\theoremstyle{plain}
\newtheorem{thm}{Theorem}[section]
\newtheorem{lem}[thm]{Lemma}
\newtheorem{cor}[thm]{Corollary}
\newtheorem{prop}[thm]{Proposition}
\theoremstyle{definition}
\newtheorem{defn}[thm]{Definition}
\newtheorem{eg}[thm]{Example}
\newtheorem{rmk}[thm]{Remark}
\newtheorem{constr}[thm]{Construction}
\begin{document}

\newcommand{\shom}{\starhyphen homomorphism}
\newcommand{\fm}{Fredholm module}

\newcommand{\ind}{\iind}

\newcommand{\suspgrad}[1]{\salg\gradtensor{#1}}

\newcommand{\alg}{\aelem}

\newcommand{\clifft}[2][]{\contfualg{#1}\left(#2, \cliffc \tanbndl{#2} \right)}
\newcommand{\clifftpare}[2][]{\contfualg{#1}\left(#2, \cliffc \tanbndl{(#2)} \right)}
\newcommand{\clifftrel}[3][]{\contfualg{#1}\left(#2, \cliffc \tanbndl_{#2}({#3}) \right)}

\newcommand{\spcofmet}{\mathrm{Met}}

\newcommand{\lrep}[1][\gam]{L_{#1}}

\newcommand{\arep}[2][\rho]{#1: \aalg \to \bh[#2]}

\newcommand{\ray}[2][]{\mathfrak{R}_{#1}^{#2}}
\newcommand{\rayinfty}[3][]{(\ray[#1]{#2}{#3})|_\infty}

\newcommand{\ksubeveng}[2][\ggrp]{\kfunctr_0^{#1}(#2)}
\newcommand{\ksuboddg}[2][\ggrp]{\kfunctr_1^{#1}(#2)}
\newcommand{\ksupeveng}[2][\ggrp]{\kfunctr^0_{#1}(#2)}
\newcommand{\ksupoddg}[2][\ggrp]{\kfunctr^1_{#1}(#2)}
\newcommand{\kk}{\kkfunctr}

\newcommand{\rkkggrp}[1][*]{\kkfunctr_{#1}^\ggrp}
\newcommand{\rkkgam}[1][*]{\kkfunctr_{#1}^\gamgrp}

\newcommand{\AofM}[1][\mnf]{\aalg({#1})}
\newcommand{\AevenofM}[1][\mnf]{\aalg_{\mathrm{ev}}({#1})}
\newcommand{\AofMrel}[1]{\aalg(\mmnf,#1)}
\newcommand{\AevenofMrel}[1]{\aalg_{\mathrm{ev}}(\mmnf,#1)}

\newcommand{\cliffmult}[1][\xpt_0]{C_{#1}}
\newcommand{\botthom}[1][\xpt_0]{\bottmap_{#1}}

\newcommand{\alfinalfind}{\alfind\in\alfindset}

\newcommand{\bh}[1][]{\boprs({#1})}
\newcommand{\kh}[1][]{\koprs({#1})}

\newcommand{\csds}{C*-dynamical system}
\newcommand{\crep}{covariant representation}
\newcommand{\cp}{crossed product}
\newcommand{\rcp}{reduced crossed product}

\newcommand{\lam}{\lambda}
\newcommand{\Lam}{\Lambda}
\newcommand{\act}[1][\gam]{\alpha_{#1}}
\newcommand{\acsds}{(\aalg\,,\Gam,\act[])}
\newcommand{\acrep}{(\rho,\uopr)}
\newcommand{\urep}[1][\gam]{\uopr_{#1}}
\newcommand{\ging}{{\gam\in\Gam}}
\newcommand{\ling}{{\lam\in\Gam}}
\newcommand{\lact}[1][\xsp]{\Gam\curvearrowright#1}

\newcommand{\trep}{\tilde{\rho}}
\newcommand{\lgh}{\lsp{\Gam}\otimes\hil}
\newcommand{\aacp}[1][]{\aalg\rtimes_{#1alg}\Gam}
\newcommand{\acp}[1][]{\aalg\rtimes_{#1}\Gam}
\newcommand{\arcp}[1][]{\aalg\rtimes_{#1r}\Gam}
\newcommand{\cpr}[1][]{\rtimes_{#1r}}

\newcommand{\chaf}[1][(\gam_i\Lam)]{X_{#1}}

\renewcommand{\salg}{{C_0(\rbbd)}}


\newcommand{\hhs}{{Hilbert\-/Hadamard space}}
\newcommand{\admhhs}{{admissible Hilbert-Hadamard space}}


\title[\tiny The Novikov conjecture, diffeomorphisms \& Hilbert-Hadamard spaces]{The Novikov conjecture, the group of volume preserving diffeomorphisms and Hilbert-Hadamard spaces}

\author{Sherry Gong}\address{S.~Gong: Department of Mathematics, Stanford University, Stanford, CA, USA}\email{sgong2@stanford.edu}

\author{Jianchao Wu}\address{J.~Wu: Department of Mathematics, Texas A\&M University, College Station, TX, USA}\email{j.wu@tamu.edu}

\author{Guoliang Yu}\address{G.~Yu: Department of Mathematics, Texas A\&M University, College Station, TX, USA, and Shanghai Center for Mathematical Sciences, Shanghai, China}\email{guoliangyu@math.tamu.edu}

\thanks{This work was partially supported by the grants NSF~1564401, NSF~1564398, NSF~1700021, NSF~2000082, NSFC~11420101001 and the Simons Fellows Program.}

\date{}

\maketitle

\begin{abstract}
	We prove that the Novikov conjecture holds for any discrete group admitting an isometric and metrically proper action on an admissible Hilbert-Hadamard space. 
	Admissible Hilbert-Hadamard spaces are a class of (possibly infinite\-/dimensional) non-positively curved metric spaces that contain dense sequences of closed convex subsets isometric to Riemannian manifolds. 
	Examples of admissible Hilbert-Hadamard spaces include Hilbert spaces, certain simply connected and non-positively curved Riemannian-Hilbertian manifolds and infinite\-/dimensional symmetric spaces. 
	Thus our main theorem can be considered as an infinite\-/dimensional analogue of Kasparov's theorem on the Novikov conjecture for groups acting properly and isometrically on complete, simply connected and non-positively curved manifolds.
	As a consequence, we show that the Novikov conjecture holds for geometrically discrete subgroups of the group of volume preserving diffeomorphisms of a closed smooth manifold. 
	This result is inspired by Connes' theorem that the Novikov conjecture holds for higher signatures associated to the Gelfand-Fuchs classes of groups of diffeormorphisms.
\end{abstract}

\tableofcontents

\section{Introduction}

\newcommand{\Diff}{\operatorname{Diff}}

A central problem in manifold topology is the Novikov conjecture. The Novikov conjecture states that the higher signatures of closed oriented smooth manifolds are invariant under orientation preserving homotopy equivalences. 
In the case of aspherical manifolds, the Novikov conjecture is an infinitesimal version of the Borel conjecture which states that all closed aspherical manifolds
are topologically rigid, i.e., if another closed manifold $N$ is homotopy equivalent to the given closed aspherical manifold $M$, then $N$ is homeomorphic to $M$. To make this precise, recall that a deep theorem of Novikov says that the rational Pontryagin classes are invariant under orientation preserving homeomorphisms. Since for aspherical manifolds, information about higher signatures is equivalent to that of rational Pontryagin classes, the Novikov conjecture for closed aspherical manifolds follows from the Borel conjecture and Novikov's theorem. 

Noncommutative geometry provides a powerful approach to the Novikov conjecture (cf.\,\cite{connes, baumconnes88, baumconneshigson, kasparov95, miscenko74}). The Novikov conjecture follows from the \emph{rational strong Novikov conjecture}, which states that the rational Baum-Connes assembly map is injective.
Using this approach, the Novikov conjecture has been proved when the fundamental group of the manifold is in one of the following classes: 
\begin{enumerate}
	\item groups acting properly and isometrically on simply connected and non-positively curved manifolds (cf.\,\cite{kasparov1}), 
	\item groups acting properly and isometrically on Hilbert spaces (cf.\,\cite{higsonkasparov}),
	\item hyperbolic groups (cf.\,\cite{connesmoscovici}), 
	\item torsion-free groups acting properly on locally compact buildings (cf.\, \cite{kasparovskandalis91}),
	\item groups acting properly and isometrically on ``bolic'', weakly geodesic metric spaces of bounded geometry (cf.\,\cite{kasparovskandalis2}), 
	\item groups with finite asymptotic dimension (cf.\,\cite{yu2}),
	\item groups coarsely embeddable into Hilbert spaces (cf.\,\cite{yu3, higson00, skandalistuyu}), 
	\item groups coarsely embeddable into Banach spaces with property (H) (cf.\,\cite{kasparovyu12}), 
	\item all linear groups and subgroups of all almost connected Lie groups (cf.\,\cite{guentnerhigsonweinberger}), 
	\item mapping class groups (cf.\,\cite{hamenstadt, kida}), and 
	\item $\operatorname{Out}(F_n)$, the outer automorphism groups of free groups (cf.\,\cite{bestvinaguirardelhorbez}).
\end{enumerate}
In the first three cases, an isometric action of a discrete group $\Gam$ on a metric space $X$ is said to be \emph{proper} if $d(x, g \cdot x) \to \infty$ as $g \to \infty$ for some (equivalently, all) $x$ in $X$.

The next most natural class of groups to study for the Novikov conjecture is that of groups of diffeomorphisms, which can be highly nonlinear in nature. 
Connes \cite{connes1986} and Connes-Gromov-Moscovici \cite{connesgromovmoscovici} proved a very striking theorem that the Novikov conjecture holds for higher signatures associated to Gelfand-Fuchs classes of groups of diffeomorphisms. The proof of this result is a technical tour de force and uses the full power of noncommutative geometry. One important construction in this proof is the space of metrics on a smooth manifold.

Inspired by this work, we prove the following theorem and apply it to the rational strong Novikov conjecture for geometrically discrete subgroups of the volume preserving diffeomorphism group on a closed smooth manifold.

\begin{thm} \label{thm:main}
The rational strong Novikov conjecture holds for groups acting properly and isometrically on an admissible Hilbert-Hadamard space. More precisely, if a countable discrete group $\Gamma$ acts properly and isometrically on an admissible Hilbert-Hadamard space, then 
the rational Baum-Connes assembly map
$$\mu \colon K_*(B \Gamma)\otimes_{\zbbd}\qbbd \to \kfunctr_{*}(\cstar_{r}\Gam)\otimes_{\zbbd}\qbbd$$
is injective, where $B \Gamma$ is the classifying space for free and proper $\Gamma$-actions, $K_*(B \Gamma)$ is the $K$-homology group of $B \Gamma$ with compact supports, and $\kfunctr_{*}(\cstar_{r}\Gam)$ is the $K$-theory of the reduced group $C^*$-algebra of $\Gam$.
\label{admissible_hilbert_hadamard_thm}
\end{thm}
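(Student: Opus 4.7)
The plan is to employ the Dirac/dual-Dirac method in $\Gamma$-equivariant $KK$-theory, combining Kasparov's strategy for proper isometric actions on finite-dimensional non-positively curved manifolds with the Higson--Kasparov strategy for affine isometric actions on Hilbert spaces; both are formal specializations of the admissible Hilbert-Hadamard hypothesis. The central object will be a $\mathbb{Z}/2$-graded $\Gamma$-$C^*$-algebra $\mathcal{A}(M)$ attached to the admissible Hilbert-Hadamard space $M$, together with Dirac and dual-Dirac classes
$$\alpha\in KK^{\Gamma}(\mathcal{A}(M),\mathbb{C}),\qquad \beta\in KK^{\Gamma}(\mathbb{C},\mathcal{A}(M))$$
whose Kasparov product is the unit of $KK^{\Gamma}(\mathbb{C},\mathbb{C})$ after rationalization. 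Kasparov's descent and the standard Baum--Connes factorization then yield a left inverse to $\mu\otimes\mathbb{Q}$, giving injectivity.

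To build $\mathcal{A}(M)$, I would use the defining property of admissibility: a dense sequence $(M_n)$ of closed convex Riemannian submanifolds. On each $M_n$ the algebra $C_0(\mathbb{R})\,\widehat{\otimes}\, C_0(M_n,\mathrm{Cliff}(TM_n))$ is a classical Kasparov-type object; the non-positive curvature supplies canonical connecting $*$-homomorphisms (via radial/geodesic projection and parallel transport), and $\mathcal{A}(M)$ is defined as the resulting graded inductive limit. Although individual $M_n$ need not be $\Gamma$-invariant, the continuity and isometric character of the $\Gamma$-action on $M$ induces a well-defined $\Gamma$-action on $\mathcal{A}(M)$ at the level of the limit.

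The dual-Dirac class $\beta$ is constructed using the CAT(0) exponential map from a basepoint $x_0\in M$: radial coordinates based at $x_0$ define a global Bott-type element, and properness, together with standard averaging over the orbit of $x_0$ (or an extension argument on the universal proper $\Gamma$-space), promotes it to an equivariant class in $KK^{\Gamma}(\mathbb{C},\mathcal{A}(M))$. The Dirac class $\alpha$ is obtained by assembling the finite-dimensional Dirac operators on the $M_n$; their compatibility under the connecting maps uses convexity and non-positive curvature to relate the spin/Clifford data on $M_n$ and $M_{n+1}$. A Kasparov product calculation, essentially local in $M$ and modelled on the finite-dimensional Kasparov computation, should then yield $\beta\otimes_{\mathcal{A}(M)}\alpha=1$ in $KK^{\Gamma}(\mathbb{C},\mathbb{C})\otimes_{\mathbb{Z}}\mathbb{Q}$.

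The principal obstacle will be reconciling the infinite-dimensional limit construction with the $\Gamma$-equivariance. Since $\Gamma$ generically does not preserve any fixed exhaustion by finite-dimensional submanifolds, the classical Dirac--dual-Dirac story cannot be lifted manifold by manifold: equivariance must emerge only at the inductive limit, which forces one to argue via continuity of the $\Gamma$-action on $\mathcal{A}(M)$ and to control uniformity of the finite-dimensional Dirac constructions across $n$. The unboundedness of these Dirac operators on the non-cocompact approximations, together with the need for the Kasparov product to be computed coherently in the limit, is the most delicate point, and is presumably also the reason rationalization is required: the composition $\beta\otimes\alpha$ is expected to equal $1$ only modulo torsion.
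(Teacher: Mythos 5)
Your proposal follows the classical Dirac/dual\-/Dirac template, but the paper's introduction explicitly flags this as the point where the classical method breaks down: the authors write that ``our proof takes a different route since we do not know how to construct a Dirac element for infinite\-/dimensional curved spaces.'' Your plan hinges on producing $\alpha\in KK^{\Gamma}(\mathcal{A}(M),\mathbb{C})$ by assembling finite\-/dimensional Dirac operators over the $M_n$, but there is no known way to make this work: the convex finite\-/dimensional pieces $M_n$ are generically \emph{not} complemented in $M$ (in the sense of Remark~\ref{rmk:AofM-complemented}), so the ``connecting $*$-homomorphisms via radial projection and parallel transport'' you describe do not exist. Indeed, Lemma~\ref{lem:AofMrel-agree} only relates $\AofMrel{\mnf_k}$ to $\AofM[\mnf_k]$ through a \emph{quotient} map $\pimap_{\mnf,\mnf_k}$, not an isomorphism, so $\AofM$ is a direct limit of the $\AofMrel{\mnf_k}$ but \emph{not} of the $\AofM[\mnf_k]$ themselves. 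Your proposed inductive\-/limit algebra is therefore not the algebra the paper uses, and even if one could build it, the Dirac operators on the $M_n$ would still not assemble coherently because curvature obstructs compatibility of the Clifford data across inclusions. This is a genuine gap, not a routine delicacy.

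The paper's actual route sidesteps the Dirac element entirely. It constructs $\AofM$ directly as a $C^*$-subalgebra of a large product of Clifford algebras generated by Bott homomorphisms $\botthom$ (Definition~\ref{defn_AofM}), and only proves that $\botthom\colon\salg\to\AofM$ is \emph{injective} on $K$-theory (Proposition~\ref{prop:botthom-K-inj}) rather than an isomorphism. To convert this into injectivity of the assembly map without a dual inverse, the paper introduces two devices absent from your sketch. First, a deformation technique: replacing $M$ by the continuum product $M^{[0,1]}$ allows the isometric $\Gamma$-action to be homotoped through isometries to the trivial action (Proposition~\ref{prop:isom-01-nilhomotopic}), which is what lets one compare $KK^{\Gamma}$ with the action to $KK^{\Gamma}$ with trivial action and thereby exploit the non\-/equivariant $K$-theory injectivity (this is the role of $\AIofM$ and Lemma~\ref{lem:AIofM-ev-KK}). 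Second, equivariant $KK$-theory with real coefficients (Construction~\ref{constr:KKR}, following Antonini--Azzali--Skandalis) is used precisely to obtain, via Lemma~\ref{lem:KKR-EGam-inj}, injectivity of $KK^{\Gamma}_{\rbbd,*}(\univspfree\Gamma,-)\to KK^{\Gamma}_{\rbbd,*}(\univspproper\Gamma,-)$, which replaces the ``left inverse from $\alpha$'' step that your plan relies on. The final argument (Section~\ref{sec:proof}) is a diagram chase combining these with the properness of $\AofM[{M^{[0,1]}}]$ and the Green--Julg isomorphism (Theorem~\ref{thm:proper-GHT}). You correctly anticipated the $C^*$-algebra $\AofM$, the Bott element, and the role of properness, but you would need to abandon the Dirac side of the argument and import the deformation plus real\-/coefficient machinery to close the proof.
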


\emph{{\hhs}s} are a type of non-positively curved (i.e., CAT(0)) metric spaces that include Hilbert spaces, complete connected and simply connected (possibly infinite\-/dimensional) Riemannian-Hilbertian manifolds with non-positive sectional curvature, and certain infinite\-/dimensional symmetric spaces. 
The precise definition of Hilbert-Hadamard spaces is given in Section~\ref{sec:hhs}.

We say that a {\hhs} $M$ is \textit{admissible} if 
it has an increasing sequence of closed and convex subsets $M_n$, whose union is dense in $M$, such that each $M_n$, seen with its inherited metric from $M$, is isometric to a finite\-/dimensional Riemannian manifold. 
Here a subset is convex if it contains the geodesic segment between every pair of points in the subset. For example, a Hilbert space is clearly admissible; so are some interesting infinite\-/dimensional symmetric spaces (see Section~\ref{sec:L2metrics}). Gromov asked the third author when a Hilbert-Hadamard space $M$ is admissible. 
In particular, two related open questions are whether every infinite\-/dimensional symmetric space is admissible and whether every complete connected and simply connected Riemannian-Hilbertian manifold with non-positive sectional curvature is admissible. 

Theorem~\ref{thm:main} can be viewed as an infinite\-/dimensional analogue of the aforementioned theorem of Kasparov \cite{kasparov1} on the Novikov conjecture for groups acting properly and isometrically on complete, simply connected and non-positively curved manifolds. In the case of Hilbert spaces, this theorem also follows from \cite{higsonkasparov}. 

Our main theorem can be applied to study the Novikov conjecture for \emph{geometrically discrete subgroups} of the group of volume preserving diffeomorphisms of a closed smooth manifold $N$. More precisely, we fix a density $\omega$ on $N$, which we regard as a measure on $N$, which is, in each smooth chart, equivalent to the Lebesgue measure with a smooth Radon-Nikodym derivative. For an orientable smooth manifold, a density is just a volume form without its orientation. Each Riemannian metric on $N$ induces a density in the same way as an inner product on $\rbbd^n$ induces a volume form on $\rbbd^n$. 
Let $\Diff(N,\omega)$ denote the group of diffeomorphisms on $N$ that fix $\omega$. 
We remark that, up to isomorphism, the group $\Diff(N,\omega)$ is independent of the choice of $\omega$. Indeed, by a result of Moser \cite{Moser1965}\footnote{Although the main theorem in Moser \cite{Moser1965} refers to volume forms, the second footnote on the first page supplies a remark by Calabi that the result extends to de Rham's \emph{odd forms}, which are what we call densities.}, any other density $\omega'$ on $N$ is related to $\omega$ through a diffeomorphism and a rescaling, that is, we have $\omega' = k \cdot \varphi_* \omega$, where $k$ is a positive number and $\varphi_* \omega$ is the push-forward of $\omega$ by a suitable diffeomorphism $\varphi$. Hence the groups $\Diff(N,\omega)$ and $\Diff(N,\omega')$ are isomorphic (through conjugation by said diffeomorphism $\varphi$). 

In order to define the concept of geometrically discrete subgroups of $\Diff(N,\omega)$, let us fix a Riemannian metric on $N$ and define a length function $\lambda$ on $\Diff(N,\omega)$ by taking, for all $\varphi \in \operatorname{Diff}(N,\omega)$, 
\[ 
	\lambda_+(\varphi) = \left(\int_N (\log(\|D\varphi\|))^2 d\omega\right)^{1/2}
\]
and
\[ 
	\lambda(\varphi) =  \max\left\{\lambda_+(\varphi),\lambda_+(\varphi^{-1})\right\} \; ,
\]
where $D\varphi$ is the Jacobian of $\varphi$, and the norm $\|\cdot \|$ denotes the operator norm, computed using the chosen Riemannian metric on $N$. Intuitively speaking, this length function measures how much a diffeomorphism $\varphi$ deviates from an isometry in an $L^2$-sense.

\begin{defn}\label{defn:geometrically-discrete}
A countable subgroup $\Gamma$ of $\Diff(N, \omega)$ is said to be a geometrically discrete subgroup 
if $\lambda(\gam) \rightarrow \infty$ when $\gam \to \infty$ in $\Gam$, i.e., for any $R>0$, there exists a finite subset $F\subset \Gamma$ such that $\lambda(\gam)\geq R$ if $\gam \in \Gamma \setminus F$. 
\end{defn}

We point out that this notion of geometric discreteness does not depend on the above choice of the Riemannian metric, even though the length function $\lambda$ does.
The following result is a consequence of our main theorem.

\begin{thm} \label{thm:diffeo}
	Let $N$ be a closed smooth manifold 
	and let $\Diff(N, \omega)$ be the group of all volume preserving diffeomorphisms of $N$ (for some density $\omega$). Then the rational strong Novikov conjecture holds for any geometrically discrete subgroup of $\Diff(N,\omega)$. 
\label{volume_preserving_diff_thm}
\end{thm}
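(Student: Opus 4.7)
The plan is to exhibit, for any closed smooth manifold $N$ with density $\omega$, a canonical {\admhhs} $M$ on which $\Diff(N,\omega)$ acts by isometries, in such a way that geometric discreteness of a subgroup $\Gamma$ (in the sense of Definition~\ref{defn:geometrically-discrete}) is equivalent to metric properness of its action on $M$. With this comparison in hand, Theorem~\ref{thm:main} immediately yields the rational strong Novikov conjecture for $\Gamma$.

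The candidate for $M$ is an $L^2$-space of Riemannian metrics on $N$. Let $\mathcal{P}\to N$ be the bundle whose fiber $\mathcal{P}_x$ is the space of positive-definite inner products on $T_xN$; each fiber is (non-canonically) isometric to the classical Hadamard symmetric space $P(n)=\mathrm{GL}(n,\mathbb{R})/O(n)$. Define $M$ as the metric completion of the space of smooth sections $g$ of $\mathcal{P}$ that are $L^2$-close to a fixed reference metric $g_0$, with
\[
	d_M(g_1,g_2)^2=\int_N d_P\bigl(g_1(x),g_2(x)\bigr)^2\,d\omega(x).
\]
The Hilbert-Hadamard property is an integrated form of the fiberwise CAT(0) inequality. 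For admissibility, take the nested subspaces $M_k\subset M$ of sections that are locally constant (in a fixed trivialization of $TN$ on a dense open subset) on the top-dimensional cells of a triangulation of $N$ of mesh tending to $0$; each $M_k$ is a closed convex finite product of copies of $P(n)$ weighted by the $\omega$-volumes of the cells, hence a finite-dimensional non-positively curved Riemannian manifold, and $\bigcup_k M_k$ is dense by density of step sections in $L^2$.

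The group $\Diff(N,\omega)$ acts on $M$ by pullback, $\varphi\cdot g=\varphi^*g$. Pullback is fiberwise a $\mathrm{GL}(n,\mathbb{R})$-conjugation, which is isometric on $P(n)$; combined with $\varphi_*\omega=\omega$ via the change-of-variables formula, this makes the action on $M$ isometric. For the length comparison, use the standard formula expressing the $P(n)$-distance in terms of the logarithms of generalized eigenvalues, together with the elementary bounds
\[
	\bigl(\log\|D\varphi(x)\|\bigr)^2 \leq \sum_{i=1}^n\bigl(\log\sigma_i(x)\bigr)^2 \leq n\Bigl((\log\|D\varphi(x)\|)^2+(\log\|D\varphi^{-1}(\varphi(x))\|)^2\Bigr),
\]
where $\sigma_i(x)$ are the singular values of $D\varphi(x)$. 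Integrating against $\omega$ and absorbing smooth prefactors coming from $g_0$ (using compactness of $N$), one obtains constants $c_1,c_2>0$ with $c_1\lambda(\varphi)\leq d_M(g_0,\varphi\cdot g_0)\leq c_2\lambda(\varphi)$ for every $\varphi\in\Diff(N,\omega)$. Hence geometric discreteness of $\Gamma$ is precisely metric properness of its orbit on $M$, which also confirms that the notion is independent of the Riemannian metric used to define $\lambda$.

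The main obstacle is the construction and verification of $M$: building it as a genuine {\hhs} requires care with the $L^2$-completion of sections valued in a bundle of non-compact symmetric spaces, with separability, and with arranging the finite-dimensional admissibility subspaces to be truly closed and convex (this is where the fiberwise trivializations must be matched consistently across the triangulation). Once this foundational step is in place, the isometric action is formal and the length comparison reduces to a pointwise matrix computation on $P(n)$, so the conclusion follows by a direct invocation of Theorem~\ref{thm:main}.
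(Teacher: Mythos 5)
Your proposal follows essentially the same route as the paper: build the space of $L^2$-Riemannian metrics as a continuum product over $(N,\omega)$ of a nonpositively curved symmetric space of inner products, verify it is an admissible Hilbert-Hadamard space, let $\Diff(N,\omega)$ act by pullback, establish a bilipschitz comparison between the orbit length function and $\lambda$, and conclude via Theorem~\ref{thm:main}. The paper develops exactly this in Sections~\ref{sec:hhs} and~\ref{sec:L2metrics}, with the proof of Theorem~\ref{thm:diffeo} reduced to citing Theorem~\ref{thm:main} and Proposition~\ref{prop:SLSO-proper-length}.

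Two small differences worth flagging, neither of which is a gap. First, the paper takes the fiber to be $P(n)=\operatorname{SL}(n,\mathbb{R})/\operatorname{SO}(n)$ (inner products normalized to a fixed volume form) and fixes a Borel trivialization in which the reference metric $g_0$ is the constant section $I_n$; this forces the Jacobians $D\varphi$ to land in $\widetilde{\operatorname{SL}}(n,\mathbb{R})$ and the bilipschitz comparison (Remark~\ref{rmk:Pn-metric}, Lemma~\ref{lem:length-bilipschitz}) to be a clean one-line matrix computation. Your choice $\mathrm{GL}(n,\mathbb{R})/O(n)$ carries an extra Euclidean factor; it is still an Hadamard manifold and the argument goes through, but one must then genuinely absorb the bounded determinant/prefactor terms as you indicate, and lose the convenient $\operatorname{SL}$ identity $\log\|T^{-1}\|\le (n-1)\log\|T\|$ (the paper's Equation~\eqref{eq:bilipschitz-inverse}) that controls the symmetrization of the length function. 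Second, for admissibility you use locally constant sections over a triangulation of shrinking mesh, while the paper works abstractly with a refining sequence of finite measurable partitions of the separable measure space (Lemma~\ref{lem:separable-measure-space-approx-partitions}) combined with an exhaustion of the fiber $P(n)$ by finite-dimensional Hadamard submanifolds. The triangulation argument is perfectly adequate here (a closed smooth manifold admits triangulations, and the codimension-one skeleton is $\omega$-null so a fixed Borel trivialization suffices); the paper's version is just slightly more general, applying to any separable finite measure space. Both yield closed convex finite-dimensional approximating submanifolds and density follows from approximation of $L^2$ sections by step sections, which is Lemma~\ref{lem:continuum-product-simple-functions} in the paper.
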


Observe that if $\varphi \in \Diff(N,\omega)$ preserves the Riemannian metric we chose in the definition of $\lambda$, then $\lambda(\varphi)=0$. This suggests that geometrically discrete subgroups of $\Diff(N, \omega)$ are, in a sense, conceptual antitheses to subgroups of isometries. We remark that since the group of isometries of a closed Riemannian manifold is a Lie group, all its countable subgroups satisfy the rational strong Novikov conjecture by \cite{guentnerhigsonweinberger}. This gives hope for a unified approach to prove the rational strong Novikov conjecture for all countable subgroups of $\Diff(N, \omega)$. 

Theorem~\ref{volume_preserving_diff_thm} can be derived from Theorem~\ref{admissible_hilbert_hadamard_thm} as follows. A key point is to model the geometry of $\Diff(N,\omega)$ after a certain infinite-dimensional symmetric space, which is a natural example of an admissible {\hhs}, thus allowing us to apply Theorem~\ref{admissible_hilbert_hadamard_thm}. More precisely, $\Diff(N,\omega)$ acts isometrically on the infinite-dimensional symmetric space
\[
	L^2(N,\omega, \operatorname{SL}(n, \mathbb{R})/\operatorname{SO}(n)) 
\]
of \emph{$L^2$-Riemannian metrics} on an $n$-dimensional closed smooth manifold $N$ with a fixed density $\omega$. 
This infinite-dimensional symmetric space can be defined as the completion of the space of all bounded Borel maps from $N$ to $\operatorname{SL}(n, \mathbb{R})/\operatorname{SO}(n)$ with regard to the following metric:
\[
	d(\xi, \eta) = 	\left( \int_{y \in N} (d_X(\xi(y),\eta(y)))^2 \, d\omega(y) \right)	^{\frac{1}{2}} \text{ for two such maps } \xi \text{ and } \eta \; , 
\]
where $d_X$ is the standard Riemannian metric on the symmetric space $X = \operatorname{SL}(n, \mathbb{R})/\operatorname{SO}(n)$. Observe that this symmetric space parametrizes all inner products on $\rbbd^n$ with a fixed volume form. Thus Riemannian metrics on $N$ that induce $\omega$ correspond to the smooth sections of an $X$-bundle over $N$. Upon taking a Borel trivialization of this bundle, these smooth sections are embedded into the space of all bounded Borel maps from $N$ to $X$, and thus also into $L^2(N,\omega, X)$, with a dense image. This explains the terminology ``$L^2$-Riemannian metrics''. 

The fact that $L^2(N,\omega, X)$ constitutes a {\hhs} essentially follows from the fundamental point that $X$ is non-positively curved as a symmetric space. This construction is key to Theorem~\ref{volume_preserving_diff_thm} since the group $\Diff(N,\omega)$ acts isometrically on $L^2(N,\omega, X)$ in the same way as it permutes Riemannian metrics on $N$ (Construction~\ref{constr:diff-isom}). For a countable subgroup $\Gamma$ of $\Diff(N,\omega)$, the property of being geometrically discrete corresponds precisely to the properness of the natural action $\Gamma \curvearrowright L^2(N,\omega, X)$ (cf.~Proposition~\ref{prop:SLSO-proper-length}). This allows us to invoke Theorem~\ref{admissible_hilbert_hadamard_thm}.

We remark on two aspects in which our paper differs from previous known results on the rational strong Novikov conjecture: 
\begin{enumerate}
	\item Geometrically, most previous cases can be proved by coarse embeddings into Hilbert spaces; however, groups of diffeomorphisms can be highly nonlinear in nature, and Hilbert spaces seems inadequate to model the large-scale geometry of these groups (see Remark~\ref{rmk:SLSO-proper-L2-vs-uniform}). 
	\item $K$-theoretically, most previous cases can be proved by Kasparov's Dirac-dual-Dirac method; however, our proof takes a different route since we do not know how to construct a Dirac element for infinite-dimensional curved spaces. 
\end{enumerate}

To overcome these difficulties in our situation, we make use of the following new technical tools in the proof of Theorem~\ref{admissible_hilbert_hadamard_thm}:
\begin{enumerate}
	\item A construction of a $C^*$-algebra $\AofM$ associated to the Hilbert-Hadamard space $M$ (see Section~\ref{sec:AofM} and, in particular, Definition~\ref{defn_AofM}), which generalizes the algebra $\AofM[\hil]$ constructed by Higson and Kasparov \cite{higsonkasparov} for a Hilbert space $\hil$ (cf.~Remark~\ref{rmk:AofM-complemented}) and is analogous to the one constructed by Kasparov and Yu \cite{kasparovyu12} for Banach spaces with property (H).  
	\item A technique of deforming any isometric action on a Hilbert-Hadamard space $M$ into a trivial action on a ``bigger'' (typically infinite\-/dimensional) Hilbert-Hadamard space (Proposition~\ref{prop:isom-01-nilhomotopic}). This deformation technique allows us to show the induced action on the $C^*$-algebra $\AofM$ can be ``trivialized'' in $KK$-theory (see Section~\ref{sec:proof}). 
	We remark that this deformation technique is only accessible in the framework of \emph{infinite\-/dimensional} spaces. 
	\item $KK$-theory with real coefficients, developed recently by Antonini, Azzali, and Skandalis \cite{antoniniazzaliskandalis2014, antoniniazzaliskandalis}. This theory allows us to deal with groups with torsion. See Lemma~\ref{lem:KKR-EGam-inj}.
\end{enumerate}

This paper is organized as follows. In Section~\ref{sec:hhs}, we introduce the notion of (admissible) {\hhs}s and prove that any isometric action on a {\hhs} can be deformed into a trivial action on a ``bigger'' {\hhs}. The latter procedure makes use of the general construction of a continuum product of a {\hhs} over a finite measure space, a topic that we revisit in the appendix (Section~\ref{sec:appendix}) in order to complete a few technical proofs. In Section~\ref{sec:L2metrics}, we focus on our main example of admissible {\hhs}s, namely the space of $L^2$-Riemannian metrics of a smooth manifold with a fixed volume form (or more generally for non-orientable manifolds, a fixed density), and discuss how volume-preserving (or density-preserving) diffeomorphisms give rise to isometries of this {\hhs} and when a group of these diffeomorphisms acts metrically properly on the space of $L^2$-Riemannian metrics. In Section~\ref{sec:AofM}, we construct a noncommutative $C^*$-algebra $\AofM$ from a {\hhs} $M$, which plays a key role in the proof of our main theorem. A discussion of how isometries of $M$ act on $\AofM$ takes place in Section~\ref{sec:automorphisms}, and one about the $K$-theory of $\AofM$ ensues in Section~\ref{sec:K-theory}. Finally, Section~\ref{sec:proof} completes the proof of our main theorem. 

\subsubsection*{Acknowledgement}The authors would like to thank the anonymous referee for the very helpful and detailed comments.

\section{Preliminaries}\label{sec:prelim}

\subsection{{\cstaralg}s}
A basic tool in the operator-theoretic approach to the Novikov conjecture is the notion of a \emph{$C^*$-algebra}, defined to be a Banach space over the field $\cbbd$ of complex numbers, equipped with a compatible product structure and a conjugate-linear involutive self-map, called the \emph{adjoint} or the $*$\-/operation, satifying
\begin{enumerate}
	\item\label{plain:cstaralg-norm-multiplication} $\| a b \| \leq \|a \| \, \|b\|$, 
	\item\label{plain:cstaralg-adjoint-anti} $(ab)^* = b^* a^*$, and
	\item\label{plain:cstaralg-cstar-identity} $\| a a^* \| = \|a\|^2$ (the \emph{$C^*$-identity}) 
\end{enumerate}
for all elements $a$ and $b$. Removing the role of the norm gives us the notion of a $*$\-/algebra. 

A $C^*$-algebra is \emph{unital} if it contains a (necessarily unique) multiplicative {identity}, denoted by $1$. It is called \emph{separable} if it contains a countable dense subset. 
An element $a$ in a {\cstaralg} $A$ is \emph{normal} if $aa^* = a^* a$, \emph{self-adjoint} if $a = a^*$, and \emph{unitary} if $A$ is unital and $a a^* = a^* a = 1$. The \emph{center} ${Z}(A)$ of a $C^*$-subalgebra consisting of elements that commute with every other element in $A$. 
First examples of {\cstaralg}s include:  
\begin{itemize}
	\item $B(\hhil)$, the algebra of bounded linear operators from a complex Hilbert space $\hhil$ into itself, equipped with operator multiplication and the operation of taking Hermitian adjoints, and 
	\item $C_0(X)$ for a locally compact Hausdorff space $X$, consisting of all continuous functions $f$ from $X$ to $\cbbd$ that \emph{vanish at infinity} (i.e., given any $\varepsilon>0$, there is a compact set $K \subset X$ such that $f(x) < \varepsilon$ for $x \in X \setminus K$) equipped with the pointwise multiplication and conjugation. 
\end{itemize}
A fundamental result of Gelfand and Naimark asserts that every $C^*$-algebra can be realized as a closed subalgebra of $B(\hhil)$ that is closed under taking adjoint operators, and moreover, every \emph{commutative} $C^*$-algebra $A$ can also be realized as $C_0(X)$ for an up-to-homeomorphism unique locally compact Hausdorff space $X$, which can be identified with the \emph{dual} $\widehat{A}$ of the commutative $C^*$-algebra $A$, defined to be the space of maximal ideals of $A$ with the \emph{hull-kernel topology}. A consequence of this is that \emph{$*$\-/homomorphisms} between $C^*$-algebras, i.e., maps that preserve multiplication and the $*$\-/operation, are norm semi\-/decreasing. It is also through the lens of this result that the study of noncommutative {\cstaralg}s is sometimes called noncommutative topology. 

Observe that given a normal element $a$ in a {\cstaralg} $A$, the $C^*$-subalgebra \emph{generated by} $a$ is commutative. It can be identified with $C_0(X \setminus \{0\})$ where $X$ is a closed subset of $\cbbd$ called the \emph{spectrum} of $a$, the element $a$ corresponds to the inclusion map $X \hookrightarrow \cbbd$, and $C_0(\varnothing)$ is understood to be $\{0\}$. Given a continuous function $f \colon X \to \cbbd$, we denote its corresponding element in $A$ by $f(a)$. Such a correspondence is called \emph{continuous functional calculus}.

In this paper, we make use of the following elementary constructions of $C^*$-algebras: 
\begin{enumerate}
	\item Generalizing the construction of $C_0(X)$, we define, for a $C^*$-algebra $A$ and a locally compact space $X$, the $C^*$-algebra $C_0(X, A)$ to consist of all continuous functions $f$ from $X$ to $A$ that {vanish at infinity}, equipped with the pointwise algebraic and $*$\-/operations. This construction is covariant in $A$ with respect to $*$\-/homomorphisms and contravariant in $X$ with respect to proper continuous maps, by means of composition of maps. When $X$ is compact, the vanishing condition is vacuous and thus we simply write $C(X,A)$ (or simply $C(X)$ when $A = \cbbd$). 
	\item Fix a discrete group $\Gamma$. Given a $C^*$-algebra $A$, embedded as a subalgebra of $B(\hhil)$, and a (left) \emph{action} $\alpha \colon \Gamma \curvearrowright A$ (i.e., a homomorphism from $\Gamma$ to $\autgrp(A)$, the \emph{group of $*$\-/automorphisms} of $A$) we can construct the \emph{reduced crossed product} $A \rtimes_{\operatorname{r}, \alpha} \Gamma$ as a $C^*$-subalgebra of $B(\ell^2(G) \otimes \hhil)$ generated by 
	\begin{itemize}
		\item the unitaries $u_g \colon \delta_h \otimes \xi \mapsto \delta_{g h} \otimes \xi$, for $g \in \Gamma$, and 
		\item the operators $\lambda_a \colon \delta_h \otimes \xi \mapsto \delta_{h} \otimes \alpha_h^{-1} (a) \cdot \xi$, for $a \in A$, 
	\end{itemize}
	where $\delta_h$ is the Dirac delta function at $h \in \Gamma$ and $\xi \in \hhil$. The generators thus satisfy the \emph{covariance condition} $u_g \lambda_a u_g^* = \lambda_{\alpha_g (a)}$. Up to $*$\-/isomorphism, this construction does not depend on the embedding $A \hookrightarrow B(\hhil)$. This construction is covariant in $A$ with respect to \emph{$\Gamma$-equivariant} $*$\-/homomorphisms.
	On the other hand, we also construct the \emph{maximal crossed product} $A \rtimes_{\alpha} \Gamma$ as the completion of the $*$\-/algebra generated by the $u_g$'s and $\lambda_a$'s by the largest possible $C^*$-norm, that is, the norm of an element is given by supremum of the norms of its images under all possible $*$\-/homomorphisms into $B(\khil)$ for all possible Hilbert spaces $\khil$. This construction is covariant not only in $A$ with respect to \emph{$\Gamma$-equivariant} $*$\-/homomorphisms, but also more generally in both variables with respect to pairs of the form $\left( \varphi, \psi \right)$ where $\varphi$ is a $*$\-/homomorphism, $\psi$ is a group homomorphism and the pair is equivariant in the sense that it intertwines the group actions.
	We often drop $\alpha$ from the notation if the action is understood, and we view $A$ as embedded in $A \rtimes_{\operatorname{r}} \Gamma$ and $A \rtimes \Gamma$ by identifying $\lambda_a$ with $a$. 
	A prominent special case is when $A = \cbbd$, whereby we write $C^*_{\operatorname{r}} \Gamma$ for $\cbbd \rtimes_{\operatorname{r}} \Gamma$ and call it the \emph{reduced group \cstaralg} of $\Gamma$. 
	\item Given a \emph{real} Hilbert space $\hhil$, we write $\cliffc \hhil$ for the complex Clifford $C^*$-algebra generated by $\hhil$. More precisely, consider the \emph{antisymmetric Fock space} 
	\[
	\expwr^*\hil_{\cbbd} := \bigoplus_{k=0}^\infty \expwr^k\hil_{\cbbd}
	\]
	where $\expwr^k\hil_{\cbbd}$, the \emph{$k$-th complex exterior power} of $\hil$, is defined to be the complexification of the quotient of the real tensor vector space $\bigotimes^k\hil$ by equating $\xi_1\otimes\cdots\otimes\xi_k$ with $\sgn(\sigma)\ \xi_{\sigma(1)}\otimes\cdots\otimes\xi_{\sigma(k)}$ for any $k$-permutation $\sigma$, with the equivalence class denoted by $\xi_1\wedge\cdots\wedge\xi_k$. For each $\eta\in\hil$, we may define its \emph{creation operator} $\copr(\eta): \expwr^*\hil_{\cbbd} \to \expwr^*\hil_{\cbbd}$ by
	\[
	\copr(\eta)(\xi_1\wedge\cdots\wedge\xi_k) = \eta\wedge\xi_1\wedge\cdots\wedge\xi_k \; .
	\]
	If we define a self-adjoint operator 
	$$\hat{\eta}:= \copr(\eta)+\copr^*(\eta)$$
	for each $\eta\in\hil$, then we have the relation
	\begin{equation}\label{cliffordrelation}
	\hat{\eta}\hat{\xi}+\hat{\xi}\hat{\eta} =2 \left\langle \eta,\xi \right\rangle
	\end{equation}
	for any $\eta,\xi \in\hil$. In particular,
	\begin{equation}
	\hat{\eta}^2=\|\eta\|^2  \; ,
	\end{equation}
	which is a scalar multiplication. The \emph{(complex) Clifford algebra} $\cliffc(\hil)$ of $\hil$ is the subalgebra of $\bh[\expwr^*\hil_{\cbbd}]$ generated by $\{\hat{\eta}:\ \eta \in\hil\}$. We remark that the assignment $\hhil \mapsto \cliffc \hhil$ is functorial with regard to isometric linear embeddings of Hilbert spaces and $*$\-/homomorphisms and it also preserves direct limits in the respective categories. In particular, the involutive isometry on $\hhil$ that takes each $\xi$ to $-\xi$ induces a distinguished involutive $*$\-/automorphism of $\cliffc(\hhil)$, which turns the latter into a \emph{{graded} $C^*$-algebra}. All $*$\-/homomorphisms induced from isometric linear embeddings of Hilbert spaces also preserve the grading. 
\end{enumerate}

The first construction above provides a prototype for the following notion. Given a locally compact Hausdorff space $X$, we say a {\cstaralg} $B$ is an \emph{$X$-{\cstaralg}} if there is a continuous map from $\widehat{Z(B)}$, the dual of the center of $B$, to $X$. 
If, in addition, a group $\Gamma$ acts on $B$ by automorphisms and on $X$ by homeomorphisms 
and the map $\widehat{Z(B)} \to X$ is equivariant under the induced action of $\Gamma$ on $\widehat{Z(B)}$, then we say $B$ is a \emph{$\Gamma$-$X$-{\cstaralg}}. For example, $C_0(X, A)$ is an $X$-{\cstaralg} and if $\Gamma$ acts on both $X$ and $A$, then $C_0(X, A)$, with the diagonal action $(g \cdot f) (x) = g \cdot f(g^{-1} \cdot x)$, becomes a $\Gamma$-$X$-{\cstaralg}. Given an \emph{$X$-{\cstaralg}} $B$ and $x \in X$, the \emph{fiber} of $B$ at $x$ is given by the quotient $B / \left( C_0(X \setminus \{x\}) \cdot B \right)$. This is useful since, for example, to prove an $X$-$C^*$-subalgebra $B'$ of $B$ is equal to $B$, we just need to check that $B'$ maps surjectively onto each fiber. 

From a noncommutative geometric point of view, a desirable condition for an action of a discrete group $\Gamma$ on a locally compact Hausdorff space $X$ is being \emph{proper}, that is, for any compact subset $K$ in $X$, we have $K \cap g K = \varnothing$ for all but finitely many $g \in \Gamma$. This topological notion of proper actions coincides with the metric notion mentioned before Theorem~\ref{thm:main} in the case of an isometric action on a \emph{proper} metric space, i.e., one where all the closed balls are compact. We say that a $\Gamma$-{\cstaralg} is \emph{proper} if the induced action of $\Gamma$ on the dual of its center is proper.  

Two {\shom}s $\varphi_0, \varphi_1 \colon A \to B$ are called \emph{homotopic} if there is a {\shom} $\psi \colon A \to C([0,1], B)$ such that for any $a \in A$ and $i = 0,1$, the image $\varphi_i (a)$ is the evaluation of $\psi(a)$ at $i \in [0,1]$. Two {\cstaralg}s $A$ and $B$ are called \emph{homotopy-equivalent} if there are {\shom}s $\varphi \colon A \to B$ and $\psi \colon B \to A$, called \emph{homotopy-equivalences}, such that $\psi \circ \varphi$ and $\varphi \circ \psi$ are homotopic to the identity maps on $A$ and $B$, respectively. The notion of \emph{equivariant homotopy equivalence} is defined similarly. 

We refer to reader to \cite{davidson} for a detailed account of the {\cstaralg} theory. 

\subsection{$KK$-theory}
An extremely potent tool in noncommutative geometry, particularly in relation with the Novikov conjecture, is Kasparov's equivariant $KK$-theory (cf.\,\cite{kasparov1, kasparov95}), which associates to a locally compact and $\sigma$-compact group $\Gamma$ and two separable $\Gamma$-$C^*$-algebras $\aalg$ and $\balg$ (meaning that $\Gamma$ acts on them) the abelian group $KK^\Gamma(A, B)$. The group $KK^\Gamma(A, B)$ contains, among other things, elements $[\varphi]$ induced from equivariant {\shom}s $\varphi \colon A \to B$. It is contravariant in $A$ and covariant in $B$, both with respect to equivariant {\shom}s. It is equivariantly homotopy-invariant, stably invariant, preserves equivariant split exact sequences, and satisfies \emph{Bott periodicity}, i.e., there are natural isomorphisms
\[
	KK^\Gamma(A, B) \cong KK^\Gamma(\Sigma^2 A, B) \cong KK^\Gamma(\Sigma A, \Sigma B)  \cong KK^\Gamma(A, \Sigma^2 B) 
\]
where $\Sigma^i A$ stands for $C_0(\rbbd^i,A)$ with $i\in \nbbd$ and $\Gamma$ acting trivially on $\rbbd$. These properties ensure that a short exact sequence $0 \to J \to E \to A \to 0$ of $\Gamma$-{\cstaralg}s and equivariant {\shom}s induces a \emph{six-term exact sequence} in the second variable, and with extra conditions such as that $E$ is a nuclear (in particular, commutative) proper $\Gamma$-{\cstaralg}, 
it also induces a six-term exact sequence in the first variable (see \cite[Appendix]{kasparovskandalis2} and \cite[Chapter~VI]{guentnerhigsontrout}), though this fails in general (\cite{Skandalis1991Le}). When one of the two variables is $\cbbd$, equivariant $KK$-theory recovers 
\begin{itemize}
	\item equivariant $K$-theory: $KK^\Gamma(\cbbd, B) \cong K^\Gamma_0(B)$;
	\item equivariant $K$-homology: $KK^\Gamma(A, \cbbd) \cong K_\Gamma^0(A)$. 
\end{itemize}

\begin{rmk}
	The definition of equivariant $KK$-theory is usually tailored to the theory of \emph{graded {\cstaralg}s}. However, in this paper, when we take the equivariant $KK$-groups of graded {\cstaralg}s, we \emph{disregard their gradings} and treat them as trivially graded. 
\end{rmk}

The most striking feature that gives equivariant $KK$-theory its power is the \emph{Kasparov product}, which gives a group homomorphism
\[
	KK^\Gamma(A, B) \otimes_\zbbd KK^\Gamma(B, C) \to KK^\Gamma(A, C) 
\]
for any three separable $\Gamma$-$C^*$-algebras $A$, $B$, and $C$. The Kasparov product of two elements $x \in KK^\Gamma(A, B)$ and $y \in KK^\Gamma(B, C)$ is often denoted by $x \otimes_B y$. The Kasparov product is associative. Moreover, for an equivariant {\shom} $\varphi \colon A \to B$, the group homomorphisms 
\[
	[\varphi] \otimes_B \colon KK^\Gamma(B, C) \to KK^\Gamma(A, C)  \text{ and } \otimes_A [\varphi] \colon KK^\Gamma(D, A) \to KK^\Gamma(D, B)
\]
induced by taking Kasparov products with $[\varphi] \in KK^\Gamma(A, B)$ 
coincide with the homomorphisms given by the functorial properties of equivariant $KK$-theory. 

When the acting group $\Gamma$ is the trivial group, we simply write $KK(A,B)$ for $KK^\Gamma(A, B)$ and drop the word ``equivariant'' everywhere. There is a forgetful functor from $KK^\Gamma$ to $KK$.

\begin{rmk} \label{rmk:KK-facts-de-equivariantize} 
	In some important special cases, we can turn an equivariant $KK$-group $KK^\Gamma(A, B)$ into a related non-equivariant $KK$-group, which is often much easier to study. 
	\begin{enumerate}
		\item\label{rmk:KK-facts-de-equivariantize:trivial-B} When $\Gamma$ is a countable discrete group and its action on $B$ is trivial, it is immediate from the definition that there is a natural isomorphism $KK^\Gamma(A, B) \cong KK(A \rtimes \Gamma, B)$ where $A \rtimes \Gamma$ is the maximal crossed product. In particular, if $A = C_0(X)$ for a locally compact second countable space $X$ and $\Gamma$ acts freely and properly on $X$, then since $C_0(X) \rtimes \Gamma$ is stably isomorphic to $C_0(X / \Gamma)$, we have a natural isomorphism $KK^\Gamma(C_0(X), B) \cong KK(C_0(X / \Gamma), B)$. 
		\item\label{rmk:KK-facts-de-equivariantize:translation-A} When $\Gamma$ is a countable discrete group and $A = C_0(\Gamma, D)$ with an action of $\Gamma$ by translation on the domain $\Gamma$, there is a natural isomorphism $KK^\Gamma(C_0(\Gamma, D), B) \overset{\cong}{\longrightarrow} KK(D, B)$ given by first applying the forgetful functor and then composing with the embedding $D \cong C(\{1_{\Gamma} \}, D) \hookrightarrow C_0(\Gamma, D)$. 
	\end{enumerate}
	
\end{rmk}

In this paper, we will focus on the case when the first variable $A$ in $KK^\Gamma(A,B)$ is commutative and view the theory as a homological theory on the spectrum of $A$. In fact, we will need a variant of it that may be thought of as homology with $\Gamma$-compact support. Recall that a subset of a topological space $X$, on which $\Gamma$ acts, is called \emph{$\Gamma$-compact} if it is contained in $\{g \cdot x \colon g \in \Gamma, ~ x \in K \}$ for some compact subset $K$ in $X$.

\begin{defn}\label{defn:KK-Gam-compact}
	Given a countable discrete group $\Gamma$, a Hausdorff space $X$ with a $\Gamma$-action, a $\Gamma$-$C^*$-algebra $B$, and $i \in \nbbd$, we write $KK^\Gamma_i(X, B)$ for the inductive limit of the equivariant $KK$-groups $KK^\Gamma \left(C_0(Z ), C_0(\rbbd^i, A) \right)$, where $Z$ ranges over $\Gamma$-invariant and $\Gamma$-compact subsets of $X$ and $A$ ranges over $\Gamma$-invariant separable $C^*$-subalgebras of $B$, both directed by inclusion. 
	
	We write $K^\Gamma_i(X)$ for $KK^\Gamma_i(X, \cbbd)$ and call it the \emph{$\Gamma$-equivariant $K$-homology of $X$ with $\Gamma$-compact supports}. 
\end{defn}

It is clear from Bott periodicity that there is a natural isomorphism $KK^\Gamma_i(X, B) \cong KK^\Gamma_{i+2}(X, B)$. Thus we can view the index $i$ as an element of $\zbbd / 2 \zbbd$. Also note that this construction is covariant both in $X$ with respect to continuous maps and in $B$ with respect to equivariant {\shom}s. Partially generalizing the functoriality in the second variable, the Kasparov product gives us a natural product $KK^\Gamma_i(X, B) \otimes_\zbbd KK^\Gamma(B,C) \to KK^\Gamma_i(X, C)$ for any separable $\Gamma$-{\cstaralg}s $B$ and $C$ (the separability condition can be dropped by extending the definition of $KK^\Gamma(B,C)$ through taking limits). 

We may think of $KK^\Gamma_i(-, B)$ as an extraordinary homology theory in the sense of Eilenberg-Steenrod. In the non-equivariant case, the coefficient algebra $B$ plays a rather minor role in this picture. 

\begin{lem}\label{lem:KK-separate-variables}
	For any CW-complex $X$, any $C^*$-algebra $B$, and any $i \in \zbbd / 2 \zbbd$, there is a natural isomorphism
	\[
		KK_i(X, B) \otimes_{\zbbd} \qbbd \cong \bigoplus_{j \in \zbbd / 2 \zbbd} K_j(X) \otimes_\zbbd K_{i-j}(B)  \otimes_{\zbbd} \qbbd
	\]
\end{lem}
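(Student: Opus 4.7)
The plan is to construct a natural transformation between the two sides via the Kasparov product, recognize both sides as rational generalized homology theories on CW-complexes in the variable $X$, and invoke the standard principle that such a natural transformation is an isomorphism everywhere once it is an isomorphism on a point.

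First I would define the natural map. For $x \in K_j(X) = KK_j(X, \cbbd)$ and $\beta \in K_{i-j}(B) = KK_{i-j}(\cbbd, B)$, the Kasparov product $x \otimes_{\cbbd} \beta$ lies in $KK_i(X, B)$. Summing over $j \in \zbbd/2\zbbd$ and tensoring with $\qbbd$ yields a $\qbbd$-linear natural map
\[
\Phi_X \colon \bigoplus_{j \in \zbbd/2\zbbd} K_j(X) \otimes_\zbbd K_{i-j}(B) \otimes_\zbbd \qbbd \longrightarrow KK_i(X, B) \otimes_\zbbd \qbbd.
\]

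Next, both sides should be verified to be generalized homology theories in $X$---wedge-additive, homotopy invariant, and equipped with long exact sequences for CW-pairs. For $KK_i(-, B)$ with compact supports the six-term exact sequence comes from Kasparov's theory applied to a short exact sequence $0 \to C_0(U) \to C_0(Z) \to C_0(F) \to 0$ attached to a closed subcomplex $F$ of a compact subcomplex $Z$ with open complement $U$, combined with the inductive limit in Definition~\ref{defn:KK-Gam-compact}; the wedge axiom follows similarly from the compact-support definition, since every compact subset of a wedge is contained in a finite sub-wedge. The source is manifestly a generalized homology theory, being a direct sum of shifts of ordinary rational $K$-homology, with coefficients in the graded $\qbbd$-vector space $K_*(B) \otimes_\zbbd \qbbd$. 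The Kasparov product is natural with respect to extensions of $C^*$-algebras, so $\Phi$ is a natural transformation of generalized homology theories. Evaluating at $X = \mathrm{pt}$, both sides collapse to $K_i(B) \otimes_\zbbd \qbbd$, and $\Phi_{\mathrm{pt}}$ is the identity, using $K_0(\mathrm{pt}) = \zbbd$, $K_1(\mathrm{pt}) = 0$, and the unitality of the Kasparov product. A natural transformation of generalized homology theories on CW-complexes which is an isomorphism on a point is automatically an isomorphism on every CW-complex: one proves this by induction on cells, applying the five-lemma to the long exact sequences of skeletal pairs for finite-dimensional $X$, and then passing to direct limits over finite subcomplexes (which both theories respect by their compact-support nature).

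The principal obstacle I anticipate is Step two: verifying in detail that $KK_i(-, B)$ with compact supports (directed both over $\Gamma$-compact subsets and over separable subalgebras of $B$) satisfies the Eilenberg--Steenrod axioms, particularly the wedge axiom and the compatibility of the six-term sequence with these inductive limits. Rationalization is essential for the final step: integrally there is a Künneth spectral sequence with $\operatorname{Tor}_1^{\zbbd}$ terms that generally obstruct any splitting, but these terms vanish after tensoring with $\qbbd$, producing the stated direct sum decomposition.
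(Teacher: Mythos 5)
Your argument is correct and follows essentially the same route as the paper's: both define the comparison map via the Kasparov product, check it at a point, and then bootstrap to all CW-complexes using the homology-theoretic axioms (the paper phrases the bootstrap as Mayer--Vietoris plus the five lemma and a direct limit, you phrase it as skeletal induction plus direct limits, but these are the same standard cutting-and-pasting device). Your closing remark that rationalization kills the $\operatorname{Tor}_1^{\zbbd}$ obstruction matches the paper's observation that rationalization is needed to preserve exactness after tensoring with $K_*(B)$.
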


This follows from a version of the K\"{u}nneth Theorem \cite{RosenbergSchochet1987}. For the sake of completeness, we provide a brief proof. 

\begin{proof}
	
	For $j \in  \zbbd / 2 \zbbd$, there are natural homomorphisms 
	\[
		K_j(X) \otimes_\zbbd K_{i-j}(B) \cong KK_j(X, \cbbd) \otimes_\zbbd KK_{i-j}(\cbbd, B) \to KK_i(X, B) 
	\]
	given by the Kasparov product. We claim that taking the direct sum of these two homomorphisms gives us the desired isomorphism after rationalization (i.e., tensoring by $\qbbd$). This is clear when $X$ is a point, since $K_0(X) \cong \zbbd$ and $K_1(X) \cong 0$. The homotopy invariance of $KK$-theory thus generalizes the isomorphism to the case when $X$ is contractible. A standard cutting-and-pasting argument using Mayer-Vietoris sequences and the five lemma then generalizes it to the case when $X$ is a finite CW-complex. Here it becomes clear that the summand with $j=1$ is needed for dimension shifts and rationalization is needed to preserve exactness after taking tensor products with $K_{*}(B)$. The general case follows by taking a direct limit. 
\end{proof}

Given a countable discrete group $\Gamma$, following \cite{baumconneshigson}, we use the term \emph{proper $\Gamma$-space} for a metrizable space $X$ with a proper $\Gamma$-action such that the quotient space is again metrizable. It is a \emph{free and proper $\Gamma$-space} if the action is, in addition, free. Proper $\Gamma$-spaces satisfy the so-called slice theorem. We shall only need the following special case. 

\begin{lem}\label{lem:slice}
	Let $X$ be a free and proper $\Gamma$-space. Then every point $x \in X$ has a neighborhood $U$ such that $\{ g \cdot U \colon g \in \Gamma \}$ is a family of disjoint sets. 
\end{lem}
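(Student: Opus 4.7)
The plan is to start from a single neighborhood of $x$ that meets only finitely many of its $\Gamma$-translates, and then shrink it in finitely many steps, using the Hausdorff property, to remove those remaining translates.

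\medskip

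\noindent\textbf{Step 1.} Using properness of the $\Gamma$-action, I would produce an open neighborhood $V$ of $x$ for which
$$F := \{g \in \Gamma \colon gV \cap V \ne \varnothing\}$$
is a finite subset of $\Gamma$. This is essentially built into the Baum--Connes--Higson definition of a proper $\Gamma$-space recalled just before the lemma; if one prefers the locally compact formulation of properness from the preliminaries, one takes any precompact open neighborhood of $x$ and applies the defining condition to its closure $\overline{V}$.

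\medskip

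\noindent\textbf{Step 2.} Since the action is free and $X$ is metrizable (hence Hausdorff), for each $g \in F \setminus \{e\}$ the points $x$ and $gx$ are distinct and can be separated by disjoint open sets $A_g \ni x$ and $B_g \ni gx$. Set $W_g := A_g \cap g^{-1} B_g$. Then $W_g$ is an open neighborhood of $x$ and
$$W_g \cap g W_g \subset A_g \cap B_g = \varnothing.$$
Define
$$U := V \cap \bigcap_{g \in F \setminus \{e\}} W_g,$$
which is an open neighborhood of $x$ by the finiteness of $F$.

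\medskip

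\noindent\textbf{Step 3.} To verify the conclusion, it suffices to show $U \cap gU = \varnothing$ for every $g \in \Gamma \setminus \{e\}$, since then $gU \cap hU = \varnothing$ whenever $g \ne h$ (apply the previous statement to $h^{-1}g$ and translate by $g$). If $g \notin F$, then $U \cap gU \subset V \cap gV = \varnothing$ by the choice of $V$. If $g \in F \setminus \{e\}$, then $U \cap gU \subset W_g \cap gW_g = \varnothing$ by the choice of $W_g$. Either way the intersection is empty, yielding the required disjoint family $\{g\cdot U \colon g \in \Gamma\}$.

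\medskip

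The only delicate point is Step~1: it depends on precisely which formulation of ``proper $\Gamma$-space'' one works with. Under the Baum--Connes--Higson definition cited immediately before the lemma the existence of such a $V$ is immediate, while under the locally compact version of properness recalled in the preliminaries one additionally needs $x$ to have a precompact neighborhood. After this initial choice, the remaining argument is a routine finite Hausdorff-separation shrinking, and freeness of the action is used exactly once, to guarantee $gx \ne x$ for $g \ne e$.
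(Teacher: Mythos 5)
The paper states this lemma without proof, treating it as a known special case of the slice theorem for proper $\Gamma$-spaces, so there is no in-paper argument to compare against; your direct three-step argument is a correct elementary substitute. You are right to flag Step~1 as the one place where the precise notion of properness matters. Under the Baum--Connes--Higson definition that the paper references, there is a finite subgroup $H\leq\Gamma$ and a $\Gamma$-map $\rho\colon U\to\Gamma/H$ defined on an invariant neighborhood of $x$; the preimage $V=\rho^{-1}(eH)$ is open (since $\Gamma/H$ is discrete) and $gV\cap V\neq\varnothing$ forces $g\in H$, so the required finite set $F$ exists with no local compactness hypothesis on $X$, which is the more robust reading and the one the paper needs (the spaces in question need not be locally compact). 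Steps~2 and~3 are routine and correct, with freeness used exactly to ensure $gx\neq x$ and metrizability supplying the Hausdorff separation; the only blemish is a harmless slip in the parenthetical of Step~3, where one should either apply the disjointness statement to $g^{-1}h$ and translate by $g$, or to $h^{-1}g$ and translate by $h$.
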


Let $\univspfree\Gamma$ denote a \emph{universal space} for free and proper $\Gamma$-actions, that is, $\univspfree\Gamma$ is a free and proper $\Gamma$-space such that any free and proper $\Gamma$-space $X$ admits a $\Gamma$-equivariant continuous map into $\univspfree\Gamma$ that is unique up to $\Gamma$-equivariant homotopy. 
Let $B \Gamma$ be the quotient of $\univspfree\Gamma$ by $\Gamma$. 
Similarly, $\univspproper\Gamma$ denotes a \emph{universal space} for proper $\Gamma$-actions. 
These constructions are unique up to ($\Gamma$-equivariant) homotopy equivalence, and thus there is no ambiguity in writing $KK^\Gamma_i(\univspfree\Gamma, B)$, $KK_i(B\Gamma, B)$ and $KK^\Gamma_i(\univspproper\Gamma, B)$ for a $\Gamma$-{\cstaralg} $B$. By definition, there is a $\Gamma$-equivariant continuous map $\univspfree\Gamma \to \univspproper\Gamma$, regardless of the choice of models. 

The \emph{reduced Baum-Connes assembly map} for a countable discrete group $\Gamma$ and a $\Gamma$-{\cstaralg} $B$ is a group homomorphism 
\[
	\mu \colon KK^\Gamma_i(\univspproper\Gamma, B) \to K_i(B \rtimes_{\operatorname{r}} \Gamma) \; .
\]
It is natural in $B$ with respect to $\Gamma$-equivariant {\shom}s or more generally with respect to taking Kasparov products, in the sense that any element $\delta \in KK^\Gamma(B,C)$ induces a commuting diagram
\begin{equation}\label{eq:BC-assembly-natural}
	\xymatrix{
			KK^\Gamma_i(\univspproper\Gamma, B) \ar[r]^\mu \ar[d]^{\delta} & K_i(B \rtimes_{\operatorname{r}} \Gamma) \ar[d]^{\delta \rtimes_{\operatorname{r}} \Gamma} \\
			KK^\Gamma_i(\univspproper\Gamma, C) \ar[r]^\mu & K_i(C \rtimes_{\operatorname{r}} \Gamma)
		}
\end{equation}
for an induced group homomorphism $\delta \rtimes_{\operatorname{r}} \Gamma$.

The case when $B = \cbbd$ is of special interest. The \emph{rational strong Novikov conjecture} asserts that the composition
\[
	K^\Gamma_i(\univspfree\Gamma) \to K^\Gamma_i(\univspproper\Gamma) \overset{\mu}{\to} K_i(C^*_{\operatorname{r}} \Gamma) 
\] 
is injective after tensoring each term by $\qbbd$. It implies the Novikov conjecture,
the Gromov-Lawson conjecture on the nonexistence of positive scalar curvature for aspherical manifolds (cf.\,\cite{rosenberg1983c}) and Gromov's zero-in-the-spectrum conjecture.

On the other hand, it has proven extremely useful to have the flexibility of a general $\Gamma$-algebra $B$ in the picture, largely due to the following key observation, which is based on a theorem of Green \cite{green1982} and Julg \cite{julg1981} and an equivariant cutting-and-pasting argument on $B$. 

\begin{thm}[{cf.\,\cite[Theorem~13.1]{guentnerhigsontrout}}]\label{thm:proper-GHT}
	For any countable discrete group $\Gamma$, and a $\Gamma$-{\cstaralg} $B$, if $B$ is a proper $\Gamma$-$X$-{\cstaralg} for some locally compact Hausdorff space $X$, then the reduced Baum-Connes assembly map 
	\[
		\mu \colon KK^\Gamma_i(\univspproper\Gamma, B) \to K_i(B \rtimes_{\operatorname{r}} \Gamma) \; .
	\]
	is a bijection. \qed
\end{thm}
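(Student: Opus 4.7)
The plan is to reduce, in three stages, to the case of finite isotropy, where the classical Green--Julg theorem gives the assembly isomorphism directly. As a first step, I would write $B$ as the direct limit of its $\Gamma$-invariant separable $C^*$-subalgebras $B_\alpha$ whose central supports in $X$ are $\Gamma$-compact. Both the source $KK^\Gamma_i(\univspproper\Gamma, -)$ (by Definition~\ref{defn:KK-Gam-compact}) and the target $K_i(- \rtimes_{\operatorname{r}} \Gamma)$ are continuous under such directed unions, and the assembly map is natural with respect to $\Gamma$-equivariant $*$-homomorphisms; hence it suffices to treat each $B_\alpha$, and I may henceforth assume that $X$ is $\Gamma$-compact and $B$ is separable.

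Second, invoking the slice theorem for proper actions of countable discrete groups on locally compact Hausdorff spaces, I would cover $X$ by finitely many $\Gamma$-invariant open sets $V_1,\ldots,V_n$ of the form $V_j \cong \Gamma \times_{F_j} U_j$, where $F_j \leq \Gamma$ is a finite stabilizer subgroup and $U_j$ is an $F_j$-invariant open set. Setting $I_j := C_0(V_j)\cdot B$ produces $\Gamma$-invariant ideals of $B$ whose partial sums fit into equivariant Mayer--Vietoris short exact sequences. Combining the naturality diagram \eqref{eq:BC-assembly-natural} with the six-term exact sequences in both $KK^\Gamma_i(\univspproper\Gamma, -)$ and $K_i(-\rtimes_{\operatorname{r}}\Gamma)$, an induction on $n$ via the five lemma reduces us to the single-slice case $X \cong \Gamma \times_F U$ for a finite subgroup $F \leq \Gamma$.

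In this final case, $B \cong \operatorname{Ind}_F^\Gamma B_0$, where $B_0 := C_0(U)\cdot B$ is the restriction of $B$ to the slice, carrying the natural $F$-action. Green's imprimitivity theorem provides a Morita equivalence $B \rtimes_{\operatorname{r}} \Gamma \sim B_0 \rtimes F$, while the induction isomorphism for $KK$-theory with $\Gamma$-compact supports gives $KK^\Gamma_i(\univspproper\Gamma, B) \cong KK^F_i(\univspproper{F}, B_0) = K^F_i(B_0)$, using that $\univspproper{F}$ is a point for finite $F$. Under these identifications, $\mu$ becomes the classical Green--Julg isomorphism $K^F_i(B_0) \cong K_i(B_0 \rtimes F)$ for the compact group $F$, which is known to be a bijection.

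The hard part, I expect, is the second stage: systematically constructing the $\Gamma$-equivariant Mayer--Vietoris sequences on both sides of $\mu$, verifying that $\mu$ intertwines them, and arranging the slice cover so the induction goes through. This careful but essentially routine bookkeeping, rather than any single new deep idea, is what takes up much of \cite[\S13]{guentnerhigsontrout}.
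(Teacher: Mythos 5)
Your outline matches exactly what the paper itself identifies as the basis of this result — the paper gives no proof but cites \cite[Theorem~13.1]{guentnerhigsontrout} and explicitly attributes the argument to ``a theorem of Green \cite{green1982} and Julg \cite{julg1981} and an equivariant cutting-and-pasting argument on $B$.'' Your three stages (direct-limit reduction, slice cover plus Mayer--Vietoris, Green imprimitivity plus Green--Julg) are precisely the standard proof in that reference and require no revision.
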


This is the basis of the \emph{Dirac-dual-Dirac} method (cf.\,\cite{kasparov1,kasparov95}; also see \cite[Chapter~9]{Valette2002}), which was applied very successfully to the study of the Baum-Connes assembly map. It is based on the construction of a proper $\Gamma$-$X$-{\cstaralg} $B$ together with $KK$-elements $\alpha \in KK^\Gamma(B, \cbbd)$ and $\beta \in KK^\Gamma(\cbbd, B)$ such that $\beta \otimes_B \alpha$ is equal to the identity element in $KK^\Gamma(\cbbd, \cbbd)$. When this is possible, Theorem~\ref{thm:proper-GHT} allows us to conclude that the Baum-Connes assembly map for $\Gamma$ is an isomorpism and the rational strong Novikov conjecture for $\Gamma$ follows. 
Although we do not directly apply this method to prove Theorem~\ref{thm:main}, our strategy still calls for a proper $\Gamma$-$X$-{\cstaralg} $B$ and a $KK$-element $\beta \in KK^\Gamma(\cbbd, B)$. 

It is not hard to see that whenever $\Gamma$ is infinite and $B$ is a proper $\Gamma$-$X$-{\cstaralg}, there is no $\Gamma$-equivariant {\shom} from $\cbbd$ to $B$. Thus one must look beyond $\Gamma$-equivariant {\shom}s in order to construct a suitable element $\beta \in KK^\Gamma(\cbbd, B)$. Many of such elements come from \emph{$\Gamma$-equivariant asymptotic morphisms} (cf.\,\cite{ConnesHigson1990, guentnerhigsontrout}). We will only make use of a special type of such morphisms, given below. 

\begin{constr}	\label{constr:KK-facts-asymptotic} 	
	Let $B$ be a $\Gamma$-$C^*$-algebra and let $\varphi_t \colon \salg \to B$ be a family of $*$\-/homomorphisms indexed by $t \in [1, \infty)$ that is
	\begin{enumerate}
		\item \emph{pointwise continuous}, i.e., $t \mapsto \varphi_t(f)$ is continuous for any $f \in \salg$, and 
		\item \emph{asymptotically invariant}, i.e., $\lim_{t \to \infty} \left\| g \cdot \left(\varphi_t(f)\right) - \varphi_t(f) \right\| = 0$ for any $f \in \salg$ and any $g \in \Gamma$. 
	\end{enumerate}
	Then by \cite[Definition~7.4]{higsonkasparov}, there is an element 
	\[
	\left[ (\varphi_t) \right] \in KK^\Gamma_0 \left(C_0(\rbbd), B \right) \cong  KK^\Gamma_1(\cbbd, B)
	\]
	whose image under the forgetful map 
	\[
	KK^\Gamma_0(\cbbd, C_0(\rbbd, B)) \to KK(\cbbd, C_0(\rbbd, B)) \cong KK(\salg, B)
	\]
	is equal to the element $[\varphi_t]$ induced by the homomorphism $\varphi_t$, for any $t \in [0, \infty)$. 
\end{constr}

To conclude our preparation of equivariant $KK$-theory, we recall the construction of equivariant $KK$-theory with real coefficients, recently introduced by Antonini, Azzali and Skandalis. 

\begin{constr}[cf.\,\cite{antoniniazzaliskandalis2016}] \label{constr:KKR}
	The \emph{equivariant $KK$-theory with real coefficients} is a bivariant theory that associates, to each pair $(A,B)$ of $\Gamma$-$C^*$-algebras, the groups  
	\[
	KK^\Gam_{\rbbd} (A, B) = \varinjlim_{N} KK^\Gam (A, B \otimes N)
	\]
	where $\otimes$ stands for the minimal tensor product and the inductive limit is taken over all II$_1$-factors $N$ with unital $*$\-/homomorphisms as connecting maps. This theory is contravariant in the first variable and covariant in the second, and there is a natural map from $KK^\Gam(A, B) \otimes_{\zbbd} \rbbd$ to $KK^\Gam_{\rbbd} (A, B)$ since $K_0(N) \cong \rbbd$ for any II$_1$-factor $N$. This map is an isomorphism when $\Gamma$ is trivial, $A = \cbbd$ and $B$ is in the bootstrap class (i.e., the class $\mathcal{N}$ in \cite{RosenbergSchochet1987}). Moreover, the Kasparov product extends to this theory. 
	
	Given a discrete group $\Gamma$, a Hausdorff space $X$ with a $\Gamma$-action, and a $C^*$-algebra $B$ with a $\Gamma$-action, we define $KK^\Gamma_{\rbbd,*}(X, B)$ in the same way as in Construction~\ref{defn:KK-Gam-compact}. Then the universal coefficient theorem allows us to identify $KK_{\rbbd, *} (X, \cbbd)$ with $K_*(X) \otimes_{\zbbd} \rbbd$ in a natural way. 
\end{constr}

The key reason we consider $KK$-theory with real coefficients is the following convenient fact. 

\begin{lem} \label{lem:KKR-EGam-inj}
	For any discrete group $\Gam$ and $\Gam$-$C^*$-algebra $A$, the homomorphism 
	\[
	\pi_* \colon KK^\Gam_{\rbbd, *} (\univspfree\Gam, A) \to KK^\Gam_{\rbbd, *} (\univspproper\Gam, A) \; ,
	\]
	which is induced by the natural $\Gam$-equivariant continuous map $\pi \colon \univspfree\Gam \to \univspproper\Gam$, is injective. 
\end{lem}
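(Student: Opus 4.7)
The plan is to reduce the injectivity of $\pi_*$ to a statement about finite subgroups of $\Gamma$ via the $\Gamma$-CW structure of $\underline{E}\Gamma$, and then to invoke a trace-valued transfer map constructed from II$_1$-factor crossed products, which is the feature that distinguishes real $KK$-theory from its integral and rational counterparts.

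First I would model $\underline{E}\Gamma$ as a $\Gamma$-CW complex with finite cell stabilizers, and $E\Gamma$ as a free $\Gamma$-CW complex above it. Because $KK^\Gamma_{\mathbb{R},*}(-, A)$ is defined as a directed colimit over $\Gamma$-compact subcomplexes, and directed colimits of abelian groups preserve injectivity, it suffices to handle each finite $\Gamma$-CW subcomplex $Z \subset \underline{E}\Gamma$ together with its preimage $\pi^{-1}(Z) \subset E\Gamma$. A cell-by-cell induction using the long exact sequence in $KK^\Gamma_{\mathbb{R},*}$ associated to a cell attachment, combined with the five lemma, reduces the problem to a single orbit: for each finite subgroup $F \leq \Gamma$, one needs injectivity of
\[
\pi_* \colon KK^\Gamma_{\mathbb{R},*}(\Gamma \times_F EF, A) \to KK^\Gamma_{\mathbb{R},*}(\Gamma/F, A).
\]

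Next, the induction--restriction adjunction, which descends from the ordinary equivariant case and commutes with the II$_1$-factor colimit in Construction~\ref{constr:KKR}, identifies this with the map $\pi_* \colon KK^F_{\mathbb{R},*}(EF, B) \to KK^F_{\mathbb{R},*}(\mathrm{pt}, B)$ with $B = A$ regarded as an $F$-$C^*$-algebra. The problem is thus reduced to the purely finite-group setting.

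For this finite-group case, both $\mathrm{pt}$ and $EF$ are proper $F$-spaces, so Theorem~\ref{thm:proper-GHT} applied coefficient-wise to $B \otimes N$ (over II$_1$-factors $N$) identifies both sides with $K$-theories of appropriate crossed products, tensored through by the $K$-theory of $N$. Choosing in the colimit a II$_1$-factor $N$ equipped with an outer $F$-action, the crossed product $N \rtimes F$ is again a II$_1$-factor, and its canonical normalized trace restricts on $N \subset N \rtimes F$ to $1/|F|$ times the trace of $N$. This produces a natural transfer map $\tau \colon KK^F_{\mathbb{R},*}(\mathrm{pt}, B) \to KK^F_{\mathbb{R},*}(EF, B)$ satisfying $\tau \circ \pi_* = |F| \cdot \mathrm{id}$. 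Since $|F|$ is invertible in $\mathbb{R}$, the map $\pi_*$ is injective.

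The main obstacle lies in the finite-group step: constructing $\tau$ and verifying the identity $\tau \circ \pi_* = |F|\cdot\mathrm{id}$ require careful use of II$_1$-factor traces, outer actions, and the Green--Julg/Baum--Connes identifications, all in a manner compatible with the II$_1$-factor colimit defining $KK^F_{\mathbb{R},*}$. This is where real $KK$-theory genuinely differs from rational $KK$-theory: over $\mathbb{Z}$ or $\mathbb{Q}$ one has analogous transfer-like maps but the II$_1$-factor trace makes the relevant scalar manifestly invertible in a uniform way. The cellular reduction and adjunction of the first two paragraphs, though formal, must also be set up with care to be compatible with the $\Gamma$-compact-support convention and the colimit structure.
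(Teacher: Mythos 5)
Your proposal goes a very different route from the paper's proof, which simply invokes the main structural result of Antonini--Azzali--Skandalis: their $\tau$-idempotent $[\tau_\Gam]\in KK^\Gam_{\rbbd}(\cbbd,\cbbd)$ acts on $KK^\Gam_{\rbbd,*}(\univspproper\Gam, A)$ as a natural projection, and $\pi_*$ is shown to be an isomorphism onto the image (the $\tau$-part). Your plan instead tries to rebuild the statement by cellular decomposition plus a finite-group transfer. There are several concrete problems with it.

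First, the reduction step fails as stated: $\pi^{-1}(Z)$ for a $\Gamma$-compact $Z\subset\univspproper\Gamma$ is \emph{not} $\Gamma$-compact. A single orbit $\Gamma/F$ with $F$ finite and nontrivial has preimage $\Gamma\times_F EF$, and $EF$ is infinite-dimensional; so "each finite $\Gamma$-CW subcomplex together with its preimage'' is not a well-posed parametrization compatible with the $\Gamma$-compact-supports colimit. Second, the cell-by-cell five-lemma argument cannot close from injectivity at each orbit alone: the (four-)five lemma needs surjectivity somewhere, so you would need the orbit-level maps to be split injective by \emph{natural} retractions that commute with the Mayer--Vietoris boundary maps, and that naturality is precisely what you have not set up. Third, the trace claim is wrong: for a II$_1$-factor $N$ with an outer action of a finite group $F$, the canonical normalized trace on $N\rtimes F$ sends $\sum_{g}a_g u_g\mapsto\tau_N(a_e)$, so its restriction to $N\subset N\rtimes F$ is $\tau_N$, not $\tfrac{1}{|F|}\tau_N$ (you may be thinking of the Jones index $[N\rtimes F:N]=|F|$ or the fixed-point subfactor, which is a different statement). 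Consequently, the $|F|$ you need in the identity $\tau\circ\pi_*=|F|\cdot\mathrm{id}$ does not emerge from the trace restriction as claimed; more fundamentally, you have not explained how a II$_1$-factor with an outer $F$-action produces a \emph{wrong-way} map $KK^F_{\rbbd,*}(\mathrm{pt},B)\to KK^F_{\rbbd,*}(EF,B)$ compatible with the II$_1$-factor colimit defining $KK_\rbbd$. Unless you can supply that construction and its naturality, the argument does not establish the lemma; the $\tau$-idempotent mechanism of Antonini--Azzali--Skandalis that the paper relies on is a genuinely global device and does not reduce to an orbit-by-orbit transfer of the kind you describe.
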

\begin{proof}
	It follows from \cite[Section~5]{antoniniazzaliskandalis} that the above homomorphism gives rise to an isomorphism between $KK^\Gam_{\rbbd, *} (\univspfree\Gam, A)$ and $KK^\Gam_{\rbbd, *} (\univspproper\Gam, A) _\tau$, which is a subgroup of $KK^\Gam_{\rbbd, *} (\univspproper\Gam, A)$ called its $\tau$-part. 
\end{proof}

\subsection{Metric geometry}

Let us give some background on metric geometry, in particular concerning 
the tangent cone of a metric space and properties of CAT(0) spaces. 
This is a summary of some of the material in \cite{burago}, most notably, Sections~3.6.2, 3.6.5, 3.6.6, and~9.1.8, as well as \cite[Part II]{BridsonHaefliger1999Metric}. 

Let $(X,d)$ be a metric space. The \emph{length} of a continuous path $\alpha: [a,b] \to X$, denoted by $|\alpha|$, is the supremum of 
\[
	\sum_{i=1}^{n} d(\alpha(t_1), \alpha(t_n))
\]
where $(t_1, \ldots, t_n)$ ranges over all finite tuples in $[a,b]$ with $t_1 \leq t_2 \leq \ldots t_n$. Such a path is called a \emph{geodesic segment} if $|\alpha| = d(\alpha(a), \alpha(b))$. The metric space $(X,d)$ is called a \emph{geodesic space} if any two points are connected by a geodesic segment. It is called a \emph{uniquely geodesic space} if any two points are connected by a unique geodesic segment, up to monotonous reparametrization of the domain. 
For any two points $x$ and $y$ in a unique geodesic space $X$, we write 
\begin{equation}\label{eq:notation-geodesic}
	[x,y] \colon [0,1] \to X
\end{equation} 
for the unique affinely parametrized geodesic segment connecting $x$ to $y$, i.e., $[x,y](0) = x$ and $[x,y](1) = y$, and $d(x, [x,y](t)) = t d(x,y)$. If there is no confusion, we also use $[x,y]$ to denote the image of this geodesic segment. 


A \emph{geodesic triangle} in a metric space consists of three geodesic segments, every two of which share a common endpoint. Given a geodesic triangle, an \emph{(Euclidean) comparison triangle} is a triangle in $\rbbd^2$ whose three sides have the same lengths as the three geodesic segments. Up to congruence, the comparison triangle is uniquely defined and depends only on the distances between the three endpoints on the original geodesic triangle. Thus we often write $\widetilde{pqr}$ for the comparison triangle of a geodesic triangle whose three endpoints are $p$, $q$, and $r$. 

A geodesic metric space $(X,d)$ is said to be \emph{CAT(0)} if for any points $x$ and $y$ on a geodesic triangle $\bigtriangleup$ in $X$, if $\widetilde{\bigtriangleup}$ is its Euclidean comparison triangle and $\widetilde{x}$ and $\widetilde{y}$ are the points on $\widetilde{\bigtriangleup}$ corresponding to $x$ and $y$, respectively (i.e., the distances from $x$ to its two adjacent endpoints are the same as those from $\widetilde{x}$ to its two adjacent vertices on the comparison triangle, and the same for $y$), then we have 
\[
	d\left( \widetilde{x}, \widetilde{y} \right) \leq d(x,y) \; .
\]
Intuitively, this says ``every geodesic triangle is thinner than its Euclidean comparison triangle''. A less intuitive but very useful equivalent definition is the following. 

\begin{rmk}\label{rmk:CAT0-equiv-defn-Bruhat-Tits}
	A metric space $(X,d)$ is {CAT(0)} if and only if for any $p,q,r,m \in X$ satisfying $d(q,m) = d(r,m) = \frac{1}{2}d(q,r)$, we have 
	\[
		d(p,q)^2 + d(p,r)^2 \geq 2 d(m,p)^2 + \frac{1}{2} d(q,r)^2 \; .
	\]
	This is the \emph{CN inequality} of Bruhat and Tits \cite{BruhatTits1972Groupes}, also called the \emph{semi parallelogram law} (cf.\,\cite[XI, {\S}3]{lang}).
\end{rmk}

\begin{rmk}\label{rmk:CAT0-facts}
	Here are some facts about CAT(0) spaces. Let $X$ be a CAT(0) space. 
	\begin{enumerate}
		\item\label{rmk:CAT0-facts-unique-geodesic} The metric space $X$ is uniquely geodesic. 
		\item\label{rmk:CAT0-facts-bicombing} The map  
		\[
		X \times X \times [0,1] \to X \, , \qquad (x, y, t) \mapsto [x, y](t )
		\]
		is continuous and is referred to as the \emph{geodesic bicombing}. It follows that $X$ is contractible. 
		\item\label{rmk:CAT0-facts-Lipschitz} For any $x,y,x',y'$ in $X$, we have
		\[
		d \big( [x, y](t ) , [x', y'](t ) \big) \leq \max\{ d(x,y), d(x',y')\} \; .
		\] 
	\end{enumerate}
\end{rmk}

Examples of CAT(0) spaces include Hilbert spaces, trees, and the so-called \emph{Hadamard manifolds}, i.e., complete connected and simply connected Riemannian manifolds with non-positive sectional curvature, e.g., hyperbolic spaces $\mathbb{H}^n$ and Riemannian symmetric spaces of noncompact type. This terminology comes from the following fundamental theorem. 

\begin{thm}[{Cartan-Hadamard Theorem; cf., e.g., \cite[XI, \S 3]{lang}}]\label{thm:Cartan-Hadamard}
	Given a Hadamard manifold $M$ and a point $x_0 \in M$, the exponential map $\exp_{x_0}$ from the tangent space $T_{x_0} M$ to $M$ is a diffeomorphism and is metric semi\-/increasing, that is, $d(\exp_{x_0} (v), \exp_{x_0} (w)) \geq \| v - w \|$ for any $v, w \in T_{x_0} M$. 
\end{thm}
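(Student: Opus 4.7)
The plan is to exploit the Jacobi field comparison that non-positive sectional curvature affords. Given $v \in T_{x_0}M$ and $\xi \in T_v(T_{x_0}M)$, identified canonically with $T_{x_0}M$, one has $d\exp_{x_0}|_v(\xi) = J(1)$, where $J$ is the Jacobi field along the radial geodesic $\gamma(t) = \exp_{x_0}(tv)$ with $J(0) = 0$ and $\nabla_t J(0) = \xi$. The Jacobi equation combined with non-positive sectional curvature gives
\[
\tfrac{d^2}{dt^2}\tfrac{1}{2}\|J(t)\|^2 = \|\nabla_t J(t)\|^2 - \langle R(J,\dot\gamma)\dot\gamma, J\rangle \geq \|\nabla_t J(t)\|^2,
\]
so $t\mapsto \|J(t)\|^2$ is convex; together with the initial conditions this forces $\|J(t)\| \geq t\|\xi\|$ for all $t\geq 0$. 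In particular $J(1)\neq 0$ whenever $\xi \neq 0$, so $d\exp_{x_0}$ is injective at every point, showing $\exp_{x_0}$ is a local diffeomorphism, and the same inequality yields the pointwise bound $\|d\exp_{x_0}|_v(\xi)\| \geq \|\xi\|$.

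Next I would upgrade this to a global diffeomorphism. Pull back the Riemannian metric from $M$ along $\exp_{x_0}$ to obtain a smooth metric $g$ on $T_{x_0}M$ making $\exp_{x_0}$ a local isometry. The radial lines from the origin are $g$-geodesics, since they map to geodesics in $M$, and are defined for all time, so by Hopf--Rinow $(T_{x_0}M, g)$ is a complete Riemannian manifold. A local isometry from a complete connected Riemannian manifold to a connected Riemannian manifold is automatically a covering map, by the standard geodesic-lifting argument, and since $M$ is simply connected, the covering is trivial. Hence $\exp_{x_0}$ is a diffeomorphism onto $M$.

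For the global metric semi-increasing inequality, fix $v,w\in T_{x_0}M$ and choose a minimizing geodesic $\alpha \colon [0,L] \to M$ from $\exp_{x_0}(v)$ to $\exp_{x_0}(w)$, where $L = d(\exp_{x_0}(v), \exp_{x_0}(w))$. Let $\tilde\alpha = \exp_{x_0}^{-1}\circ \alpha$, a smooth curve in $T_{x_0}M$ from $v$ to $w$. The pointwise infinitesimal bound above is equivalent to $\|\tilde\alpha'(s)\| \leq \|\alpha'(s)\|$ for the Euclidean norm on $T_{x_0}M$ on the left side and the Riemannian norm on the right, so
\[
\|v-w\| \leq \int_0^L \|\tilde\alpha'(s)\|\, ds \leq \int_0^L \|\alpha'(s)\|\, ds = L,
\]
which is the desired inequality. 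The main obstacle is the global diffeomorphism step: one has to verify carefully that the pulled-back metric is complete, so that Hopf--Rinow applies to \emph{all} $g$-geodesics and not merely the radial ones, and then invoke the covering map criterion together with simple connectedness; after that, the integration in the last paragraph is a short payoff.
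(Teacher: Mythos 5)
The paper does not prove this statement; it cites Lang~\cite[XI, \S 3]{lang} and treats the Cartan--Hadamard theorem as background. So there is no ``paper's proof'' to compare against, and your proposal should be judged on its own merits. The overall strategy you chose (Jacobi field comparison, pull-back metric, Hopf--Rinow, covering argument, then integrate the infinitesimal bound along a minimizing geodesic) is the standard textbook route, and most of it is sound.

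There is, however, a genuine gap in the first step. You argue that $\tfrac{d^2}{dt^2}\tfrac{1}{2}\|J\|^2 \geq \|\nabla_t J\|^2 \geq 0$, hence $\|J\|^2$ is convex, and claim that ``together with the initial conditions this forces $\|J(t)\|\geq t\|\xi\|$.'' That inference does not hold: the initial conditions give $\|J(0)\|^2 = 0$ and $\tfrac{d}{dt}\|J\|^2|_{t=0} = 2\langle \nabla_t J(0), J(0)\rangle = 0$, so convexity of $\|J\|^2$ alone only tells you that $\|J\|^2$ is nonnegative and nondecreasing, not that it grows quadratically. To get the linear lower bound on $\|J\|$ you need to establish convexity of $\phi(t) = \|J(t)\|$ itself. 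This follows from the sharper information you already wrote down: on any interval where $J$ does not vanish, $\phi\phi'' = \tfrac{1}{2}(\|J\|^2)'' - (\phi')^2 \geq \|\nabla_t J\|^2 - (\phi')^2 \geq 0$, where the last inequality is Cauchy--Schwarz applied to $\phi' = \langle \nabla_t J, J\rangle / \|J\|$. Then $\phi$ is convex with $\phi(0)=0$ and $\phi'(0^+)=\|\xi\|$, which gives $\phi(t)\geq t\|\xi\|$ and, in particular, that $J$ has no further zeros. (Alternatively, one can just invoke the Rauch comparison theorem against the flat model, where $J_{\mathrm{flat}}(t) = t\xi$.) Once this is repaired, the remaining steps --- the pull-back metric being complete because radial geodesics extend to all time, the local-isometry-from-complete-implies-covering lemma, triviality of the covering by simple connectedness, and the final length integration --- are all correct.
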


\vspace{.2cm}

Next we review the notions of angle and tangent cone. Let $(X,d)$ be a geodesic metric space. For three distinct points $x,y,z \in X$, we define the comparison angle $\widetilde{\angle} xyz$ to be the angle at $\widetilde{y}$ of the Euclidean comparison triangle $\widetilde{xyz}$. More explicitly, we have
\[\widetilde{\angle} xyz = \arccos\left(\frac{d(x,y)^2 + d(y,z)^2 - d(x,z)^2}{2d(x,y)d(y,z)}\right).\]

Given two nontrivial geodesic segments $\alpha$ and $\beta$ emanating from a point $p$ in $X$, meaning that $\alpha(0) = \beta(0) = p$, we define the angle between them, $\angle(\alpha,\beta)$, to be 
\[\angle(\alpha,\beta) = \lim_{s,t \to 0} \widetilde{\angle} (\alpha(s),p,\beta(t)) \; ,\]
provided that the limit exists. 
By \cite[Theorem~3.6.34]{burago}, angles satisfy the triangle inequality. 

\begin{rmk}\label{rmk:CAT0-equiv-defn-thin-triangle}
	Using the notion of comparison angles, we get another equivalent definition for CAT(0) spaces. Namely, a metric space $(X,d)$ is \emph{CAT(0)} if it is geodesic and for any points $p,q,r, x,y$ in $X$ with $x$ on a geodesic segment connecting $p$ and $q$ and $y$ on a geodesic segment connecting $p$ and $r$, we have
	\[
		\widetilde{\angle} xpy \leq \widetilde{\angle} qpr \; . 
	\]
	It follows from this definition that in a CAT(0) space, the angle between any two nontrivial geodesic segments emanating at the same point exists. 
\end{rmk}

Now suppose the geodesic metric space $(X,d)$ satisfies that the angle between any two nontrivial geodesic segments emanating at the same point exists. 
For a point $p \in X$, let $\Sigma_p'$ denote the metric space consisting of all equivalence classes of geodesic segments emanating from $p$, where two geodesic segments are identified if they have zero angle and the distance $d([\alpha],[\beta])$ between two classes of geodesic segments is the angle $\angle(\alpha,\beta)$. 
Note, in particular, from our definition of angles, that $d([\alpha],[\beta]) \leq \pi$ for any geodesic segments $\alpha$ and $\beta$ emanating from $p$.
Let $\Sigma_p$ denote the completion of $\Sigma_p'$. 

The \textit{tangent cone} $T_p$ at a point $p$ in $X$ is then defined to be a metric space which is, as a topological space, the cone of $\Sigma_p$, that is,
\[\Sigma_p \times [0,\infty)/\Sigma_p \times \{0\} \;. \]
The metric on it is given as follows. For two points $p,q \in T_p$ we can express them as $p=[(\alpha,t)]$ and $q = [(\beta,s)]$. Then the metric is given by
\[d(p,q) = \sqrt{t^2+s^2-2st\cos(d([\alpha],[\beta]))} \; . \]
In other words, it is what the distance would be if we went along straight lines in a Euclidean plane with the same angle between them as the angle between the corresponding directions in $X$. A key motivation for this definition is that when $X$ is a Riemannian manifold, this construction of the tangent cone at a point recovers the tangent space equipped with the metric induced by the inner product. 

We remark that some authors do not take the completion when talking about the space of directions. This does not affect our main definition (\ref{defn:hhs}). 

Observe that if $\varphi \colon X \to Y$ is an isometry between two such geodesic metric spaces $X$ and $Y$, then for any $x \in X$, it induces an isometry 
\[
	D_x \varphi \colon T_x \to T_{\varphi(x)} \, , \qquad [(\alpha, t)] \mapsto [( \varphi\circ \alpha, t )] \; ,
\]
which we can think of as the derivative of $\varphi$ at $x$. 
This association is functorial, i.e., it is compatible with composition of isometries. 

\vspace{.2cm}


We conclude our preparation in metric geometry with a discussion of isometric embeddings into Hilbert spaces. 

\begin{constr}\label{constr:Hilbert-space-span}
	It is well known from the work of Schoenberg \cite{schoenberg38} that a metric space $(X,d)$ embeds isometrically into a Hilbert space if and only if the bivariant function $(x_1, x_2) \mapsto \left(d(x_1, x_2)\right)^2$ is a conditionally negative-type kernel. Given such a metric space $(X,d)$ and a fixed base point $x_0 \in X$, there is a canonical way to construct the smallest Hilbert space that contains it with $x_0$ being the origin. See, for example, \cite[Proposition~3.1]{higsonguentner}. More precisely, we define $\hhil_{X, d, x_0}$, the \emph{Hilbert space spanned by $(X,d)$ centered at $x_0$}, to be the completion of the real vector space $\rbbd_0 [X]$, which consists of formal finite linear combinations of elements in $X$ whose coefficients sum up to zero, under the pseudometric induced from the positive semidefinite blinear form 
	\[
	\left\langle \sum_{x \in X} a_x x, \sum_{y \in X} b_y y \right\rangle = -\frac{1}{2} \sum_{x, y \in X} a_x b_y \left(d(x, y)\right)^2 \; .
	\]
	Here a completion under a pseudometric is meant to also identify elements of zero distance to each other. There is a canonical isometric embedding from $(X,d)$ into $\hhil_{X, x_0, d}$ that maps each $x \in X$ to the linear combination $x - x_0$. Given an isometric embedding from $(X, d)$ to another metric space $(Y,d')$ that maps $x_0$ to $y_0$, there is a unique isometric linear embedding from $\hhil_{X, d, x_0}$ to $\hhil_{Y, d', y_0}$ that intertwines the canonical embeddings. It is straightforward to see that these assignments form a functor from the category of pointed metric spaces and isometric base-point-fixing embeddings to the category of Hilbert spaces and linear isometric embeddings. 
\end{constr}

\section{{\hhs}s and deformation of isometric actions}\label{sec:hhs}

In this section, we introduce a class of metric spaces that we call \emph{{\hhs}s}, and we prove that any isometric action on a {\hhs} can be deformed into a trivial action on a ``bigger'' {\hhs}. 

The concept of {\hhs}s is inspired by \cite[Page~2]{fishersilberman}.
Roughly speaking, this is a class of (possibly infinite\-/dimensional) non-positively curved spaces. The deformation result for isometric actions plays an essential role in the proof of our main theorem. The ``bigger'' {\hhs} in this deformation result is obtained by a general construction 
called the continuum product (cf.\,\cite[Page~3]{fishersilberman}). We explain in detail how continuum products provide new examples of typically infinite\-/dimensional {\hhs}s.

\begin{defn}\label{defn:hhs} 
	A \emph{\hhs} is a complete geodesic CAT(0) metric space (i.e., a Hadamard space) all of whose tangent cones are isometrically embeddable into Hilbert spaces. 
	
	For any point $x$ in a {\hhs} $X$, we define the \emph{tangent Hilbert space} $\hhil_x M$ to be the Hilbert space $\hhil_{T_x M}$ spanned by the tangent cone $T_x M$ such that the origin is at the tip of the cone, following Construction~\ref{constr:Hilbert-space-span}.
\end{defn}

We mostly focus on \emph{separable} {\hhs}s, i.e., those that contain countable dense subsets.

\begin{eg}\label{ex:Riemannian-Hilbertian}
	A Riemannian manifold without boundary is a {\hhs} if and only if it is complete, connected, and simply connected, and has non-positive sectional curvature. The same statement holds for \emph{Riemannian-Hilbertian manifolds} (cf.\,\cite{lang}), which are a kind of infinite\-/dimensional generalizations of Riemannian manifolds defined using charts which are open subsets in Hilbert spaces, instead of Euclidean spaces, in a way that a large part of differential geometry, including sectional curvatures, still makes sense. To see why the statement holds, observe that in this case, every tangent cone is itself a Hilbert space, and the equivalence between the CAT(0) condition and being connected, simply connected and non-positively curved follows from \cite[XI, Proposition~3.4 and Theorem~3.5]{lang}.  
\end{eg}

\begin{constr}\label{constr:log-map}
	A CAT(0) space $X$ is always uniquely geodesic. For any $x_0 \in X$, using the notation in Equation~\eqref{eq:notation-geodesic}, we define the \emph{logarithm function at $x_0$} by
	\[
		\log_{x_0} \colon X \to T_{x_0} X \; , \quad x \mapsto [([x_0, x], d(x_0, x))] \; .
	\]
	The CAT(0) condition (e.g., Remark~\ref{rmk:CAT0-equiv-defn-thin-triangle}) implies $\log_{x_0}$ is \emph{non-expansive} (also called \emph{weakly contractive} or \emph{short} by some authors), 
	i.e., 
	\[
		d\left(  \log_{x_0} (x) ,   \log_{x_0} (x') \right) \leq d(x, x')
	\]
	for any $x, x' \in X$ and, in particular, continuous. Moreover, it preserves the metric on each geodesic emanating from $x_0$, that is, 
	\[
		d\left(  \log_{x_0} (x_0) ,   \log_{x_0} (x) \right) = d(x_0, x)
	\]
	for any $x \in X$. 
\end{constr}

\begin{eg}\label{eg:logarithm-map-Riemannian}
	When $M$ is a complete, connected and simply connected Riemannian manifold with non-positive sectional curvature, the logarithm map as defined above is the inverse to the \emph{exponential map} 
	\[
		\exp_{\xi_0} \colon T_{x_0} X \to X 
	\]
	in Riemannian geometry. In this case, both maps are also diffeomorphisms by the Cartan-Hadamard theorem (cf.~\ref{thm:Cartan-Hadamard}).
\end{eg}

Recall that a subset of a geodesic metric space is called \emph{convex} if it is again a geodesic metric space when equipped with the restricted metric. We observe that a closed convex subset of a {\hhs} is itself a {\hhs}.

\begin{defn}\label{defn:hhs-admissible}
	A separable {\hhs} $M$ is called \emph{admissible} if there is an increasing sequence of closed convex subsets isometric to finite\-/dimensional Riemannian manifolds, whose union is dense in $M$. 
\end{defn}

\begin{rmk}\label{rmk:hhs-admissible}
	In Definition~\ref{defn:hhs-admissible}, observe that by Example~\ref{ex:Riemannian-Hilbertian}, each closed convex subset in this sequence is isometric to a Hadamard manifold, i.e., a complete, connected, and simply connected manifold with non-positive sectional curvature. By the classical Cartan-Hadamard theorem, the logarithm map at any point provides a diffeomorphism between the manifold and the corresponding tangent space. 
\end{rmk}

\begin{eg}
	Apart from finite\-/dimensional Hadamard manifolds, examples of admissible {\hhs}s include separable Hilbert spaces. More examples can be obtained by the continuum product construction we are about to discuss. 
\end{eg}

The notion of {\hhs}s is more general than Example~\ref{ex:Riemannian-Hilbertian}, due to the following construction. 

\begin{constr}[cf.\,\cite{fishersilberman}] \label{constr:continuum-product}
	Let $X$ be a metric space. Let $(Y, \mu)$ be a measure space with $\mu(Y) < \infty$. The \emph{($L^2$-)continuum product} of $X$ over $(Y, \mu)$ is the space $L^2(Y,\mu,X)$ of equivalence classes of measurable maps $\xi$ from $Y$ to $X$ satisfying 
	\[
		\int_Y d_X(\xi(y),x_0)^2 \, d\mu(y) < \infty \; ,
	\] 
	where $x_0$ is a fixed point in $X$ and two functions are identified if they differ only on a measure-zero subset of $Y$. It follows from the triangle inequality that the above condition does not depend on the choice of $x_0$. Moreover, the Minkowski inequality implies that the formula 
	\[
		d(\xi, \eta) = \left( \int_Y d_X(\xi(y),\eta(y))^2 \, d\mu(y) \right)^{\frac{1}{2}}
	\]
	defines a metric on $L^2(Y,\mu,X)$. 
\end{constr}

\begin{rmk}\label{rmk:continuum-product-functoriality}
	The continuum product construction is functorial in the following sense: given an isometric embedding $X_1 \to X_2$ of metric spaces and a measurable map $(Y_2, \mu_2) \to (Y_1, \mu_1)$ that is also measure-preserving (i.e., the push-forward of $\mu_2$ is equal to $\mu_1$), composition of maps induces an isometric embedding $L^2(Y_1,\mu_1,X_1) \to L^2(Y_2,\mu_2,X_2)$. 
\end{rmk}

\begin{eg}\label{eg:continuum-product-manifolds}
	As a first example, we consider the case when $Y$ is a finite set $\{y_1, \ldots, y_n \}$ with $\mu$ defined on every subset of $Y$. Notice that $\mu$ is determined by the weights $\mu(\{y_i\})$ for all $y_i \in Y$. Let $\operatorname{supp} \mu$ be the support of $\mu$, that is, the set of $y_i$ such that $\mu(\{y_i\}) > 0$. Then the continuum product $L^2(Y,\mu,X)$ is nothing but the Cartesian product $X^{\operatorname{supp} \mu}$, equipped with a weighted $\ell^2$-metric. 
	
	In particular, when $X$ is a Riemannian manifold, then so is $L^2(Y,\mu,X)$, where for any point $\xi \in L^2(Y,\mu,X)$, the tangent space $T_{\xi} L^2(Y,\mu,X)$ is canonically identified with $\displaystyle \bigoplus_{y_i \in \operatorname{supp} \mu} T_{\xi(y_i)} X$ as vector spaces and the Riemannian metric on $L^2(Y,\mu,X)$ at $\xi$ is given, under the above identification, by the weighted sum
	\[
		T_{\xi} L^2(Y,\mu,X) \times T_{\xi} L^2(Y,\mu,X) \ni (v, w) \mapsto \sum_{y_i \in \operatorname{supp} \mu} \mu(\{y_i\}) \cdot g_{\xi(y_i)} (v(y_i), w(y_i) ) \; ,
	\] 
	where $g_{\xi(y_i)} (-,-)$ is the Riemannian metric on $X$ at $\xi(y_i)$. 
\end{eg}

\begin{prop}\label{prop:continuum-product-CAT0}
	For any CAT(0) space $X$ and measure space $(Y, \mu)$, the continuum product $L^2(Y,\mu,X)$ is again a CAT(0) space. 
\end{prop}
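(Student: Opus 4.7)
The plan is to verify the three constitutive properties of a CAT(0) space for $M := L^2(Y, \mu, X)$: completeness, being geodesic, and the Bruhat-Tits semi-parallelogram (CN) inequality (Remark~\ref{rmk:CAT0-equiv-defn-Bruhat-Tits}). The first two I would dispatch via standard $L^2$-style arguments, and the CN inequality by integrating its pointwise counterpart in $X$.

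First, for completeness, I would take a Cauchy sequence, pass to a subsequence $(\xi_n)$ with $d(\xi_n, \xi_{n+1}) \leq 2^{-n}$, and use Fatou's lemma to check that $\sum_n d(\xi_n(y), \xi_{n+1}(y)) < \infty$ for almost every $y$; completeness of $X$ then produces a pointwise limit $\xi$, and a further Fatou estimate places $\xi$ in $M$ and shows $L^2$-convergence. For the geodesic structure, given $\xi, \eta \in M$, I would define $\gamma_t(y) := [\xi(y), \eta(y)](t)$ using the geodesic bicombing of $X$ (Remark~\ref{rmk:CAT0-facts}); measurability and integrability of $\gamma_t$ are straightforward, and the pointwise identity $d_X(\gamma_t(y), \gamma_s(y)) = |t - s| \cdot d_X(\xi(y), \eta(y))$ integrates to give $d_M(\gamma_t, \gamma_s) = |t - s| \cdot d_M(\xi, \eta)$, so $\gamma$ is a geodesic segment from $\xi$ to $\eta$.

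For the CN inequality, I would consider the pointwise bicombing midpoint $m'(y) := [q(y), r(y)](1/2)$ for $q, r \in M$. Since $X$ is CAT(0), for almost every $y$ we have
\[
d(p(y), q(y))^2 + d(p(y), r(y))^2 \geq 2 d(m'(y), p(y))^2 + \tfrac{1}{2} d(q(y), r(y))^2
\]
for any $p \in M$; integrating over $Y$ produces the CN inequality in $M$ for the specific midpoint $m'$. To finish, given any other midpoint $m$ of $q$ and $r$ in $M$, substituting $p := m$ into the already-established inequality forces $d(m', m) = 0$, which both shows uniqueness of midpoints and upgrades the CN inequality to every midpoint.

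The main delicate point I anticipate is organizing this last step so that the CN inequality is established without circularly invoking CAT(0); the resolution is precisely the order above---prove CN first for the explicit bicombing midpoint, then deduce midpoint uniqueness. The rest of the argument is essentially mechanical, hinging on the fact that the CN inequality is a quadratic identity that interacts extremely well with $L^2$-integration.
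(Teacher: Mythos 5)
Your proof is correct, but it takes a genuinely different route from the paper's. The paper verifies only the CN inequality of Remark~\ref{rmk:CAT0-equiv-defn-Bruhat-Tits}, deferring completeness to the appendix (Proposition~\ref{prop:continuum-product-hhs}) and the geodesic structure to Remark~\ref{rmk:continuum-product-geodesics}; to handle an \emph{arbitrary} $L^2$-midpoint $\lambda$ of $\eta$ and $\theta$, the paper analyzes the equality case in the Minkowski inequality applied to $d(\eta,\lambda)+d(\theta,\lambda)=d(\eta,\theta)$, deduces the pointwise proportionality $u\,d_X(\eta(y),\lambda(y)) = v\,d_X(\theta(y),\lambda(y))$ a.e., and hence the pointwise midpoint relations $d_X(\eta(y),\lambda(y)) = d_X(\theta(y),\lambda(y)) = \tfrac12 d_X(\eta(y),\theta(y))$ a.e., after which it integrates the pointwise CN inequality. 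Your approach instead first proves the CN inequality for the \emph{canonical} bicombing midpoint $m'(y)=[q(y),r(y)](\tfrac12)$ by direct pointwise integration (no equality-case analysis required), and then substitutes $p:=m$ into that very inequality to force $d_M(m',m)=0$, identifying every $L^2$-midpoint with $m'$. This is slicker in two ways: it avoids the Minkowski rigidity argument, and it delivers the midpoint uniqueness of Remark~\ref{rmk:continuum-product-geodesics} as a byproduct. It is also more scrupulous about hypotheses, since you explicitly verify completeness and the geodesic structure, which the Bruhat--Tits characterization needs but the paper leaves to be picked up elsewhere. (One small detail worth spelling out: $m'$ lies in $L^2(Y,\mu,X)$ because the bicombing map is continuous by Remark~\ref{rmk:CAT0-facts}, hence $y\mapsto m'(y)$ is measurable, and square-integrability follows from the triangle inequality against $q$.) The paper's route is terser and goes straight to the CN inequality; yours does a bit more work but is more self-contained and arguably cleaner.
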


\begin{proof}
	We are going to check the CN inequality of Bruhat and Tits (cf.~Remark~\ref{rmk:CAT0-equiv-defn-Bruhat-Tits}). Given any $\xi, \eta, \theta, \lambda \in L^2(Y,\mu,X)$ satisfying $d(\eta, \lambda) = d(\theta, \lambda) = \frac{1}{2}d(\eta, \theta)$, we first observe that 	these equalities imply that the Minkowski inequality
	\[
		d(\eta, \lambda) + d(\theta, \lambda) \geq \left( \int_Y \left( d_X(\eta(y),\lambda(y)) + d_X(\theta(y),\lambda(y)) \right)^2 \, d\mu(y) \right)^{\frac{1}{2}} 
	\]
	must reach equality, which happens if and only if there is a nonzero vector $(u,v) \in \rbbd^2$ such that $u \, d_X(\eta(y),\lambda(y)) = v \, d_X(\theta(y),\lambda(y))$ for almost every $y$ in $Y$. It follows that 
	\[
		d_X(\eta(y), \lambda(y)) = d_X(\theta(y), \lambda(y)) = \frac{1}{2}d_X(\eta(y), \theta(y)) 
	\]
	for almost every $y$ in $Y$. Hence the CN inequality for $X$ states that 
	\[
		d_X(\xi(y), \eta(y))^2 + d_X(\xi(y), \theta(y))^2 \geq 2 d_X(\lambda(y), \xi(y) )^2 + \frac{1}{2} d_X(\eta(y), \theta(y))^2 \; .
	\] 
	for almost every $y$ in $Y$. Integrating this over $Y$ yields the CN inequality for $L^2(Y,\mu,X)$. 
\end{proof}

\begin{rmk}\label{rmk:continuum-product-geodesics}
	For any CAT(0) space $X$ and measure space $(Y, \mu)$, an argument similar to that in the first half of the proof of Proposition~\ref{prop:continuum-product-CAT0} shows that for any distinct $\xi$ and $\eta$ in $L^2(Y,\mu,X)$, the unique geodesic segment $[\xi, \eta]$ is given by 
	\[
		[\xi, \eta](t)(y) = [\xi(y), \eta(y)] \left( t \right)   
	\]
	for almost every $y$ in $Y$.  
\end{rmk}

As hinted above, the class of {\hhs}s is also closed under taking continuum products. To streamline our presentation, we place the somewhat technical proofs in the appendix and merely summarize the main results here. 

Recall that a measure space $(Y,\mu)$ is called \emph{separable} if there is a countable family $\{A_n \colon n \in \nbbd \}$ of measurable subsets such that for any $\varepsilon >0$ and any measurable subset $A$ in $Y$, we have $\mu(A \bigtriangleup A_n) < \varepsilon$ for some $n$. For example, it is easy to see that any outer regular finite measure on a separable metric space is separable. This includes, in particular, any measure induced from a density on a closed smooth manifold.

\begin{prop}\label{prop:continuum-product-hhs-summary}
	Let $M$ be a {\hhs} and $(Y, \mu)$ be a finite measure space. Then
	\begin{enumerate}
		\item the continuum product $L^2(Y,\mu,M)$ is again a {\hhs}; 
		\item if $(Y, \mu)$ is separable and $M$ is admissible (respectively, separable), then $L^2(Y,\mu,M)$ is also admissible (respectively, separable).  
	\end{enumerate}
\end{prop}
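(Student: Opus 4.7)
My plan is to verify three conditions for the first claim---completeness, geodesicity, and isometric embeddability of tangent cones into Hilbert spaces---and then the separability and admissibility inheritance for the second. The geodesic and CAT(0) conditions are already supplied by Remark~\ref{rmk:continuum-product-geodesics} and Proposition~\ref{prop:continuum-product-CAT0}, so I would first dispatch completeness by a Bochner-type argument: extract from a Cauchy sequence $(\xi_n)$ a subsequence $(\xi_{n_k})$ with summable successive distances, use completeness of $M$ to obtain a pointwise limit $\xi(y)$ for $\mu$-a.e.\ $y$, and then apply Fatou's lemma to $d_M(\xi_{n_k}(\cdot), \xi(\cdot))^2$ to place $\xi$ in $L^2(Y,\mu,M)$ and confirm $\xi_{n_k} \to \xi$ in the metric.

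The main step is to embed each tangent cone of $L^2(Y,\mu,M)$ isometrically into a Hilbert space. Fix $\xi \in L^2(Y,\mu,M)$. By Remark~\ref{rmk:continuum-product-geodesics}, every nontrivial geodesic segment emanating from $\xi$ is of the form $\alpha_\eta(t)(y) = [\xi(y), \eta(y)](t)$ for some $\eta \in L^2(Y,\mu,M)$. I take as target the direct-integral Hilbert space
\[
	\hhil_\xi := \int_Y^{\oplus} \hhil_{\xi(y)} M \, d\mu(y)
\]
built from the tangent Hilbert spaces of $M$ at the fibers (a construction whose measurability structure is a technical point to be handled in the appendix), and define a map from directions to $\hhil_\xi$ by $\Phi([\alpha_\eta])(y) := \log_{\xi(y)} \eta(y)$ using Construction~\ref{constr:log-map}. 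Since $\|\log_{\xi(y)} \eta(y)\| = d_M(\xi(y), \eta(y))$, the norm of $\Phi([\alpha_\eta])$ in $\hhil_\xi$ equals $d(\xi, \eta)$. The crucial identity to check is
\[
	\bigl\langle \Phi([\alpha_{\eta_1}]), \Phi([\alpha_{\eta_2}]) \bigr\rangle_{\hhil_\xi} = d(\xi,\eta_1)\,d(\xi,\eta_2)\,\cos\bigl(\angle_\xi([\alpha_{\eta_1}],[\alpha_{\eta_2}])\bigr),
\]
which I would verify by unpacking the comparison-angle definition of $\angle_\xi$, using the pointwise distance formula $d(\alpha_{\eta_1}(s), \alpha_{\eta_2}(t))^2 = \int_Y d_M([\xi(y),\eta_1(y)](s), [\xi(y),\eta_2(y)](t))^2 \, d\mu(y)$, interchanging the limit $s,t\to 0$ with the integral via dominated convergence (with the CAT(0) triangle inequality supplying the integrable bound), and reducing fiberwise to the pointwise identity $\langle \log_{\xi(y)}\eta_1(y), \log_{\xi(y)}\eta_2(y)\rangle = d_M(\xi(y),\eta_1(y))\,d_M(\xi(y),\eta_2(y))\cos(\angle_{\xi(y)}(\cdots))$, which is valid because $T_{\xi(y)} M$ embeds isometrically in $\hhil_{\xi(y)} M$. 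It then follows that $\Phi$ preserves cone distances and extends by continuity to an isometric embedding $T_\xi L^2(Y,\mu,M) \hookrightarrow \hhil_\xi$.

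For the inheritance claims, if $M$ is separable and $(Y,\mu)$ has a countable generating subalgebra of measurable sets, the countable family of $M$-valued simple functions constant on the pieces of finite partitions drawn from this subalgebra and taking values in a countable dense subset of $M$ is $L^2$-dense by a standard approximation argument. For admissibility, let $M_n \subset M$ be the given dense increasing chain of closed convex subsets isometric to finite-dimensional Hadamard manifolds, and let $\{A_1^{(k)},\dots,A_{n_k}^{(k)}\}$ be a refining sequence of finite measurable partitions of $Y$ generating the measure structure. The subset $N_{n,k} \subset L^2(Y,\mu,M)$ of functions constant on each $A_i^{(k)}$ with value in $M_n$ is, by Example~\ref{eg:continuum-product-manifolds}, a closed convex subset isometric to the weighted product $(M_n)^{n_k}$, itself a finite-dimensional Hadamard manifold; suitably enumerated into an increasing chain, the $N_{n,k}$ have dense union.

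The main obstacle is the tangent-cone step. One must equip $y \mapsto \hhil_{\xi(y)} M$ with a measurable bundle structure so that the direct integral $\hhil_\xi$ makes sense, and verify measurability of the section $y \mapsto \log_{\xi(y)} \eta(y)$; and one must supply a quantitative bound---essentially a uniform Lipschitz estimate on the pointwise geodesic bicombings for small $s,t$---to legitimize the exchange of limit and integral in the angle computation. These measurability and analytic technicalities are precisely what the authors defer to the appendix.
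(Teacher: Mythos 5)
Your plan for completeness and for the separability/admissibility claims matches the paper almost exactly (fast Cauchy subsequence plus Fatou; simple functions constant on pieces of a refining sequence of finite partitions, with values in $M_n$). The tangent-cone step is also the right target, and you correctly isolate the measurability question as the crux. However, your proposal relies on a direct-integral Hilbert space $\int_Y^{\oplus} \hhil_{\xi(y)} M \, d\mu(y)$, and you defer the measurable-field structure needed to make that object exist. This is a genuine gap rather than a technicality: the $\hhil_{\xi(y)} M$ are abstract Hilbert spaces with no given identification between fibers, so a measurable bundle structure is not available for free, and supplying one would itself require a nontrivial argument.

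The paper's resolution is materially different and does not use direct integrals at all. It works inside the set-theoretic product $\prod_{y} \hhil_{\xi,y}$, takes the linear span $V_\xi$ of the image of the assembled logarithm map $\LOG_\xi(\varphi) = (\log_{\xi(y)}(\varphi(y)))_y$, and defines a positive-semidefinite bilinear form on $V_\xi$ by integrating the fiberwise pairings. The measurability requirement then collapses to showing that $y \mapsto \langle \log_{\xi(y)}(\varphi(y)), \log_{\xi(y)}(\psi(y)) \rangle$ is measurable, and the key observation you are missing is that the function $(x,y,z) \mapsto \langle \log_y(x), \log_y(z) \rangle$ on $M^3$ is \emph{lower semi-continuous} (Lemma~\ref{lem:log-inner-lower-semicontinuous}), which in turn rests on the \emph{upper semi-continuity of angles} in a CAT(0) space (Lemma~\ref{lem:angle-upper-semicontinuous}). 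With this in hand, integrability follows from Cauchy--Schwarz and the non-expansiveness of $\log$, one quotients by the null space and completes, and the isometric embedding of $T_{[\xi]} L^2(Y,\mu,M)$ is read off fiberwise because geodesics in the continuum product are fiberwise geodesics---no dominated-convergence verification of a cosine formula is required. Your proposed angle computation is not wrong, but it replaces the paper's one-line fiberwise identification with a heavier analytic argument, and in any case it cannot get started without first resolving the measurability question, which is exactly where the two semi-continuity lemmas do the work.
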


\begin{proof}
	See Propositions~\ref{prop:continuum-product-hhs}, \ref{prop:continuum-product-afdhhm} and~\ref{prop:continuum-product-separable-separable}. 
\end{proof}

We conclude this section with a discussion of the group of isometries of a {\hhs}.

\begin{defn}\label{defn:isom-M}
	Let $M$ be a {\hhs}. We denote by $\isomgrp(M)$ the group of all isometries of $M$, equipped with the topology of pointwise convergence, namely, the weakest topology such that the orbit maps 
	\[
		\isomgrp(M)	 \to M \; , \quad \varphi \mapsto \varphi \cdot x \; ,
	\]
	for $x \in M$, are continuous.  
\end{defn}

\begin{rmk}\label{rmk:length-function-orbit}
	A helpful tool in the study of $\isomgrp(M)$ are the length functions
	\[
		\ell_x \colon \isomgrp(M) \to [0, \infty) \quad , \qquad \varphi \mapsto d_M(x , \varphi  (x) )
	\] 
	for $x \in M$. Their usefulness is reflected in the following properties: 
	\begin{enumerate}
		\item\label{rmk:length-function-orbit:topology} The topology on $\isomgrp(M)$ is the weakest one that makes every $\ell_x$ continuous. 
		\item\label{rmk:length-function-orbit:base-point} Triangle inequality implies that $| \ell_x (\varphi) - \ell_{x'} (\varphi) | \leq 2 d_M(x , x')$. Thus for any dense subset $D$ in $M$, the topology on $\isomgrp(M)$ is the weakest one that makes $\ell_x$ continuous for all $x$ in $D$. 
		\item\label{rmk:length-function-orbit:proper} A subgroup of $\isomgrp(M)$, when viewed as a discrete group, acts on $M$ metrically properly, i.e.,  $d(x, g \cdot x) \to \infty$ as $g \to \infty$ for some (equivalently, all) $x$ in $X$, if and only if for some (equivalently, for all) $x \in M$, the restriction of $\ell_x$ on the subgroup is a proper function, i.e., the preimage of any compact set is again compact (or rather finite in the case of a discrete group). 
	\end{enumerate}
\end{rmk}

Finally, we discuss a construction that serves as the base for a deformation technique we will use in Section~\ref{sec:proof}. 

\begin{constr}\label{constr:embedding-01}
	Let $M$ be a {\hhs} and let $(Y,\mu)$ be a finite measure space. Then $\isomgrp(M)$ embeds into $\isomgrp(L^2(Y,\mu,M))$ canonically by composition of maps, i.e., for any $\varphi \in \isomgrp(M)$, we define $\varphi^{(Y, \mu)} \in \isomgrp(L^2(Y,\mu,M))$ by 
	\[
		\varphi^{(Y, \mu)} ( \xi ) (y) = \varphi ( \xi (y) ) 
	\] 
	for any $\xi \in L^2(Y,\mu,M)$ and $ y \in Y$. 
	
	We will focus on the case when $(Y, \mu)$ is given by the unit interval $[0,1]$ equipped with the Lebesgue measure. In this case, we write $M^{[0,1]}$ for $L^2([0,1],m,M)$ and $\varphi^{[0,1]}$ for $\varphi^{(Y, \mu)}$. 
\end{constr} 

\begin{lem}\label{lem:embedding-01-length}
	Let $M$ be a {\hhs} and let $(Y,\mu)$ be a measure space. Then for any $\varphi \in \isomgrp(M)$ and $\xi \in L^2(Y,\mu,M)$, we have 
	\[
		\ell_\xi \left( \varphi^{(Y, \mu)} \right) = \left( \int_{Y} \left( \ell_{\xi(y)} (\varphi) \right)^2 d \mu(y) \right)^{\frac{1}{2}} \; .
	\]
\end{lem}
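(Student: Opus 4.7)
The plan is to prove this by a direct unwinding of the three relevant definitions: the definition of the length function $\ell$ from Remark~\ref{rmk:length-function-orbit}, the definition of the isometry $\varphi^{(Y,\mu)}$ from Construction~\ref{constr:embedding-01}, and the definition of the $L^2$-metric on the continuum product from Construction~\ref{constr:continuum-product}. There is no real geometry being used here beyond these definitions, so the argument should be essentially a single chain of equalities.

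More specifically, I would start from
\[
\ell_\xi\!\left(\varphi^{(Y,\mu)}\right) = d_{L^2(Y,\mu,M)}\!\left(\xi,\, \varphi^{(Y,\mu)}(\xi)\right),
\]
then expand the right-hand side using the $L^2$-metric formula to rewrite it as the square root of $\int_Y d_M\!\left(\xi(y),\,\varphi^{(Y,\mu)}(\xi)(y)\right)^2 d\mu(y)$. Next I would apply the pointwise defining identity $\varphi^{(Y,\mu)}(\xi)(y) = \varphi(\xi(y))$, which turns the integrand into $d_M(\xi(y),\varphi(\xi(y)))^2 = \ell_{\xi(y)}(\varphi)^2$, giving the claimed formula.

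Before invoking the $L^2$-metric, I should spend one line verifying that the integrand $y \mapsto d_M(\xi(y),\varphi(\xi(y)))^2$ is measurable and that $\varphi^{(Y,\mu)}(\xi)$ is indeed an element of $L^2(Y,\mu,M)$, so that the displayed expressions make sense; this is immediate since $\varphi$ is an isometry, so measurability of $\xi$ passes to $\varphi \circ \xi$, and $d_M(\xi(y),\varphi(\xi(y))) \leq d_M(\xi(y),x_0) + d_M(x_0,\varphi(x_0)) + d_M(\varphi(\xi(y)),\varphi(x_0)) = 2 d_M(\xi(y),x_0) + \ell_{x_0}(\varphi)$, which is in $L^2$. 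I also note that the statement as written refers to $\xi \in \isomgrp(L^2(Y,\mu,M))$, but based on the formula this is a typo and should read $\xi \in L^2(Y,\mu,M)$; the proof I just sketched treats the latter.

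The main ``obstacle'' is therefore only the bookkeeping around measurability and $L^2$-integrability just mentioned; the heart of the statement is purely formal.
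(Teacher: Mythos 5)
Your proof is correct and follows essentially the same approach as the paper's, which simply cites the definitions of the length function and the $L^2$-metric; you additionally spell out the measurability/integrability bookkeeping and correctly flag the typo in the statement ($\xi$ should lie in $L^2(Y,\mu,M)$, not $\isomgrp(L^2(Y,\mu,M))$), both of which the paper leaves implicit.
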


\begin{proof}
	This is an immediate consequence of the definition of the length function in Remark~\ref{rmk:length-function-orbit} and the definition of the metric of the continuum product in Construction~\ref{constr:continuum-product}. 
\end{proof}

The following two propositions tell us that the canonical embedding 
	\[
	\isomgrp(M) \hookrightarrow \isomgrp(M^{[0,1]})
	\]
makes the topological aspect of $\isomgrp(M)$ more tractable while keeping the large-scale behavior intact. 

\begin{prop}\label{prop:isom-01-nilhomotopic}
	Let $M$ be a {\hhs}. Then there is a homotopy of group homomorphisms connecting the canonical embedding
	\[
		\isomgrp(M) \hookrightarrow \isomgrp(M^{[0,1]})
	\]
	to the trivial homomorphism, that is, there is a continuous map 
	\[
		H \colon \isomgrp(M) \times [0,1] \to \isomgrp(M^{[0,1]})
	\]
	such that
	\begin{enumerate}
		\item $\varphi \mapsto H(\varphi, t)$ is a group homomorphism for each $t \in [0,1]$, 
		\item $H(\varphi, 0) = \idmap$ for any $\varphi \in \isomgrp(M) $, and
		\item $H(\varphi, 1) (\xi) (s) = \varphi (\xi(s))$ for any  $\varphi \in \isomgrp(M) $, $\xi \in M^{[0,1]}$ and $s \in [0,1]$. 
	\end{enumerate}

\end{prop}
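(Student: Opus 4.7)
The plan is to define the homotopy by gradually ``switching on'' $\varphi$ along the interval parameter. Explicitly, for each $\varphi \in \isomgrp(M)$ and $t \in [0,1]$, set
\[
H(\varphi, t)(\xi)(s) = \begin{cases} \varphi(\xi(s)), & s \leq t, \\ \xi(s), & s > t, \end{cases}
\]
for $\xi \in M^{[0,1]}$ and $s \in [0,1]$. This formula delivers the required boundary values immediately: $H(\varphi,0)$ equals the identity outside the measure-zero set $\{0\}$, and $H(\varphi,1)(\xi)(s) = \varphi(\xi(s))$ pointwise. The homomorphism property $H(\varphi_1\varphi_2, t) = H(\varphi_1, t) \circ H(\varphi_2, t)$ for fixed $t$ is also transparent from the piecewise formula, since composing these ``cut-off'' isometries only concatenates $\varphi$'s on the same interval $[0,t]$.

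Next I would verify that $H(\varphi, t)$ genuinely is an isometry of $M^{[0,1]}$. Using that $\varphi$ preserves $d_M$, one computes
\[
d\bigl(H(\varphi, t)(\xi),\, H(\varphi, t)(\eta)\bigr)^2 = \int_0^t d_M(\varphi\xi(s), \varphi\eta(s))^2\,ds + \int_t^1 d_M(\xi(s), \eta(s))^2\,ds = d(\xi, \eta)^2,
\]
and surjectivity follows from the analogous formula applied with $\varphi^{-1}$ in place of $\varphi$. Measurability and square-integrability of the resulting map $s \mapsto H(\varphi,t)(\xi)(s)$ are automatic from the corresponding properties of $\xi$ and the continuity of $\varphi$.

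The substantive obstacle is to establish joint continuity of $H$ in the topology of pointwise convergence on $\isomgrp(M^{[0,1]})$ from Definition~\ref{defn:isom-M}. By Remark~\ref{rmk:length-function-orbit}\,(\ref{rmk:length-function-orbit:topology}), this reduces to showing that for each fixed $\xi \in M^{[0,1]}$, the orbit map $(\varphi, t) \mapsto H(\varphi, t)(\xi)$ is continuous from $\isomgrp(M) \times [0,1]$ into $M^{[0,1]}$ with its $L^2$ metric. I would split the estimate via $d(a,c)^2 \leq 2d(a,b)^2 + 2d(b,c)^2$ into a $t$-only and a $\varphi$-only variation. For fixed $\varphi$ and $t_n \to t$,
\[
d\bigl(H(\varphi, t_n)(\xi), H(\varphi, t)(\xi)\bigr)^2 = \int_{\min(t_n,t)}^{\max(t_n,t)} d_M(\xi(s), \varphi(\xi(s)))^2\,ds,
\]
and the integrand lies in $L^1([0,1])$ thanks to the triangle-inequality bound $d_M(\xi(s), \varphi(\xi(s)))^2 \leq 2\,\ell_{x_0}(\varphi)^2 + 8\,d_M(\xi(s), x_0)^2$, so the integral tends to $0$.

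For fixed $t$ and $\varphi_n \to \varphi$ pointwise on $M$, one similarly has $d_M(\varphi_n(\xi(s)), \varphi(\xi(s)))^2 \to 0$ pointwise a.e., and the triangle inequality
\[
d_M(\varphi_n(\xi(s)), \varphi(\xi(s))) \leq 2\,d_M(\xi(s), x_0) + d_M(\varphi_n(x_0), x_0) + d_M(x_0, \varphi(x_0))
\]
yields a dominating function in $L^2([0,1])$, the numerical sequence $d_M(\varphi_n(x_0), x_0)$ being bounded because it converges to $d_M(\varphi(x_0), x_0)$. Dominated convergence then forces $d\bigl(H(\varphi_n, t)(\xi), H(\varphi, t)(\xi)\bigr) \to 0$. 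Combining the two one-variable estimates through the quasi-parallelogram inequality produces joint continuity. The main bookkeeping hurdle is organizing these dominations uniformly; once done, the three listed properties of $H$ are immediate from the definition.
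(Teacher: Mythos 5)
Your construction of $H$ is identical to the paper's, and your verification that each $H(\varphi,t)$ is a well-defined isometry of $M^{[0,1]}$ and that the three listed properties hold is exactly the kind of routine checking the paper leaves to the reader. Where you diverge is in the continuity argument. The paper applies Lemma~\ref{lem:embedding-01-length} to rewrite $\ell_\xi(H(\varphi,t))$ as $\bigl(\int_0^t (\ell_{\xi(s)}(\varphi))^2\,ds\bigr)^{1/2}$, shows this is jointly continuous by dominated convergence (with the domination supplied by Remark~\ref{rmk:length-function-orbit}\eqref{rmk:length-function-orbit:base-point}), and then invokes Remark~\ref{rmk:length-function-orbit}\eqref{rmk:length-function-orbit:topology} to conclude continuity of $H$. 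You instead directly establish that the orbit maps $(\varphi,t)\mapsto H(\varphi,t)(\xi)$ are jointly continuous into $M^{[0,1]}$, splitting the distance estimate by $d(a,c)^2 \leq 2d(a,b)^2 + 2d(b,c)^2$ into a $t$-variation and a $\varphi$-variation and dominating each integrand. Your route is a bit more explicit and, importantly, it bypasses any reliance on the assertion that the $\ell_x$ alone generate the topology on the isometry group; this is a point worth being careful about, since length functions cannot distinguish, e.g., a rotation from its inverse, and what one really needs is precisely the $L^2$ convergence $d\bigl(H(\varphi_n,t_n)(\xi), H(\varphi,t)(\xi)\bigr)\to 0$ that you prove. (Equivalently, one can phrase this as $\ell_\xi\bigl(H(\varphi,t)^{-1}H(\varphi_n,t_n)\bigr)\to 0$, exploiting that $\isomgrp(M^{[0,1]})$ is a topological group whose topology is determined by neighborhoods of the identity, where length functions do suffice; your argument amounts to the same computation.) One small misattribution: the reduction you invoke at the start of your continuity step is just Definition~\ref{defn:isom-M} (pointwise convergence is the initial topology from the orbit maps), not Remark~\ref{rmk:length-function-orbit}\eqref{rmk:length-function-orbit:topology}; the content you use is the definition, so the argument stands.
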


\begin{proof}
	Define 
	\[
		H \colon \isomgrp(M) \times [0,1] \to \isomgrp(M^{[0,1]})
	\]
	by 
	\[
		H(\varphi, t) (\xi) (s) = 
		\begin{cases}
			\varphi(\xi(s))  \; , & s \in [0, t] \\
			\xi(s) \; , & s \in (t, 1]
		\end{cases} 
	\]
	for any $\varphi \in \isomgrp(M) $, $t \in [0,1]$, $\xi \in M^{[0,1]}$ and $s \in [0,1]$. It is straightforward to check the three conditions above. Moreover, by Lemma~\ref{lem:embedding-01-length}, we have
	\[
		\ell_\xi \left( H(\varphi, t) \right) = \left( \int_0^t \left( \ell_{\xi(s)} (\varphi) \right)^2 d s \right)^{\frac{1}{2}} 
	\]
	for any $\varphi \in \isomgrp(M)$, $t \in [0,1]$ and $\xi \in M^{[0,1]}$. Remark~\ref{rmk:length-function-orbit}\eqref{rmk:length-function-orbit:base-point} guarantees the use of the dominated convergence theorem to this integral, which implies that $\ell_\xi \left( H(\varphi, t) \right)$ is continuous in $\varphi$ and in $t$. Thus continuity of $H$ follows from Remark~\ref{rmk:length-function-orbit}\eqref{rmk:length-function-orbit:topology}. 
\end{proof}

\begin{prop}\label{prop:isom-01-proper}
	Let $\gamgrp$ be a discrete group, let $\alfmap \colon \gamgrp \to \isomgrp(\mnf)$ be an isometric, metrically proper action on $\mnf$, and let $(Y,\mu)$ be a nontrivial measure space. Then composing this homomorphism with the canonical embedding
	\[
		\isomgrp(M) \hookrightarrow \isomgrp(L^2(Y,\mu,M))
	\]
	gives an isometric, metrically proper action of $\gamgrp$ on $L^2(Y,\mu,M)$.
\end{prop}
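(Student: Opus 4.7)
The statement has two parts: the canonical composition is an action by isometries (which is essentially free from the construction), and this action is metrically proper. By Construction~\ref{constr:embedding-01}, each $\varphi^{(Y,\mu)}$ is an isometry of $L^2(Y,\mu,M)$, and the map $\varphi \mapsto \varphi^{(Y,\mu)}$ is a group homomorphism, so the composed action is automatically isometric. The content of the proposition therefore lies entirely in verifying that $\ell_\xi(\alpha(\gamma)^{(Y,\mu)}) \to \infty$ as $\gamma \to \infty$ in $\Gamma$, for one (hence every) $\xi \in L^2(Y,\mu,M)$.

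\textbf{Key estimate.} Fix a base point $x_0 \in M$ and a representative $\xi \colon Y \to M$. The triangle inequality applied to the isometry $\varphi = \alpha(\gamma)$ yields
\[
\ell_{\xi(y)}(\varphi) = d(\xi(y), \varphi \xi(y)) \geq d(x_0, \varphi x_0) - 2 d(\xi(y), x_0) = \ell_{x_0}(\varphi) - 2 d(\xi(y), x_0),
\]
pointwise in $y \in Y$. Using Lemma~\ref{lem:embedding-01-length}, the idea is to integrate this lower bound over a subset of $Y$ on which $d(\xi(y), x_0)$ is bounded.

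\textbf{Properness via Chebyshev.} Since $\mu(Y) < \infty$ and $\int_Y d(\xi(y), x_0)^2 \, d\mu(y) < \infty$, Chebyshev's inequality ensures that for every $\delta \in (0, \mu(Y))$ we can choose $C = C(\xi, \delta) > 0$ such that the set
\[
A := \{ y \in Y : d(\xi(y), x_0) \leq C \}
\]
satisfies $\mu(A) \geq \mu(Y) - \delta$. Fixing, say, $\delta = \mu(Y)/2$, Lemma~\ref{lem:embedding-01-length} then gives
\[
\ell_\xi(\varphi^{(Y,\mu)})^2 \;\geq\; \int_A (\ell_{\xi(y)}(\varphi))^2 \, d\mu(y) \;\geq\; \frac{\mu(Y)}{2}\,\bigl( \ell_{x_0}(\varphi) - 2C \bigr)_+^{\,2},
\]
where $(\cdot)_+$ denotes the positive part. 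Equivalently, whenever $\ell_\xi(\alpha(\gamma)^{(Y,\mu)}) \leq R$ we must have $\ell_{x_0}(\alpha(\gamma)) \leq 2C + R\sqrt{2/\mu(Y)}$.

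\textbf{Conclusion.} The original action $\alpha$ is metrically proper, so by Remark~\ref{rmk:length-function-orbit}\eqref{rmk:length-function-orbit:proper} there are only finitely many $\gamma \in \Gamma$ with $\ell_{x_0}(\alpha(\gamma))$ bounded by any given constant. The preceding inequality therefore shows that only finitely many $\gamma$ satisfy $\ell_\xi(\alpha(\gamma)^{(Y,\mu)}) \leq R$, proving that the action of $\Gamma$ on $L^2(Y,\mu,M)$ is metrically proper. The only subtle point is the passage from the pointwise triangle-inequality bound to a uniform integral bound, which is precisely what the $L^2$-integrability of $\xi$ together with finiteness of $\mu(Y)$ allows via Chebyshev; nontriviality of $(Y,\mu)$ (i.e.\ $\mu(Y) > 0$) is used to ensure the set $A$ has positive measure and the prefactor $\mu(Y)/2$ is nonzero.
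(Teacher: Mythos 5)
Your proof is correct, but it takes a longer route than the paper's. You observe at the outset that it suffices to check properness of $\ell_\xi$ for a single $\xi \in L^2(Y,\mu,M)$, yet you then prove the estimate for an \emph{arbitrary} $\xi$, which forces the Chebyshev truncation to control the varying base point $\xi(y)$. The paper instead exploits the freedom in the choice of $\xi$ by taking $\xi$ to be the \emph{constant} function at $x_0$: Lemma~\ref{lem:embedding-01-length} then collapses to the exact scaling identity
\[
\ell_\xi\bigl(\varphi^{(Y,\mu)}\bigr) = \Bigl(\int_Y \bigl(\ell_{x_0}(\varphi)\bigr)^2 \, d\mu(y)\Bigr)^{1/2} = \sqrt{\mu(Y)}\,\ell_{x_0}(\varphi),
\]
and since $0 < \mu(Y) < \infty$, properness of $\ell_\xi \circ \alpha^{(Y,\mu)}$ on $\Gamma$ is literally equivalent to properness of $\ell_{x_0} \circ \alpha$, with no further estimate needed. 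What your approach buys is a direct lower bound for every $\xi$ simultaneously (showing explicitly how the $L^2$-integrability and finiteness of $\mu$ tame the fluctuation of $d(\xi(y),x_0)$); what the paper's approach buys is brevity — Remark~\ref{rmk:length-function-orbit}\eqref{rmk:length-function-orbit:proper} already licenses the convenient choice of base point, so the Chebyshev step is superfluous.
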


\begin{proof}
	This follows from Remark~\ref{rmk:length-function-orbit}\eqref{rmk:length-function-orbit:proper} and Lemma~\ref{lem:embedding-01-length}. 
\end{proof}

\section{The space of $L^2$-Riemannian metrics}\label{sec:L2metrics}

In this section, we focus on a prominent example of {\admhhs}s\textemdash the space of {$L^2$-Riemannian metrics} on a closed smooth manifold with a fixed density. This example makes use of the general construction of continuum products described in Section~\ref{sec:hhs} and may be considered as an infinite\-/dimensional symmetric space. It is also pivotal in obtaining Theorem~\ref{thm:diffeo} from Theorem~\ref{thm:main}. 

Throughout the section, we let $N$ be an $n$-dimensional closed smooth manifold. We regard a density $\omega$ as a measure on $N$ which is, in each smooth chart, equivalent to the Lebesgue measure with a smooth Radon-Nikodym derivative. A Riemannian metric $g$ on $N$ naturally induces a density: in local coordinates, it can be expressed as
\[
	d \omega_g = \left( \left| \operatorname{det} (g_{ij}) \right| \right) ^{\frac{1}{2}} d m \; ,
\]
where $(g_{ij})$ is the positive definite symmetric matrix corresponding to the Riemannian metric in the local coordinates, $\operatorname{det} (g_{ij})$ is its determinant, and $m$ is the Lebesgue measure. If $\omega_g = \omega$, we say $g$ \emph{induces} $\omega$. 

\begin{constr}
	Consider the symmetric space $P(n)$ of positive definite symmetric real matrices in $M_n(\mathbb{R})$ with determinant $1$, which can be identified, through the congruence action of $\operatorname{SL}(n,\mathbb{R})$ (or more generally, the group $\widetilde{\operatorname{SL}}(n,\mathbb{R}) = \{ \varphi \in \operatorname{GL}(n,\mathbb{R}) \colon \operatorname{det} \varphi = \pm 1 \}$) on $P(n)$, with the quotient space $\operatorname{SL}(n,\mathbb{R}) / \operatorname{SO}(n)$ (and also $\widetilde{\operatorname{SL}}(n,\mathbb{R}) / \operatorname{O}(n)$), with a base point chosen to be the identity matrix $I_n$, which is identified with the class $[e]$ of the identity element in $\operatorname{SL}(n,\mathbb{R})$. As an irreducible Riemannian symmetric space of noncompact type, $P(n)$ is a complete simply connected Riemannian manifold with non-positive curvature, and in particular, also an {\admhhs}. 
\end{constr}

\begin{defn}\label{defn:L2-Riemannian}
	The continuum product $L^2(N, \omega, P(n))$ is called the \emph{space of {$L^2$-Riemannian metrics}} on $N$ with the density $\omega$. It is an {\admhhs} by Proposition~\ref{prop:continuum-product-hhs-summary}. 
\end{defn}

We point out that the space of $L^2$-Riemannian metrics is not a Riemannian-Hilbertian manifold (see Remark~\ref{rmk:not-Hilbert-manifold}). 

\begin{rmk}
	The rationale behind this terminology is the following: The set of all Riemannian metrics on $N$ that induce $\omega$ is identified with sections on a $P(n)$-bundle over $N$. Fixing a Riemannian metric $g$ on $N$ that induces $\omega$ and a Borel trivialization of the tangent bundle $T N$ such that the inner product $g_x$ on $T_x N$ corresponds to $I_n$ for all $x \in N$, we see that the set of all Riemannian metrics embeds densely into $L^2(N, \omega, P(n))$, since the closedness of $N$ implies $d(g', g) < \infty$ for any Riemannian metric $g'$ on $N$ inducing $\omega$. 
\end{rmk}

\begin{rmk}\label{rmk:Pn-metric}
	We make a few remarks on the metric of $P(n)$. The tangent space $T_{I_n} P(n)$ is canonically identified with the linear space of all symmetric real matrices in $M_n(\mathbb{R})$ with trace $0$, on which the Riemannian metric is given by $\langle A, B \rangle = \operatorname{Tr} (AB)$ for $A, B \in T_{I_n} P(n)$ and the Riemannian-geometric exponential map agrees with the matrix exponential map. Thus for any $D \in P(n)$, the distance $d_{P(n)} (D, I_n)$ is given by $\| \log D \|_{\operatorname{HS}}$, the Hilbert-Schmidt norm of the logarithm of the positive definite matrix $D$. Equivalently, for any $T \in \widetilde{\operatorname{SL}}(n,\mathbb{R})$, we have $d_{P(n)} ([T], [e]) = \| \log (T^*T) \|_{\operatorname{HS}}$. Since $\widetilde{\operatorname{SL}}(n,\mathbb{R})$ acts isometrically on $P(n)$, the assignment $T \mapsto d_{P(n)} ([T], [e])$ defines a length function on $\widetilde{\operatorname{SL}}(n,\mathbb{R})$, which is bilipschitz to the length function 
	\[
		T \mapsto \max \left\{ \log (\|T\|), \log (\|T^{-1}\|) \right\} \; ,
	\]
	where $\| \cdot \|$ denotes the operator norm, because we have 
	\[
		2 \log (\|T\|) = \log (\|T^*T\|) = \| \log (T^*T) \|
	\]
	and diagonalizing $T^*T$ yields
	\begin{equation} \label{eq:bilipschitz}
	\| \log (T^*T) \| \leq \| \log (T^*T) \|_{\operatorname{HS}} \leq \sqrt{n} \| \log (T^*T) \| 
	\end{equation}
	for any $T \in \widetilde{\operatorname{SL}}(n,\mathbb{R})$. We also note that 
	\begin{equation} \label{eq:bilipschitz-inverse}
		\log (\|T\|) \leq ({n-1}) \log (\|T^{-1}\|)
	\end{equation}
	and vice versa. 
\end{rmk}

The main reason we consider $L^2(N, \omega, P(n))$ is that any diffeomorphism $\varphi$ of $N$ preserving $\omega$ induces an isometry on $L^2(N, \omega, P(n))$. 

\begin{constr}\label{constr:diff-isom}
	Fix a measurable trivialization of the $P(n)$-bundle over $N$.
	For any diffeomorphism $\varphi$ of $N$ preserving $\omega$. Define an isometry $\varphi_*$ of $L^2(N, \omega, P(n))$ by 
	\[
	\varphi_*(f)(x) =  (D_{\varphi^{-1} (x)} \varphi )\cdot f( \varphi^{-1} (x) ) \; ,
	\]
	where $D_{\varphi^{-1} (x)} \varphi \colon T_{\varphi^{-1} (x)} N \to T_x N$ is the derivative of $\varphi$ at $\varphi^{-1} (x)$, which, under the trivialization we chose, is identified with an element of $\widetilde{\operatorname{SL}}(n, \mathbb{R})$ and thus acts on $P(n)$. The prescription $\varphi \mapsto \varphi_*$ extends the formula for the pushing forward Riemannian metrics under diffeomorphisms and is easily verified to be functorial and implements an embedding of the group $\operatorname{Diff}(N, \omega)$ of volume preserving diffeomorphisms of $N$ into the group $\operatorname{Isom}(L^2(N, \omega, P(n)))$ of isometries of $L^2(N, \omega, P(n))$. 
\end{constr}

There are two special types of isometries of $\operatorname{Isom}(L^2(N, \omega, P(n)))$ that are of particular interest: those that ``dial'' the individual fibers $P(n)$, and those that ``rotate'' the base space. We study them separately. 

An isometry of the first type can be viewed as a measurable function from $N$ to $\operatorname{Isom}(P(n))$, which is identified with $\widetilde{\operatorname{SL}}(n,\mathbb{R})$, satisfying an integrability condition. We make this precise. 

\begin{lem}
	Let $M(N, \omega, \widetilde{\operatorname{SL}}(n, \mathbb{R}))$ be the group of (equivalence classes of) all measurable functions from $N$ to $\widetilde{\operatorname{SL}}(n, \mathbb{R})$, where the product is defined pointwise and two functions are identified if they only differ on a measure-zero set. Then the function 
	\begin{equation}\label{eq:length-lambda-plus}
		\lambda_+ \colon M(N, \omega, \widetilde{\operatorname{SL}}(n, \mathbb{R})) \to [0,\infty] \;, \quad f \mapsto \left(\int_{N} \log  ^2(\|f(x)\|) \, d\omega(x) \right)^{\frac{1}{2}}
	\end{equation}
	satisfies 
	\begin{equation}\label{eq:length-lambda-product}
		\lambda_+ (f f') \leq \lambda_+(f) + \lambda_+(f') 
	\end{equation}
	and 
	\begin{equation}\label{eq:length-lambda-inverse}
		\lambda_+ (f^{-1})   \leq ({n-1}) \,  \lambda_+ (f) 
	\end{equation}
	for any $f$ and $f'$ in $M(N, \omega, \widetilde{\operatorname{SL}}(n, \mathbb{R}))$. In particular, the subset 
	\begin{equation}\label{eq:L2-SL}
		\left\{ f \in M(N, \omega, \widetilde{\operatorname{SL}}(n, \mathbb{R})) \mid \lambda_+(f) < \infty \right\}
	\end{equation}
	is a subgroup. 
\end{lem}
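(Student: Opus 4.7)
The plan is to reduce both inequalities to pointwise estimates on matrices in $\widetilde{\operatorname{SL}}(n,\mathbb{R})$ and then integrate via standard $L^2$ techniques. The only inputs needed are submultiplicativity of the operator norm and the bilipschitz bound \eqref{eq:bilipschitz-inverse} already recorded, so I do not expect a serious obstacle; the one point to watch is that $\log\|T\|$ is non-negative on $\widetilde{\operatorname{SL}}(n,\mathbb{R})$, which is what allows Minkowski's inequality to carry the load for subadditivity.

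First I would check positivity of the integrand. For any $T \in \widetilde{\operatorname{SL}}(n,\mathbb{R})$ one has $|\det T| = 1$, so the singular values of $T$ multiply to $1$; hence the largest singular value $\|T\|$ is at least $1$ and $\log\|T\| \geq 0$. Consequently $x \mapsto \log\|f(x)\|$ is a non-negative measurable function on $N$ for every $f \in M(N, \omega, \widetilde{\operatorname{SL}}(n, \mathbb{R}))$, and $\lambda_+(f)$ is well-defined in $[0,\infty]$.

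For the subadditivity \eqref{eq:length-lambda-product}, submultiplicativity of the operator norm combined with the non-negativity above gives the pointwise bound
\[
0 \leq \log\|f(x)f'(x)\| \leq \log\|f(x)\| + \log\|f'(x)\|
\]
for almost every $x \in N$. Squaring, integrating, and then applying the Minkowski inequality in $L^2(N,\omega)$ to the two non-negative functions $x \mapsto \log\|f(x)\|$ and $x \mapsto \log\|f'(x)\|$ would yield
\[
\lambda_+(ff') \leq \left( \int_N \bigl( \log\|f(x)\| + \log\|f'(x)\| \bigr)^2 \, d\omega(x) \right)^{1/2} \leq \lambda_+(f) + \lambda_+(f').
\]

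For the inverse bound \eqref{eq:length-lambda-inverse}, I would apply \eqref{eq:bilipschitz-inverse} pointwise to $T = f(x)^{-1}$ (whose inverse is $f(x)$), obtaining $\log\|f(x)^{-1}\| \leq \sqrt{n-1}\,\log\|f(x)\|$ almost everywhere; squaring and integrating then yields $\lambda_+(f^{-1}) \leq \sqrt{n-1}\,\lambda_+(f)$. The subgroup statement for \eqref{eq:L2-SL} is then automatic: the constant function $x \mapsto I_n$ satisfies $\lambda_+ = 0$, and the set is closed under products and inverses by the two bounds just established.
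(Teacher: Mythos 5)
Your proof is correct and follows essentially the same route as the paper: submultiplicativity of the operator norm pointwise, then Minkowski's inequality in $L^2(N,\omega)$, with the inverse estimate coming from the pointwise bound $\log\|T^{-1}\| \leq \sqrt{n-1}\log\|T\|$ on $\widetilde{\operatorname{SL}}(n,\mathbb{R})$. You add one useful detail the paper leaves implicit: the observation that $|\det T| = 1$ forces $\|T\| \geq 1$ and hence $\log\|T\| \geq 0$, which is exactly what justifies the squaring step $\log^2\|f(x)f'(x)\| \leq \bigl(\log\|f(x)\| + \log\|f'(x)\|\bigr)^2$ before Minkowski is applied.
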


\begin{proof}
	For any $f, f' \in M(N, \omega, \operatorname{SL}(n, \mathbb{R}))$, we compute
	\begin{align*}
	\lambda_+ (f f') & = \left(\int_{N} \log  ^2(\|(f f') (x)\|) \, d\omega(x) \right)^{\frac{1}{2}} \\
	& \leq \left(\int_{N} \log  ^2(\|f (x)\| \| f' (x)\|) \, d\omega(x) \right)^{\frac{1}{2}} \\ 
	& \leq \left(\int_{N} \left(\log(\|f (x)\|) + \log (\| f' (x)\|)\right)  ^2 \, d\omega(x) \right)^{\frac{1}{2}} \\ 
	& \leq \left(\int_{N} \log  ^2(\|f(x)\|) \, d\omega(x) \right)^{\frac{1}{2}} +  \left(\int_{N} \log  ^2(\|f'(x)\|) \, d\omega(x) \right)^{\frac{1}{2}} \\
	& \leq \lambda_+(f) + \lambda_+(f') \; .
	\end{align*}
	Similarly, we have $\lambda_+ (f^{-1})   \leq ({n-1}) \, \lambda_+ (f)$. 
\end{proof}

\begin{constr}\label{constr:L2-SL}
	We define the group $L^2(N, \omega, \widetilde{\operatorname{SL}}(n,\mathbb{R}) )$ to be the subgroup of $M(N, \omega, \widetilde{\operatorname{SL}}(n, \mathbb{R}))$ given in Equation~\eqref{eq:L2-SL}. 
	
	We embed $L^2(N, \omega, \widetilde{\operatorname{SL}}(n,\mathbb{R}) )$ into $\operatorname{Isom}(L^2(N, \omega, P(n)))$ by applying pointwise the congruence action of $\widetilde{\operatorname{SL}}(n,\mathbb{R})$ on $P(n)$, which is well-defined thanks to Equation~\eqref{eq:bilipschitz}. 
\end{constr}

\begin{rmk}
	We make the following observations:
	\begin{enumerate}
		\item The action of $L^2(N, \omega, \widetilde{\operatorname{SL}}(n,\mathbb{R}) )$ on $L^2(N, \omega, P(n))$ is transitive, which can be seen by making measurable lifts of functions in $L^2(N, \omega, P(n))$. 
		\item When we endow $L^2(N, \omega, \widetilde{\operatorname{SL}}(n,\mathbb{R}) )$ with the pointwise convergence topology, i.e., the topology as a subgroup of $\isomgrp(M)$, it is not hard to see that $L^2(N, \omega, P(n))$ is homeomorphic to the quotient
		\[
			L^2(N, \omega, \widetilde{\operatorname{SL}}(n,\mathbb{R}) ) / L(N, \omega, {\operatorname{O}}(n) )
		\]
		where $L(N, \omega, {\operatorname{O}}(n) )$ is the group of all measurable functions from $N$ to the compact group $\operatorname{O}(n)$, identified up to measure 0. Here $L(N, \omega, {\operatorname{O}}(n) )$ is identified with the stabilizer group of the constant function $I_n$ in $L^2(N, \omega, P(n))$. Moreover, $L(N, \omega, {\operatorname{O}}(n) )$ is the fixed point set of the involutive automorphism of $L^2(N, \omega, \widetilde{\operatorname{SL}}(n,\mathbb{R}) )$ given by taking the transpose inverse on $\widetilde{\operatorname{SL}}(n,\mathbb{R}) $. In view of this, we regard $L^2(N, \omega, P(n))$ as an \emph{infinite\-/dimensional symmetric space}. 
		\item The function 
		\begin{equation}\label{eq:length-function-L}
			\lambda \colon L^2(N, \omega, \widetilde{\operatorname{SL}}(n,\mathbb{R}) ) \to \mathbb{R}^{\geq 0} \; , \quad f \mapsto \max \left( \lambda_+(f) , \lambda_+(f^{-1})  \right)
		\end{equation}
		is a length function. 
	\end{enumerate}
\end{rmk}

\begin{constr}\label{constr:rotation-subgroup}
	The group $T(N, \omega)$ of $\omega$-preserving measurable transformations of $N$, which embeds into $\operatorname{Isom}(L^2(N, \omega, P(n)))$ by composition of maps. 
	
	We observe that $T(N, \omega)$ normalizes $L^2(N, \omega, \widetilde{\operatorname{SL}}(n,\mathbb{R}) )$ and thus the subgroup of $\operatorname{Isom}(L^2(N, \omega, P(n)))$ they generate is isomorphic to a semidirect product $L^2(N, \omega, \widetilde{\operatorname{SL}}(n,\mathbb{R}) ) \rtimes T(N, \omega)$. 
\end{constr}

\begin{rmk}
	We make the following observations:
	\begin{enumerate}
		\item It is clear from Construction~\ref{constr:diff-isom} that the image of the map 
		\[
			\operatorname{Diff} (N, \omega) \to \operatorname{Isom}(L^2(N, \omega, P(n)))	
		\]
		is contained in $L^2(N, \omega, \widetilde{\operatorname{SL}}(n,\mathbb{R}) ) \rtimes T(N, \omega)$. 
		\item The length function in Equation~\eqref{eq:length-function-L} is invariant under the conjugation action of $T(N, \omega)$ and thus extends to a length function on $L^2(N, \omega, \widetilde{\operatorname{SL}}(n,\mathbb{R}) ) \rtimes T(N, \omega)$ by taking the value $0$ on $T(N, \omega)$. 
	\end{enumerate}
\end{rmk}

This length function has appeared in Definition~\ref{defn:geometrically-discrete} in the introduction. It is useful in the discussion of proper actions because the following lemma. 

\begin{lem}\label{lem:length-bilipschitz}
	For any $\xi \in L^2(N, \omega, P(n))$, the length functions $\ell_\xi$ (as defined in Remark~\ref{rmk:length-function-orbit}) and $\lambda$ on the group $L^2(N, \omega, \widetilde{\operatorname{SL}}(n,\mathbb{R}) ) \rtimes T(N, \omega)$ are large-scale bilipschitz to each other. 
\end{lem}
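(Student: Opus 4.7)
The plan is to reduce the problem to the case of a distinguished base point $\xi_0 \in L^2(N,\omega,P(n))$, namely the constant function with value the identity matrix $I_n$, and then perform a direct pointwise comparison using the norm identities and inequalities recorded in Remark~\ref{rmk:Pn-metric}.

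First, since every element $g \in L^2(N, \omega, \widetilde{\operatorname{SL}}(n,\mathbb{R})) \rtimes T(N, \omega)$ acts on $L^2(N, \omega, P(n))$ by isometries, the triangle inequality yields
\[
    \bigl| \ell_\xi(g) - \ell_{\xi_0}(g) \bigr| \leq 2 \, d_{L^2(N, \omega, P(n))}(\xi, \xi_0)
\]
for any $\xi \in L^2(N, \omega, P(n))$. The constant function $\xi_0 = I_n$ visibly belongs to $L^2(N, \omega, P(n))$, and for any other $\xi$ in this space the right-hand side is a fixed finite constant. Hence it suffices to prove that $\ell_{\xi_0}$ and $\lambda$ are large-scale bilipschitz.

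Next, I would compute $\ell_{\xi_0}((f, \tau))$ explicitly. Since $\xi_0$ is constant, it is pointwise fixed by any $\tau \in T(N, \omega)$, so the semidirect product action collapses to the pointwise congruence action of $f$, giving $((f, \tau) \cdot \xi_0)(x) = f(x) I_n f(x)^* = f(x) f(x)^*$. Combined with Remark~\ref{rmk:Pn-metric}, this yields
\[
    \ell_{\xi_0}((f, \tau))^2 = \int_N \bigl\| \log(f(x) f(x)^*) \bigr\|_{\operatorname{HS}}^2 \, d\omega(x).
\]
Meanwhile, $\lambda((f, \tau)) = \lambda(f)$ by the construction that extends $\lambda$ by $0$ on $T(N, \omega)$.

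The third step is a pointwise comparison. Since $f(x) \in \widetilde{\operatorname{SL}}(n, \mathbb{R})$, the matrix $f(x)f(x)^*$ is positive definite with determinant $1$, so its eigenvalues occur both above and below $1$; consequently
\[
    \|\log(f(x)f(x)^*)\|_{\operatorname{op}} = \max\bigl(2\log\|f(x)\|,\ 2\log\|f(x)^{-1}\|\bigr).
\]
The equivalence between operator and Hilbert-Schmidt norms (\ref{eq:bilipschitz}) together with (\ref{eq:bilipschitz-inverse}) then shows that $\|\log(f(x)f(x)^*)\|_{\operatorname{HS}}$ is pointwise bilipschitz equivalent to $\max\bigl(\log\|f(x)\|,\log\|f(x)^{-1}\|\bigr)$ with constants depending only on $n$. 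Squaring, integrating, and using the elementary fact that $\int \max(a(x),b(x))^2 \, d\omega$ is sandwiched between $\max(\int a^2,\int b^2)$ and the sum of these two integrals yields that $\ell_{\xi_0}((f,\tau))$ is bilipschitz equivalent to $\max\bigl(\lambda_+(f),\lambda_+(f^{-1})\bigr) = \lambda(f)$, with constants depending only on $n$.

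I do not anticipate any serious obstacle; the entire argument reduces to bookkeeping of the dimension-dependent constants in the various norm comparisons, once one has made the two-step reduction to a single base point and to the pointwise congruence action. The only mildly subtle point is exploiting $\det(ff^*) = 1$ so that the pointwise operator norm really captures both $\log\|f(x)\|$ and $\log\|f(x)^{-1}\|$ simultaneously, which is what lets us recover the full $\lambda$ rather than merely $\lambda_+$.
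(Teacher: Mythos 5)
Your proof is correct and follows essentially the same route as the paper: reduce to the base point $\xi_0 = I_n$ via the triangle inequality, compute $\ell_{\xi_0}(f\tau)$ as the integral of the pointwise squared distance $\|\log(f(x)f(x)^*)\|_{\operatorname{HS}}^2$, and invoke the norm comparisons in Remark~\ref{rmk:Pn-metric} to pass to $\lambda$. The only minor difference is bookkeeping: you keep the pointwise $\max(\log\|f(x)\|, \log\|f(x)^{-1}\|)$ all the way through the integration (which avoids needing~\eqref{eq:length-lambda-inverse} at the end), whereas the paper first compares to $\lambda_+(f)$ alone and then uses~\eqref{eq:length-lambda-inverse} to recover the full $\lambda$.
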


\begin{proof}
	Thanks to Remark~\ref{rmk:length-function-orbit}\eqref{rmk:length-function-orbit:base-point}, we know that any two length functions $\ell_{\xi_1}$ and  $\ell_{\xi_2}$ are large-scale bilipschitz. Thus without loss of generality, we may choose $\xi$ to be the constant function $I_n$. Then it is clear from Construction~\ref{constr:continuum-product} that for any $f \in L^2(N, \omega, \widetilde{\operatorname{SL}}(n,\mathbb{R}) )$ and $\tau \in T(N, \omega)$, we have
	\[
		\ell_{I_n} \left( f \tau \right) = d(f \tau \cdot I_n, I_n) = d(f \cdot I_n, I_n) =  \left( \int_{N} \left( d_{P(n)} ([f(x)], I_n) \right)^2 d \omega(x) \right)^{\frac{1}{2}} \; .
	\]
	By Remark~\ref{rmk:Pn-metric}, we see that the right-hand side is equal to 
	\[
		\left( \int_{N} \left( \left\| \log \left( f(x)^{\operatorname{T}} f(x) \right) \right\|_{\operatorname{HS}} \right)^2 d \omega(x) \right)^{\frac{1}{2}} \; .
	\]
	Hence again by Remark~\ref{rmk:Pn-metric}, we see that $\ell_{I_n}$ is bilipschitz to the function
	\[
		f \tau \mapsto \lambda_+ (f) = \left( \int_{N} \left( \log \left\|  f(x) \right\| \right)^2 d \omega(x) \right)^{\frac{1}{2}}
	\]
	and thus, by Equation~\eqref{eq:length-lambda-inverse}, it is also bilipschitz to $\lambda$. 
\end{proof}

\begin{prop}\label{prop:SLSO-proper-length}
	Let $\Gamma$ be a subgroup of $\operatorname{Diff} (N, \omega)$, or more generally a subgroup of $L^2(N, \omega, \widetilde{\operatorname{SL}}(n,\mathbb{R}) ) \rtimes T(N, \omega)$. 
	It inherits an action on $L^2(N, \omega, P(n))$. When we consider $\Gamma$ as a discrete group, the following are equivalent: 
	\begin{enumerate}
		\item the action on $L^2(N, \omega, P(n))$ by $\Gamma$ is metrically proper; 
		\item the length function $\lambda$ is proper on $\Gamma$; 
		\item the metric induced by $\lambda$ is a proper metric.
	\end{enumerate}
\end{prop}

\begin{proof}
	This follows from Lemma~\ref{lem:length-bilipschitz} and Remark~\ref{rmk:length-function-orbit}\eqref{rmk:length-function-orbit:proper}. 
\end{proof}

\begin{rmk}\label{rmk:SLSO-proper-L2-vs-uniform}
	It follows from Equation~\eqref{eq:length-lambda-inverse} that for a subgroup $\Gamma$ of $\operatorname{Diff} (N, \omega)$, the length function $\lambda$ is proper on $\Gamma$ if and only if $\lambda_+$ is proper on $\Gamma$, which is equivalent to  
	\begin{equation}\label{eq:geometrically-discrete-L2}
		\left(\int_N (\log(\|D_x\varphi\|))^2 d\omega(x)\right)^{1/2} \to \infty \quad \text{ as } \gamma \to \infty \text{ in }  \Gamma \; .
	\end{equation}
	Observe that this is weaker than requiring 
	\begin{equation}\label{eq:geometrically-discrete-uniform}
		\inf_{x \in N} \|D_x\varphi\|\to \infty \quad \text{ as } \gamma \to \infty \text{ in }  \Gamma \; .
	\end{equation}
	This latter condition would imply that there is a coarse embedding $\iota$ of $\Gamma$ into $P(n)$, that is, $\iota$ satisfies the property that there are unbounded increasing functions $f_+, f_- \colon [0, \infty) \to [0, \infty)$ such that 
	\[
		f_-(d_{\Gamma}(\gamma, \gamma')) \leq d_{P_n}(\iota(\gamma), \iota(\gamma')) \leq f_+(d_{\Gamma}(\gamma, \gamma')) 
	\]
	for any $\gamma, \gamma' \in \Gamma$, where $d_{\Gamma}$ is a fixed right-invariant proper metric on $\Gamma$. The best possible such functions $f_+$ and $f_-$ are called the \emph{dilation} and the \emph{compression}, respectively, of $\iota$. Such an embedding $\iota$ can be constructed by fixing a family of trivializations for the tangent spaces of $N$ and realizing the derivative $D_{x_0}\varphi$ at a fixed point $x_0$ as an element in $SL(n, \rbbd)$. By a Gram-Schmidt procedure, we see that $P(n)$ is coarsely equivalent to the group of upper triangular matrices, which is amenable and thus coarsely embeds into a Hilbert space. It follows that any $\Gamma$ that satisfies \eqref{eq:geometrically-discrete-uniform} coarsely embeds into a Hilbert space and thus satisfies the strong Novikov conjecture by \cite{yu3}.  
	
	However, it is not clear whether any subgroup $\Gamma$ that is geometrically discrete, i.e., satisfies \eqref{eq:geometrically-discrete-L2}, also embeds into a Hilbert space. A naive attempt would be to embed $L^2(N, \omega, P(n))$ into a Hilbert space by ``integrating'' over $(N, \omega)$ a coarse embedding $\varepsilon$ of $P(n)$ into a Hilbert space $\hil$, that is, defining a map
	\[
		\varepsilon^{(N,\omega)} \colon L^2(N, \omega, P(n)) \to L^2(N, \omega, \hil) \, , \quad \xi \mapsto \varepsilon \circ \xi \; .
	\] 
	To see why this naive attempt is not enough, we let $f_+, f_- \colon [0, \infty) \to [0, \infty)$ be, respectively, the dilation and compression of $\varepsilon$. 
	While since $P(n)$ is a geodesic space, the dilation of $\varepsilon$, and thus also that of $\varepsilon^{(N,\omega)}$, can be controlled by a linear function, yet there is an issue with the compression of $\varepsilon^{(N,\omega)}$: as long as $f_-$ is not bounded from below by a strictly increasing linear function, we can show that the compression of $\varepsilon^{(N,\omega)}$ fails to be unbounded, that is, there exist elements $\xi_n, \xi'_n \in L^2(N, \omega, P(n))$, for $n = 1,2,\ldots$, such that $\dist(\xi_n, \xi'_n) \to \infty$ as $n \to \infty$ but $\dist(\varepsilon^{(N,\omega)}(\xi_n), \varepsilon^{(N,\omega)}(\xi'_n))$ is bounded. Indeed, by our assumption, we can find points $x_n, x'_n \in P(n)$ for $n = 1,2,\ldots$ such that $\dist(\varepsilon(x_n), \varepsilon(x'_n)) \leq \frac{1}{n} \dist(x_n, x'_n)$. Since the measure space $(N,\omega)$ is atomless, we can choose measurable subsets $Y_n \subset N$ with $\omega(Y_n) = (\dist(\varepsilon(x_n), \varepsilon(x'_n)))^{-2}$. Then we define 
	\[
		\xi_n = 
		\begin{cases}
			x_n \, , & y \in Y_n \\
			x_1 \, , & y \in N \setminus Y_n
		\end{cases}
		\qquad \text{and} \qquad 
		\xi'_n = 
		\begin{cases}
		x'_n \, , & y \in Y_n \\
		x_1 \, , & y \in N \setminus Y_n
		\end{cases}
		\; .
	\]
	It follows that for any positive integer $n$, we have $\dist(\varepsilon^{(N,\omega)}(\xi_n), \varepsilon^{(N,\omega)}(\xi'_n)) \leq 1$ but $\dist(\xi_n, \xi'_n) \geq n$, as desired. This means $\varepsilon^{(N,\omega)}$ is not a coarse embedding. 
\end{rmk}

\newcommand{\piunbdd}{\Pi}
\newcommand{\pialg}{\Pi_{\operatorname{b}}}

\section{A $C^*$-algebra $\AofM$ associated to a {\hhs}~$M$}
\label{sec:AofM}

In this section, we define a $C^*$-algebra $\AofM$ associated to a Hilbert-Hadamard space~$M$. This $C^*$-algebra and its $K$-theory will play a key role in our proof.

Throughout this section, we let $M$ be a {\hhs} as in Definition~\ref{defn:hhs}. Recall that for any point $x \in M$, we use $\hhil_x M$ to denote the tangent Hilbert space at $x$.

\begin{defn}\label{defn:Pi-M}
	Given a {\hhs}, we define the $*$\-/algebra
	\[
		\piunbdd (M) = \prod_{(x,t) \in M \times [0,\infty) } \cliffc (\hhil_x M \oplus t \rbbd ) \; ,
	\]
	where 
	\[
		t \rbbd = 
		\begin{cases}
			\rbbd \, , & t > 0 \\
			\{0\} \, , & t = 0 
		\end{cases}
		\; 
	\]
	and $\rbbd$ carries the canonical inner product (independent of $t$). 
	We also define the $C^*$-algebra  
	\[
		\pialg(M) = \left\{ \sigma \in \piunbdd (M) \colon \sup_{(x,t) \in M \times [0,\infty)} \| \sigma({x,t}) \| < \infty \right\}
	\] 
	equipped with pointwise algebraic operations and the uniform norm. 
\end{defn}

Intuitively, these $*$\-/algebras can be viewed as built out of (typically discontinuous) tangent vector fields over $M \times [0, \infty)$. 
Although $\pialg(M)$ is too large a $C^*$-algebra to be of much use, it will contain our key object in this section, $\AofM$, as a $C^*$-subalgebra. A key ingredient is a following version of outward pointing Euler vector fields. 

\begin{defn}\label{defn_Cliffordmultiplier}
	Let $\mnf$ be a {\hhs}. For any point $\xpt_0 \in \mnf$, we define the \emph{Clifford operator} $\cliffmult \in \piunbdd (M)$ by
	\[
	\cliffmult (x , t) = \left(-\log_{\xpt} (\xpt_0) , t \right) \in \tanbndl_{\xpt} \mnf \times t \rbbd \subset \cliffc({\hhil_{\xpt}{\mnf} \oplus t \rbbd})
	\]
	for any $\xpt \in \mnf$ and $t \in [0,\infty)$. We also write $\cliffmult^{M}$ when we need to emphasize the {\hhs} $M$. 
\end{defn}

Note that $\cliffmult$ is unbounded, i.e., $\displaystyle \sup_{(x,t) \in M \times [0,\infty)} \| \cliffmult({x,t}) \| = \infty$, whenever $M$ is unbounded. Also observe that each $\cliffmult(x,t)$ is self-adjoint. 

\begin{eg}\label{eg_CliffordmultiplieronaHilbertspace}
	When $\mnf$ is a (real) Hilbert \emph{space}, upon identifying $\mnf$ with its own tangent cones $\tanbndl_{\xpt} \mnf$ in the canonical way, we have
	\[
	\cliffmult( \xpt, t )=  (\xpt-\xpt_0, t )\in  \tanbndl_{\xpt} \mnf \oplus t \rbbd \; .
	\] 
	Thus in this case, $\cliffmult$ is a restriction of the classical \emph{Euler vector field} on $M \times \rbbd$ centered at $(\xpt_0, 0)$, given by
	\[
		( \xpt, t ) \mapsto (\xpt-\xpt_0, t ) \; .
	\]
\end{eg}

\begin{rmk}\label{rmk:Clifford-even-odd-functional-calculus}
	We point out a standard fact regarding the functional calculus of vectors inside the Clifford algebra. Let $\hil$ be a Hilbert space and let $\xi$ be a vector in $\hil$, viewed as a self-adjoint element of the Clifford algebra $\cliffc \hil$. Let $f$ and $g$ be bounded continuous functions on $\rbbd$ with $f$ even and $g$ odd. Applying functional calculus to $\xi$, we have
	\begin{enumerate}
		\item the element $f(\xi)$ in $\cliffc \hil$ is equal to the scalar $f(\|\xi\|)$, and
		\item the element $g(\xi)$ in $\cliffc \hil$ is equal to the vector in $\hil$ with norm equal to $| g(\|\xi\|) |$, and pointing in the same direction as $\xi$ if $g(\|\xi\|) \geq 0$ and otherwise in the opposite direction. Note that when $\xi = 0$, we have $g(\|\xi\|) = g(0) = 0 $, so there is no ambiguity. 
	\end{enumerate} 
\end{rmk}

\begin{prop}\label{prop_welldefinednessofBotthomomorphism}
	For any point $\xpt_0\in\mnf$, the map
	\[
	\botthom \colon \salg \to \piunbdd (M)
	\]
	is defined by functional calculus such that 
	\[
	\botthom (\ffunc) (x,t) = \ffunc(\cliffmult(\xpt, t))
	\]
	for all $\xpt \in \mnf$, $t \in [0,\infty)$, and $\ffunc \in \salg$. Then $\botthom $ is a graded $*$\-/homomorphism from $\salg$ to the $*$\-/subalgebra $\pialg(M)$.  
\end{prop}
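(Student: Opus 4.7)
The plan is to reduce the verification to a fiberwise analysis, since both $\piunbdd(M)$ and $\pialg(M)$ carry pointwise algebraic structure. First I would check that at each $(x,t) \in M \times [0,\infty)$, the element $\cliffmult(x,t) = (-\log_x(x_0), t)$, viewed inside the Clifford algebra $\cliffc(\hhil_x M \oplus t\rbbd)$, is self-adjoint (every vector in a Clifford algebra is self-adjoint by construction). Setting $\eta = \xi$ in the Clifford relation \eqref{cliffordrelation} yields $\cliffmult(x,t)^2 = \|\log_x(x_0)\|^2 + t^2$ as a scalar, so $\sigma(\cliffmult(x,t)) \subseteq \{\pm \sqrt{\|\log_x(x_0)\|^2 + t^2}\}$ is real and bounded. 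Hence continuous functional calculus produces a well-defined element $f(\cliffmult(x,t)) \in \cliffc(\hhil_x M \oplus t\rbbd)$ for every $f \in \salg$, and these assemble into $\botthom(f) \in \piunbdd(M)$.

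To show $\botthom(f) \in \pialg(M)$, I would invoke the spectral mapping theorem at each fiber to get
\[
\|\botthom(f)(x,t)\| = \sup_{\lambda \in \sigma(\cliffmult(x,t))} |f(\lambda)| \leq \|f\|_\infty,
\]
with a uniform bound independent of $(x,t)$. In particular, $\botthom$ is norm-decreasing as a map into $\pialg(M)$.

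For the $*$-homomorphism property, since the algebraic operations on $\piunbdd(M)$ are pointwise, it suffices to observe that at each fiber $f \mapsto f(\cliffmult(x,t))$ is a $*$-homomorphism from $\salg$ into $\cliffc(\hhil_x M \oplus t\rbbd)$, which is a standard property of continuous functional calculus applied to a self-adjoint element. Finally, to verify that $\botthom$ preserves the $\zbbd/2\zbbd$-grading (where $\salg$ is graded by even and odd functions), I would appeal directly to Remark \ref{rmk:Clifford-even-odd-functional-calculus}: an even $f$ sends $\cliffmult(x,t)$ to a scalar multiple of the identity, hence into the even part of the Clifford algebra, while an odd $f$ sends it to a scalar multiple of the vector $\cliffmult(x,t)$ itself, hence into the odd part.

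I do not foresee a substantive obstacle. The only computation of any subtlety is the identification $\cliffmult(x,t)^2 = \|\log_x(x_0)\|^2 + t^2$; once this is in hand, the rest is an elementary packaging of fiberwise functional calculus, and the uniform norm estimate is automatic from the spectral bound rather than requiring any continuity of $(x,t) \mapsto \cliffmult(x,t)$ — which is fortunate, since $\piunbdd(M)$ is only a plain product $*$-algebra with no continuity demands built in.
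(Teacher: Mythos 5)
Your proof is correct and follows essentially the same approach as the paper: reducing to fiberwise continuous functional calculus applied to the self-adjoint element $\cliffmult(x,t)$, obtaining the uniform norm bound from the spectral estimate $\|\ffunc(\cliffmult(x,t))\| \le \|\ffunc\|_\infty$, and invoking Remark~\ref{rmk:Clifford-even-odd-functional-calculus} for grading. Your write-up is somewhat more explicit than the paper's about why functional calculus is applicable (self-adjointness and the scalar identity $\cliffmult(x,t)^2 = \|\log_x(x_0)\|^2 + t^2$), but these are elementary observations implicit in the paper's argument.
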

\begin{proof}
	This follows from the fact that $\cliffmult(\xpt, t)$ is an odd bounded self-adjoint operator for any $\xpt \in \mnf$ and $t \in [0,\infty)$. 
\end{proof}

\begin{defn}\label{defn_Botthomomorphism}
	For any point $\xpt_0\in\mnf$, the graded $*$\-/homomorphism 
	$$\botthom: \salg \to \pialg(M) $$
	is called the \emph{Bott homomorphism} centered at $\xpt_0$.
	We also write $\botthom^{M}$ when we need to emphasize the {\hhs} $M$. 
\end{defn}

We discuss some important features of $\botthom$. Let us denote the even part of $ \salg $ by $ \salg_\text{ev} $, which consists of all even functions.

\begin{prop}\label{prop_Botthomomorphismandevenpart}
	For any $ \xpt_0 \in \mnf $, the map $ \botthom$ takes $ \salg_\text{ev} $ into the subalgebra $ \ell^\infty (M \times [0,\infty))$ of the center of $\pialg(M) $. 
\end{prop}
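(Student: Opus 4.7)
The plan is to proceed directly from Definition~\ref{defn_Cliffordmultiplier} and the functional calculus observation of Remark~\ref{rmk:Clifford-even-odd-functional-calculus}. The key point is that $\cliffmult(x,t)$ is a single self-adjoint element of $\cliffc(\hhil_x M \oplus t\rbbd)$ coming from a vector in the underlying Hilbert space, and for such a ``vectorial'' self-adjoint element, functional calculus by an even function produces a scalar multiple of the identity.

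More concretely, first I would observe that for each $(x,t) \in M \times [0,\infty)$, the element $\cliffmult(x,t) = (-\log_x(x_0), t)$ is a vector in the real Hilbert space $\hhil_x M \oplus t\rbbd$, with norm
\[
\| \cliffmult(x,t) \| = \sqrt{\| \log_x(x_0) \|^2 + t^2} = \sqrt{d_M(x, x_0)^2 + t^2},
\]
using that $\log_{x}$ preserves the metric along geodesics emanating from $x$ (Construction~\ref{constr:log-map}). Then, for any $\ffunc \in \salg_{\mathrm{ev}}$, Remark~\ref{rmk:Clifford-even-odd-functional-calculus}(1) gives
\[
\botthom(\ffunc)(x,t) \;=\; \ffunc(\cliffmult(x,t)) \;=\; \ffunc\!\left( \sqrt{d_M(x, x_0)^2 + t^2} \right) \cdot 1,
\]
a scalar multiple of the unit in $\cliffc(\hhil_x M \oplus t\rbbd)$.

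From here the conclusion is immediate. Since $\ffunc \in C_0(\rbbd)$ is bounded, the scalar-valued function $(x,t) \mapsto \ffunc\!\left(\sqrt{d_M(x,x_0)^2 + t^2}\right)$ is an element of $\ell^\infty(M \times [0,\infty))$, and the identification of scalar functions with central constants in each fiber $\cliffc(\hhil_x M \oplus t\rbbd)$ realizes $\ell^\infty(M \times [0,\infty))$ as a unital subalgebra sitting inside the center of $\pialg(M)$: indeed, scalar multiples of the identity commute with every element of every Clifford algebra fiber. Hence $\botthom(\ffunc)$ lies in this central subalgebra, as required.

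There is no real obstacle here beyond unpacking the definition. The only point that deserves a line of care is checking the norm of $\cliffmult(x,t)$, which amounts to verifying that $\log_x$ sends the geodesic from $x$ to $x_0$ isometrically to a ray in $T_x M$ and hence in the spanned tangent Hilbert space $\hhil_x M$; this is exactly the metric-preserving property of $\log_x$ along emanating geodesics recorded in Construction~\ref{constr:log-map}.
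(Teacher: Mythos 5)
Your proof is correct and takes essentially the same route as the paper's: the paper writes $\ffunc(s) = \gfunc(s^2)$ and applies $\gfunc$ to the scalar $\cliffmult^2(x,t) = d(x_0,x)^2 + t^2$, while you invoke Remark~\ref{rmk:Clifford-even-odd-functional-calculus}(1) to get $\ffunc(\|\cliffmult(x,t)\|)$ directly; these are two spellings of the same observation, and your computation of $\|\cliffmult(x,t)\|$ via the metric-preservation of $\log_x$ along emanating geodesics matches the paper's implicit use of it.
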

\begin{proof}
	For any $ (\xpt, t ) \in \mnf \times [0, \infty)$ and $\ffunc \in 
	\salg_\text{ev}$, we can write $ \ffunc(s)=\gfunc(s^2) $ for some $\gfunc\in \cz([0,\infty))$ and thus
	\begin{equation}\label{eq:AevenofM-distance}
		\botthom(\ffunc)(\xpt, t) = \gfunc \left( \cliffmult^2(\xpt, t) \right) =  \gfunc \left(  \dist(\xpt_0,\xpt)^2 + t^2 \right) \; ,
	\end{equation}
	which is a scalar. 
\end{proof}

Now we discuss the dependence of the Bott homomorphisms on the base points. 

\begin{lem}\label{lem_Cliffordmultiplierandchangeofbasepoint}
	For any $\xpt_0, \xpt_1\in\mnf$, we have $\cliffmult-\cliffmult[\xpt_1] \in \pialg(M)$ and in fact 
	\[
	\|\cliffmult-\cliffmult[\xpt_1]\| \le \dist(\xpt_0,\xpt_1) \; .
	\]
\end{lem}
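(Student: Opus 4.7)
The plan is to compute the difference $\cliffmult - \cliffmult[\xpt_1]$ fiberwise at each $(x,t) \in M \times [0,\infty)$ and show that its operator norm is bounded by $d(\xpt_0, \xpt_1)$ uniformly in $(x,t)$.

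First, I would observe that by Definition~\ref{defn_Cliffordmultiplier}, the two Clifford operators agree in their $t\rbbd$-component, so
\[
	(\cliffmult - \cliffmult[\xpt_1])(x,t) = \left(\log_x(\xpt_1) - \log_x(\xpt_0),\, 0\right) \in \hhil_x M \oplus t\rbbd \subset \cliffc(\hhil_x M \oplus t\rbbd) \; .
\]
Thus the problem reduces to bounding, at each $x \in M$, the Clifford-algebra norm of the self-adjoint vector $\log_x(\xpt_1) - \log_x(\xpt_0) \in \hhil_x M$. By the Clifford relation~\eqref{cliffordrelation}, $\hat\eta^2 = \|\eta\|^2$ for any vector $\eta$ in a real Hilbert space, and therefore the $C^*$-identity forces $\|\hat\eta\|_{\cliffc} = \|\eta\|_{\hhil_x M}$. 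Hence it suffices to show
\[
	\|\log_x(\xpt_1) - \log_x(\xpt_0)\|_{\hhil_x M} \leq d(\xpt_0, \xpt_1) \qquad \text{for every } x \in M.
\]

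To prove this uniform estimate, I would invoke the two relevant isometric/non-expansive properties. By Construction~\ref{constr:Hilbert-space-span}, the tangent cone $T_x M$ embeds isometrically into $\hhil_x M$ (with the tip of the cone mapped to the origin), so the Hilbert-space norm of $\log_x(\xpt_1) - \log_x(\xpt_0)$ coincides with the tangent-cone distance $d_{T_x M}(\log_x(\xpt_0), \log_x(\xpt_1))$. On the other hand, Construction~\ref{constr:log-map} tells us that $\log_x \colon M \to T_x M$ is metric semi-decreasing, a consequence of the CAT(0) condition via the comparison-angle formulation of Remark~\ref{rmk:CAT0-equiv-defn-thin-triangle}. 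Combining the two yields
\[
	\|\log_x(\xpt_1) - \log_x(\xpt_0)\|_{\hhil_x M} = d_{T_x M}\bigl(\log_x(\xpt_0), \log_x(\xpt_1)\bigr) \leq d(\xpt_0, \xpt_1) \; ,
\]
which is independent of $x$ and $t$. Taking the supremum over $(x,t)$ gives both membership in $\pialg(M)$ and the claimed bound on the norm.

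There is no real obstacle here: the statement is essentially a repackaging of the fact that $\log_x$ is a short map into the tangent Hilbert space. The only conceptual step worth flagging is the identification of the Clifford norm of a vector with its Hilbert-space norm, which relies on~\eqref{cliffordrelation} and the $C^*$-identity.
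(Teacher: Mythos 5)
Your proof is correct and follows essentially the same route as the paper: reduce to a fiberwise bound, identify the Clifford norm of a vector with its Hilbert-space norm, identify that with the tangent-cone distance via the isometric embedding, and finish by invoking the non-expansiveness of $\log_x$. The paper's own proof is more terse (it goes straight to $\|-\log_x(\xpt_0)+\log_x(\xpt_1)\|_{\hhil_x}\le d(\xpt_0,\xpt_1)$), but the steps you spell out — cancellation of the $t\rbbd$-component, the $C^*$-identity argument for $\|\hat\eta\|=\|\eta\|$, and the isometry $T_xM\hookrightarrow\hhil_xM$ — are exactly what makes that inequality legitimate.
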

\begin{proof}
	By Construction~\ref{constr:log-map}, the logarithm map $\log_x$ is non-expansive for any $x$. It follows that 
	\[
	\|\cliffmult(\xpt, t)-\cliffmult[\xpt_1](\xpt, t)\| = \|- \log_x(\xpt_0) + \log_x(\xpt_1)\|_{\hhil_x} \le \dist(\xpt_0,\xpt_1) 
	\] 
	for any $(x, t) \in M \times [0, \infty)$, whence the claims follow. 
\end{proof}

\newcommand{\meansym}[1]{\Theta_{#1}}

\begin{defn}\label{defn:Omega-Theta}
	For any $\ffunc\in\salg$ and $r \geq 0$, let us define its \emph{$\rdist$-oscillation} by 
	\begin{equation}\label{eq:oscillation}
		\oscill{\rdist}\ffunc = \sup \left\{ |\ffunc(\tvar) - \ffunc(\tvar')| : \ \tvar,\tvar'\in\rbbd, |\tvar-\tvar'|\le \rdist \right\} \; .
	\end{equation}
	For $r>0$, we also define
	\begin{equation}\label{eq:mean}
		\meansym{\rdist} \ffunc = \rdist \cdot \sup \left\{ \frac{| f(t) - f(-t) |}{2t} \colon t \geq \rdist \right\} \; .
	\end{equation}
	It is clear that $\oscill{\rdist}\ffunc \leq 2 \|f\|$ and $\meansym{\rdist} \ffunc \leq \|f\|$.  
\end{defn}

\begin{lem}\label{lem:Omega-Theta-lim-r}
	For any $\ffunc \in \salg$, we have
	\[
		\lim_{r \to 0} \oscill{\rdist}\ffunc = 0 = \lim_{r \to 0} \meansym{\rdist}\ffunc  \; .
	\]
\end{lem}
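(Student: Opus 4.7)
\medskip

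\noindent\emph{Proof plan.} Both limits rest on straightforward properties of functions in $\salg = C_0(\rbbd)$, and I would separate them into two short arguments.

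For the statement $\lim_{r\to 0} \oscill{r} f = 0$, I would simply invoke that every $f\in C_0(\rbbd)$ is uniformly continuous. Indeed, $f$ extends to a continuous function on the one-point compactification $\rbbd \cup \{\infty\}$ with value $0$ at infinity, and continuous functions on a compact Hausdorff space are uniformly continuous. The quantity $\oscill{r} f$ is precisely the modulus of continuity of $f$ at scale $r$, so uniform continuity is equivalent to $\oscill{r} f \to 0$ as $r \to 0$.

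For the statement $\lim_{r\to 0} \meansym{r} f = 0$, the key observation is to consider the auxiliary function $g \colon [0,\infty) \to \cbbd$ defined by $g(t) = f(t) - f(-t)$. Then $g$ is continuous, $g(0)=0$, and $g(t) \to 0$ as $t \to \infty$ because $f$ vanishes at infinity on both ends; hence $g \in C_0([0,\infty))$ with a zero at the origin. Given $\varepsilon > 0$, I would use these two vanishing properties to pick $0 < a < b$ such that $|g(t)| < \varepsilon$ for $t \in [0,a] \cup [b, \infty)$, and then split the supremum in \eqref{eq:mean} into the three intervals $[r, a]$, $[a, b]$, $[b, \infty)$ for $r \leq a$. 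On $[r, a]$ and $[b, \infty)$, the bound $r \leq t$ gives $r|g(t)|/(2t) \leq |g(t)|/2 < \varepsilon/2$; on the compact middle interval $[a,b]$, the estimate $r|g(t)|/(2t) \leq r \|f\| / a$ becomes smaller than $\varepsilon$ as soon as $r < a\varepsilon/(\|f\|+1)$. Combining the three bounds gives $\meansym{r} f < \varepsilon$ for all sufficiently small $r$.

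There is no substantive obstacle here. The only subtle point worth highlighting is the role of the factor $r$ in front of the supremum in the definition of $\meansym{r}$: without it, the supremum could blow up as $r \to 0$ for functions with non-Lipschitz behavior at the origin (such as $f(t) = \sqrt{|t|}\,\chi(t)$ with $\chi$ a cutoff). The prefactor $r$ together with the restriction $t \geq r$, which together bound $r/t \leq 1$, is precisely what tames this behavior and allows the argument on the regions $[r,a]$ and $[b,\infty)$ to go through. Once this bookkeeping is clear, both limits follow by elementary estimates.
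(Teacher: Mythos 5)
Your proof is correct and takes essentially the same approach as the paper: both parts rest on uniform continuity of $C_0(\rbbd)$ functions for $\oscill{r}f$, and on the vanishing of $f(t)-f(-t)$ at $0$ (plus its decay at $\infty$) for $\meansym{r}f$. The paper abstracts the second half into an unproved observation about continuous $g\colon(0,\infty)\to[0,\infty)$ with $tg(t)\to 0$; you simply carry out the same three-region $\varepsilon$-$\delta$ estimate explicitly, which is a worthwhile unpacking since the paper's stated observation omits the needed control near infinity.
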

\begin{proof}
	The first equation follows from the fact that every function in $\salg$, being a uniform limit of compactly supported functions, is uniformly continuous. The second equation follows from the observation that for any continuous function $g$ from $(0,\infty)$ to $[0, \infty)$ with $\displaystyle \lim_{r \to 0} t g(t) = 0$, we have
	\[
		\lim_{r \to 0} \rdist \sup \left\{ g(t) \colon t \geq \rdist \right\} = 0 \; .
	\]
	We apply this to the function $g(t) = \displaystyle \frac{| f(t) - f(-t) |}{2t}$. 
\end{proof}

Using these notations, we have the following estimate. 

\begin{prop}\label{prop_Botthomandchangeofbasepoint}
	For any $\xpt_0,\xpt_1 \in \mnf$ and any $\ffunc\in\salg$, writing $\rdist$ for the distance $ \dist(\xpt_0,\xpt_1)$, we have
	\[
		\|\botthom(\ffunc)- \botthom[\xpt_1](\ffunc)\| \le 2 \; \oscill{\rdist}\ffunc + \max\left\{  \oscill{2\rdist}\ffunc , \, \meansym{\rdist} \ffunc \right\} \; . 
	\]
\end{prop}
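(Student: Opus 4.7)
The plan is to reduce to a pointwise estimate inside the Clifford algebras $\cliffc(\hhil_x M \oplus t\rbbd)$. Fix $(x,t) \in M \times [0,\infty)$ and write $v_i = \cliffmult[\xpt_i](x,t)$ and $\rho_i = \|v_i\|$. By Lemma~\ref{lem_Cliffordmultiplierandchangeofbasepoint}, $\|v_0 - v_1\| \le r$, so in particular $|\rho_0 - \rho_1| \le r$. Since $v_0, v_1$ span a subspace $V$ of dimension at most $2$ and $\cliffc(V) \hookrightarrow \cliffc(\hhil_x M \oplus t\rbbd)$ is an isometric inclusion, it suffices to estimate $\|\ffunc(v_0) - \ffunc(v_1)\|$ inside $\cliffc(V) \cong M_2(\cbbd)$, the lower-dimensional degenerations being subsumed by the same formulas.

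Decompose $\ffunc = \ffunc_{\text{ev}} + \ffunc_{\text{odd}}$; by the Clifford relation $v_i^2 = \rho_i^2$, functional calculus gives $\ffunc(v_i) = \ffunc_{\text{ev}}(\rho_i) + \ffunc_{\text{odd}}(\rho_i)\, v_i / \rho_i$ when $\rho_i > 0$. Choose an orthonormal basis $\{e_1, e_2\}$ of $V$ with $v_0 = \rho_0 e_1$ and $v_1 = \rho_1 (\cos\theta\, e_1 + \sin\theta\, e_2)$, where $\theta$ is the angle between $v_0$ and $v_1$, and represent $e_1, e_2$ by the Pauli matrices $\sigma_1, \sigma_2$. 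A direct matrix computation yields $\ffunc(v_0) - \ffunc(v_1) = a\, I + B$, where $a = \ffunc_{\text{ev}}(\rho_0) - \ffunc_{\text{ev}}(\rho_1)$ and $B$ is Hermitian and off-diagonal with eigenvalues $\pm |z|$, satisfying
\[
|z|^2 = (\ffunc_{\text{odd}}(\rho_0) - \ffunc_{\text{odd}}(\rho_1))^2 + 2\, \ffunc_{\text{odd}}(\rho_0)\, \ffunc_{\text{odd}}(\rho_1)(1 - \cos\theta) .
\]
Since $a\, I + B$ has eigenvalues $a \pm |z|$, one concludes $\|\ffunc(v_0) - \ffunc(v_1)\| = |a| + |z|$.

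The scalar part is immediate: $|a| \le \oscill{r}\ffunc_{\text{ev}} \le \oscill{r}\ffunc$. For $|z|$, I split into two cases. In the \emph{large-radius} case where both $\rho_0, \rho_1 \ge r$, the bounds $|\ffunc_{\text{odd}}(\rho_i)| \le \rho_i \meansym{r}\ffunc / r$ (from the definition of $\meansym{r}$) combined with $(1 - \cos\theta) \le r^2/(2\rho_0 \rho_1)$ (which follows from expanding $\|v_0 - v_1\|^2 = \rho_0^2 + \rho_1^2 - 2\rho_0\rho_1 \cos\theta \le r^2$) give $|z|^2 \le (\oscill{r}\ffunc)^2 + (\meansym{r}\ffunc)^2$, so $|z| \le \oscill{r}\ffunc + \meansym{r}\ffunc$ and the total is $\le 2\oscill{r}\ffunc + \meansym{r}\ffunc$. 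In the \emph{small-radius} case where at least one $\rho_i < r$, the triangle inequality forces the other $\rho_i$ to lie in $[0, 2r)$; the crude bound $|z| \le |\ffunc_{\text{odd}}(\rho_0)| + |\ffunc_{\text{odd}}(\rho_1)|$ (from $|\cos\theta|, \sin\theta \le 1$) together with $|\ffunc_{\text{odd}}(s)| \le \oscill{s}\ffunc$ (using $\ffunc_{\text{odd}}(0) = 0$) gives $|z| \le \oscill{r}\ffunc + \oscill{2r}\ffunc$, so the total is $\le 2\oscill{r}\ffunc + \oscill{2r}\ffunc$. Since $\oscill{2r}\ffunc \le 2\oscill{r}\ffunc$ (by chaining two $r$-steps), we have $\oscill{2r}\ffunc \le \sqrt{2\oscill{r}\ffunc \cdot \oscill{2r}\ffunc}$, which absorbs the second bound into the stated one. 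Taking the maximum over the two cases gives the proposition pointwise, and the supremum over $(x,t)$ gives the norm.

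The main technical care lies precisely in this case split: the sharp $\meansym{r}\ffunc$ estimate is only available when both radii are at least $r$, and the presence of the $\sqrt{2\oscill{r}\ffunc \cdot \oscill{2r}\ffunc}$ term in the statement reflects the need to fall back on the oscillation bound in the opposite regime. The $2 \times 2$ matrix calculation itself is routine once the Pauli realization of $\cliffc(V)$ is in hand.
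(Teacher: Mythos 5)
Your proposal is correct and follows essentially the same route as the paper's proof: split $\ffunc$ into even and odd parts, use the Clifford functional-calculus identity $\ffunc(v) = \ffunc_{\mathrm{ev}}(\|v\|) + \ffunc_{\mathrm{odd}}(\|v\|)\,v/\|v\|$, expand $\|v_0 - v_1\|^2$ by the law of cosines to control the angle term, and perform the same case split on whether $\min\{\rho_0,\rho_1\}$ is above or below $r$. Your reduction to a Pauli-matrix realization of $\cliffc(V)$ for a $2$-dimensional $V$ and the exact eigenvalue formula $\|aI+B\| = |a| + |z|$ are cosmetic variants of the paper's direct reasoning with vectors in the Clifford algebra (and give the same estimates after the case analysis); both your proof and the paper's silently reduce to real-valued $\ffunc$, since the formulas $\|aI+B\| = |a|+|z|$ and $\|\ffunc_{\mathrm{odd}}(v_0)-\ffunc_{\mathrm{odd}}(v_1)\|^2 = \ffunc_{\mathrm{odd}}(\rho_0)^2 + \ffunc_{\mathrm{odd}}(\rho_1)^2 - 2\ffunc_{\mathrm{odd}}(\rho_0)\ffunc_{\mathrm{odd}}(\rho_1)\cos\theta$ are only exact when the scalar coefficients are real.
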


\begin{proof}
	Observe that for any $(x, t) \in M \times [0,\infty)$, since we have $\|\cliffmult[x_i](\xpt, t)\| = \sqrt{d(x_i , x)^2 + t^2}$ for $i = 0,1$, it follows from the triangle inequality that 
	\begin{align}
		\label{eq:prop_Botthomandchangeofbasepoint::leq} \left| \|\cliffmult[x_0](\xpt, t)\| - \|\cliffmult[x_1](\xpt, t)\| \right| & \leq |d(x_0 , x) - d(x_1 , x) | \leq r \; .
	\end{align}

	Now let $f_0$ and $f_1$ be the even and odd parts of $f$, that is, 
	\[
		f_0 (t) = \frac{f(t) + f(-t)}{2} \quad \text{ and } \quad f_1 (t) = \frac{f(t) - f(-t)}{2} 
	\]
	for any $t \in \rbbd$. Then for $ i = 0, 1$ and any $\sdist > 0$, we have $f_i \in \salg$, $\|f_i \| \leq \|f\|$, $\oscill{\sdist} (f_i) \leq \oscill{\sdist} (f) $, and 
	\begin{equation} \label{eq:prop_Botthomandchangeofbasepoint::meansym} 
		\meansym{\sdist}\ffunc = \meansym{\sdist}\ffunc_1 = \sdist \cdot \sup \left\{ \frac{| f_1(t) |}{t} \colon t \geq \sdist \right\} \; .
	\end{equation}
	
	By Remark~\ref{rmk:Clifford-even-odd-functional-calculus}, we have $\botthom[x_i](\ffunc_0)(\xpt, t)= \ffunc_0 \left(\|\cliffmult[x_i](\xpt, t) \|\right)$ for $i=0,1$,
	and thus by Equation~\eqref{eq:prop_Botthomandchangeofbasepoint::leq}, we have
	\begin{align*}
		\left\| \big( \botthom(\ffunc_0)-\botthom[\xpt_1](\ffunc_0) \big)(\xpt, t) \right\| & = \big| \ffunc_0 \left( \|\cliffmult[x_0](\xpt, t) \| \right) - \ffunc_0 \left( \|\cliffmult[x_1](\xpt, t) \| \right) \big| \\
		& \leq \oscill{\rdist}\ffunc_0 \leq \oscill{\rdist}\ffunc \; .
	\end{align*}
	
	On the other hand, to estimate $\left\| \big(\botthom(\ffunc_1)-\botthom[\xpt_1](\ffunc_1) \big)(\xpt, t) \right\|$, we discuss two complementary cases:
	\begin{enumerate}
		\item When one of $\|\cliffmult[x_0](\xpt, t) \|$ and $ \|\cliffmult[x_1](\xpt, t) \|$ is less than $\rdist$, then by Equation~\eqref{eq:prop_Botthomandchangeofbasepoint::leq}, the other one is less than $2 \rdist$, whence, observing that $f_1(0)=0$, we have
		\begin{align*}
			& \left\| \big(\botthom(\ffunc_1)-\botthom[\xpt_1](\ffunc_1) \big)(\xpt, t) \right\| \\
			=\ & \left\|  \ffunc_1\left(\cliffmult(\xpt, t)\right) - \ffunc_1\left(\cliffmult[x_1](\xpt, t)\right)   \right\| \\
			\leq \ & \left\|  \ffunc_1\left(\cliffmult(\xpt, t)\right)   \right\| + \left\|  \ffunc_1\left(\cliffmult[x_1](\xpt, t)\right)   \right\| \\ 
			\leq \ & \oscill{\rdist}\ffunc_1  + \oscill{2\rdist}\ffunc_1 \\ 
			\leq \ & \oscill{\rdist}\ffunc  + \oscill{2\rdist}\ffunc  \; .
		\end{align*}
		\item When $\min \left\{ \|\cliffmult[x_0](\xpt, t) \| , \|\cliffmult[x_1](\xpt, t) \| \right\} \geq \rdist$, then we denote the angle between the vectors $\cliffmult (\xpt, t) $ and $\cliffmult[x_1] (\xpt, t)$ in $\hil_\xpt \oplus t \rbbd$ by $\theta$. By Euclidean geometry, we have
		\[
		\cos \theta = \frac{ \left\| \cliffmult(\xpt, t)  \right\|^2 + \left\|  \cliffmult[x_1](\xpt, t)   \right\|^2 - \left\| \cliffmult(\xpt, t) - \cliffmult[x_1](\xpt, t)   \right\|^2 }{ 2 \, \left\| \cliffmult(\xpt, t)  \right\| \, \left\|  \cliffmult[x_1](\xpt, t)   \right\|}
		\]
		and thus by Lemma~\ref{lem_Cliffordmultiplierandchangeofbasepoint}, we have
		\begin{align*}
			1 - \cos \theta & = \frac{ \left\| \cliffmult(\xpt, t) - \cliffmult[x_1](\xpt, t)   \right\|^2  - \left( \|\cliffmult[x_0](\xpt, t) \| - \|\cliffmult[x_1](\xpt, t) \| \right)^2 }{ 2 \, \|\cliffmult[x_0](\xpt, t) \| \, \|\cliffmult[x_1](\xpt, t) \|} \\
			& \leq \frac{ d(x_0 , x_1)^2  }{ 2 \, \|\cliffmult[x_0](\xpt, t) \| \, \|\cliffmult[x_1](\xpt, t) \|} \; .
		\end{align*}
		Hence, writing $s_i = \|\cliffmult[x_i](\xpt, t) \|$ for $i = 0,1$ and using Remark~\ref{rmk:Clifford-even-odd-functional-calculus} and Equations~\eqref{eq:prop_Botthomandchangeofbasepoint::leq} and~\eqref{eq:prop_Botthomandchangeofbasepoint::meansym} , we compute
		\begin{align*}
		& \left\| \big(\botthom(\ffunc_1)-\botthom[\xpt_1](\ffunc_1) \big)(\xpt, t) \right\|^2 \\
		=\ & \left\|  \ffunc_1\left(\cliffmult(\xpt, t)\right) - \ffunc_1\left(\cliffmult[x_1](\xpt, t)\right)   \right\|^2 \\
		=\ & \ffunc_1\left(s_0\right) ^2 + \ffunc_1\left(s_1\right) ^2  - 2 \, \ffunc_1\left(s_0\right) \, \ffunc_1\left(s_1\right) \, \cos\theta \\
		=\ & \big( \ffunc_1\left(s_0\right) - \ffunc_1\left(s_1\right) \big)^2  + 2 \,  \ffunc_1\left(s_0\right) \, \ffunc_1\left(s_1\right) \, (1 - \cos\theta) \\
		\leq \ & \left(\oscill{\rdist}\ffunc_1 \right)^2 + \frac{\ffunc_1\left(s_0\right)}{s_0} \, \frac{\ffunc_1\left(s_1\right)}{s_1} \cdot d(x_0 , x_1)^2 \\
		\leq \ & \left(\oscill{\rdist}\ffunc \right)^2 + \left( \meansym{\rdist} \ffunc \right)^2 \\
		\leq \ & \left(\oscill{\rdist}\ffunc + \meansym{\rdist} \ffunc  \right)^2 \; .
		\end{align*}
	\end{enumerate}

	Since $\ffunc = \ffunc_0 + \ffunc_1$, combining the two estimates gives us the final result. 
\end{proof}

\begin{cor}\label{cor_botthomiscontwrtbasepoint}
	If a sequence $ \{ \xpt_\nind \in \mnf \}_{\nind \in \nbbd} $ converges to $\xpt_0 \in \mnf$, then for any $\ffunc\in\salg$, we have 
	\[
	\lim_{\nind \to \infty} \botthom[\xpt_\nind](\ffunc) = \botthom(\ffunc) 
	\]
	in $\pialg(\mnf)$. 
\end{cor}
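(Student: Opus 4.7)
The plan is a direct application of Proposition~\ref{prop_Botthomandchangeofbasepoint} together with Lemma~\ref{lem:Omega-Theta-lim-r}. Since the hard analytic work has already been carried out in establishing the explicit norm estimate in Proposition~\ref{prop_Botthomandchangeofbasepoint}, only a passage to the limit is required.

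First, I would set $r_n = d(x_0, x_n)$ for each $n \in \nbbd$. By hypothesis, $r_n \to 0$ as $n \to \infty$. Applying Proposition~\ref{prop_Botthomandchangeofbasepoint} to the pair of base points $x_0$ and $x_n$ yields, for every $f \in \salg$, the inequality
\[
\|\botthom(f) - \botthom[x_n](f)\| \le 2\,\oscill{r_n}f + \max\!\left\{\sqrt{2\,(\oscill{r_n}f)\cdot(\oscill{2r_n}f)},\ \meansym{r_n}f\right\}.
\]

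Next, I would invoke Lemma~\ref{lem:Omega-Theta-lim-r}, which gives $\oscill{r}f \to 0$ and $\meansym{r}f \to 0$ as $r \to 0$. Since $r_n \to 0$ and $2r_n \to 0$, both $\oscill{r_n}f$ and $\oscill{2r_n}f$ tend to $0$, and so does $\meansym{r_n}f$. Hence each term in the right-hand side of the displayed inequality tends to $0$, and the conclusion $\|\botthom(f) - \botthom[x_n](f)\| \to 0$ follows immediately.

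I do not anticipate any real obstacle here: the corollary is essentially a restatement of the quantitative estimate of Proposition~\ref{prop_Botthomandchangeofbasepoint} as a qualitative continuity statement, once one notes that $\salg = C_0(\rbbd)$ consists of uniformly continuous functions (giving the control on $\oscill{r}f$) and that for $f \in C_0(\rbbd)$ the ratios controlling $\meansym{r}f$ vanish at $0$ and at $\infty$. The only small point worth mentioning is that the estimate holds uniformly in $x \in M$, so the supremum norm on $\pialg(M)$ is controlled by quantities depending only on $f$ and $r_n$, not on the individual fibers.
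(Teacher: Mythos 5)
Your proof is correct and follows exactly the same route as the paper, which simply cites Proposition~\ref{prop_Botthomandchangeofbasepoint} and Lemma~\ref{lem:Omega-Theta-lim-r}; you have merely written out the one-line limit argument those two citations encode.
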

\begin{proof}
	This follows from Proposition~\ref{prop_Botthomandchangeofbasepoint} and Lemma~\ref{lem:Omega-Theta-lim-r}. 
\end{proof}

The following remark is not essential in the proofs of our main theorems. We will only use it in Remark~\ref{rmk:AofM-complemented}. 

\begin{rmk}\label{rmk:botthom-alternative}
	We sketch an alternative, geometric description of the Bott homomorphisms that does not require functional calculus. 
	Let $\{\varhexstar\}$ denote a one-point space. Then we have an embedding 
	\begin{align*}
	\salg &~\hookrightarrow \Pi_{\operatorname{b}} (\{\varhexstar\}) & = & \prod_{t \in [0,\infty) } \cliffc(t \rbbd) \\
	&& \cong &  \left\{ \sigma \in \prod_{t \in [0,\infty) } (\cbbd \oplus \cbbd) \colon \sigma(0) \in \cbbd \cdot 1_{\cbbd \oplus \cbbd} \right\} \\ 
	f &~ \mapsto && \Big( f(t), f(-t) \Big)_{t \in [0,\infty) }
	\end{align*}
	where the identification $\cliffc( \rbbd) \cong \cbbd \oplus \cbbd$ is so that the vector $1 \in \rbbd$ corresponds to $(1,-1)$. Note that it follows from Remark~\ref{rmk:Clifford-even-odd-functional-calculus} that 
	\[
	\Big( f(t), f(-t) \Big)_{t \in [0,\infty) } = \Big( f( \cliffmult[\varhexstar](t) ) \Big)_{t \in [0,\infty) } \; .
	\]
	On the other hand, fixing a base point $x_0 \in M$ as before, we define a continuous map
	\[
	\dist_{x_0} \colon M \times [0, \infty) \to [0, \infty) \, , \quad (x,t) \mapsto \sqrt{ d(x, x_0)^2 + t^2 } \; .
	\]
	Observe that $\dist_{x_0} (x , t) = \| \cliffmult (x , t) \| $. For each $(x,t) \in M \times [0, \infty)$, we thus let 
	\[
		\tau_{x_0}(x,t) \colon \dist_{x_0} (x , t) \cdot \rbbd \to \hhil_{x} M \oplus t \rbbd
	\]
	be the linear isometric embedding that maps $\dist_{x_0} (x , t)$ to $\cliffmult (x , t)$. Notice that $\tau_{x_0}(x,t)$ is the embedding of the trivial vector space when $\dist_{x_0} (x , t) = 0$. 
	Together they make up a $*$\-/homomorphism
	\[
	B_{x_0} \colon \Pi (\{\varhexstar\}) \to \Pi (M) \, , \quad \sigma \mapsto \Big( \left(\cliffc \big( \tau_{x_0} (x,t) \big) \circ \sigma \circ \dist_{x_0} \right) (x,t)  \Big)_{(x,t) \in M \times [0, \infty)}
	\]
	where 
	\[
	\cliffc \big( \tau_{x_0} (x,t) \big) \colon \cliffc (\dist_{x_0}(x,t) \cdot \rbbd) \to \cliffc \left( \hhil_{x} M \oplus t \rbbd \right)
	\]
	is the induced $*$\-/homomorphism between the Clifford algebras. Notice that when $\dist_{x_0} (x , t) = 0$, then $t=0$ and the map $\cliffc \big( \tau_{x_0} (x,t) \big)$ can be identified with the embedding of $\cbbd$ into $\cliffc \left( \hhil_{x} M \oplus t \rbbd \right) = \cliffc \left( \hhil_{x} M \right)$ as the scalars. This $*$\-/homomorphism obviously maps $\Pi_{\operatorname{b}} (\{\varhexstar\})$ to $\Pi_{\operatorname{b}} (M)$ and is characterized by the equation
	\[
		B_{x_0} (\cliffmult[\varhexstar]) = \cliffmult \; .
	\]
	Unpacking the definitions, we see that the Bott homomorphism $\botthom$ is just the restriction of $B_{x_0}$ to $\salg \subset \Pi_{\operatorname{b}} (\{\varhexstar\})$. 
	
	Incidentally, the linear map $\tau_{x_0}(x,t)$ may be seen as the ``adjoint of the derivative'' of $\dist_{x_0}$ at $(x,t)$, because it can be characterized by the equations
	\[
	\left\langle \tau_{x_0}(x,t) \left( s \right) , \log_{(x,t)} (x', t')  \right\rangle_{\hhil_x \oplus \rbbd} = \lim_{r \to 0} \frac{1}{r}  \left\langle  s  , \dist_{x_0} (x'_r,t'_r) - \dist_{x_0} (x,t)   \right\rangle_{\rbbd}
	\]
	for all $(x',t') \in M \times [0, \infty)$, 
	where we write 
	\[
	\log_{(x,t)} (x', t') = \left( \log_{x} (x') , t'-t \right) \in \hhil_{x} M \oplus \rbbd \; ,
	\]
	\[
	x'_r = [x, x'](r) \quad \text{and} \quad t'_r = t + r(t'-t) \; ,
	\]
	using the notation introduced in Construction~\ref{constr:log-map}. This equation can be proved by checking the case when $s = \dist_{x_0} (x,t)$ and observing that the angle between the geodesic segments $[x, x_0]$ and $[x, x']$ is the limit of the comparison angle $\widetilde{\angle}(x_0, x, x'_r)$. 
\end{rmk}

We are now ready to introduce the main definition of this section. 

\begin{defn}\label{defn_AofM}
	Let $\mnf$ be a {\hhs}. The algebra $\aalg(\mnf)$ is the $\cstar$-subalgebra of $\pialg(M)$ generated by 
	\[
	\{ \botthom(\ffunc) \colon \xpt_0 \in \mnf,\ \ffunc \in \salg \ \} \; .
	\]
	We also define $ \AevenofM $ to be the $\cstar$-subalgebra of $\aalg(\mnf)$ generated by 
	\[
	\{ \botthom(\ffunc) \colon \xpt_0 \in \mnf,\ \ffunc \in \salg_\text{ev} \ \} \; . 
	\]
\end{defn}

There are two main reasons why we construct the $C^*$-algebra $\AofM$ this way: 
\begin{enumerate}
	\item It carries a natural and well-behaved action by $\isomgrp(M)$. 
	\item Its $K$-theory can be at least partially computed. 
\end{enumerate}
These topics will be discussed in the following sections. For the rest of this section, we establish some basic facts about $\AofM$.

\begin{prop}\label{prop_AofMisseparable}
	$\aalg(\mnf)$ is separable whenever $M$ is separable.
\end{prop}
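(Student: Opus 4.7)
The plan is to exhibit a countable subset of $\aalg(\mnf)$ whose generated $C^*$-subalgebra is all of $\aalg(\mnf)$. The natural candidate is obtained by discretizing both parameters in the generating set $\{\botthom(\ffunc) : \xpt_0 \in \mnf,\ \ffunc \in \salg\}$: pick a countable dense subset $\{x_k\}_{k \in \nbbd}$ of $M$ (using separability of $M$) and a countable dense subset $\{f_n\}_{n \in \nbbd}$ of $\salg = C_0(\rbbd)$ (which is separable as a $C^*$-algebra). Let $B$ denote the $C^*$-subalgebra of $\aalg(\mnf)$ generated by the countable set $\{\botthom[x_k](f_n) : k, n \in \nbbd\}$. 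Clearly $B$ is separable, so it suffices to show $B = \aalg(\mnf)$.

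To establish this equality, I would show every generator $\botthom[x_0](f)$ (for arbitrary $x_0 \in M$ and $f \in \salg$) lies in $B$ by approximating it in norm. Choose sequences $x_{k_j} \to x_0$ in $M$ and $f_{n_j} \to f$ in $\salg$. The triangle inequality yields
\[
\left\| \botthom[x_0](f) - \botthom[x_{k_j}](f_{n_j}) \right\| \leq \left\| \botthom[x_0](f) - \botthom[x_{k_j}](f) \right\| + \left\| \botthom[x_{k_j}](f - f_{n_j}) \right\|.
\]
The first term tends to $0$ by Corollary~\ref{cor_botthomiscontwrtbasepoint} (continuity of the Bott homomorphism in the base point), and the second term is bounded by $\|f - f_{n_j}\|$ since each $\botthom[x_{k_j}]$ is a $*$\-/homomorphism and hence norm semi\-/decreasing. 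Thus $\botthom[x_0](f) \in B$, and since $B$ is a $C^*$-algebra containing all generators of $\aalg(\mnf)$, we conclude $B = \aalg(\mnf)$.

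There is no real obstacle here: the proof is essentially a routine two-parameter approximation, with the only substantive input being the continuity statement of Corollary~\ref{cor_botthomiscontwrtbasepoint} (itself resting on Proposition~\ref{prop_Botthomandchangeofbasepoint} and Lemma~\ref{lem:Omega-Theta-lim-r}). The mildly subtle point worth mentioning is that one must approximate in both variables simultaneously and use the $*$\-/homomorphism property to handle the $\salg$-variable; the $M$-variable is handled via the nontrivial continuity estimate from the previous proposition.
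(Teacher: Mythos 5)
Your proof is correct and matches the paper's argument, which likewise discretizes both the base point (via separability of $M$) and the function (via separability of $\salg$) and invokes Corollary~\ref{cor_botthomiscontwrtbasepoint} to pass to the limit in the base-point variable. You merely spell out the routine two-parameter triangle-inequality estimate that the paper leaves implicit.
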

\begin{proof}
	By the separability of $\mnf$ and $\salg$, there are countable dense subsets $\xsp$ and $\fset$ of $\mnf$ and $\salg$, respectively. It follows then from Lemma~\ref{cor_botthomiscontwrtbasepoint} and functional calculus that $\{ \botthom(\ffunc) \ |\ \xpt_0 \in \xsp,\ \ffunc \in \fset \}$ is a countable dense subset of $\AofM$. 
\end{proof}

We observe that by Proposition~\ref{prop_Botthomomorphismandevenpart}, $\AevenofM$ consists of scalar functions and is thus in the center of $\AofM$. In fact, we have the following proposition.  

\begin{prop}\label{prop_AofMisAevenofMalgebra}
	Let $\widehat{\AevenofM}$ be the spectrum of the central subalgebra $\AevenofM$. Then $\AofM$ is an $\widehat{\AevenofM}$-{\cstaralg}. 
\end{prop}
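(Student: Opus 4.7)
The goal is to produce a continuous map $\widehat{Z(\AofM)} \to \widehat{\AevenofM}$. My plan is to exploit Gelfand duality applied to the inclusion $\iota \colon \AevenofM \hookrightarrow Z(\AofM)$, which is already available since the excerpt observes (via Proposition~\ref{prop_Botthomomorphismandevenpart}) that every element of $\AevenofM$ is a scalar-valued function on $M \times [0,\infty)$ and is hence central in $\AofM$. For the restriction of characters $\chi \mapsto \chi|_{\AevenofM}$ to produce a total (rather than partial) continuous map on spectra, the crucial step will be to prove that $\iota$ is \emph{non-degenerate}, i.e., that $\AevenofM \cdot \AofM$ is dense in $\AofM$.

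My approach to non-degeneracy will be to exhibit an explicit approximate unit for $\AofM$ lying inside $\AevenofM$. Fix any basepoint $\xpt_0 \in M$ and choose even bump functions $\psi_n \in \salg_\text{ev}$ valued in $[0,1]$ with $\psi_n \equiv 1$ on $[-n,n]$ and compact support. Setting $e_n = \botthom(\psi_n)$, Remark~\ref{rmk:Clifford-even-odd-functional-calculus} together with Equation~\eqref{eq:AevenofM-distance} gives
\[
e_n(\xpt,t) \;=\; \psi_n\!\left(\sqrt{\dist(\xpt_0,\xpt)^2 + t^2}\right),
\]
so $e_n \equiv 1$ on $\{(\xpt,t) : \dist(\xpt_0,\xpt)^2 + t^2 \leq n^2\}$. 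For any generator $\botthom[y_0](g)$ of $\AofM$ with $g \in \salg$ supported in $[-R,R]$, the same functional-calculus identity forces $\botthom[y_0](g)(\xpt,t) = 0$ outside $\{\dist(y_0,\xpt)^2 + t^2 \leq R^2\}$, a region which by the triangle inequality is contained in $\{e_n \equiv 1\}$ once $n$ is large enough. Hence $e_n \cdot \botthom[y_0](g) = \botthom[y_0](g)$ for such $n$; density of compactly supported functions in $\salg$, the bound $\|e_n\| \leq 1$, and continuity of multiplication then promote $\{e_n\}$ to an approximate unit on all of $\AofM$, and a fortiori on $Z(\AofM)$.

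With non-degeneracy established, any character $\chi$ of $Z(\AofM)$ satisfies $\chi(e_n)\chi(a) = \chi(e_n a) \to \chi(a)$ for any $a \in Z(\AofM)$ with $\chi(a) \neq 0$, so $\chi(e_n) \to 1$ and in particular $\chi|_{\AevenofM} \neq 0$, making it a character of $\AevenofM$. The restriction map $\chi \mapsto \chi|_{\AevenofM}$ is manifestly continuous in the weak-$*$ topologies on the two spectra, yielding the desired continuous map $\widehat{Z(\AofM)} \to \widehat{\AevenofM}$ and endowing $\AofM$ with the structure of an $\widehat{\AevenofM}$-$\cstaralg$. The step I expect to be the main (if minor) obstacle is the support estimate for $\botthom[y_0](g)$; this hinges on the spectrum of the self-adjoint element $\cliffmult[y_0](\xpt,t)$ being $\{\pm\sqrt{\dist(y_0,\xpt)^2 + t^2}\}$, which follows from the identity $\cliffmult[y_0](\xpt,t)^2 = \dist(y_0,\xpt)^2 + t^2$ implicit in Equation~\eqref{eq:AevenofM-distance}, after which everything reduces to routine bookkeeping.
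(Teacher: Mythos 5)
Your proposal is correct and, like the paper, reduces the statement to showing that the inclusion $\AevenofM \hookrightarrow Z(\AofM)$ is non-degenerate. Where you diverge is in \emph{how} you verify non-degeneracy. The paper uses a one-line factorization argument: every $f \in \salg$ factors as $f = f_1 f_2$ with $f_1$ even, so every generator $\botthom[x_0](f) = \botthom[x_0](f_1)\,\botthom[x_0](f_2)$ visibly lies in $\AevenofM \cdot \AofM$, which is a closed ideal (by centrality of $\AevenofM$) and hence is all of $\AofM$. You instead construct an explicit approximate unit $(e_n)$ in $\AevenofM$ using even bump functions, estimate supports via the functional-calculus formula for $\cliffmult[y_0]$ and the triangle inequality, and check $e_n \cdot \botthom[y_0](g) = \botthom[y_0](g)$ for large $n$. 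Both are sound; yours is more hands-on and has the small bonus of exhibiting the approximate unit concretely (and you are also a bit more careful than the paper in noting that a density statement for $\AofM$ passes to $Z(\AofM)$, and that the resulting character restriction is everywhere nonzero), while the paper's factorization trick is shorter. The support estimate you flag as the potential obstacle is indeed correct and follows exactly as you anticipate from $\cliffmult[y_0](\xpt,t)^2 = d(y_0,\xpt)^2 + t^2$.
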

\begin{proof}
	Since we already know that $\AevenofM$ is a sub-$\cstar$-algebra of the center $Z(\AofM)$, to get a continuous map from $\widehat{Z(\AofM)}$ to $\widehat{\AevenofM}$, it suffices to show that $\AevenofM \cdot \AofM$ is dense in $\AofM$, which follows from the definition of $\AofM$ and the fact that every $\ffunc\in\salg$ can be written as a product $\ffunc = \ffunc_1 \ffunc_2$ where $\ffunc_1 \in \salg_\text{ev}$ and $ \ffunc_2 \in \salg$.
\end{proof}

In the special case when the {\hhs} $M$ is a finite\-/dimensional Riemannian manifold (cf.~Example~\ref{ex:Riemannian-Hilbertian}), we have rather concrete identifications for $\AevenofM$ and $\AofM$. 

\begin{lem}\label{lem:AofMeven-fin-dim}
	Let $\mnf$ be a complete, connected and simply connected finite\-/dimensional Riemannian manifold with non-positive sectional curvature. Then $\AevenofM$ coincides with $C_0(M \times [0,\infty))$ as $C^*$-subalgebras of $\Pi_{\operatorname{b}} (M)$, where $C_0(M \times [0,\infty))$ embeds into $\Pi_{\operatorname{b}} (M)$ as scalar functions on $M \times [0,\infty)$. 
\end{lem}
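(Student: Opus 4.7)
The plan is to invoke the Stone--Weierstrass theorem for locally compact Hausdorff spaces, applied to $X := M \times [0,\infty)$, first establishing the inclusion $\AevenofM \subseteq C_0(X)$ and then density.

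For the inclusion: by Proposition~\ref{prop_Botthomomorphismandevenpart} together with Equation~\eqref{eq:AevenofM-distance}, each generator $\botthom(\ffunc)$ with $\ffunc \in \salg_\text{ev}$ is the scalar function $(x,t) \mapsto \gfunc(d(\xpt_0,x)^2 + t^2)$, where $\gfunc \in C_0([0,\infty))$ satisfies $\ffunc(s) = \gfunc(s^2)$. This function is manifestly continuous. Because $M$ is a complete finite\-/dimensional Riemannian manifold, the Hopf--Rinow theorem makes $M$ a proper metric space; hence so is $X$. Consequently $d(\xpt_0,x)^2 + t^2 \to \infty$ whenever $(x,t)$ leaves a compact set, forcing $\gfunc(d(\xpt_0,x)^2 + t^2) \to 0$. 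Each generator therefore lies in $C_0(X)$, and norm-closedness of $C_0(X)$ extends this to all of $\AevenofM$.

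For density, I would verify the two hypotheses of Stone--Weierstrass for $C_0(X)$. Non-vanishing is immediate: taking $\ffunc(s) = e^{-s^2}$ produces the strictly positive function $(x,t) \mapsto e^{-d(\xpt_0,x)^2 - t^2}$. For point separation, given distinct $(x_1,t_1), (x_2,t_2) \in X$, I claim that at least one of the choices $\xpt_0 = x_1$ or $\xpt_0 = x_2$ makes the function $(x,t) \mapsto d(\xpt_0,x)^2 + t^2$ take distinct values at the two points. Indeed, equality of values under $\xpt_0 = x_1$ forces $t_1^2 - t_2^2 = d(x_1,x_2)^2$, while equality under $\xpt_0 = x_2$ forces $t_2^2 - t_1^2 = d(x_1,x_2)^2$; if both held, adding them would give $d(x_1,x_2) = 0$ and $t_1 = t_2$, contradicting distinctness. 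Composing with any $\gfunc \in C_0([0,\infty))$ that separates the two (distinct) output values then produces a separating element of $\AevenofM$. Stone--Weierstrass now supplies density, and since $\AevenofM$ is already norm-closed, the equality $\AevenofM = C_0(X)$ follows.

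There is no substantive obstacle here. The only genuinely geometric input is the properness of $M$, which rests on finite-dimensionality via Hopf--Rinow; in infinite dimensions this step fails, which is precisely why the clean identification $\AevenofM = C_0(M \times [0,\infty))$ is confined to the finite-dimensional Riemannian setting and why a subtler analysis of $\AevenofM$ is needed for general {\hhs}s.
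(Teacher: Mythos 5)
Your proof is correct and follows essentially the same route as the paper: establish $\AevenofM \subseteq C_0(M \times [0,\infty))$ via properness of $(x,t) \mapsto d(x_0,x)^2 + t^2$, then apply Stone--Weierstrass using point separation by even generators. Your version is a touch more careful in explicitly invoking Hopf--Rinow for properness and checking the non-vanishing hypothesis, and phrases the separation argument symmetrically in $x_1, x_2$ rather than by ordering $t_1 \leq t_2$, but the underlying argument is the same.
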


\begin{proof}
	It follows from Proposition~\ref{prop_Botthomomorphismandevenpart} that $\left(\cliffmult\right) ^2$ is a continuous and proper function from $M \times [0,\infty)$ to $[0,\infty) \subset\cbbd$ for any $x_0 \in M$; thus $\botthom(f)$ falls in $C_0(M \times [0,\infty))$ for any even function $f \in \salg$. Therefore we have $\AevenofM \subset C_0(M \times [0,\infty))$. 
	
	To prove the other direction, we first claim that the generating set
	\[
		\left\{ \beta_{x_0} (f) \colon f \in \salg_\text{ev}, x_0 \in M \right\}
	\]
	of $\AevenofM$, when viewed as a subset of $C_0(M \times [0,\infty))$, separates points. Indeed, for any two different points $(x_1, t_1)$ and $(x_2 , t_2)$ in $M \times [0,\infty)$ with $t_1 \leq t_2$, we have
	\[
	\dist(\xpt_1,\xpt_1)^2 + t_1^2 = t_1^2 <  \dist(\xpt_1,\xpt_2)^2 + t_2^2
	\]
	and thus by Equation~\eqref{eq:AevenofM-distance}, we can choose $f \in \salg_\text{ev}$ so that 
	\[
	\beta_{x_1} (f) (x_1, t_1) \not= \beta_{x_1} (f) (x_2, t_2) \; .
	\]
	
	Hence the Stone-Weierstrass theorem implies $\AevenofM = C_0(M \times [0,\infty))$. 
\end{proof}

\begin{prop}\label{prop_AofM-Euclidean}
	Let $V$ be a finite\-/dimensional Euclidean space, viewed as a {\hhs}, with each tangent cone identified canonically with $V$ itself. Then inside $\Pi_{\operatorname{b}} (V)$, the $C^*$-subalgebra $\AofM[V]$ coincides with 
	\begin{equation}\label{eq:prop_AofM-Euclidean}
		\left\{ f \in C_0 \big( V \times [0, \infty) , \cliffc (V \oplus \rbbd ) \big) \colon f( x , 0 ) \in \cliffc (V ) \text{ for all } x \in V \right\} \; ,
	\end{equation}
	where the embedding $\cliffc (V ) \hookrightarrow \cliffc (V \oplus \rbbd )$ is induced from embedding $V$ into the first factor of $V \oplus \rbbd $.  
\end{prop}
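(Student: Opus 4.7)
Denote the algebra on the right-hand side of~\eqref{eq:prop_AofM-Euclidean} by $\mathcal{B}$, and note that $\mathcal{B}$ is a $C^*$-subalgebra of $\Pi_{\operatorname{b}}(V)$ once we identify $\cliffc(\hhil_x V \oplus t \rbbd)$ with $\cliffc(V \oplus \rbbd)$ for $t>0$ (via the canonical identification of $\hhil_x V$ with $V$) and with $\cliffc(V)\subset \cliffc(V \oplus \rbbd)$ for $t = 0$. The plan is to prove two inclusions. The forward inclusion $\AofM[V] \subseteq \mathcal{B}$ will follow directly from continuous functional calculus on the Bott generators; the reverse inclusion will be obtained by verifying that both algebras are $(V \times [0,\infty))$-$C^*$-algebras (in the sense recalled in Section~\ref{sec:prelim}) and that $\AofM[V]$ surjects onto every fiber of $\mathcal{B}$.

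For the forward inclusion, it suffices to show that each generator $\botthom[x_0](f)$ lies in $\mathcal{B}$. Under the identifications above, $\botthom[x_0](f)(x,t) = f\bigl((x-x_0,\,t)\bigr)$ computed inside $\cliffc(V \oplus \rbbd)$. Continuity of $(x,t)\mapsto(x-x_0,t)$ as a self-adjoint element of $\cliffc(V\oplus\rbbd)$, together with continuity of functional calculus, gives continuity of $\botthom[x_0](f)$ on $V \times [0,\infty)$. Vanishing at infinity follows from $\|(x-x_0,t)\| = \sqrt{\|x-x_0\|^2+t^2} \to \infty$ as $(x,t)\to\infty$ and $f \in \salg = C_0(\rbbd)$. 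Finally, at $t=0$ the vector $(x-x_0,0)$ lies in $V$, so $f((x-x_0,0))$ lies in the unital $C^*$-subalgebra generated by this vector, which is contained in $\cliffc(V)$. Hence $\botthom[x_0](f) \in \mathcal{B}$.

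For the reverse inclusion, Lemma~\ref{lem:AofMeven-fin-dim} identifies $\AevenofM[V]$ with $C_0(V\times[0,\infty))$ as subalgebras of $\Pi_{\operatorname{b}}(V)$, and Proposition~\ref{prop_AofMisAevenofMalgebra} places this inside the center of $\AofM[V]$. Thus $\AofM[V]$ becomes an $(V\times[0,\infty))$-$C^*$-subalgebra of $\mathcal{B}$, and by the fiber criterion it is enough to check that the quotient map $\AofM[V] \to \mathcal{B}_{(x_0,t_0)}$ is surjective at every point. One computes directly that $\mathcal{B}_{(x_0,t_0)} = \cliffc(V\oplus\rbbd)$ for $t_0 > 0$ and $\mathcal{B}_{(x_0,0)} = \cliffc(V)$. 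On the $\AofM[V]$ side, evaluation at $(x_0,t_0)$ sends the generator $\botthom[x_1](f)$ to $f((x_0-x_1,t_0))$. Using Remark~\ref{rmk:Clifford-even-odd-functional-calculus}, as $f$ varies over odd elements of $\salg$ and $x_1$ over $V$, these evaluations produce every $\cbbd$-scalar multiple of every vector of the form $(x_0-x_1,t_0)$. Taking algebra sums, one recovers the whole $\cbbd$-span of such vectors in $\cliffc(V\oplus\rbbd)$, which equals $V\oplus\rbbd$ when $t_0 > 0$ (since the affine hyperplane $V\times\{t_0\}$ spans $V\oplus\rbbd$ as a real vector space) and equals $V$ when $t_0 = 0$ (since $\{x_0 - x_1 : x_1 \in V\} = V$). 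Since the Clifford algebra is generated as a $C^*$-algebra by its vectors, we obtain surjection onto the fiber in both cases.

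The main technical point is the fiber calculation at $t_0 > 0$: one must verify that the vectors $\{(x_0-x_1,t_0) : x_1 \in V\}$, together with their scalar rescalings provided by odd-functional calculus, $\cbbd$-linearly span $V\oplus\rbbd$ inside $\cliffc(V\oplus\rbbd)$. This is a short affine-linear-algebra observation\textemdash any $(u,s)$ with $s \neq 0$ is $(s/t_0)\cdot(ut_0/s,\,t_0)$, while $(u,0) = (u,t_0)-(0,t_0)$\textemdash but it is the step where the specific form of the Bott generators interacts nontrivially with the description of $\mathcal{B}$. Everything else is bookkeeping with functional calculus and the general fiberwise characterization of $X$-$C^*$-subalgebras.
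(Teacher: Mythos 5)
Your proof follows the same strategy as the paper's: both establish that $\AofM[V]$ and the algebra $\mathcal{B}$ in~\eqref{eq:prop_AofM-Euclidean} are $(V\times[0,\infty))$-$C^*$-algebras over the common central subalgebra $\AevenofM[V]\cong C_0(V\times[0,\infty))$ (via Lemma~\ref{lem:AofMeven-fin-dim} and Proposition~\ref{prop_AofMisAevenofMalgebra}) and then compare fibers at each $(x_0,t_0)$, using the observation that the Clifford operators $C_{x_1}(x_0,t_0)=(x_0-x_1,t_0)$ sweep out $V\times\{t_0\}$ and thus generate $\cliffc(V\oplus t_0\rbbd)$. The one substantive addition in your write-up is the explicit verification of the containment $\AofM[V]\subseteq\mathcal{B}$ (continuity, decay at infinity, and the $t=0$ boundary condition for the Bott generators), which the paper's proof leaves implicit but which is needed a priori in order to invoke the fiber criterion recalled in Section~\ref{sec:prelim}; your version is therefore the more complete of the two.
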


\begin{proof}
	Let us denote the $C^*$-algebra in Equation~\eqref{eq:prop_AofM-Euclidean} by $\balg$. By Lemma~\ref{lem:AofMeven-fin-dim}, the $C^*$-subalgebra $\AevenofM[V]$ is contained in both $\AofM[V]$ and $\balg$. It also makes both into $( V \times [0, \infty) )$-$C^*$-algebras (cf.\,Proposition~\ref{prop_AofMisAevenofMalgebra}). 
	
	Now for $(x, t) \in V \times [0,\infty)$, we see that  
	\[
		\left\{ C_{x_0} (x,t) \colon x_0 \in V \right\} = V \times \{ t \} \subset V \oplus t \rbbd \subset \cliffc (V \oplus t \rbbd ) \; .
	\]
	It follows that as a $( V \times [0, \infty) )$-$C^*$-algebra, the fiber of $\AofM[V]$ at $(x,t)$ is $\cliffc (V \oplus t \rbbd )$, which is the same as that of $\balg$. 
	Therefore $\AofM[V] = \balg$ as $C^*$-subalgebras of $\pialg(V)$. 
\end{proof}

\begin{cor}\label{cor:AofM-fin-dim}
	We write $\mtrxalg{n}$ for the algebra of $n \times n$-matrices. Let $k$ be a natural number. Then we have the isomorphisms
	\begin{align*}
		\AofM[\rbbd^{2k}] &~\cong \cz(\rbbd^{2k}\times\rbbd, \mtrxalg{2^k}) \; , \\
		\AofM[\rbbd^{2k+1}] &~\cong \{ \ffunc \in \cz(\rbbd^{2k+1} \times [0,\infty), \mtrxalg{2^{k+1}} ) \colon \ffunc(\rbbd^{2k+1} \times \{0\}) \subset \mtrxalg{2^{k}} \oplus \mtrxalg{2^{k}} \ \} \; ,
	\end{align*}
	where $\mtrxalg{2^{k}} \oplus \mtrxalg{2^{k}}$ embeds into $\mtrxalg{2^{k+1}}$ diagonally. 
\end{cor}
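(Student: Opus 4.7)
The plan is to apply Proposition~\ref{prop_AofM-Euclidean} and then make the two formulas explicit using the well-known structure of the complex Clifford algebras of finite-dimensional Euclidean spaces. Specifically, Proposition~\ref{prop_AofM-Euclidean} already identifies $\AofM[V]$, for any finite-dimensional Euclidean $V$, with the $C^*$-algebra
\[
	\left\{ f \in C_0 \big( V \times [0, \infty) , \cliffc (V \oplus \rbbd ) \big) \colon f( x , 0 ) \in \cliffc (V ) \text{ for all } x \in V \right\} \; ,
\]
so all that remains is to make the Clifford algebras and the inclusion $\cliffc(V) \hookrightarrow \cliffc(V \oplus \rbbd)$ concrete in each parity.

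The second step uses the standard periodicity $\cliffc(\rbbd^{2k}) \cong \mtrxalg{2^k}$ and $\cliffc(\rbbd^{2k+1}) \cong \mtrxalg{2^k} \oplus \mtrxalg{2^k}$ (which are straightforward consequences of the defining relation~\eqref{cliffordrelation} and the decomposition $\cliffc(\rbbd^{n+1}) \cong \cliffc(\rbbd^{n}) \otimes \cliffc(\rbbd)$). Under these identifications, the unital $*$-homomorphism $\cliffc(\rbbd^n) \hookrightarrow \cliffc(\rbbd^{n+1})$ induced by an isometric embedding $\rbbd^n \hookrightarrow \rbbd^{n+1}$ becomes the diagonal inclusion, both in the case $\mtrxalg{2^k} \hookrightarrow \mtrxalg{2^k}\oplus \mtrxalg{2^k}$ (when $n = 2k$) and in the case $\mtrxalg{2^k}\oplus \mtrxalg{2^k} \hookrightarrow \mtrxalg{2^{k+1}}$ (when $n = 2k+1$); the former follows because $\cliffc(\rbbd^{2k}) \cong \mtrxalg{2^k}$ is simple and must split to an isomorphism onto each of the two central factors of $\cliffc(\rbbd^{2k+1})$, and the latter follows by examining the minimal central idempotents. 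In the odd case $V = \rbbd^{2k+1}$, substituting these identifications into Proposition~\ref{prop_AofM-Euclidean} directly yields the stated formula for $\AofM[\rbbd^{2k+1}]$.

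In the even case $V = \rbbd^{2k}$, the same substitution presents $\AofM[\rbbd^{2k}]$ as the set of $C_0$-functions $f \colon \rbbd^{2k}\times [0,\infty) \to \mtrxalg{2^k}\oplus \mtrxalg{2^k}$ whose values at $t=0$ lie on the diagonal. Writing $f = (f_+, f_-)$, I then perform a gluing: the map
\[
	F(x, t) = \begin{cases} f_+(x, t), & t \geq 0, \\ f_-(x, -t), & t \leq 0, \end{cases}
\]
is well defined and continuous precisely because of the matching condition $f_+(x,0) = f_-(x,0)$, belongs to $C_0(\rbbd^{2k}\times \rbbd, \mtrxalg{2^k})$, and this assignment $f \mapsto F$ is clearly a $*$-isomorphism with inverse obtained by restricting $F$ to $t \geq 0$ and to $t \leq 0$ (after reflection). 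This proves the even-case formula. The only technical point is the identification of the Clifford inclusion with the diagonal embedding, but this is completely standard; once that is settled, the odd case is immediate and the even case is an elementary gluing, so I do not anticipate any serious obstacle.
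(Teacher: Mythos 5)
Your proof is correct and takes essentially the same route as the paper's, whose proof is a one-line citation of Proposition~\ref{prop_AofM-Euclidean} together with the standard identifications $\cliffc(\rbbd^{2k})\cong\mtrxalg{2^k}$ and $\cliffc(\rbbd^{2k+1})\cong\mtrxalg{2^k}\oplus\mtrxalg{2^k}$. You have merely supplied the details the paper leaves implicit, most notably the identification of the induced unital embedding with a diagonal one (so that, after conjugation, the boundary condition becomes the equality of the two components) and the unfolding of $\rbbd^{2k}\times[0,\infty)$ with the diagonal boundary condition into $\rbbd^{2k}\times\rbbd$ in the even case.
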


\begin{proof}
	These follow from Proposition~\ref{prop_AofM-Euclidean} using the identifications $\cliffc (\rbbd^{2k} ) \cong \mtrxalg{2^k}$ and $\cliffc (\rbbd^{2k+1} ) \cong \mtrxalg{2^k} \oplus \mtrxalg{2^k} $. 
\end{proof}

\begin{rmk} \label{rmk:AofM-fin-dim-graded}
	For a finite\-/dimensional Euclidean space $V$, the $C^*$-algebra $\AofM[V]$ is isomorphic to $\mathscr{SC}(V)$ used in \cite[\S 2, 3]{higsonkasparovtrout}, which is defined as the graded tensor product $\salg \widehat{\otimes} C_0 \big( V  , \cliffc V   \big)$, where $C_0 \big( V , \cliffc (V  ) \big)$ inherits the grading from $\cliffc V$ and $\salg$ is graded by even and odd functions. This follows from the facts that $\cliffc \rbbd \widehat{\otimes} \cliffc V \cong \cliffc (V \oplus \rbbd)$ and  
	\[
		\salg \cong \left\{ f\in C_0 \left([0,\infty) , \cliffc \rbbd \right) \colon f(0) \in \cbbd \right\} \; .
	\]
	Moreover, under this identification, it is clear that our Bott homomorphism coincides with the $*$\-/homomorphism $\beta \colon \mathscr{SC}(\{0\}) \to \mathscr{SC}(V)$ constructed in \cite[\S 2, 5]{higsonkasparovtrout}.
\end{rmk}

\begin{prop}\label{prop_AofM-fin-dim}
	Let $M$ be a complete, connected and simply connected finite\-/dimensional Riemannian manifold with non-positive sectional curvature and let $V$ denote its tangent space at a point $x_0$, viewed as an Euclidean vector space. Then there is an isomorphism 
	\[
		\Phi \colon \AofM[V] \to \AofM
	\]
	which intertwines the Bott homomorphisms in the sense that $\Phi \circ \botthom[0]^{V} = \botthom^{M}$. 
\end{prop}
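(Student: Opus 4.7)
The strategy is to establish a fiberwise description of $\AofM$ completely analogous to Proposition~\ref{prop_AofM-Euclidean}, and then use the Cartan-Hadamard diffeomorphism $\psi := \exp_{x_0} \colon V \to M$ together with Levi-Civita parallel transport along radial geodesics emanating from $x_0$ to identify this section algebra with $\AofM[V]$.

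For the fiberwise description, I would show that $\AofM$ coincides with the $C^*$-subalgebra of $\pialg(M)$ consisting of all continuous sections $\sigma$ of the Clifford bundle $(x,t) \mapsto \cliffc(T_x M \oplus t \rbbd)$ over $M \times [0,\infty)$ which vanish at infinity and satisfy $\sigma(x, 0) \in \cliffc(T_x M)$. The proof mirrors Proposition~\ref{prop_AofM-Euclidean}: both algebras are $\AevenofM$-subalgebras of $\pialg(M)$, where $\AevenofM \cong C_0(M \times [0,\infty))$ by Lemma~\ref{lem:AofMeven-fin-dim}; the generators $\botthom[x_0](h)$ clearly lie in the section algebra (continuity in $(x,t)$ follows from the continuity of $x \mapsto -\log_x(x_0)$ together with functional calculus, while vanishing at infinity uses $\|\cliffmult[x_0](x,t)\|^{2} = d(x_0,x)^{2} + t^{2}$); and the two sides have the same fibers at each $(x,t)$, since $\{-\log_x(x_0) \colon x_0 \in M\} = T_x M$ by surjectivity of $\exp_x$ (Cartan-Hadamard), which in turn forces the fiber to equal the full Clifford algebra $\cliffc(T_x M \oplus t\rbbd)$.

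Next, for each $v \in V$, let $P_v \colon V = T_{x_0} M \to T_{\psi(v)} M$ denote parallel transport along the geodesic $t \mapsto \exp_{x_0}(tv)$; this is a smoothly varying family of linear isometries. Define $\Phi \colon \pialg(V) \to \pialg(M)$ by
\[
\Phi(\sigma)(x, t) \;=\; \cliffc\!\bigl(P_{\psi^{-1}(x)} \oplus \mathrm{id}_{t\rbbd}\bigr)\bigl(\sigma(\psi^{-1}(x), t)\bigr),
\]
which is an isometric $*$-isomorphism of the ambient algebras. Since $\psi$ is a proper homeomorphism (being metric semi-increasing by Theorem~\ref{thm:Cartan-Hadamard}) and $v \mapsto P_v$ is continuous and fiberwise isometric, $\Phi$ carries the section algebra over $V$ bijectively onto the section algebra over $M$, respecting both vanishing at infinity and the fiber condition at $t=0$. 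Combined with Proposition~\ref{prop_AofM-Euclidean} and the first step, this yields a $C^*$-algebra isomorphism $\Phi \colon \AofM[V] \to \AofM$. For the intertwining property $\Phi \circ \botthom[0]^V = \botthom^M$, the key geometric input is that parallel transport preserves the tangent vector of the geodesic along which it transports, so $P_v(v) = -\log_{\psi(v)}(x_0)$; combining this with the naturality of functional calculus under $*$-isomorphisms of Clifford algebras yields $\Phi(\botthom[0]^V(f))(x,t) = f((-\log_x(x_0), t)) = \botthom^M(f)(x,t)$.

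The main obstacle is the first step: while both candidate $\AevenofM$-subalgebras of $\pialg(M)$ have matching fibers, deducing equality of the $C^*$-subalgebras themselves requires a partition-of-unity argument exploiting the continuity of sections, with $\AevenofM \cong C_0(M \times [0, \infty))$ supplying enough cutoff functions to glue local approximations by generators. This is essentially the content tacitly invoked in the proof of Proposition~\ref{prop_AofM-Euclidean}, and should transfer to the manifold setting without substantial new input.
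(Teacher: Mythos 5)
Your proof is correct and shares the overall strategy of transporting the Euclidean description across the Cartan--Hadamard diffeomorphism, but it constructs the underlying Riemannian vector bundle isomorphism $V \times V \to TM$ over $\exp_{x_0}$ differently from the paper. The paper takes the isometric factor $\varphi_v$ from the polar decomposition $D_v \exp_{x_0} = \varphi_v \lambda_v$ and proves the key identity $\varphi_v(v) = -\log_{\exp_{x_0}(v)}(x_0)$ by observing that $\exp_{x_0}$ being metric semi\-/increasing forces $\lambda_v \geq 1$, while the fact that $D_v\exp_{x_0}$ is isometric along the radial line through $v$ places $v$ in the $1$-eigenspace of $\lambda_v$; hence $\varphi_v(v) = D_v\exp_{x_0}(v)$. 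You instead use Levi-Civita parallel transport $P_v$ along the radial geodesic $s \mapsto \exp_{x_0}(sv)$, where $P_v(v) = -\log_{\exp_{x_0}(v)}(x_0)$ is immediate from the parallelism of geodesic velocities --- a somewhat more geometric argument that bypasses the eigenvalue computation. The two families of linear isometries $V \to T_{\exp_{x_0}(v)}M$ need not coincide unless $M$ is flat, but both satisfy the one identity that matters, and either yields the desired intertwining $*$\-/isomorphism. Organizationally you also diverge slightly: you propose first establishing a standalone Riemannian section-algebra description of $\AofM$ (a manifold analogue of Proposition~\ref{prop_AofM-Euclidean}) and then transporting it across $\Phi$, whereas the paper constructs $\widetilde{\Phi}_{x_0}$ directly on $\pialg(V)$ and verifies $\widetilde{\Phi}_{x_0}^{-1}(\AofM) = \AofM[V]$ in two steps --- the inclusion $\subset$ via the Euclidean Proposition~\ref{prop_AofM-Euclidean}, and the reverse via the $X$-$C^*$-algebra fiber criterion recalled in the preliminaries --- thereby avoiding an extra lemma on the manifold side. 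The ``main obstacle'' you flag at the end is precisely that fiber criterion (matching fibers of a $C^*$-subalgebra with $\AevenofM \cong C_0(M\times[0,\infty))$ acting centrally force equality), and it does go through, so your plan closes.
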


\begin{proof}
	By the Cartan-Hadamard theorem (cf.\ Example~\ref{eg:logarithm-map-Riemannian}), the logarithm map $\log_{x_0}$ and the exponential map $\exp_{x_0}$ are mutually inverse diffeomorphisms between $M$ and $V$. 
	Thus there is a vector bundle isomorphism $D \exp_{x_0}$ from the trivial $V$-bundle on $V$ to the tangent bundle $T M$; namely, for any $(v,w) \in V \times V$, we have $D \exp_{x_0} (v, w) = \left(\exp_{x_0}(v), D_{v} \exp_{x_0} (w) \right)$, where $D_{v} \exp_{x_0} \colon V \to T_{\exp_{x_0}(v)} M = \hhil_{\exp_{x_0}(v)} M$ is the derivative. Its inverse is the similarly defined $D \log_{x_0}$. 
	
	Now the Riemannian metric on $M$ induces inner products on the fibers of both bundles. Applying polar decomposition, we obtain, for each $v \in V$, 
	\[
		D_{v} \exp_{x_0} = \varphi_v \, \lambda_v \; ,
	\]
	where $\lambda_v = \left( \left( D_{v} \exp_{x_0} \right)^* \, \left(D_{v} \exp_{x_0}\right) \right)^{\frac{1}{2}}$ and $\varphi_v \colon V \to T_{\exp_{x_0}(v)} M$ is an isometric linear isomorphism. Thus we obtain a Riemannian vector bundle isomorphism $\varphi = (\varphi_v)_{v\in V}$ from the trivial bundle $V \times V$ to $T M$. 
	
	Observe that $\exp_{x_0}$ maps each line through $0$ to a geodesic in $M$ passing through $x_0$ and is an isometry when restricted to each such line. Thus for any $v \in V$, we have
	\[
		D \exp_{x_0} (v, v) = \left(\exp_{x_0}(v), - D_{v} \exp_{x_0} (-v) \right) =  \left(\exp_{x_0}(v), - \log_{\exp_{x_0} (v) } x_0 \right)  \; , 
	\] 
	that is, $D \exp_{x_0}$ intertwines the Euler vector field $v \mapsto v$ on $V$ and the vector field $x \mapsto - \log_{x} (x_0)$ on $M$ defining the Clifford operator in Definition~\ref{defn_Cliffordmultiplier}. In particular, $D_{v} \exp_{x_0}$ is isometric on $v$. 
	Since $\exp_{x_0}$ is metric semi\-/increasing (cf.\,Construction~\ref{constr:log-map}), so are $D_{v} \exp_{x_0}$ and thus $\lambda_v$. Hence $v$ is in the eigenspace of $1$, the smallest eigenvalue of $\lambda_v$. It follows that $\varphi_v (v) = D_{v} \exp_{x_0} (v) = - \log_{\exp_{x_0} (v) } x_0$, that is, $\varphi$ also intertwines the two aforementioned vector fields. 
	
	The Riemannian vector bundle isomorphism $\varphi$ induces a $*$\-/isomorphism
	\begin{align*}
		\widetilde{\Phi}_{x_0} \colon \Pi (V) &\to \Pi (M) \\
		\sigma &\mapsto \left( \cliffc \left( \varphi_{  \log_{x_0} (x) } \times \idmap_{t \rbbd} \right) \left( \sigma \left(\log_{x_0} (x) , t\right)\right) \right)_{(x,t) \in M \times [0,\infty)}
	\end{align*}
	that restricts to a $*$\-/isomorphism between $\Pi_{\operatorname{b}} (V)$ and $\Pi_{\operatorname{b}} (M)$ and intertwines the Clifford operators: 
	\[
		\widetilde{\Phi}_{x_0} \left( \cliffmult[0]^{V} \right) = \cliffmult^{M} \; .
	\]
	It follows that $\widetilde{\Phi}_{x_0} \circ \botthom[0]^{V} = \botthom^{M}$. Hence it only remains to show that 
	\[
		\widetilde{\Phi}_{x_0} \left( \AofM[V] \right) = \AofM \text{, or equivalently, } \AofM[V]  = \left(\widetilde{\Phi}_{x_0}\right)^{-1} \left(\AofM \right)  \; ,
	\]
	as then we may just define ${\Phi}_{x_0}$ to be the restriction of $\widetilde{\Phi}_{x_0}$ on $\AofM[V]$. 
	
	To this end, we observe that for any $x_1 \in M$ and any $f \in \salg$, we have 
	\begin{align*}
		&~ \widetilde{\Phi}_{x_0}^{-1} \left(\beta_{x_1} (f)\right) \\
		=&~ \left( f \left( \widetilde{\Phi}_{x_0}^{-1} \left( \cliffmult[x_1]^{M} \right) (v,t)  \right) \right)_{(v,t) \in V \times [0,\infty)} \\
		= &~ \left( f \left(   - \varphi_v^{-1} \circ \log_{\exp_{x_0}(v)} \left( x_1 \right) \oplus t \right)   \right)_{(v,t) \in V \times [0,\infty)} \; ,
	\end{align*}
	which falls in $\AofM[V]$ because of Proposition~\ref{prop_AofM-Euclidean} and the fact that the function 
	\[
		V \times [0,\infty) \ni (v,t) \mapsto \left(   - \varphi_v^{-1} \circ \log_{\exp_{x_0}(v)} \left( x_1 \right) \oplus t \right) \in V \oplus \rbbd
	\] 
	is continuous and proper. 
	This implies $\AofM[V]  \supset \left(\widetilde{\Phi}_{x_0}\right)^{-1} \left(\AofM \right)$. 
	
	To prove the other direction, we observe from Lemma~\ref{lem:AofMeven-fin-dim} that $\widetilde{\Phi}_{x_0}$ restricts to a $*$\-/isomorphism between $\AevenofM[V] = C_0(V \times [0,\infty))$ and $\AevenofM[M] = C_0(M \times [0,\infty))$ such that the corresponding maps between $V \times [0,\infty)$ and $M \times [0,\infty)$ are induced by $\exp_{x_0}$ and $\log_{x_0}$. Since $\AofM$ is an $(M \times [0,\infty))$-$C^*$-algebra, $\widetilde{\Phi}_{x_0}^{-1} \left( \AofM \right)$ is a $(V \times [0,\infty))$-$C^*$-algebra, just as $\AofM[V]$. For any $(v, t) \in V \times [0,\infty)$, since 
	\[
		\left\{ C_{x_1} (\exp_{x_0} (v),t) \colon x_1 \in M \right\} = (- T_{\exp_{x_0} (v)} M) \times \{ t \} \; ,
	\]
	which is a generating subset of $\cliffc (\hhil_{\exp_{x_0} (v)} M \oplus t \rbbd )$, it follows that the fiber of $\AofM$ at $(\exp_{x_0} (v),t)$ is $\cliffc (\hhil_{\exp_{x_0} (v)}  M\oplus t \rbbd )$, and thus the fiber of $\widetilde{\Phi}_{x_0}^{-1} \left( \AofM \right)$ at $\left(v, t \right)$ is given by $\left(\cliffc \left(\varphi_{v} \oplus \idmap_{t\rbbd} \right) \right)^{-1} \left(\cliffc (\hhil_{\exp_{x_0} (v)} M \oplus t \rbbd )\right)$, which is the same as the corresponding fiber of $\AofM[V]$, namely $\cliffc (V  \oplus t \rbbd )$. 
	Therefore $\AofM[V]  = \left(\widetilde{\Phi}_{x_0}\right)^{-1} \left(\AofM \right)$ as $C^*$-subalgebras of $\pialg(V)$, as desired. 
\end{proof}

\section{Isometries of $M$ and automorphisms of $\AofM$} \label{sec:automorphisms}

In this section, we discuss how isometries of a \hhs{} $M$ act on $\AofM$ by automorphisms and study continuity and properness of this action.

\begin{constr}\label{constru:Pi-M-isometry}
	Given an isometry $\varphimap$ of $\mnf$, we may construct a graded $*$\-/automorphism $\varphimap_*$ of $\piunbdd(M)$ by 
	\begin{equation}\label{eq:action-Clifford-algebra}
	\varphimap_* (\sigfunc) (\xpt, t) = \cliffc \left( \hhil \deriv_{\varphimap \inv (\xpt)} \varphimap  \oplus \idmap_{t \rbbd} \right) \; \left( \sigfunc \left(\varphimap \inv (\xpt), t \right) \right) 
	\end{equation}
	for any $\xpt \in \mnf$, $t \in [0,\infty)$ and $\sigfunc \in \piunbdd(M)$, where
	\begin{itemize}
		\item $\deriv_{\varphimap \inv (\xpt)} \varphimap \colon \tanbndl_{\varphimap \inv (\xpt)} \mnf \to \tanbndl_{\xpt} \mnf$ is the derivative of $\varphimap$ at $\xpt$, which is an isometric bijection fixing the base point, 
		\item $\hhil\deriv_{\varphimap \inv (\xpt)} \varphimap \colon \hhil_{\varphimap \inv (\xpt)} \mnf \to \hhil_{\xpt} \mnf$ is the induced isometric isomorphism, and
		\item $\cliffc \left( \hhil \deriv_{\varphimap \inv (\xpt)} \varphimap  \oplus \idmap_{t \rbbd}  \right)$ is the induced graded $*$\-/isomorphism between the corresponding Clifford algebras. 
	\end{itemize}  
	
	It is clear that the $*$\-/subalgebra $\pialg (M)$ is invariant under this action.
	
	The assignment $\varphimap \mapsto \varphimap_*$ give rise to group homomorphisms from the isometry group $\isomgrp(\mnf)$ to the group $\autgrp(\piunbdd(M))$ of $*$\-/automorphisms of $\piunbdd(M)$, as well as $\autgrp(\pialg(M))$. 
	
\end{constr}

Next we study the relation between Bott homomorphisms and isometries on $\mnf$. 

\begin{lem}\label{lem_Botthomandgroupaction}
	For any $\varphimap \in \isomgrp(\mnf)$ and any $\xpt_0 \in \mnf$, we have
	\[
	\varphimap_* \circ \botthom[\xpt_0] = \botthom[\varphimap (\xpt_0)] \; ,
	\]
	where $\varphimap_*$ is the induced $*$\-/automorphism of $ \pialg(M)$ defined in \eqref{eq:action-Clifford-algebra}.
\end{lem}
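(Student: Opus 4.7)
The plan is to verify the identity fiberwise, by unwinding definitions and invoking functoriality. Since both $\varphi_* \circ \beta_{x_0}$ and $\beta_{\varphi(x_0)}$ are $*$-homomorphisms from $\salg$ into $\pialg(M)$, and since each $\beta_{x}(f)$ is built via functional calculus from the self-adjoint element $\cliffmult[x] \in \piunbdd(M)$, it suffices to track what $\varphi_*$ does to $\cliffmult[x_0]$ and then appeal to the fact that $*$-isomorphisms commute with continuous functional calculus.

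More precisely, I would first observe that by Construction~\ref{constru:Pi-M-isometry}, evaluating $\varphi_*(\cliffmult[x_0])$ at a point $(x,t)$ amounts to applying the graded $*$-isomorphism $\cliffc(\hhil D_{\varphi^{-1}(x)}\varphi \oplus \idmap_{t\rbbd})$ to the vector
\[
\cliffmult[x_0](\varphi^{-1}(x),t) = \bigl(-\log_{\varphi^{-1}(x)}(x_0),\, t\bigr) \in \hhil_{\varphi^{-1}(x)} M \oplus t\rbbd.
\]
Since this isomorphism is induced from an isometric linear map that is the identity on the $t\rbbd$-summand, the image is just $\bigl(-\hhil D_{\varphi^{-1}(x)}\varphi(\log_{\varphi^{-1}(x)}(x_0)),\, t\bigr)$.

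The main (and really only) substantive step is then the geometric identity
\[
\hhil D_{\varphi^{-1}(x)}\varphi\bigl(\log_{\varphi^{-1}(x)}(x_0)\bigr) = \log_x\bigl(\varphi(x_0)\bigr),
\]
which should follow directly from the functorial description of the derivative of an isometry on the tangent cone (given right after the definition of $T_p$ in the metric-geometry preliminaries): $D_p\varphi$ sends a class $[(\alpha,t)]$ to $[(\varphi\circ\alpha,t)]$. Applying this to the geodesic $\alpha = [\varphi^{-1}(x),x_0]$ and using that $\varphi$ is an isometry (so $\varphi\circ[\varphi^{-1}(x),x_0]$ is the unique geodesic segment from $x$ to $\varphi(x_0)$, and the two segments have the same length), yields the claim. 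Combining, we get $\varphi_*(\cliffmult[x_0])(x,t) = \cliffmult[\varphi(x_0)](x,t)$, i.e.\ $\varphi_*(\cliffmult[x_0]) = \cliffmult[\varphi(x_0)]$ as elements of $\piunbdd(M)$.

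With this in hand, for any $f\in\salg$ we have $\varphi_*(\beta_{x_0}(f)) = \varphi_*(f(\cliffmult[x_0])) = f(\varphi_*(\cliffmult[x_0])) = f(\cliffmult[\varphi(x_0)]) = \beta_{\varphi(x_0)}(f)$, where the second equality uses that the fiberwise $*$-isomorphisms making up $\varphi_*$ intertwine functional calculus. I do not anticipate a real obstacle: the computation is mechanical, and the only point requiring a small amount of care is the naturality of the logarithm under isometries, which in turn rests on the functoriality of $D_p$ established in the preliminaries.
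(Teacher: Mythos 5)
Your proposal is correct and takes essentially the same approach as the paper: the paper's proof inlines the computation into a single chain of equalities evaluated at $(x,t)$, invoking the same two facts you invoke (functional calculus commutes with $*$-isomorphisms; isometries carry geodesic segments to geodesic segments), while you factor out $\varphi_*(\cliffmult[x_0]) = \cliffmult[\varphi(x_0)]$ as the key step first. The paper itself records precisely this reorganization in Remark~\ref{rmk_alternativeproof_lem_Botthomandgroupaction} as the fact "somewhat underlying" the lemma, so the two presentations are materially identical.
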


\begin{proof}
	For any $ \ffunc \in \salg $, $\xpt\in\mnf$ and $t \in \rbbd$, we have
	\begin{align*}
	\varphimap_* \left( \botthom[\xpt_0] (\ffunc) \right) (\xpt, t) =\ & \cliffc \left( \hhil \deriv_{\varphimap \inv (\xpt)} \varphimap  \oplus \idmap_{t \rbbd} \right) \; \left( \botthom (\ffunc) \left(\varphimap \inv (\xpt), t \right) \right)  \\
	=\ & \cliffc \left( \hhil \deriv_{\varphimap \inv (\xpt)} \varphimap  \oplus \idmap_{t \rbbd} \right) \; \left( \ffunc \left( \cliffmult \left(\varphimap \inv (\xpt), t \right) \right)  \right)  \\
	=\ & \ffunc \Big( \cliffc \left( \hhil \deriv_{\varphimap \inv (\xpt)} \varphimap  \oplus \idmap_{t \rbbd} \right) \;  \left( \cliffmult \left(\varphimap \inv (\xpt), t  \right) \right) \Big) \\
	=\ & \ffunc \left( \left( \deriv_{{\varphimap \inv (\xpt)}} \varphimap  \right) \; \left(  -\log_{\varphimap \inv (\xpt)}  (\xpt_0)\right), t \right) \\
	=\ & \ffunc \left( - \log_\xpt (\varphimap (\xpt_0)) , t \right) \\
	=\ & \ffunc \left( \cliffmult[\varphimap (\xpt_0)](\xpt, t) \right) \\
	=\ & \botthom[\varphimap (\xpt_0)](\ffunc)(\xpt, t).
	\end{align*}
	Here we used the fact that functional calculus commutes with automorphisms of $\cstar$-algebras, and that the isometry $ \varphimap \colon \mnf \to \mnf $ maps the geodesic segment $ [ \varphimap \inv (\xpt), \xpt_0 ] $ to $ [ \xpt, \varphimap (\xpt_0) ] $. 
\end{proof}

\begin{rmk}\label{rmk_alternativeproof_lem_Botthomandgroupaction}
	Somewhat underlying Lemma~\ref{lem_Botthomandgroupaction} is the fact that 
	$$ \varphimap_* \left( \cliffmult \right) = \cliffmult[\varphimap (\xpt_0)] $$
	for any $ \xpt_0 \in \mnf $ and $ \gingam $.  
\end{rmk}

\begin{prop}\label{prop_AofMisinvariant}
	For any isometry $\varphimap$ of $\mnf$, the induced $*$\-/automorphism $\varphimap_*$ of $ \pialg(M) $ given in Equation~\eqref{eq:action-Clifford-algebra} preserves $\aalg(\mnf)$ and $ \AevenofM $.
\end{prop}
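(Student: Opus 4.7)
My plan is to deduce the proposition almost immediately from Lemma~\ref{lem_Botthomandgroupaction} together with the definitions of $\AofM$ and $\AevenofM$. The core idea: the assignment $\varphimap \mapsto \varphimap_*$ is a group homomorphism, so it suffices to show that the generating set of $\AofM$ (respectively $\AevenofM$) is mapped into $\AofM$ (respectively $\AevenofM$) by $\varphimap_*$; invariance will then follow by also applying the argument to $\varphimap^{-1}$.

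First I would recall that by Definition~\ref{defn_AofM}, $\AofM$ is the $C^*$-subalgebra of $\pialg(M)$ generated by
\[
\{ \botthom(f) : x_0 \in M,\ f \in \salg \},
\]
and $\AevenofM$ is generated by the subset where $f \in \salg_{\text{ev}}$. By Lemma~\ref{lem_Botthomandgroupaction}, for any $x_0 \in M$ and any $f \in \salg$,
\[
\varphimap_*\bigl( \botthom(f) \bigr) = \botthom[\varphimap(x_0)](f),
\]
which is again an element of the generating set for $\AofM$; if moreover $f \in \salg_{\text{ev}}$, then the right-hand side is in the generating set for $\AevenofM$. Since $\varphimap_*$ is a $*$-homomorphism, it therefore maps the $*$-algebra generated by the respective generating sets into itself; since it is also contractive (being a $*$-homomorphism of $C^*$-algebras), it preserves the $C^*$-closure. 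Hence $\varphimap_*(\AofM) \subseteq \AofM$ and $\varphimap_*(\AevenofM) \subseteq \AevenofM$.

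Finally I would apply the same argument to the isometry $\varphimap^{-1}$, whose induced automorphism is $(\varphimap_*)^{-1}$ (since $\varphimap \mapsto \varphimap_*$ is a group homomorphism from $\isomgrp(M)$ into $\autgrp(\pialg(M))$, as noted in Construction~\ref{constru:Pi-M-isometry}). This gives the reverse inclusions $\AofM \subseteq \varphimap_*(\AofM)$ and $\AevenofM \subseteq \varphimap_*(\AevenofM)$, completing the proof.

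There is no real obstacle here: the substantive geometric content was already absorbed into Lemma~\ref{lem_Botthomandgroupaction} (namely that $\varphimap_*$ moves the Clifford operator $C_{x_0}$ to $C_{\varphimap(x_0)}$, using that $\varphimap$ carries geodesic segments to geodesic segments and commutes with functional calculus). What remains is only the purely formal observation that a $*$-automorphism permuting a generating set of a $C^*$-subalgebra preserves that subalgebra.
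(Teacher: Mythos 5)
Your proposal is correct and matches the paper's approach exactly: the paper's proof is the one-line remark that the statement ``follows directly from Lemma~\ref{lem_Botthomandgroupaction}'', and your argument simply spells out the formal bookkeeping (generators are permuted, $*$-homomorphisms preserve generated $C^*$-subalgebras, and applying the same reasoning to $\varphimap^{-1}$ gives equality).
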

\begin{proof}
	This follows directly from Lemma~\ref{lem_Botthomandgroupaction}.  
\end{proof}

Recall that $\widehat{\AevenofM}$ is the spectrum of the central $C^*$-subalgebra ${\AevenofM}$. The action of $\isomgrp(\mnf)$ on $\AofM$ induces actions on ${\AevenofM}$ by $*$\-/automorphisms and on $\widehat{\AevenofM}$ by homeomorphisms. 

\begin{cor}\label{cor:AofM-isom-AevenofM-algebra}
	The {\cstaralg} $\AofM$ is an $\isomgrp(\mnf)$-$\widehat{\AevenofM}$-$\cstar$-algebra.
\end{cor}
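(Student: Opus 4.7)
The plan is to unpack the three ingredients required by the definition of a $\Gamma$-$X$-$C^*$-algebra and show that each one is already supplied by the preceding results, with a bit of naturality glue. Recall that, by definition, we must exhibit: an $\widehat{\AevenofM}$-$C^*$-algebra structure on $\AofM$; a group action of $\isomgrp(\mnf)$ on $\AofM$ by $*$\-/automorphisms; a group action of $\isomgrp(\mnf)$ on $\widehat{\AevenofM}$ by homeomorphisms; and equivariance of the canonical continuous map $\widehat{Z(\AofM)} \to \widehat{\AevenofM}$ with respect to the induced action on $\widehat{Z(\AofM)}$. The first is Proposition~\ref{prop_AofMisAevenofMalgebra} and the second is Construction~\ref{constru:Pi-M-isometry} together with Proposition~\ref{prop_AofMisinvariant}. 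So only the last two items require any work, and they are closely linked.

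First I would argue that $\isomgrp(\mnf)$ acts by $*$\-/automorphisms on $\AevenofM$. This is immediate from Proposition~\ref{prop_AofMisinvariant}: for each $\varphi \in \isomgrp(\mnf)$, the graded $*$\-/automorphism $\varphi_* \colon \pialg(\mnf) \to \pialg(\mnf)$ preserves both $\AofM$ and the central $*$\-/subalgebra $\AevenofM$, and restriction is evidently compatible with composition, so $\varphi \mapsto \varphi_*|_{\AevenofM}$ is a group homomorphism $\isomgrp(\mnf) \to \autgrp(\AevenofM)$. Applying Gelfand duality to the commutative $C^*$-algebra $\AevenofM$ converts this into a group action of $\isomgrp(\mnf)$ on the spectrum $\widehat{\AevenofM}$ by homeomorphisms.

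Next I would verify equivariance of the structure map. The continuous map $\widehat{Z(\AofM)} \to \widehat{\AevenofM}$ arises from the inclusion $\AevenofM \hookrightarrow Z(\AofM)$, as in the proof of Proposition~\ref{prop_AofMisAevenofMalgebra}. Since $\varphi_*$ is a $*$\-/automorphism of $\AofM$, it automatically preserves the center $Z(\AofM)$, and by Proposition~\ref{prop_AofMisinvariant} it also preserves $\AevenofM$. Hence the inclusion $\AevenofM \hookrightarrow Z(\AofM)$ is an $\isomgrp(\mnf)$-equivariant $*$\-/homomorphism, so dualizing gives the required $\isomgrp(\mnf)$-equivariance of $\widehat{Z(\AofM)} \to \widehat{\AevenofM}$. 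Assembling these observations yields the corollary.

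There is no real obstacle here; the only subtlety worth flagging is that one should note (via Corollary~\ref{cor_botthomiscontwrtbasepoint} and Lemma~\ref{lem_Botthomandgroupaction}) that on the generating set $\{\botthom(f)\}$ of $\AofM$ the assignment $\varphi \mapsto \varphi_*(\botthom(f)) = \botthom[\varphi(x_0)](f)$ is continuous in $\varphi$ with respect to the pointwise-convergence topology on $\isomgrp(\mnf)$; a standard three-$\varepsilon$ argument then propagates this to all of $\AofM$, so the action is in fact strongly continuous. This last remark is not logically required by the bare definition of a $\Gamma$-$X$-$C^*$-algebra, but it makes the structure compatible with how $\isomgrp(\mnf)$ will be used downstream through its discrete subgroups acting via this construction.
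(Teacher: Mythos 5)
Your proposal is correct and follows the same route as the paper, which simply cites Propositions~\ref{prop_AofMisAevenofMalgebra} and~\ref{prop_AofMisinvariant}; you have just unpacked the (entirely routine) verification that these two facts, together with Gelfand duality applied to the equivariant inclusion $\AevenofM \hookrightarrow Z(\AofM)$, assemble into the required $\isomgrp(\mnf)$-$\widehat{\AevenofM}$-structure.
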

\begin{proof}
	This combines Proposition~\ref{prop_AofMisAevenofMalgebra} and Proposition~\ref{prop_AofMisinvariant}. 
\end{proof}

One of the key properties of $\AofM$ is that any isometric, metrically proper action on $\mnf$ by a discrete group $\gamgrp$ induces by means of Proposition~\ref{prop_AofMisinvariant} a proper action on $\AofM$. Here $ \AevenofM $ will play an important role, as we will show the action of $\gamgrp$ on the spectrum of $\AevenofM$ is proper.

\begin{lem}\label{lem_Cstarcharacterizationofproperaction}
	Let $\xsp$ be a locally compact Hausdorff space and let $\gamgrp$ be a discrete group. Let $\alfmap \colon \gamgrp \curvearrowright \xsp$ be an action by homeomorphisms and let $\alfmap_*$ be the induced action on $\cz(\xsp)$. Then this action is (topologically) proper if and only if for any $\ffunc\in\cz(\xsp)$,
	$$\lim_{\gamelem\to\infty} \| \big( (\alfmap_*)_\gamelem (\ffunc) \big) \cdot \ffunc \|\to 0 \; ,$$
	i.e., for any $\varepsilon>0$, there is a finite subset $\fset\subset\gamgrp$ such that for any $\gamelem \in \gamgrp \setminus \fset$,
	$$\| \big( (\alfmap_*)_\gamelem (\ffunc) \big) \cdot \ffunc \| < \varepsilon . $$
\end{lem}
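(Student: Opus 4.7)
The plan is to verify both implications by a direct argument using the standard interplay between compact subsets of $\xsp$ and the functions in $\cz(\xsp)$ that are bounded below on them. Throughout I will use the convention $((\alfmap_*)_\gamelem \ffunc)(x) = \ffunc(\gamelem^{-1} x)$, consistent with the paper's covariance condition $u_g \lambda_a u_g^* = \lambda_{\alpha_g(a)}$.

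For the forward direction, assume $\alfmap$ is proper. Fix $\ffunc \in \cz(\xsp)$ and $\varepsilon > 0$; WLOG $\ffunc \neq 0$. Set
\[
K := \left\{ x \in \xsp : |\ffunc(x)| \geq \varepsilon/(2\|\ffunc\|) \right\} ,
\]
which is compact because $\ffunc$ vanishes at infinity. If $\| ((\alfmap_*)_\gamelem \ffunc) \cdot \ffunc \| \geq \varepsilon$, then for some $x \in \xsp$ one has $|\ffunc(\gamelem^{-1} x)| \cdot |\ffunc(x)| \geq \varepsilon/2$. Since each factor is bounded by $\|\ffunc\|$, both $|\ffunc(x)|$ and $|\ffunc(\gamelem^{-1} x)|$ are at least $\varepsilon/(2\|\ffunc\|)$, i.e.\ $x \in K$ and $\gamelem^{-1} x \in K$, so $\gamelem K \cap K \neq \varnothing$. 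Properness of $\alfmap$ then gives that this occurs for only finitely many $\gamelem$.

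For the reverse direction, fix a compact set $K \subset \xsp$. By local compactness and the Urysohn lemma for LCH spaces, pick $\ffunc \in \cz(\xsp)$ with $0 \leq \ffunc \leq 1$, compactly supported, and $\ffunc \equiv 1$ on $K$. If $\gamelem K \cap K \neq \varnothing$, say $x \in K$ with $\gamelem^{-1} x \in K$, then $((\alfmap_*)_\gamelem \ffunc)(x) \cdot \ffunc(x) = 1$, so $\|((\alfmap_*)_\gamelem \ffunc) \cdot \ffunc\| \geq 1$. By hypothesis, this holds for only finitely many $\gamelem$, whence $\{\gamelem : \gamelem K \cap K \neq \varnothing\}$ is finite, proving properness.

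There is no real obstacle here; the only subtle point is the quantitative setup in the forward direction, where one must extract a compact set $K$ such that $\gamelem^{-1} x, x \in K$ simultaneously whenever $\| ((\alfmap_*)_\gamelem \ffunc) \cdot \ffunc \| \geq \varepsilon$, which is handled by choosing the threshold $\varepsilon/(2\|\ffunc\|)$ as above.
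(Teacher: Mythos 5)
Your proof is correct, and it is essentially the same argument as the paper's: both directions exploit the usual correspondence between compact sets in $X$ and sublevel/support sets of functions in $C_0(X)$. The one modest difference is in the forward direction: the paper first treats $f\in C_c(X)$, where one gets $\|(\alfmap_*)_\gamelem(f)\cdot f\|=0$ exactly for all but finitely many $\gamelem$ (since $\gamelem\,\mathrm{supp}(f)\cap\mathrm{supp}(f)=\varnothing$), and then extends to $f\in C_0(X)$ by approximation. You instead work directly with a general $f\in C_0(X)$ by introducing the compact sublevel set $K=\{|f|\ge\varepsilon/(2\|f\|)\}$ and the threshold estimate $|f(\gamelem^{-1}x)|\,|f(x)|\ge\varepsilon/2\Rightarrow x,\gamelem^{-1}x\in K$; this avoids the approximation step and gives a slightly more self-contained quantitative argument. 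The reverse directions are essentially identical (the paper uses $f\ge 1$ on $K$ with $\varepsilon=1/2$; you use a Urysohn function equal to $1$ on $K$ with $\varepsilon=1$ — same idea). One tiny cosmetic remark: since $((\alfmap_*)_\gamelem f)\cdot f\in C_0(X)$, its sup norm is attained, so you could even take $|f(\gamelem^{-1}x)|\,|f(x)|\ge\varepsilon$ at some $x$; your factor of $2$ safety margin is harmless but unnecessary.
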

\begin{proof} 
	If the action $\gamgrp \curvearrowright \xsp$ is proper, then for any $\ffunc\in\cc(\xsp)$, there is a finite subset $\fset\subset\Gam$ such that for any $\gamelem \in \gamgrp \setminus \fset$, $\|(\alfmap_*)_{\gamelem} (\ffunc) \cdot \ffunc \| =0$. The statement for a general $\ffunc\in\cz(\xsp)$ follows by approximation.
	
	On the other hand, if every element of $\cz(\xsp)$ satisfies the condition in the statement, then for any compact subset $\kset\subset\xsp$, picking a positive function $\ffunc\in\cz(\xsp)$ such that $\ffunc(\xpt)\ge1$ for $\xpt\in\kset$, we can find, according to the condition, a finite $\fset\subset\gamgrp$ such that for any $\gam\not\in\fset$, $\| (\alfmap_*)_{\gamelem} (\ffunc) \cdot \ffunc\| < \frac{1}{2}$, which implies that $\alfmap_{\gamelem}(\kset) \cap \kset = \varnothing$. 
\end{proof}

\begin{prop}\label{prop_AofMisproper}
	Let $\gamgrp$ be a discrete group and $\alfmap \colon \gamgrp \to \isomgrp(\mnf)$ an isometric, metrically proper action on $\mnf$. Then the induced action on $\AofM$ (also denoted by $\alfmap$) given in Proposition~\ref{prop_AofMisinvariant} makes $\AofM$ into a proper $\Gamma$-$\widehat{\AevenofM}$-\cstaralg.
\end{prop}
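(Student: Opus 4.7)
The plan is to apply Lemma~\ref{lem_Cstarcharacterizationofproperaction} to the induced action of $\Gamma$ on the spectrum $\widehat{\AevenofM}$. By Corollary~\ref{cor:AofM-isom-AevenofM-algebra} we already have an $\isomgrp(M)$-$\widehat{\AevenofM}$-$C^*$-algebra structure, so everything reduces to showing that the action $\Gamma \curvearrowright \widehat{\AevenofM}$ is topologically proper. By Lemma~\ref{lem_Cstarcharacterizationofproperaction}, this amounts to checking that $\lim_{\gamma\to\infty}\|\alpha_\gamma(f)\cdot f\|=0$ for every $f\in\AevenofM$.

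The key step is to establish the following stronger estimate on generators: for any $x_0,y_0\in M$ and any $f,g\in\salg_{\operatorname{ev}}$,
\[
\lim_{\gamma\to\infty}\bigl\|\alpha_\gamma\bigl(\botthom[x_0](f)\bigr)\cdot\botthom[y_0](g)\bigr\|=0.
\]
By Lemma~\ref{lem_Botthomandgroupaction}, $\alpha_\gamma(\botthom[x_0](f))=\botthom[\gamma x_0](f)$, and by Proposition~\ref{prop_Botthomomorphismandevenpart} both factors are scalar functions on $M\times[0,\infty)$, namely
\[
\botthom[\gamma x_0](f)(x,t)=\tilde f\bigl(d(\gamma x_0,x)^2+t^2\bigr),\qquad \botthom[y_0](g)(x,t)=\tilde g\bigl(d(y_0,x)^2+t^2\bigr),
\]
where $\tilde f,\tilde g\in C_0([0,\infty))$ satisfy $f(s)=\tilde f(s^2)$ and $g(s)=\tilde g(s^2)$. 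Given $\varepsilon>0$, pick $R>0$ with $|\tilde f(s)|<\varepsilon$ for $s\ge R^2$. The triangle inequality gives
\[
d(\gamma x_0,x)+d(y_0,x)\;\ge\;d(\gamma x_0,y_0)\;\ge\;d(x_0,\gamma x_0)-d(x_0,y_0),
\]
so once $d(x_0,\gamma x_0)>2R+d(x_0,y_0)$ at least one of $d(\gamma x_0,x),d(y_0,x)$ exceeds $R$, forcing the pointwise product to have modulus at most $\varepsilon\|g\|$. Since the action $\Gamma\curvearrowright M$ is metrically proper, $d(x_0,\gamma x_0)\to\infty$ as $\gamma\to\infty$, yielding the claim.

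It then remains to propagate the estimate to arbitrary elements of $\AevenofM$. By definition, the set
\[
\mathcal G=\bigl\{\botthom[x_0](f):x_0\in M,\;f\in\salg_{\operatorname{ev}}\bigr\}
\]
generates $\AevenofM$ as a $C^*$-algebra, so the $*$-algebra $\mathcal A$ of finite linear combinations of finite products of elements in $\mathcal G$ is norm dense. Using commutativity of $\AevenofM$, a product $\prod_j \botthom[x_j](f_j)\cdot\prod_k\botthom[y_k](g_k)$ can be reorganized so that after applying $\alpha_\gamma$ to the first bundle, its norm is bounded by $\|\alpha_\gamma(\botthom[x_{j_0}](f_{j_0}))\cdot\botthom[y_{k_0}](g_{k_0})\|$ times a constant independent of $\gamma$; by the previous paragraph this tends to $0$. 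By bilinearity the same holds for arbitrary $a_1,a_2\in\mathcal A$, and a standard $\varepsilon/3$ approximation (together with $\|\alpha_\gamma\|=1$) extends the conclusion to arbitrary $f_1,f_2\in\AevenofM$. Specializing $f_1=f_2=f$ verifies the hypothesis of Lemma~\ref{lem_Cstarcharacterizationofproperaction} and finishes the proof.

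\textbf{Main obstacle.} No step is particularly deep; the heart of the matter is simply that even Bott elements $\botthom[x_0](f)$ are scalar $C_0$-functions whose decay is controlled by $d(x_0,\cdot)$, so metric properness on $M$ translates directly into the $C_0$-vanishing required by Lemma~\ref{lem_Cstarcharacterizationofproperaction}. The only mildly fiddly point is the density argument in the last paragraph, where one must keep track of cross terms when passing from generators to linear combinations and products.
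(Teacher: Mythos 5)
Your overall strategy is the same as the paper's: reduce via Lemma~\ref{lem_Cstarcharacterizationofproperaction} to the $C_0$-vanishing condition on the commutative subalgebra $\AevenofM$, establish it on generators using the metric properness of $\Gamma\curvearrowright M$, and extend by density. The paper handles the generator step more cleanly by first taking $\ffunc$ even \emph{and compactly supported}, so that $\botthom(\ffunc)$ has bounded support in $M\times[0,\infty)$ and $\alpha_\gamma(\botthom(\ffunc))\cdot\botthom(\ffunc)$ vanishes \emph{identically} for all but finitely many $\gamma$; your $\varepsilon$-argument with general $C_0$ functions is a valid alternative, but as written it has a small slip. You pick $R$ so that $|\tilde f(s)|<\varepsilon$ for $s\ge R^2$, and then invoke the dichotomy that one of $d(\gamma x_0,x)$, $d(y_0,x)$ exceeds $R$; but in the branch where it is $d(y_0,x)$ that exceeds $R$ (while $d(\gamma x_0,x)\le R$), you have no smallness of $|\tilde f(\cdot)|$ and have not controlled $|\tilde g(\cdot)|$ on $[R^2,\infty)$, so the claimed bound $\varepsilon\|g\|$ does not follow. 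The fix is trivial: choose $R$ large enough that both $|\tilde f(s)|<\varepsilon$ and $|\tilde g(s)|<\varepsilon$ for $s\ge R^2$, yielding the bound $\varepsilon\max\{\|f\|,\|g\|\}$. With that repair, the density step you sketch is fine and is, if anything, more explicit than the corresponding (very compressed) passage in the paper's proof.
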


\begin{proof}
	Observe that for any even and compactly supported function $\ffunc \in \salg$, the element $\botthom (\ffunc) \in \AevenofM \subset \pialg(M)$ is, as a function over $M \times [0, \infty)$, supported in a bounded ball around $(x_0, 0)$. Thus because of the metric properness of the action $\Gam\curvearrowright\mnf$, all but finitely many elements $\gamelem$ of $\gamgrp$ satisfy $(\alfmap_{\gamelem} (\botthom (\ffunc))) \cdot \botthom (\ffunc) = 0$. Since any even function $\ffunc \in \salg$ is approximated by the compactly supported even ones, every element $\sigma$ of $\AevenofM$ satisfies 
	\[
	\lim_{\gamelem\to\infty} \|(\alfmap_{\gamelem} (\sigma)) \cdot \sigma \| = 0 \; .
	\]
	This ensures the action of $\gamgrp$ on the spectrum of $\AevenofM$ is (topologically) proper by Lemma~\ref{lem_Cstarcharacterizationofproperaction}, i.e., $\AevenofM$ is a commutative proper $\Gamma$-\cstaralg. It follows that $\AofM$ is a proper $\Gamma$-$\widehat{\AevenofM}$-\cstaralg.
\end{proof}

Finally, we discuss the topological aspect of the action of $\isomgrp(M)$ on $\AofM$ by automorphisms. This will be crucial for our deformation technique in Section~\ref{sec:proof}. 

\begin{constr}
	Recall from Definition~\ref{defn:isom-M} that $\isomgrp(M)$ is equipped with the topology of pointwise convergence. Similarly, let us endow $\autgrp(\AofM)$, the group of $*$\-/automorphisms of $\AofM$, with the \emph{topology of pointwise (norm) convergence}, so that a net $\{\varphimap_\iind\}_{\iiniind}$ converges to the identity if and only if $\lim_{\iiniind} \varphimap_\iind (\aelem) = \aelem$ in norm for any $\aelem \in \AofM$. Note that it suffices to check the latter condition for any $\aelem$ in a generating set of $\AofM$, e.g., for all $\aelem$ of the form $\botthom(\ffunc)$ for $\xpt_0 \in \mnf$ and $\ffunc \in \salg$. 
\end{constr}

\begin{prop}\label{prop:isom-M-aut-AofM}
	When both $\isomgrp(\mnf)$ and $\autgrp(\AofM)$ carry the {topology of pointwise convergence}, the canonical homomorphism $\isomgrp(\mnf) \to \autgrp(\AofM)$ defined in Proposition~\ref{prop_AofMisinvariant} is continuous. 
\end{prop}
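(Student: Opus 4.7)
The plan is to reduce the statement to continuity at the identity $\idmap \in \isomgrp(M)$, and then to leverage Lemma~\ref{lem_Botthomandgroupaction} together with (the net-version of) Corollary~\ref{cor_botthomiscontwrtbasepoint}. I expect no serious obstacle, since once the reduction is made the argument is a formal subalgebra closure argument plus the explicit estimate in Proposition~\ref{prop_Botthomandchangeofbasepoint}.

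\textbf{Reduction to the identity.} Suppose a net $\varphimap_i \to \varphimap$ in $\isomgrp(M)$. Because $\varphimap$ is itself an isometry,
\[
\ell_{x}(\varphimap^{-1}\varphimap_i) \;=\; d\bigl(x,\varphimap^{-1}\varphimap_i(x)\bigr) \;=\; d\bigl(\varphimap(x),\varphimap_i(x)\bigr) \;\longrightarrow\; 0
\]
for each $x \in M$, so by Remark~\ref{rmk:length-function-orbit}\eqref{rmk:length-function-orbit:topology} we have $\varphimap^{-1}\varphimap_i \to \idmap$ in $\isomgrp(M)$. Assuming for now that $(\varphimap^{-1}\varphimap_i)_* \to \idmap$ in $\autgrp(\AofM)$, then for any $a \in \AofM$, since the fixed automorphism $\varphimap_*$ is an isometry on $\AofM$, we get
\[
\bigl\|(\varphimap_i)_*(a)-\varphimap_*(a)\bigr\| \;=\; \bigl\|\varphimap_*\bigl((\varphimap^{-1}\varphimap_i)_*(a)-a\bigr)\bigr\| \;=\; \bigl\|(\varphimap^{-1}\varphimap_i)_*(a)-a\bigr\| \;\longrightarrow\; 0.
\]
Thus it suffices to verify continuity at $\idmap$.

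\textbf{Continuity at the identity.} Suppose $\varphimap_i \to \idmap$ in $\isomgrp(M)$, and let
\[
S \;=\; \bigl\{\, a \in \AofM \,:\, \|(\varphimap_i)_*(a) - a\| \to 0 \,\bigr\}.
\]
Since each $(\varphimap_i)_*$ is an isometric $*$\-/automorphism, a standard $\varepsilon/3$-estimate shows that $S$ is a norm-closed $*$\-/subalgebra of $\AofM$ (closure under products uses $\|(\varphimap_i)_*(b)\|=\|b\|$). By Definition~\ref{defn_AofM}, the set $\{\botthom[x_0](\ffunc) : x_0 \in M,\ \ffunc \in \salg\}$ generates $\AofM$ as a $C^*$-algebra, so it suffices to check these elements lie in $S$. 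By Lemma~\ref{lem_Botthomandgroupaction},
\[
(\varphimap_i)_*\bigl(\botthom[x_0](\ffunc)\bigr) \;=\; \botthom[\varphimap_i(x_0)](\ffunc).
\]
Since $\varphimap_i \to \idmap$ pointwise, $d(\varphimap_i(x_0),x_0) \to 0$; applying the explicit bound of Proposition~\ref{prop_Botthomandchangeofbasepoint} with $r = d(\varphimap_i(x_0),x_0)$, together with $\displaystyle \lim_{r\to 0}\oscill{r}\ffunc = \lim_{r\to 0}\meansym{r}\ffunc = 0$ from Lemma~\ref{lem:Omega-Theta-lim-r} (which is the net-version of Corollary~\ref{cor_botthomiscontwrtbasepoint}), we conclude $\|\botthom[\varphimap_i(x_0)](\ffunc)-\botthom[x_0](\ffunc)\| \to 0$. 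Hence $\botthom[x_0](\ffunc) \in S$, and $S = \AofM$, completing the proof.
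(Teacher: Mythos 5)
Your proof is correct and follows the same strategy as the paper's: reduce to continuity at the identity, then check convergence on the generators $\botthom[x_0](f)$ using Lemma~\ref{lem_Botthomandgroupaction} and Corollary~\ref{cor_botthomiscontwrtbasepoint}. The paper's version is terser, taking both the reduction to the identity and the ``closed $*$\-/subalgebra'' argument for granted, whereas you spell out both steps explicitly; the substance is identical.
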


\begin{proof}
	It suffices to show that for any net $\{\varphimap_\iind\}_{\iiniind}$ in $\isomgrp(\mnf)$ that converges to the identity, the induced net $\{ (\varphimap_\iind)_* \}_{\iiniind}$ in $\autgrp(\AofM)$ also converges to the identity. Since $\AofM$ is generated by $\botthom(\ffunc)$ for $\xpt_0 \in \mnf$ and $\ffunc \in \salg$, it suffices to check 
	\[
	\lim_{\iiniind} (\varphimap_\iind)_* (\botthom(\ffunc)) = \botthom(\ffunc)
	\]
	for any $\xpt_0 \in \mnf$ and $\ffunc \in \salg$. By Lemma~\ref{lem_Botthomandgroupaction}, the left-hand side is equal to $\lim_{\iiniind} \botthom[\varphimap_\iind (\xpt_0)](\ffunc)$, which by Lemma~\ref{cor_botthomiscontwrtbasepoint} is equal to the right-hand side, as, by assumption, $\lim_{\iiniind} \varphimap_\iind (\xpt_0) = \xpt_0$. 
\end{proof}

\section{The $K$-theory of $\aalg(\mnf)$} \label{sec:K-theory}

In this section, we discuss the computation of the $K$-theory of $\AofM$. We shall see that when $M$ is finite\-/dimensional, the Bott homomorphism $\botthom \colon \salg \to \AofM$ induces an isomorphism on $K$-theory, which can be seen as a version of Bott periodicity. This statement remains true when $M$ is a separable Hilbert space (cf.\,\cite{higsonkasparov}). For general Hilbert-Hadamard spaces, however, the problem of computing the $K$-theory of $\AofM$ remains open. We provide a partial solution to this problem by using an approximation technique to show that the Bott homomorphism induces an injection on $K$-theory when the \hhs{} $M$ is admissible. This is the only place we need the admissibility condition. It is an open question whether the injectivity of the Bott homomorphism on $K$-theory remains true without this condition.

\begin{defn}\label{defn_AofM_relative}
	Let $\nmnf \subset \mnf$ be a subset. We define $\AofMrel{\nmnf} $ to be the $C^*$-subalgebra of $\AofM$ generated by 
	\[
	\{ \botthom[\xpt_0](\ffunc) \colon \xpt_0 \in \nmnf \subset \mnf,\ \ffunc \in \salg \} .
	\]
	Likewise, we define $ \AevenofMrel{\nmnf}$ to be the $C^*$-subalgebra of $\AofMrel{\nmnf} $ generated by 
	\[
	\{ \botthom[\xpt_0](\ffunc) \colon \xpt_0 \in \nmnf \subset \mnf,\ \ffunc \in \salg_\text{ev} \} .
	\]
\end{defn}

We list some immediate consequences of the definition. 

\begin{lem}\label{lem:AofMrel-basic}
	Let $\nmnf_1, \nmnf_2, \ldots$ be subsets of $\mnf$.
	\begin{enumerate}
		\item $\AofMrel{\mnf} = \AofM$. 
		\item If $\nmnf_1 \subset \nmnf_2$ then $\AofMrel{\nmnf_1} \subset \AofMrel{\nmnf_2}$. 
		\item If $\overline{\nmnf}$ is the closure of $\nmnf$, then $\AofMrel{\overline{\nmnf}} = \AofMrel{\nmnf}$. 
		\item If $\nmnf_1 \subset \nmnf_2 \subset \ldots$, then $ \AofMrel{\overline{ \bigcup_{\kind=1}^\infty \nmnf_\kind}}$ is the direct limit of the sequence $\AofMrel{\nmnf_1} \subset \AofMrel{\nmnf_2} \subset \ldots$ of $C^*$-subalgebras. 
	\end{enumerate}
\end{lem}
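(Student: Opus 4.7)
The plan is to verify the four items of Lemma~\ref{lem:AofMrel-basic} in order, with each following either directly from Definition~\ref{defn_AofM_relative} or from the base-point continuity established in Corollary~\ref{cor_botthomiscontwrtbasepoint}. Items (1) and (2) will be essentially tautological: comparing Definition~\ref{defn_AofM_relative} for $\nmnf = \mnf$ with Definition~\ref{defn_AofM} shows that the two generating sets coincide, giving (1); and for (2), any generator $\botthom[(0, \xpt_0)](\ffunc)$ with $\xpt_0 \in \nmnf_1$ is, in particular, a generator of $\AofMrel{\nmnf_2}$, so $\AofMrel{\nmnf_1} \subset \AofMrel{\nmnf_2}$.

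For (3), the inclusion $\AofMrel{\nmnf} \subset \AofMrel{\overline{\nmnf}}$ is an instance of (2). For the reverse inclusion, it suffices to show that every generator $\botthom(\ffunc)$ with $\xpt_0 \in \overline{\nmnf}$ and $\ffunc \in \salg$ lies in $\AofMrel{\nmnf}$. Given such $\xpt_0$, I will pick a sequence $\{\xpt_n\} \subset \nmnf$ converging to $\xpt_0$; then Corollary~\ref{cor_botthomiscontwrtbasepoint} yields $\botthom[\xpt_n](\ffunc) \to \botthom(\ffunc)$ in norm, and since each $\botthom[\xpt_n](\ffunc)$ lies in the closed subalgebra $\AofMrel{\nmnf}$, so does the limit.

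For (4), by (3) it suffices to identify $\AofMrel{\bigcup_\kind \nmnf_\kind}$ with the direct limit of the chain $\AofMrel{\nmnf_1} \subset \AofMrel{\nmnf_2} \subset \cdots$. By (2) all inclusions are well-defined, and the closure of the union $\bigcup_\kind \AofMrel{\nmnf_\kind}$ is contained in $\AofMrel{\bigcup_\kind \nmnf_\kind}$. Conversely, every generator of $\AofMrel{\bigcup_\kind \nmnf_\kind}$ has base point $\xpt_0 \in \nmnf_\kind$ for some $\kind$, and so lies in $\AofMrel{\nmnf_\kind}$, giving the reverse containment. Combining with (3) identifies $\AofMrel{\overline{\bigcup_\kind \nmnf_\kind}}$ with the direct limit.

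None of the steps presents a real obstacle; the only mild subtlety is (3), where one must invoke the norm continuity of $\xpt_0 \mapsto \botthom(\ffunc)$ provided by Corollary~\ref{cor_botthomiscontwrtbasepoint}, rather than any pointwise or weak continuity. Everything else is formal manipulation of generating sets and closures inside the ambient algebra $\pialg(M)$.
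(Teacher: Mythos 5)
Your proof is correct and takes essentially the same approach as the paper's, which likewise disposes of items (1) and (2) as immediate from the definition, derives (3) from Corollary~\ref{cor_botthomiscontwrtbasepoint}, and obtains (4) from (2) and (3). Your write-up simply spells out the routine closure and generating-set arguments that the paper leaves implicit.
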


\begin{proof}
	The first and second claims are immediate from the definition. The third claim is a consequence of Corollary~\ref{cor_botthomiscontwrtbasepoint}. The last claim follows from the second and the third. 
\end{proof}

The construction of $\AofMrel{\nmnf}$ is particularly interesting when $\nmnf$ is a closed convex subset of $\mnf$. In this case, $\nmnf$ is again a {\hhs}; thus we can compare the algebras $\AofMrel{\nmnf}$ and $\AofM[\nmnf]$. Consider the $C^*$-algebra
\[
	\pialg(N; M) = \left\{ \sigma \in \prod_{(x,t) \in N \times [0,\infty) } \cliffc (\hhil_x M \oplus t \rbbd ) \colon \sup_{(x,t) \in N \times [0,\infty)} \| \sigma({x,t}) \| < \infty \right\}
\]
together with the natural quotient map 
\[
	\pimap_{\mnf, \nmnf} \colon \pialg(\mnf) \to \pialg(N; M)
\]
and the natural embedding
\[
	\iotmap_{\mnf, \nmnf} \colon \pialg(N) \to \pialg(N; M)
\]
induced from the embeddings $\hhil_x N \hookrightarrow \hhil_x M$ for all $x \in N$. 

\begin{lem}\label{lem:AofMrel-agree}
	Let $\nmnf$ be a closed convex subset of $\mnf$. Then for any $\xpt_0 \in \nmnf$, we have
	\[
	\pimap_{\mnf, \nmnf} \circ \botthom[\xpt_0]^\mnf = \iotmap_{\mnf, \nmnf} \circ \botthom[\xpt_0]^\nmnf \; ,
	\]
	where $\botthom[\xpt_0]^\mnf$ is the Bott homomorphism into $\AofMrel{\nmnf}$, which in turn is contained in $\AofM$ and $\pialg(M)$, and $\botthom[\xpt_0]^\nmnf$ is the Bott homomorphism into $\AofM[\nmnf]$, viewed as a subalgebra of $\pialg(N)$. In particular,
	\[
	\pimap_{\mnf, \nmnf} (\AofMrel{\nmnf}) = \iotmap_{\mnf, \nmnf} (\AofM[\nmnf]) \; .
	\]
\end{lem}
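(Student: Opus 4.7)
The plan is to reduce the identity to the behavior of the Clifford operators $C_{x_0}$ restricted to $N$, and then use the naturality of continuous functional calculus with respect to inclusions of Clifford algebras. The main observation is that because $N$ is a closed convex subset of the CAT(0) space $M$, and $M$ is uniquely geodesic (Remark~\ref{rmk:CAT0-facts}\eqref{rmk:CAT0-facts-unique-geodesic}), for any two points $x, x_0 \in N$, the unique geodesic segment $[x,x_0]$ in $M$ is contained in $N$ and therefore coincides with the unique geodesic segment between these points inside the Hilbert-Hadamard subspace $N$. Consequently, under the canonical isometric inclusions $T_x N \hookrightarrow T_x M$ and $\hhil_x N \hookrightarrow \hhil_x M$, we have $\log_x^N(x_0) = \log_x^M(x_0)$ for every $x, x_0 \in N$.

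The first step is to use this to compare the Clifford operators: for $x_0 \in N$, the element $\cliffmult^M(x,t) = (-\log_x^M(x_0), t) \in \hhil_x M \oplus t\rbbd$ at any $(x,t) \in N \times [0,\infty)$ is precisely the image under the inclusion $\cliffc(\hhil_x N \oplus t\rbbd) \hookrightarrow \cliffc(\hhil_x M \oplus t\rbbd)$ of $\cliffmult^N(x,t) = (-\log_x^N(x_0), t)$. In the language of the quotient and embedding maps, this reads
\[
\pimap_{\mnf, \nmnf}(\cliffmult[\xpt_0]^M) = \iotmap_{\mnf, \nmnf}(\cliffmult[\xpt_0]^N)
\]
as self-adjoint (unbounded) multipliers over $N \times [0,\infty)$.

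The second step is to apply continuous functional calculus. For any $f \in \salg$, the element $\botthom[\xpt_0]^M(f)$ is, fiberwise at $(x,t)$, defined by applying $f$ to $\cliffmult[\xpt_0]^M(x,t)$, and similarly for $\botthom[\xpt_0]^N(f)$. Since both $\pimap_{\mnf, \nmnf}$ (fiberwise restriction to $N$) and $\iotmap_{\mnf, \nmnf}$ (the $*$\-/homomorphism induced fiberwise by the isometric embedding $\hhil_x N \oplus t\rbbd \hookrightarrow \hhil_x M \oplus t\rbbd$) are $*$\-/homomorphisms that intertwine functional calculus, the equality of Clifford operators from the previous step promotes to the desired identity
\[
\pimap_{\mnf, \nmnf} \circ \botthom[\xpt_0]^\mnf(f) = \iotmap_{\mnf, \nmnf} \circ \botthom[\xpt_0]^\nmnf(f)
\]
for every $\xpt_0 \in N$ and $f \in \salg$.

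For the ``In particular'' statement, I observe that both $\AofMrel{\nmnf}$ and $\AofM[N]$ are by definition generated as $C^*$-algebras by the images of the Bott homomorphisms $\botthom[\xpt_0]$ with $\xpt_0$ ranging over $N$ (in $\pialg(M)$ and $\pialg(N)$ respectively). Since both $\pimap_{\mnf, \nmnf}$ and $\iotmap_{\mnf, \nmnf}$ are $*$\-/homomorphisms, each sends its source generating set onto the common image identified above, so the two generated $C^*$-subalgebras of $\pialg(N;M)$ coincide. No step here is a real obstacle; the only mild subtlety is verifying that uniqueness of CAT(0) geodesics, together with convexity of $N$, really forces equality of the logarithm maps, but this is immediate from the definition of the tangent cone and the functoriality of $\hhil_{(-)}$ under isometric base-point-fixing inclusions (Construction~\ref{constr:Hilbert-space-span}).
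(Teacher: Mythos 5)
Your proof is correct and takes essentially the same approach as the paper: both reduce the identity to the observation that, by convexity of $N$, the geodesic segment in $M$ from $\xpt_0$ to $x \in N$ coincides with the one in $N$, so the Clifford operators agree under the embedding, and then functional calculus (together with the fact that $\pimap_{\mnf,\nmnf}$ and $\iotmap_{\mnf,\nmnf}$ are $*$\-/homomorphisms) promotes this to all of $\salg$.
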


\begin{proof}
	We check that for any $(\xpt, \tvar) \in \nmnf \times [0,\infty)$ and any $\ffunc \in \salg$, we have, inside $\cliffc (\hhil_x M \oplus \rbbd )$,
	\begin{align*}
		\pimap_{\mnf, \nmnf} \circ \botthom^\mnf  (\ffunc) (\xpt, \tvar) = \pimap_{\mnf, \nmnf} \left( \ffunc(  \cliffmult^\mnf (\xpt, \tvar) ) \right) = \ffunc \left(  \cliffmult^\mnf (\xpt, \tvar) \right)
	\end{align*}
	and 
	\begin{align*}
		\iotmap_{\mnf, \nmnf} \circ \botthom^\nmnf  (\ffunc) (\xpt, \tvar) = \iotmap_{\mnf, \nmnf} \left( \ffunc(  \cliffmult^\nmnf (\xpt, \tvar) ) \right) = \ffunc \left(  \cliffmult^\nmnf (\xpt, \tvar) \right) \;.
	\end{align*}
	They give the same element because the geodesic segment in $\mnf$ connecting $\xpt_0$ to $\xpt$ coincides with the geodesic segment in the convex subset $\nmnf$ connecting the same two points. 
\end{proof}

\begin{lem}\label{lem:fin-dim-$K$-theory}
	If $\mnf$ is a complete, connected and simply connected (finite\-/dimensional) Riemannian manifold with non-positive sectional curvature, then for any $\xpt_0 \in \mnf$, the Bott homomorphism $\botthom  \colon \salg \to \AofM$ induces isomorphisms on $K$-theory as well as on $K$-theory with real coefficients. 
\end{lem}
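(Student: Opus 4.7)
The strategy is to reduce the statement to the Euclidean case, where it becomes the classical Bott periodicity theorem. The reduction has already been essentially done in the preceding material; only a brief synthesis is needed.

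First, set $V = T_{x_0} M$, viewed as a finite-dimensional Euclidean space and thus as an admissible Hilbert-Hadamard space in its own right. By Proposition~\ref{prop_AofM-fin-dim}, there is a $*$-isomorphism $\Phi \colon \AofM[V] \to \AofM$ satisfying $\Phi \circ \botthom[0]^V = \botthom^M$. Consequently, the square
\[
\xymatrix{
\salg \ar[r]^{\botthom[0]^V} \ar[dr]_{\botthom^M} & \AofM[V] \ar[d]^{\Phi}_{\cong} \\
& \AofM
}
\]
commutes, and it suffices to prove that $\botthom[0]^V \colon \salg \to \AofM[V]$ induces an isomorphism on $K$-theory.

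Next, invoke Remark~\ref{rmk:AofM-fin-dim-graded}, which identifies $\AofM[V]$ with the $C^*$-algebra $\mathscr{SC}(V) = \salg \widehat{\otimes} C_0(V, \cliffc V)$ of Higson-Kasparov-Trout, and further identifies our Bott homomorphism $\botthom[0]^V$ with their map $\beta \colon \mathscr{SC}(\{0\}) = \salg \to \mathscr{SC}(V)$. The assertion that $\beta$ induces an isomorphism on $K$-theory is precisely the classical Bott periodicity theorem in the formulation of \cite[Theorem~3.5]{higsonkasparovtrout}. Concretely, one may also verify this directly from Corollary~\ref{cor:AofM-fin-dim}: in the even-dimensional case $V = \rbbd^{2k}$, the target is $C_0(\rbbd^{2k+1}, \mtrxalg{2^k})$, whose $K$-groups are $(0, \zbbd)$, matching $K_*(\salg) = (0, \zbbd)$; in the odd-dimensional case, a similar computation (using the six-term sequence associated with the ideal $C_0(\rbbd^{2k+1} \times (0,\infty), \mtrxalg{2^{k+1}})$ and the quotient $C_0(\rbbd^{2k+1}, \mtrxalg{2^k} \oplus \mtrxalg{2^k})$) yields the same result, and one checks that $[\botthom[0]^V (f)]$ for a suitable generator $f \in \salg$ generates the $K$-theory.

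\textbf{Main obstacle.} There is no substantive obstacle, since the heavy lifting has been done in Proposition~\ref{prop_AofM-fin-dim} (which uses the Cartan-Hadamard theorem to pull back the algebra structure from the tangent space to $M$) and in the identification of Remark~\ref{rmk:AofM-fin-dim-graded}. The only point requiring mild care is confirming that the polar decomposition construction in Proposition~\ref{prop_AofM-fin-dim} produces an isomorphism that genuinely intertwines the Bott homomorphisms on the nose (so that one does not merely get homotopic but possibly different maps on $K$-theory); this is already recorded in the statement $\Phi \circ \botthom[0]^V = \botthom^M$. Given this, the lemma is an immediate consequence of classical finite-dimensional Bott periodicity.
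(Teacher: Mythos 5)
Your proof is correct and follows essentially the same route as the paper: reduce to the Euclidean case via Proposition~\ref{prop_AofM-fin-dim}, then identify $\botthom[0]^V$ with the Higson-Kasparov-Trout Bott map through Remark~\ref{rmk:AofM-fin-dim-graded} and invoke classical Bott periodicity. The supplementary hands-on verification via Corollary~\ref{cor:AofM-fin-dim} is a nice sanity check but is not needed given the identification in the remark.
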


\begin{proof}
	Write $V$ for the tangent space of $M$ at $x_0$. By Proposition~\ref{prop_AofM-fin-dim}, it suffices to show the homomorphism 
	\[
		\botthom[0]^{V} \colon \salg \to \AofM[V]
	\]
	induces isomorphisms on $K$-theory and $K$-theory with real coefficients. In view of Remark~\ref{rmk:AofM-fin-dim-graded}, the isomorphism on $K$-theory follows from the Bott Periodicity Theorem \cite[\S 2, 6]{higsonkasparovtrout}. The case for $K$-theory with real coefficients then follows since both algebras are type I \textemdash\ all of their irreducible representations factor through finite-dimensional Clifford algebras\textemdash\ and thus in the bootstrap class, and hence the natural map in Construction~\ref{constr:KKR} from $K$-theory tensored with $\rbbd$ to $K$-theory with real coefficients is an isomorphism. 
\end{proof}

\begin{lem}\label{lem:botthom-homotopy}
	For any two points $\xpt_0, \xpt_1 \in \mnf$, the Bott homomorphisms
	\[
	\botthom[\xpt_0], \botthom[\xpt_1] \colon \salg \to \AofM[\mnf]
	\]
	are homotopic to each other. 
\end{lem}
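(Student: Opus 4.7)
The plan is to use the unique geodesic segment in the CAT(0) space $M$ connecting $x_0$ to $x_1$ as a parameter for a path of Bott homomorphisms. Concretely, write $\gamma \colon [0,1] \to M$ for the affinely parametrized geodesic, $\gamma(s) = [x_0, x_1](s)$ (in the notation of Equation~\eqref{eq:notation-geodesic}), and define
\[
\Psi \colon \salg \to C\!\left([0,1], \AofM\right), \qquad \Psi(f)(s) = \beta_{\gamma(s)}(f) .
\]
Once $\Psi$ is shown to be a well-defined {\shom}, it is automatically a homotopy between $\beta_{x_0}$ and $\beta_{x_1}$, since evaluating at $s=0$ and $s=1$ recovers the two Bott homomorphisms.

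The key point is showing that $\Psi(f)$ genuinely lands in $C([0,1], \AofM)$. For each fixed $s \in [0,1]$, Proposition~\ref{prop_welldefinednessofBotthomomorphism} already guarantees that $\beta_{\gamma(s)}(f) \in \AofM \subset \pialg(M)$, so the range is pointwise correct; all that remains is to check norm-continuity of $s \mapsto \beta_{\gamma(s)}(f)$ for each $f \in \salg$. This is essentially Corollary~\ref{cor_botthomiscontwrtbasepoint}: the CAT(0) bicombing (Remark~\ref{rmk:CAT0-facts}\eqref{rmk:CAT0-facts-bicombing}) makes $s \mapsto \gamma(s)$ continuous (in fact $1$-Lipschitz since $d(\gamma(s), \gamma(s')) = |s-s'|\, d(x_0,x_1)$), and for any convergent sequence $s_n \to s$ in $[0,1]$, Corollary~\ref{cor_botthomiscontwrtbasepoint} yields $\|\beta_{\gamma(s_n)}(f) - \beta_{\gamma(s)}(f)\| \to 0$. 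One can also read off a uniform modulus of continuity directly from Proposition~\ref{prop_Botthomandchangeofbasepoint} together with Lemma~\ref{lem:Omega-Theta-lim-r}, which gives an explicit bound in terms of $\oscill{r} f$ and $\meansym{r} f$ for $r = |s-s'| \, d(x_0, x_1)$.

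Finally, since pointwise evaluation $\text{ev}_s \colon C([0,1], \AofM) \to \AofM$ is a {\shom} and each $\beta_{\gamma(s)}$ is a {\shom} by Proposition~\ref{prop_welldefinednessofBotthomomorphism}, it follows that $\Psi$ is itself a {\shom} (linearity and multiplicativity hold pointwise in $s$). By construction $\text{ev}_0 \circ \Psi = \beta_{x_0}$ and $\text{ev}_1 \circ \Psi = \beta_{x_1}$, so $\Psi$ is the desired homotopy. There is no significant obstacle here; the only substantive ingredient is the continuity of $\beta_{x}(f)$ in the base point $x$, which has already been established.
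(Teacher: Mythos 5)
Your proof is correct and follows the same approach as the paper: parametrize by a path in $M$ from $x_0$ to $x_1$ (the paper notes the geodesic segment as the natural choice), and invoke Corollary~\ref{cor_botthomiscontwrtbasepoint} to get norm-continuity of $s \mapsto \beta_{\gamma(s)}(f)$, hence a well-defined {\shom} into $C([0,1], \AofM)$. Your additional remarks on the explicit modulus of continuity via Proposition~\ref{prop_Botthomandchangeofbasepoint} are a nice bonus but not needed.
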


\begin{proof}
	Let $(\xpt_\svar )_{\svar \in [0,1]}$ be a path in $\mnf$ connecting $\xpt_0$ and $\xpt_1$ (e.g., the geodesic segment between the two points). By Corollary~\ref{cor_botthomiscontwrtbasepoint}, the family $\big( \botthom[\xpt_\svar] \big)_{\svar \in [0,1]}$ constitutes a homotopy between $\botthom[\xpt_0]$ and $\botthom[\xpt_1]$.
\end{proof}

Recall that $\mnf$ is said to be \emph{admissible} if there is a sequence $\mnf_1 \subset \mnf_2 \subset \ldots$ of closed convex subsets isometric to finite\-/dimensional Riemannian manifolds, whose union is dense in $M$.

\begin{prop} \label{prop:botthom-K-inj}
	Suppose that $\mnf$ is admissible. Then for any $\xpt_0 \in \mnf$, the Bott homomorphism 
	\[
	\botthom  \colon \salg \to \AofM
	\]
	induces injections on $K$-theory as well as $K$-theory with real coefficients (Construction~\ref{constr:KKR}), that is, the induced homomorphisms
	\[
		K_i (\salg) \overset{(\botthom)_*}{\longrightarrow} K_i (\AofM) \quad \text{and} \quad K_{\rbbd, i} (\salg) \overset{(\botthom)_*}{\longrightarrow} K_{\rbbd, i} (\AofM)
	\] 
	are injective. 
\end{prop}

\begin{proof}
	Let $\mnf_1 \subset \mnf_2 \subset \ldots$ be a sequence of closed convex subsets isometric to finite\-/dimensional Riemannian manifolds such that $\displaystyle \mnf = \overline{\bigcup_{\kind=1}^\infty \mnf_\kind}$. By Lemma~\ref{lem:botthom-homotopy}, the Bott homomorphisms associated to any two base points agree on $K$-theory. Hence we may assume without loss of generality that $\xpt_0 \in \tanbndl\mnf_1$. By Lemma~\ref{lem:AofMrel-basic}, we see that $\AofM$ is the direct limit of the increasing sequence of subalgebras $\AofMrel{\mnf_1} \subset \AofMrel{\mnf_2} \subset \ldots$. Since the image of $\botthom$ is contained in $\AofMrel{\mnf_\kind}$ for any $\kind \in \znum{>0}$, by the continuity of the $K$-theory functor with regard to direct limits, it suffices to show that 
	\[
	\botthom  \colon \salg \to \AofMrel{\mnf_\kind}
	\]
	induces an injection on $K$-theory and $K$-theory with real coefficients for every $\kind \in \znum{>0}$. To this end, we fix an arbitrary $\kind \in \znum{>0}$ and observe that Lemma~\ref{lem:AofMrel-agree} yields a commutative diagram 
	\[
		\xymatrix{
				\salg \ar[r]^{\botthom} \ar[d]_{\botthom^{\mnf_\kind}} & \AofMrel{\mnf_\kind} \ar[d]^{\pimap_{\mnf, \mnf_\kind}} \\
				\AofM[\mnf_\kind] \ar[r]_-{\iotmap_{\mnf, \mnf_\kind}} &	 \iotmap_{\mnf, \mnf_\kind} \left( \AofM[\mnf_\kind] \right) = \pimap_{\mnf, \mnf_\kind} \left( \AofMrel{\mnf_\kind} \right) \subseteq \pialg({\mnf_\kind; \mnf})
			}
	\]
	of homomorphisms of $C^*$-algebras. Its lower horizontal arrow is an isomorphism. By Lemma~\ref{lem:fin-dim-$K$-theory}, the left vertical arrow induces isomorphisms on $K$-theory as well as $K$-theory with real coefficients. It follows that $\botthom$ induces injections on $K$-theory as well as $K$-theory with real coefficients, as desired. 
\end{proof}

The following remark is not essential to the proofs of our main theorems, but it helps to connect our construction of $\AofM[\hhil]$ for a separable Hilbert space $\hhil$ with those of $\mathscr{SC}(\hhil)$ in \cite[\S 3, 3]{higsonkasparovtrout} and $\widetilde{\mathscr{A}}(\hhil)$ in \cite[4.3]{higsonkasparov}. 

\begin{rmk}\label{rmk:AofM-complemented}
	We say that a closed convex subset $N$ of $M$ is \emph{complemented} if there exists a closed convex subset $N'$ of $M$ such that $N \cap N' = \left\{ x_0 \right\}$ for some $x_0 \in M$ and there is an isometry $M \xrightarrow{\simeq} N \times N'$ (equipped with the $\ell^2$-product metric) sending $N \subseteq M$ to $N \times \{x_0\} \subseteq N \times N'$.
	
	We claim that when $N$ is complemented, the quotient map $\pimap_{\mnf, \nmnf}$ maps $\AofMrel{\nmnf}$ isomorphically onto $\iotmap_{\mnf, \nmnf} (\AofM[\nmnf])$; thus we have $\AofMrel{\nmnf} \cong \AofM[\nmnf]$. To see this, we follow Remark~\ref{rmk:botthom-alternative} and introduce, for any $(x',t) \in N' \times [0, \infty)$, the isometric linear embedding 
	\[
		\tau_{x_0}^{N'}(x',t) \colon \dist_{x_0} (x' , t) \cdot \rbbd \to \hhil_{x'} N' \oplus t \rbbd 
	\]
	sending $\dist_{x_0} (x' , t)$ to $\cliffmult^{N'} (x' , t)$, where $\dist_{x_0} (x' , t) = \sqrt{ d(x', x_0)^2 + t^2 }$. 
	Together they make up a $*$\-/homomorphism
	\begin{align*}
		B_{N}^{M} \colon \Pi(N) &\to \Pi(N \times N') \cong \Pi(M) \\
		\sigma &\mapsto \bigg( \cliffc \left( \idmap_{ \hhil_x N } \oplus \tau_{x_0}^{N'} (x',t) \right) \left( \sigma (x, \dist_{x_0}(x',t)) \right) \bigg)_{(x, x',t) \in N \times N' \times [0, \infty)} \; ,
	\end{align*}
	where 
	\[
		\cliffc \left( \idmap_{ \hhil_x N } \oplus \tau_{x_0}^{N'} (x',t) \right) \colon \cliffc ( \hhil_x N \oplus \dist_{x_0}(x,t) \rbbd) \to \cliffc \left( \hhil_x N \oplus \hhil_{x'} N' \oplus t \rbbd \right)
	\]
	is the induced $*$\-/homomorphism between the Clifford algebras. It follows from straightforward computations that 
	\[
		\pimap_{\mnf, \nmnf} \circ B_{N}^{M} = \iotmap_{\mnf, \nmnf} \colon \Pi_{\operatorname{b}}(N) \to \Pi_{\operatorname{b}}(N; M) 
	\]
	and $B_{N}^{M} (\cliffmult[x]^N) = \cliffmult[x]^M$ for any $x \in N$. The latter implies $B_{N}^{M} \circ \botthom[x]^{N} = \botthom[x]^{M}$ for any $x \in N$ and thus
	\[
		B_{N}^{M} (\AofM[N]) = \AofMrel{N} \; .
	\]
	These facts prove the above claim. 
	
	In the case when $M$ is a separable Hilbert space, every closed convex subset $N$ is an affine subspace and is clearly complemented. Thus choosing an increasing sequence $(M_k)_{k \in \nbbd}$ of finite\-/dimensional affine subspaces with $M = \overline{\bigcup_{k\in \nbbd} M_k}$, we have
	\[
		\AofM = \overline{\bigcup_{k\in \nbbd} B_{M_k}^{M} (\AofM[M_k])} \; .
	\]
	In view of the natural identification  
	\[
		\cliffc \left( \idmap_{ \hhil_x N } \oplus \tau_{x_0}^{N'} (x',t) \right) = \idmap_{ \cliffc (\hhil_x N) } \widehat{\otimes} \cliffc \left( \tau_{x_0}^{N'} (x',t) \right) \; , 
	\]
	it is not hard to see that the map $B_{N}^{M} \colon \AofM[N] \to \AofMrel{N} \subset \AofM$ agrees with the connecting map $\AofM[V'] \to \AofM[V]$ given in Definition~4.4 of \cite{higsonkasparov}. It follows that in this case, our construction of $\AofM$ agrees with the algebras $\mathscr{SC}(M)$ in \cite[\S 3, 3]{higsonkasparovtrout} and $\widetilde{\mathscr{A}}(M)$ in \cite[4.3]{higsonkasparov}. 
\end{rmk}

\section{The proofs of the main theorems} \label{sec:proof}
In this section, we prove our main results, which will make use of the various ingredients from the previous sections. More precisely, suppose that a discrete group $\Gamma$ acts on a {\hhs} $M$ properly and isometrically. Then, Proposition~\ref{prop_AofMisproper} gives one hope to apply the standard Dirac-dual-Dirac method (cf. ~\cite{kasparov1,kasparov95}; also see \cite[Chapter~9]{Valette2002}) to the $C^*$-algebra $\AofM$ in order to prove the injectivity of the assembly map $\mu \colon KK^\Gamma_i(\univspproper\Gamma) \to K_i(C^*_{\operatorname{r}} \Gamma)$ via Theorem~\ref{thm:proper-GHT} and the commutative diagram in \eqref{eq:BC-assembly-natural}. However, we are not able to directly apply this powerful method since we have not been able to compute the $K$-theory of $\AofM$ in general. We have only obtained some partial information from Proposition~\ref{prop:botthom-K-inj}. 

To circumvent this problem, we amplify the {\hhs} $M$ to a bigger and necessarily infinite\-/dimensional {\hhs} $M^{[0,1]}$, on which $\Gamma$ still acts properly and isometrically, and then employ a deformation technique to simplify the calculation of the equivariant $KK$-groups involving the $C^*$-algebra $\AofM[M^{[0,1]}]$. 
To formalize this deformation technique, we introduce the following $C^*$-algebra. 

\newcommand{\AIofM}[1][\mnf]{\mathcal{A}_{[0,1]}(#1)}

\begin{constr}
	Let us fix a {\hhs} $M$.  
	Recall from Proposition~\ref{prop:isom-01-nilhomotopic} that we write $M^{[0,1]}$ for the continuum product $L^2([0,1], m , M)$. We define 
	\[
		\AIofM = C \left( [0,1], \AofM[{M^{[0,1]}}] \right) \; ,
	\]
	that is, the {\cstaralg} of all continuous functions with values in $\AofM[{M^{[0,1]}}]$,
	and equip it with the action $\alpha_{[0,1]}$ of $\isomgrp(M)$: for any $\varphi \in \isomgrp(M)$ and any $f \in C \left( [0,1], \AofM[{M^{[0,1]}}] \right)$, we define $\varphi \cdot_{\alpha_{[0,1]}} f$ by
	\[
		\left(\varphi \cdot_{\alpha_{[0,1]}} f\right) (t) = H(\varphi, t)_* \, (f(t)) \quad \text{for any } t \in [0,1] \; ,
	\]
	where $H \colon \isomgrp(M) \times [0,1] \to \isomgrp\left(M^{[0,1]} \right)$ is the homotopy given in Proposition~\ref{prop:isom-01-nilhomotopic} and the lower $*$ denotes the induced element in $\autgrp(\AofM[{M^{[0,1]}}])$. Note that the continuity statement in Proposition~\ref{prop:isom-01-nilhomotopic}, together with Proposition~\ref{prop:isom-M-aut-AofM}, guarantees this action is well-defined and continuous. 
	
	For each $t \in [0,1]$, there is an evaluation map
	\[
		\operatorname{ev}_t \colon \AIofM \to \AofM[{M^{[0,1]}}] \;, \quad f \mapsto f(t) \; ,
	\]
	which clearly intertwines the actions $\alpha_{[0,1]}$ and 
	\[
		\alpha_t \colon \isomgrp(M) \curvearrowright \AofM[{M^{[0,1]}}] \; , \quad \varphi \cdot_{\alpha_{t}} a = H(\varphi, t)_* \, (a) \; . 
	\]
\end{constr}

\begin{rmk}\label{rmk:AIofM-ev}
	It is clear that for any $t \in [0,1]$, the evaluation map $\operatorname{ev}_t$ is a homotopy equivalence and thus induces an isomorphism on (non-equivariant) $K$-theory. 
\end{rmk}

\begin{lem}\label{lem:AIofM-ev-KK}
	Let $\Gamma$ be a subgroup of $\isomgrp(M)$ and let $X$ be a free and proper $\Gamma$-space. Then for any $t \in [0,1]$, the evaluation maps 
	\[
		\operatorname{ev}_t \colon \AIofM \to \AofM[{M^{[0,1]}}]
	\]
	induce isomorphisms
	\[
		(\operatorname{ev}_t)_* \colon KK^{\Gamma, \alpha_{[0,1]}}_i \left(X, \AIofM \right) \overset{\cong}{\longrightarrow} KK^{\Gamma, \alpha_t}_i \left(X, \AofM[{M^{[0,1]}}] \right)
	\]
	for $i=0,1$, where the superscripts $\alpha_{[0,1]}$ and $\alpha_t$ are inserted to specify the actions of $\Gamma$ on the $C^*$-algebras $\AIofM$ and $\AofM[{M^{[0,1]}}]$. 
\end{lem}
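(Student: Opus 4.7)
Plan: The strategy is to analyze the short exact sequence
\[
0 \to \mathcal{I}_t \to \AIofM \xrightarrow{\operatorname{ev}_t} \AofM[{M^{[0,1]}}] \to 0,
\]
where $\mathcal{I}_t = C_0([0,1] \setminus \{t\},\, \AofM[{M^{[0,1]}}])$ carries the restricted $\alpha_{[0,1]}$\-/action, and reduce the lemma (via the induced six-term exact sequence in $KK^\Gamma_*(X, -)$) to the vanishing statement $KK^\Gamma_i(X, \mathcal{I}_t) = 0$. Note that $\operatorname{ev}_t$ is already a non-equivariant homotopy equivalence (Remark~\ref{rmk:AIofM-ev}); the entire content of the lemma is to promote this to the equivariant setting, despite the $s$\-/dependent twist $H(\varphi,s)$ defining $\alpha_{[0,1]}$.

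I would decompose $\mathcal{I}_t$ into (at most two) pieces of the form $C_0(I,\, \AofM[{M^{[0,1]}}])$ with $I$ a half-open interval missing $t$. If the twist were absent, the linear rescaling $(\mu \cdot f)(s) = f(\mu(s-t) + t)$ with $\mu \in [0,1]$ would supply an equivariant null\-/homotopy of each such piece. To accommodate the twist, I would introduce the auxiliary $\Gamma$-$C^*$-algebra
\[
\widetilde{\mathcal{A}} = C\bigl([0,1]_\lambda \times [0,1]_s,\, \AofM[{M^{[0,1]}}]\bigr) \quad \text{with} \quad (\varphi \cdot g)(\lambda, s) = H\bigl(\varphi, (1-\lambda)t + \lambda s\bigr)_* \, g(\lambda, s),
\]
together with the equivariant $*$\-/homomorphism $\Psi \colon (\AIofM, \alpha_{[0,1]}) \to \widetilde{\mathcal{A}}$ defined by $\Psi(f)(\lambda, s) = f((1-\lambda)t + \lambda s)$. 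The two evaluations $\operatorname{ev}_{\lambda=0}, \operatorname{ev}_{\lambda=1}$ on $\widetilde{\mathcal{A}}$ are equivariant with targets $(\AIofM, \alpha_t \otimes \operatorname{triv})$ and $(\AIofM, \alpha_{[0,1]})$ respectively, and a direct computation shows $\operatorname{ev}_{\lambda=0} \circ \Psi = \iota_t \circ \operatorname{ev}_t$ while $\operatorname{ev}_{\lambda=1} \circ \Psi = \operatorname{id}$, where $\iota_t$ is the constant inclusion.

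The lemma would follow once one checks (i) that $(\iota_t)_*$ is an isomorphism from $KK^{\Gamma, \alpha_t}_*(X, \AofM[{M^{[0,1]}}])$ to $KK^{\Gamma, \alpha_t \otimes \operatorname{triv}}_*(X, \AIofM)$, which is standard homotopy invariance because the tensor factor $C([0,1])$ carries the trivial action, and (ii) that both evaluations $\operatorname{ev}_{\lambda=0}, \operatorname{ev}_{\lambda=1}$ on $\widetilde{\mathcal{A}}$ induce $KK^\Gamma_*(X, -)$\-/iso\-/morphisms. The main obstacle is (ii): the kernels of these evaluations are ``twisted cones'' in the $\lambda$\-/direction, and although the continuity of $s \mapsto H(\varphi, s)$ from Proposition~\ref{prop:isom-M-aut-AofM} permits a contracting homotopy in the $\lambda$\-/parameter, the twist still interlocks $\lambda$ and $s$. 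I expect the resolution either to involve an iteration progressively shrinking the effective range of the twist parameter to a single point, or an asymptotic-morphism argument producing the required equivariant null\-/homotopy by exploiting the $\Gamma$-compactness of supports in $X$ to bound the relevant $H(\varphi,\cdot)$ uniformly.
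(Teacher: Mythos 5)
Your proposal contains a genuine gap, which you yourself acknowledge at the end: the auxiliary algebra $\widetilde{\mathcal{A}}$ merely repackages the twist into a two-parameter family, and you do not supply a working argument for step (ii). Neither of the two escape routes you gesture at (iterative shrinking, asymptotic-morphism argument) is developed, and it is not clear either can work, because the twist $H(\varphi,\cdot)$ genuinely depends on the parameter and there is no equivariant contraction available. The reduction to $KK^\Gamma_i(X, \mathcal{I}_t)=0$ is correct in principle, and your diagnosis that the sole content of the lemma is promoting a non-equivariant homotopy equivalence to an equivariant $KK$-isomorphism is exactly right; but you stop at the diagnosis.

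The ingredient you are missing is the one piece of the hypothesis your proof never touches: $X$ is a \emph{free} and proper $\Gamma$-space. The paper's proof exploits this to reduce to $\Gamma$-compact $X$, apply the slice lemma (Lemma~\ref{lem:slice}) to cover $X$ by finitely many $\Gamma$-invariant open sets of the form $\Gamma\times U$ with $\Gamma$ acting by translation on the first factor, and then use Remark~\ref{rmk:KK-facts-de-equivariantize}\eqref{rmk:KK-facts-de-equivariantize:translation-A} to identify $KK^{\Gamma}_i(C_0(\Gamma\times U), B)$ with the \emph{non-equivariant} group $KK_i(C_0(U),B)$. Under this identification the $\Gamma$-action on the coefficient algebra becomes irrelevant, so the twist disappears, and the non-equivariant homotopy equivalence of Remark~\ref{rmk:AIofM-ev} already makes $(\operatorname{ev}_t)_*$ an isomorphism on each such piece. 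A Mayer--Vietoris and five-lemma induction over the finite cover then yields the result. In short: rather than trying to contract the twisted cone equivariantly (which is hard, and what your plan attempts), the paper de-equivariantizes so that the non-equivariant contraction suffices. Without invoking freeness somewhere, your argument cannot close.
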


\begin{proof}
	By Definition~\ref{defn:KK-Gam-compact}, it suffices to prove the lemma in the case when $X$ is $\Gamma$-compact. Combining this additional condition with the slice lemma (Lemma~\ref{lem:slice}), we see that such an $X$ can be written as the union of finitely many $\Gamma$-invariant open subsets $W_1, \ldots, W_k$, with each $W_j$ being a disjoint union of $\Gamma$-translates of a single open subset $U_j$, that is, $W_j$ is equivariantly homeomorphic to $\Gamma \times U_j$, with $\Gamma$ acting by translation on the first factor of the Cartesian product. Thus each $W_j$ determines a $\Gamma$-invariant ideal $C_0 (W_j)$ in $C_0(X)$ that is equivariantly isomorphic to $C_0(\Gamma, C_0 (U))$. 
	Remark~\ref{rmk:KK-facts-de-equivariantize}\eqref{rmk:KK-facts-de-equivariantize:translation-A} then yields a commutative diagram
	\[
		\xymatrix{
				KK^{\Gamma, \alpha_{[0,1]}}_i \left( C_0(W_j), \AIofM \right) \ar[r]^{(\operatorname{ev}_t)_*} \ar[d]^{\cong} & KK^{\Gamma, \alpha_t}_i \left( C_0(W_j) , \AofM[{M^{[0,1]}}] \right) \ar[d]^{\cong} \\
				KK_i \left( C_0(U_j), \AIofM \right) \ar[r]^{(\operatorname{ev}_t)_*} & KK_i \left( C_0(U_j) , \AofM[{M^{[0,1]}}] \right)
			}
	\]
	Remark~\ref{rmk:AIofM-ev} implies that the bottom map is an isomorphism, and thus so is the top map. 
	Now, for any $j \in \{2, \ldots, k\}$, the two $\Gamma$-invariant ideals $C_0 (W_1 \cup \ldots \cup W_{j-1})$ and $C_0 (W_{j})$ in the commutative proper $\Gamma$-$C^*$-algebra $C_0 (W_1 \cup \ldots \cup W_{j})$ give rise to a commutative diagram consisting of two Mayer-Vietoris sequences with regard to the functors $KK^{\Gamma, \alpha_{[0,1]}}_i \left( -, \AIofM \right)$ and $KK^{\Gamma, \alpha_t}_i \left( - , \AofM[{M^{[0,1]}}] \right)$, together with various maps induced by $\operatorname{ev}_t$. 
	By a standard inductive argument using the five lemma (see, for example, \cite{guentnerhigsontrout}), we obtain the desired isomorphism. 
\end{proof}

\begin{constr}
	Let 
	\[
		\sigma \colon \rbbd_+^{\ast} \curvearrowright \salg 
	\]
	be the rescaling action given by 
	\[
		(s \cdot f) (t) = f( s^{-1} t )
	\]
	for any $s \in \rbbd_+^{\ast}$, $f \in \salg$, and $t \in \rbbd$. This action preserves the set of even (respectively, odd) functions. 
\end{constr}

\begin{lem}\label{lem:Omega-Theta-rescale}
	Following the notations of Definition~\ref{defn:Omega-Theta}, for any $\ffunc \in \salg$ and $r \in \rbbd_+^{\ast}$, we have 
	\[
		\oscill{\rdist} (\sigma_s (\ffunc)) = \oscill{s^{-1} \rdist}\ffunc \qquad \text{and} \qquad \meansym{\rdist} (\sigma_s (\ffunc)) = \meansym{s^{-1} \rdist}\ffunc
	\]
	for all $s \in \rbbd_+^{\ast}$ and thus
	\[
		\lim_{s \to \infty} \sup_{\rdist' \leq \rdist} \oscill{\rdist'} (\sigma_s (\ffunc)) = 0  = \lim_{s \to \infty} \sup_{\rdist' \leq \rdist} \meansym{\rdist'} (\sigma_s (\ffunc)) \; .
	\]
\end{lem}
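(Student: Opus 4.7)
The plan is to reduce both identities to a direct change of variables in the defining suprema, then deduce the limit statement by combining these identities with Lemma~\ref{lem:Omega-Theta-lim-r}. Since everything happens pointwise on $\rbbd$ and the only action involved is the scaling $(s\cdot f)(t) = f(s^{-1}t)$, no deep input is required; the main thing to take care of is the correct bookkeeping of the scaling factor in $\meansym{\rdist}$.

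For the first identity, I would unpack the definition of $\oscill{\rdist}$: substituting $u = s^{-1}\tvar$ and $u' = s^{-1}\tvar'$ turns the constraint $|\tvar-\tvar'|\le \rdist$ into $|u-u'|\le s^{-1}\rdist$, yielding $\oscill{\rdist}(\sigma_s \ffunc) = \oscill{s^{-1}\rdist}\ffunc$. For the second identity, the same substitution (with $t \geq \rdist$ becoming $u \geq s^{-1}\rdist$) gives
\[
\meansym{\rdist}(\sigma_s \ffunc) = \rdist\,\sup\!\left\{\tfrac{|\ffunc(s^{-1}t)-\ffunc(-s^{-1}t)|}{2t} : t\geq \rdist\right\} = (s^{-1}\rdist)\sup\!\left\{\tfrac{|\ffunc(u)-\ffunc(-u)|}{2u} : u\geq s^{-1}\rdist\right\},
\]
which is exactly $\meansym{s^{-1}\rdist}\ffunc$; the factor $\rdist$ in front absorbs into the $s^{-1}$ from the denominator, leaving $s^{-1}\rdist$.

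For the uniform limit statements, I would first observe that $\oscill{\cdot}\ffunc$ is monotone non-decreasing in its subscript, directly from the definition, so
\[
\sup_{\rdist'\leq \rdist}\oscill{\rdist'}(\sigma_s\ffunc) = \oscill{\rdist}(\sigma_s\ffunc) = \oscill{s^{-1}\rdist}\ffunc,
\]
which tends to $0$ as $s\to\infty$ by Lemma~\ref{lem:Omega-Theta-lim-r}. The function $\meansym{\cdot}\ffunc$ is \emph{not} obviously monotone because the prefactor $\rdist$ fights the shrinking domain, so I would instead apply the first identity directly to get
\[
\sup_{\rdist'\leq \rdist}\meansym{\rdist'}(\sigma_s\ffunc) = \sup_{\rdist'\leq \rdist}\meansym{s^{-1}\rdist'}\ffunc = \sup_{\rdist''\leq s^{-1}\rdist}\meansym{\rdist''}\ffunc,
\]
and then appeal to Lemma~\ref{lem:Omega-Theta-lim-r}: given $\varepsilon>0$, choose $\delta>0$ with $\meansym{r}\ffunc<\varepsilon$ for all $0<r<\delta$; for $s$ large enough that $s^{-1}\rdist<\delta$ the entire sup on the right is bounded by $\varepsilon$.

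There is no real obstacle here — the statement is essentially just the observation that scaling $\ffunc$ by $s^{-1}$ on the input side rescales both oscillation-type moduli by the same factor $s^{-1}$ in their window parameter — so the only point requiring a little care is the non-monotonicity of $\meansym{\rdist}\ffunc$, which is handled by performing the change of variables first and invoking Lemma~\ref{lem:Omega-Theta-lim-r} uniformly on the shrinking interval $(0,s^{-1}\rdist]$ rather than trying to monotonize $\meansym{}$.
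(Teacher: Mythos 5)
Your proof is correct and spells out exactly the change-of-variables computation that the paper dismisses as immediate from Definition~\ref{defn:Omega-Theta} and Lemma~\ref{lem:Omega-Theta-lim-r}. Your observation that $\meansym{\cdot}\ffunc$ need not be monotone — and your workaround of performing the rescaling first and then appealing to Lemma~\ref{lem:Omega-Theta-lim-r} uniformly on $(0, s^{-1}\rdist]$ — is a genuine bit of care that the paper's one-line proof glosses over, but it is the right way to handle the $\sup_{\rdist' \leq \rdist}$ in the second limit.
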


\begin{proof}
	These follow immediately from Definition~\ref{defn:Omega-Theta} and Lemma~\ref{lem:Omega-Theta-lim-r}. 
\end{proof}

\begin{lem}\label{lem:botthom-rescaled-asymptotic-invariant-01}
	Consider $M$ as embedded in $M^{[0,1]}$ as constant functions. For any $x_0 \in M$, let
	\[
		\botthom^{[0,1]} \colon \salg \to \AIofM \; ,
	\]
	be the composition of $\botthom \colon \salg \to \AofM[M^{[0,1]}]$ and the embedding of $\AofM[M^{[0,1]}]$ into $\AIofM$ as constant functions. Then the family 
	\[
		\left\{ {\botthom^{[0,1]}} \circ \sigma_s \right\}_{s \in [1, \infty)}
	\]
	of {\shom}s from $\salg$ to $\AIofM$ is asymptotically invariant in the sense of Construction~\ref{constr:KK-facts-asymptotic} with regard to the action $\alpha_{[0,1]} \colon \isomgrp(M) \curvearrowright \AIofM$ and the trivial action on $\salg$. 
\end{lem}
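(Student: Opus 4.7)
The plan is to unpack the asymptotic invariance condition and reduce it to the change-of-base-point estimate from Proposition~\ref{prop_Botthomandchangeofbasepoint}, combined with the rescaling behavior recorded in Lemma~\ref{lem:Omega-Theta-rescale}. Fix $f \in \salg$ and $g \in \isomgrp(M)$; I need to show
\[
\lim_{s \to \infty} \left\| g \cdot_{\alpha_{[0,1]}} \bigl(\botthom^{[0,1]}(\sigma_s(f))\bigr) - \botthom^{[0,1]}(\sigma_s(f)) \right\|_{\AIofM} = 0.
\]
Since $\AIofM = C([0,1], \AofM[M^{[0,1]}])$ carries the supremum norm, and since $\botthom^{[0,1]}(\sigma_s(f))$ is a constant function with value $\botthom(\sigma_s(f)) \in \AofM[M^{[0,1]}]$, evaluating at a point $t \in [0,1]$ and applying Lemma~\ref{lem_Botthomandgroupaction} to the isometry $H(g,t) \in \isomgrp(M^{[0,1]})$, the quantity inside the norm at $t$ becomes
\[
\botthom[H(g,t)(x_0)](\sigma_s(f)) - \botthom(\sigma_s(f)),
\]
where both Bott homomorphisms are based at points of $M^{[0,1]}$ (with $x_0$ viewed as the constant function).

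The next step is to estimate the distance between the two base points in $M^{[0,1]}$. By the explicit formula for $H(g,t)$ in Proposition~\ref{prop:isom-01-nilhomotopic}, the function $H(g,t)(x_0)$ equals $g(x_0)$ on $[0,t]$ and $x_0$ on $(t,1]$, so
\[
r_t := d_{M^{[0,1]}}\bigl(H(g,t)(x_0), x_0\bigr) = \sqrt{t}\, d_M(g(x_0), x_0) \leq d_M(g(x_0), x_0) =: R.
\]
Thus $r_t \leq R$ uniformly in $t \in [0,1]$. Applying Proposition~\ref{prop_Botthomandchangeofbasepoint} with this bound gives, at each $t$,
\[
\bigl\|\botthom[H(g,t)(x_0)](\sigma_s(f)) - \botthom(\sigma_s(f))\bigr\| \leq 2\,\oscill{r_t}\sigma_s(f) + \max\!\left\{\sqrt{2\,\oscill{r_t}\sigma_s(f)\cdot \oscill{2r_t}\sigma_s(f)},\ \meansym{r_t}\sigma_s(f)\right\}.
\]

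The final step is to take the supremum over $t \in [0,1]$ and let $s \to \infty$. Since $r_t \leq R$ and $2r_t \leq 2R$, the right-hand side is dominated by an expression involving $\sup_{r' \leq 2R} \oscill{r'}\sigma_s(f)$ and $\sup_{r' \leq R} \meansym{r'}\sigma_s(f)$. By Lemma~\ref{lem:Omega-Theta-rescale} (which expresses these in terms of $\oscill{s^{-1}r'}f$ and $\meansym{s^{-1}r'}f$ and then applies Lemma~\ref{lem:Omega-Theta-lim-r}), both suprema tend to $0$ as $s \to \infty$. Taking the sup over $t$ before passing to the limit therefore yields the asymptotic invariance. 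There is no serious obstacle: the only point requiring care is the uniformity in $t$, and this is handled at once by the uniform bound $r_t \leq R$ together with the uniform-in-$r'$ limit supplied by Lemma~\ref{lem:Omega-Theta-rescale}.
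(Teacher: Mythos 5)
Your proof is correct and follows essentially the same route as the paper's: reduce via Lemma~\ref{lem_Botthomandgroupaction} to comparing Bott homomorphisms at moving base points, bound the base-point distance uniformly in $t$, then apply Proposition~\ref{prop_Botthomandchangeofbasepoint} and conclude with Lemma~\ref{lem:Omega-Theta-rescale}. The only cosmetic difference is that you compute the base-point distance explicitly as $r_t = \sqrt{t}\,d_M(g(x_0),x_0)$, whereas the paper simply invokes continuity of $t \mapsto H(\varphi,t)\cdot x_0$ and compactness of $[0,1]$ to get a uniform bound $R$; both yield the same uniformity.
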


\begin{proof}
	For any $t \in [0,1]$, $f \in \salg$, $x_0 \in M$ and $\varphi \in \isomgrp(M)$, it follows from Lemma~\ref{lem_Botthomandgroupaction} that 
	\[
		\alpha_{[0,1]}(\varphi) \left( \botthom^{[0,1]}(\sigma_s(f)) \right) (t) = \alpha_{t}(\varphi) \left( \botthom(\sigma_s(f)) \right) = \botthom[H(\varphi,t) \cdot x_0](\sigma_s(f)) \; ,
	\]
	where $H$ is the homotopy in Proposition~\ref{prop:isom-01-nilhomotopic}. Since the map $t \to H(\varphi,t) \cdot x_0$ is continuous, we can define $R>0$ to be the supremum of $d(x_0, H(\varphi,t) \cdot x_0)$ as $t$ ranges over $[0,1]$. Then by Proposition~\ref{prop_Botthomandchangeofbasepoint}, we have 
	\begin{align*}
		&~ \left\| \botthom^{[0,1]}(\sigma_s(f)) - \alpha_{[0,1]}(\varphi) \left( \botthom^{[0,1]}(\sigma_s(f)) \right) \right\| \\
		= &~ \sup_{t \in [0,1]} \left\| \botthom(\sigma_s(f)) - \botthom[H(\varphi,t) \cdot x_0](\sigma_s(f)) \right\| \\
		\leq &~ \sup_{\rdist \leq R} \left( 2 \; \oscill{\rdist} (\sigma_s(f)) + \max\left\{ \oscill{2\rdist}(\sigma_s(f))  , \, \meansym{\rdist} (\sigma_s(f)) \right\} \right)\; ,
	\end{align*}
	which converges to $0$ by Lemma~\ref{lem:Omega-Theta-rescale}.
\end{proof}

\begin{constr}\label{constr:bott-element}
	Thanks to Lemma~\ref{lem:botthom-rescaled-asymptotic-invariant-01}, we may define the \emph{Bott element}
	\[
		[\beta] \in \kkgam[1]( \cbbd, \AIofM ) 
	\]
	as the one induced by the family $\left\{ {\botthom^{[0,1]}} \circ \sigma_s \right\}_{s \in [1, \infty)}$ according to Construction~\ref{constr:KK-facts-asymptotic}. Thus the forgetful map 
	\[
		\kkgam[1]( \cbbd, \AIofM ) \to KK_1 ( \cbbd, \AIofM ) \cong KK_0(\salg, \AIofM ) \; , 
	\]
	maps the Bott element to the class of the Bott homomorphism $\botthom^{[0,1]} \colon \salg \to \AIofM$, for any $x_0 \in M$. 
\end{constr}

Recall from Construction~\ref{constr:KKR} that $ KK^{\Gamma}_{\rbbd,i} (X, A)$ stands for equivariant $KK$-theory with real coefficients and $\Gamma$-compact supports in the space $X$.  

\begin{prop}\label{prop:beta-EGamma-injective}
	The composition of the group homomorphisms 
	\[
		K_{i+1}^{\Gam}(\univspfree\Gamma)\otimes_{\zbbd}\qbbd \overset{[\beta]}{\longrightarrow} KK^{\Gamma}_{i}(\univspfree\Gamma, \AIofM)\vphantom{\otimes_{\zbbd}\qbbd} \otimes_{\zbbd}\qbbd \to KK^{\Gamma}_{\rbbd,i} (\univspfree\Gamma, \AIofM)\vphantom{\otimes_{\zbbd}\qbbd} 
	\]
	is injective, where the first map is given by taking Kasparov product with $[\beta]$ and the second is given by the natural map mentioned in Construction~\ref{constr:KKR}. 
\end{prop}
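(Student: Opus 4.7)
The plan is to combine three main ingredients: the homotopy invariance from Lemma~\ref{lem:AIofM-ev-KK}, the injectivity of the Bott homomorphism on $K$-theory from Proposition~\ref{prop:botthom-K-inj}, and the rational Künneth decomposition of Lemma~\ref{lem:KK-separate-variables}. First I would evaluate at $t = 0$: Lemma~\ref{lem:AIofM-ev-KK} says $(\operatorname{ev}_0)_*$ is an isomorphism on equivariant $KK$-groups, and applying the same argument level-wise inside the colimit defining $KK_\rbbd$ yields the analogous isomorphism on equivariant $KK_\rbbd$-groups. By Proposition~\ref{prop:isom-01-nilhomotopic} the resulting $\Gamma$-action $\alpha_0$ on $\AofM[M^{[0,1]}]$ is trivial, so Remark~\ref{rmk:KK-facts-de-equivariantize}\eqref{rmk:KK-facts-de-equivariantize:trivial-B}, together with its colimit analogue for $KK_\rbbd$, identifies the equivariant groups with $KK_i(B\Gamma, \AofM[M^{[0,1]}])$ and $KK_{\rbbd, i}(B\Gamma, \AofM[M^{[0,1]}])$, respectively. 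Under these identifications the Bott element $[\beta]$ becomes the class $[\botthom] \in K_1(\AofM[M^{[0,1]}])$ of the Bott homomorphism (for any base point $x_0 \in M$ included as a constant function in $M^{[0,1]}$), and the composition in question reads
\[
K_{i+1}(B\Gamma) \otimes_\zbbd \qbbd \xrightarrow{\,-\,\otimes\,[\botthom]\,} KK_i(B\Gamma, \AofM[M^{[0,1]}]) \otimes_\zbbd \qbbd \longrightarrow KK_{\rbbd, i}(B\Gamma, \AofM[M^{[0,1]}]) \; .
\]

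I would then establish the rational injectivity of the first arrow. Since $M$ is admissible (by hypothesis) and $([0,1], m)$ is separable, $M^{[0,1]}$ is admissible by Proposition~\ref{prop:continuum-product-hhs-summary}, so Proposition~\ref{prop:botthom-K-inj} gives that $\botthom_* \colon K_1(\salg) = \zbbd \to K_1(\AofM[M^{[0,1]}])$ is injective; in particular, $[\botthom]$ is non-torsion and hence non-zero in $K_1(\AofM[M^{[0,1]}]) \otimes_\zbbd \qbbd$. Applying the Künneth decomposition of Lemma~\ref{lem:KK-separate-variables}, the map $-\,\otimes\,[\botthom]$ lands in the summand $K_{i+1}(B\Gamma) \otimes_\zbbd K_1(\AofM[M^{[0,1]}]) \otimes_\zbbd \qbbd$ as $x \mapsto x \otimes [\botthom]$, which is injective because $K_{i+1}(B\Gamma) \otimes_\zbbd \qbbd$ is a flat $\qbbd$-module and $[\botthom]$ spans a non-trivial $\qbbd$-subspace of the right tensor factor.

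Finally I would lift this injectivity across the comparison map to $KK_\rbbd$ by passing through a hyperfinite II$_1$-factor $N$, which lies in the bootstrap class and has $K_0(N) = \rbbd$ and $K_1(N) = 0$, so the Künneth theorem provides an isomorphism $K_*(\AofM[M^{[0,1]}] \otimes N) \cong K_*(\AofM[M^{[0,1]}]) \otimes_\zbbd \rbbd$ under which $[\botthom \otimes 1_N]$ corresponds to the non-zero element $[\botthom] \otimes 1$. Re-running the previous paragraph with coefficient algebra $\AofM[M^{[0,1]}] \otimes N$ in place of $\AofM[M^{[0,1]}]$, the composed map $-\,\otimes\,[\botthom \otimes 1_N] \colon K_{i+1}(B\Gamma) \otimes \qbbd \to KK_i(B\Gamma, \AofM[M^{[0,1]}] \otimes N) \otimes \qbbd$ is still injective. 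Since $KK_{\rbbd, i}(B\Gamma, \AofM[M^{[0,1]}]) = \varinjlim_{N'} KK_i(B\Gamma, \AofM[M^{[0,1]}] \otimes N')$, any element of $K_{i+1}(B\Gamma) \otimes \qbbd$ mapping to zero in $KK_{\rbbd, i}$ would already vanish in $KK_i(B\Gamma, \AofM[M^{[0,1]}] \otimes N') \otimes \qbbd$ for some larger bootstrap-class II$_1$-factor $N' \supseteq N$, contradicting the same injectivity statement applied with $N'$ in place of $N$. The main technical difficulty is the real-coefficient bookkeeping: one must verify that the colimit defining $KK_\rbbd$ admits a cofinal subsystem of II$_1$-factors in the bootstrap class and that the Künneth decomposition is compatible with the transition maps of this colimit.
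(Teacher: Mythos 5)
Your proposal captures the same overall shape as the paper's proof: evaluate at $t=0$ using Lemma~\ref{lem:AIofM-ev-KK}, de-equivariantize because $\alpha_0$ is trivial via Remark~\ref{rmk:KK-facts-de-equivariantize}\eqref{rmk:KK-facts-de-equivariantize:trivial-B}, then apply the rational K\"unneth decomposition (Lemma~\ref{lem:KK-separate-variables}) together with the injectivity of the Bott homomorphism on $K$-theory (Proposition~\ref{prop:botthom-K-inj}). All of that matches the paper. The parting of ways is in how you pass to $KK_\rbbd$: the paper simply invokes the fact, recorded in Construction~\ref{constr:KKR} and due to Antonini--Azzali--Skandalis, that $K_{\rbbd, j}(B) \cong K_j(B) \otimes_\zbbd \rbbd$ for \emph{any} $C^*$-algebra $B$, and reads off injectivity from the resulting commutative diagram in one step.

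Your last step has a genuine gap. You write that the hyperfinite II$_1$-factor $N$ ``lies in the bootstrap class'' so the K\"unneth theorem gives $K_*(\AofM[M^{[0,1]}] \otimes N) \cong K_*(\AofM[M^{[0,1]}]) \otimes_\zbbd \rbbd$. This is false as stated: the bootstrap class consists of \emph{separable} $C^*$-algebras, and II$_1$-factors are not norm-separable, so the Rosenberg--Schochet K\"unneth theorem does not apply directly. In fact, the isomorphism $K_{\rbbd, *}(B) \cong K_*(B) \otimes \rbbd$ is exactly the non-trivial content that the Antonini--Azzali--Skandalis papers supply and that Construction~\ref{constr:KKR} invokes; re-deriving it from the classical K\"unneth theorem is not an option. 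The same objection applies to the larger factors $N' \supseteq N$ in your colimit argument. You flag this yourself in your closing sentence (``one must verify that the colimit defining $KK_\rbbd$ admits a cofinal subsystem of II$_1$-factors in the bootstrap class''), but it is not merely bookkeeping: no such cofinal subsystem can exist because no II$_1$-factor is in the bootstrap class. The fix is to replace the whole last paragraph with a direct appeal to the identification $K_{\rbbd, j}(B) \cong K_j(B) \otimes \rbbd$ from Construction~\ref{constr:KKR}, which immediately shows that $[\botthom]$ maps to the non-zero element $[\botthom] \otimes 1 \in K_1(\AofM[M^{[0,1]}]) \otimes \rbbd \cong K_{\rbbd,1}(\AofM[M^{[0,1]}])$, after which the $j = i+1$ summand of the K\"unneth decomposition yields the required injectivity.
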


\begin{proof}
	After composing the homomorphism in question with 
	\[
		(\operatorname{ev}_0)_* \colon KK^{\Gamma, \alpha_{[0,1]}}_{\rbbd,*} \left(\univspfree\Gamma, \AIofM \right) \overset{}{\longrightarrow} KK^{\Gamma, \alpha_0}_{\rbbd,*} \left(\univspfree\Gamma, \AofM[{M^{[0,1]}}] \right) 
	\]
	and identifying the left-hand side with $KK_{*}^{\Gam}(\univspfree\Gamma, \salg)\otimes_{\zbbd}\qbbd$, 
	we obtain a map 
	\[
		KK_{*}^{\Gam}(\univspfree\Gamma, \salg)\otimes_{\zbbd}\qbbd \to KK^{\Gamma, \alpha_0}_{\rbbd,*} \left(\univspfree\Gamma, \AofM[{M^{[0,1]}}] \right) \; ,
	\]
	which is seen to be induced by the Bott homomorphism $\botthom \colon \salg \to \AofM[{M^{[0,1]}}]$ at an arbitrary base point $x_0$. It suffices to show this composition is injective. Since the action $\alpha_0$ is trivial, we obtain a commutative diagram
	\[
		\xymatrix{
				KK_{i}^{\Gam}(\univspfree\Gamma, \salg)\otimes_{\zbbd}\qbbd \ar[r] \ar[d]^{\cong} & KK^{\Gamma, \alpha_0}_{\rbbd,i} \left(\univspfree\Gamma, \AofM[{M^{[0,1]}}] \right) \ar[d]^{\cong} \\
				KK_{i}(B\Gamma, \salg)\otimes_{\zbbd}\qbbd \ar[r] & KK_{\rbbd,i} \left(B\Gamma, \AofM[{M^{[0,1]}}] \right) \\
				\displaystyle \bigoplus_{j \in \zbbd / 2 \zbbd} K_{i-j}(B\Gamma) \otimes_{\zbbd} K_{j}(\salg) \otimes_{\zbbd} \qbbd  \ar[r]  \ar[u]_{\cong} & \displaystyle \bigoplus_{j \in \zbbd / 2 \zbbd} K_{i-j}(B\Gamma) \otimes_{\zbbd} K_{\rbbd, j} \left( \AofM[{M^{[0,1]}}] \right)   \ar[u]_{\cong} 
			}
	\]
	where the upper vertical maps are the natural isomorphisms given by Remark~\ref{rmk:KK-facts-de-equivariantize}\eqref{rmk:KK-facts-de-equivariantize:trivial-B}, the lower vertical maps are the natural isomorphisms given by Lemma~\ref{lem:KK-separate-variables}, and the horizontal maps are induced from the Bott homomorphism $\botthom$ and the change-of-coefficient homomorphisms. It suffices to show the bottom horizontal map is injective. Since this is a homomorphism between $\qbbd$-vector spaces, it suffices to show the maps on the second tensor components, i.e., the compositions 
	\[
		K_j(\salg) \otimes_{\zbbd} \qbbd
		\to K_{j} \left(\AofM[{M^{[0,1]}}]\right)  \otimes_{\zbbd} \qbbd \to K_{\rbbd, j} \left(\AofM[{M^{[0,1]}}]\right) 
	\]
	for $j= 0,1$, are injective. This is clear for $j = 0$ since $K_0(\salg) \cong 0$. As for $j = 1$, we rewrite the composition as 
	\[
		\qbbd \cong K_1(\salg) \otimes_{\zbbd} \qbbd \hookrightarrow K_1(\salg) \otimes_{\zbbd} \rbbd \cong K_{\rbbd, 1}(\salg) \overset{(\botthom)_*}{\longrightarrow} K_{\rbbd, 1} \left(\AofM[{M^{[0,1]}}]\right) \; ,
	\]
	which is injective by Proposition~\ref{prop:botthom-K-inj}, as desired. 
\end{proof}

\newcommand{\rtimesred}{\rtimes_{\operatorname{r}}}

\begin{proof}[Proof of Theorem~\ref{thm:main}]
	Consider the commuting diagram
	\[
	\xymatrix{
		K_{*+1}^{\Gam}(\univspfree\Gamma)\otimes_{\zbbd}\qbbd \ar[r]^{\pi_*} \ar[d]^{[\bottmap]}  & K_{*+1}^{\Gam}(\univspproper\Gamma)\otimes_{\zbbd}\qbbd \ar[r]^\mu \ar[d]^{[\bottmap]}  & \kfunctr_{*+1}(C^*_{\operatorname{r}}\Gam)\otimes_{\zbbd}\qbbd \ar[d]^{[\bottmap] \rtimesred \Gam} \\
		\rkkgam[\rbbd,*](\univspfree\Gamma, \AIofM)\vphantom{\otimes_{\zbbd}\qbbd} \ar[r]^{\pi_*} \ar[d]^{(\operatorname{ev}_{1})_*}  & \rkkgam[\rbbd,*](\univspproper\Gamma, \AIofM)\vphantom{\otimes_{\zbbd}\qbbd} \ar[r]^\mu \ar[d]^{(\operatorname{ev}_{1})_*}  & \kfunctr_{\rbbd,*}(\AIofM \rtimesred \Gam)\vphantom{\otimes_{\zbbd}\qbbd} \ar[d]^{(\operatorname{ev}_{1})_* \rtimesred \Gam }  \\
		\rkkgam[\rbbd,*](\univspfree\Gamma, \AofM[M^{[0,1]}])\vphantom{\otimes_{\zbbd}\qbbd} \ar[r]^{\pi_*} & \rkkgam[\rbbd,*](\univspproper\Gamma, \AofM[M^{[0,1]}])\vphantom{\otimes_{\zbbd}\qbbd} \ar[r]^\mu & \kfunctr_{\rbbd,*}(\AofM[M^{[0,1]}] \rtimesred \Gam)\vphantom{\otimes_{\zbbd}\qbbd} 
	}
	\]
	where $\Gam$ acts on $\AIofM$ by $\alpha_{[0,1]}$ and on $\AofM[M^{[0,1]}]$ by $\alpha_1$. 
	Tracing along the leftmost column and then the bottom row, we see that the first vertical map is injective by Proposition~\ref{prop:beta-EGamma-injective}, the second vertical map is a bijection by Lemma~\ref{lem:AIofM-ev-KK}, the first horizontal map is injective by Lemma~\ref{lem:KKR-EGam-inj}, and the second horizontal map is bijective by Theorem~\ref{thm:proper-GHT} and the fact that $\AofM[M^{[0,1]}]$ is a proper $\Gamma$-$X$-{\cstaralg}, with $X$ being the spectrum of $\AevenofM[M^{[0,1]}]$, by Propositions~\ref{prop:isom-01-proper} and~\ref{prop_AofMisproper}. This implies the composition of the maps in the top row is injective, which is what we need. 
\end{proof}

\begin{proof}[Proof of Theorem~\ref{thm:diffeo}]
	This follows from Theorem~\ref{thm:main} and Proposition~\ref{prop:SLSO-proper-length}. 
\end{proof}

\section{Appendix: {\hhs}s and continuum products}\label{sec:appendix}

In this appendix, we prove some technical results regarding the permanence of {\hhs}s (Definition~\ref{defn:hhs}) under taking continuum products (Construction~\ref{constr:continuum-product}). These results are summarized in Proposition~\ref{prop:continuum-product-hhs-summary}. 

\begin{lem}\label{lem:angle-upper-semicontinuous}
	If $X$ is a CAT(0) space, then the map 
	\[
	(x, y , z)  \mapsto \angle([y,x],[y,z])
	\]
	is upper semi-continuous, that is, roughly speaking, small perturbations of $(x,y,z)$ do not increase $\angle([y,x],[y,z])$ by much. 
\end{lem}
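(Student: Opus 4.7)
The plan is to use the CAT(0) condition to rewrite the angle as an infimum rather than just a limit, and then exploit the continuity of the geodesic bicombing (Remark~\ref{rmk:CAT0-facts}\eqref{rmk:CAT0-facts-bicombing}) together with the obvious continuity of the comparison-angle formula at non-degenerate triples.

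First, fix $(x,y,z)$ with $x \neq y$ and $z \neq y$, write $\alpha = [y,x]$ and $\beta = [y,z]$, and observe that by Remark~\ref{rmk:CAT0-equiv-defn-thin-triangle}, for any $0 < s \leq s'$ and $0 < t \leq t'$,
\[
\widetilde{\angle}\bigl(\alpha(s), y, \beta(t)\bigr) \leq \widetilde{\angle}\bigl(\alpha(s'), y, \beta(t')\bigr),
\]
so the limit defining $\angle(\alpha,\beta)$ is actually an infimum, and in particular
\[
\angle(\alpha,\beta) \leq \widetilde{\angle}\bigl(\alpha(s), y, \beta(t)\bigr) \text{ for every } s,t \in (0,1].
\]
This same inequality also applies to any nearby triple $(x_n, y_n, z_n)$ with the corresponding geodesics $\alpha_n = [y_n,x_n]$ and $\beta_n = [y_n,z_n]$.

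Next, given $\varepsilon > 0$, choose $s, t \in (0,1]$ small enough that
\[
\widetilde{\angle}\bigl(\alpha(s), y, \beta(t)\bigr) < \angle(\alpha,\beta) + \tfrac{\varepsilon}{2}.
\]
If $(x_n, y_n, z_n) \to (x, y, z)$, then by the continuity of the geodesic bicombing, $\alpha_n(s) \to \alpha(s)$ and $\beta_n(t) \to \beta(t)$. Since $\alpha(s) \neq y$ and $\beta(t) \neq y$, the explicit $\arccos$ formula for comparison angles shows that $\widetilde{\angle}\bigl(\alpha_n(s), y_n, \beta_n(t)\bigr) \to \widetilde{\angle}\bigl(\alpha(s), y, \beta(t)\bigr)$ as $n\to\infty$. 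Combining with the inequality from the previous paragraph yields
\[
\angle(\alpha_n,\beta_n) \leq \widetilde{\angle}\bigl(\alpha_n(s), y_n, \beta_n(t)\bigr) < \angle(\alpha,\beta) + \varepsilon
\]
for all sufficiently large $n$, which is exactly upper semi-continuity.

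The only potential obstacle is the non-degeneracy: the statement only makes sense on the open subset where $x \neq y$ and $z \neq y$, and one must ensure that for $n$ large the perturbed geodesics $\alpha_n, \beta_n$ are still nontrivial, which follows immediately from the convergence $(x_n, y_n, z_n) \to (x, y, z)$. No other serious difficulty is expected.
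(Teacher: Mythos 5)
Your proof is correct and takes essentially the same approach as the paper's: both exploit that $\angle(\alpha,\beta)$ is bounded above by comparison angles at interior points of the two geodesics (via Remark~\ref{rmk:CAT0-equiv-defn-thin-triangle}), pick such interior points to witness $\angle(\alpha,\beta) + \varepsilon/2$, and then transfer that bound to nearby triples using the continuity of the comparison-angle formula together with continuity of the geodesics. The only cosmetic difference is that the paper fixes auxiliary points $u_0, v_0$ on the geodesics and uses the quantitative Lipschitz estimate of Remark~\ref{rmk:CAT0-facts}\eqref{rmk:CAT0-facts-Lipschitz} to locate nearby geodesics inside balls around them, whereas you fix affine parameters $s, t$ and invoke continuity of the bicombing (Remark~\ref{rmk:CAT0-facts}\eqref{rmk:CAT0-facts-bicombing}) directly; these are interchangeable for this purpose.
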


\begin{proof}
	Given any $\varepsilon > 0$ and any distinct points $x_0,y_0,z_0 \in X$, we wish to find $r \in \left(0, \frac{1}{2} \min\{d(x_0,y_0), d(y_0,z_0), d(z_0,x_0)\} \right)$ such that for any $x \in B_r(x_0)$, $y \in B_r(y_0)$ and $z \in B_r(z_0)$, we have 
	\[
	\angle([y,x],[y,z]) \leq \angle([y_0,x_0],[y_0,z_0]) + \varepsilon \; .
	\]
	To this end, we find $u_0 \in [y_0,x_0] \setminus \{y_0\}$ and $v_0 \in [y_0, z_0] \setminus \{y_0\}$ such that 
	\[
	\widetilde{\angle}(u_0, y_0 , v_0) \leq \angle([y_0,x_0],[y_0,z_0]) + \frac{\varepsilon}{2} \; .
	\] 
	Since the function $(x,y,z) \mapsto \widetilde{\angle}(x,y,z)$ is continuous by its definition, we can find $r \in \left(0, \frac{1}{2} \min\{d(u_0,y_0), d(y_0,v_0), d(v_0,u_0)\} \right)$ such that for any $u \in B_r(u_0)$, $y \in B_r(y_0)$ and $v \in B_r(v_0)$, we have 
	\[
	\left| \widetilde{\angle}(u, y , v) - \widetilde{\angle}(u_0, y_0 , v_0) \right| \leq  \frac{\varepsilon}{2} \; .
	\]
	Now for any $x \in B_r(x_0)$, $y \in B_r(y_0)$ and $z \in B_r(z_0)$, Remark~\ref{rmk:CAT0-facts}\eqref{rmk:CAT0-facts-Lipschitz} implies that $[y,x] \cap B_r(u_0) \not= \varnothing$ and $[y,z] \cap B_r(v_0) \not= \varnothing$. Hence, fixing $u \in [y,x] \cap B_r(u_0)$ and $v \in [y,z] \cap B_r(v_0) $, we have
	\[
	\angle([y,x],[y,z]) \leq \widetilde{\angle}(u, y , v) \leq \widetilde{\angle}(u_0, y_0 , v_0) + \frac{\varepsilon}{2}  \leq \angle([y_0,x_0],[y_0,z_0]) + \varepsilon \; ,
	\]
	which is what we wanted to prove. 
\end{proof}

\begin{lem}\label{lem:log-inner-lower-semicontinuous}
	If $M$ is a {\hhs}, then with the notations of Constructions~\ref{constr:log-map} and~\ref{constr:Hilbert-space-span}, the map 
	\begin{align*}
	M \times M \times M & \to \rbbd \\
	(x,y,z) & \mapsto \left\langle \log_y (x) , \log_y (z) \right\rangle
	\end{align*}
	is lower semi-continuous. 
\end{lem}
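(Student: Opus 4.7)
The plan is to reduce the claim to the upper semi-continuity of angles already proven in Lemma~\ref{lem:angle-upper-semicontinuous} via the ``cone formula'' that relates the inner product on $\hhil_y M$ to distances and angles in $M$. More precisely, I would first establish that whenever $x \neq y$ and $z \neq y$,
\[
  \langle \log_y(x),\, \log_y(z) \rangle \;=\; d(y,x)\, d(y,z)\, \cos\bigl(\angle([y,x],[y,z])\bigr).
\]
This follows by polarizing the embedding $T_y M \hookrightarrow \hhil_y M$ of Construction~\ref{constr:Hilbert-space-span} (with the cone tip mapped to the origin) combined with the defining cone metric
\[
  d\bigl([(\alpha, t_1)], [(\beta, t_2)]\bigr)^2 \;=\; t_1^2 + t_2^2 - 2 t_1 t_2 \cos\bigl(\angle(\alpha,\beta)\bigr),
\]
together with the fact that $\|\log_y(x)\|_{\hhil_y M} = d(y,x)$.

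Granted this formula, on the open set where $x \neq y$ and $z \neq y$, the factors $d(y,x)$ and $d(y,z)$ are continuous and strictly positive, while Lemma~\ref{lem:angle-upper-semicontinuous} gives upper semi-continuity of $\angle([y,x],[y,z]) \in [0,\pi]$. Since $\cos$ is continuous and decreasing on $[0,\pi]$, its composition with an upper semi-continuous function is lower semi-continuous and bounded in $[-1,1]$. Multiplying a bounded lower semi-continuous function by a positive continuous one preserves lower semi-continuity (a short verification using the boundedness of the lsc factor), so the entire expression is lower semi-continuous at every such point.

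The remaining case is the degeneracy locus $\{x = y\} \cup \{z = y\}$, where the inner product equals $0$. Along any sequence $(x_n, y_n, z_n)$ approaching such a point, the Cauchy-Schwarz inequality in $\hhil_{y_n} M$ gives
\[
  \bigl| \langle \log_{y_n}(x_n), \log_{y_n}(z_n) \rangle \bigr| \;\leq\; d(y_n, x_n)\, d(y_n, z_n) \;\longrightarrow\; 0,
\]
so the liminf is at least $0$, as required.

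The only real content beyond Lemma~\ref{lem:angle-upper-semicontinuous} is checking the cone formula; the rest is elementary semi-continuity bookkeeping and I do not expect any serious obstacle.
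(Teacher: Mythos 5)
Your proof is correct and follows essentially the same route as the paper: establish the cone formula $\langle \log_y(x), \log_y(z)\rangle = d(y,x)\,d(y,z)\,\cos\angle([y,x],[y,z])$ on the non-degenerate locus, invoke Lemma~\ref{lem:angle-upper-semicontinuous} together with the monotonicity of $\cos$ on $[0,\pi]$ to get lower semi-continuity there, and handle the degenerate locus $\{x=y\}\cup\{z=y\}$ by the Cauchy--Schwarz estimate $|\langle\log_y(x),\log_y(z)\rangle| \le d(y,x)\,d(y,z)$. If anything, your write-up is more careful than the paper's at two spots: you correctly cite $[0,\pi]$ as the range of angles (the paper has a typo reading $[0,2\pi]$, where cosine is not monotone), and you explicitly note the boundedness hypothesis needed to multiply a lower semi-continuous factor by a positive continuous one.
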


\begin{proof}
	We observe that 
	\[
	\left\langle \log_y (x) , \log_y (z) \right\rangle = 
	\begin{cases}
	0 \; , & x = y \text{ or } y = z \\
	d(y,x) d(y,z) \cos \angle([y,x],[y,z]) \; , & \text{otherwise}
	\end{cases} \; .
	\]
	This is continuous at any point $(x,y,z)$ where $x=y$ or $y = z$ because the cosine function is bounded. At any other point, since $\angle([y,x],[y,z]) \in [0, \pi]$ and the cosine function is strictly decreasing on the interval $[0, \pi]$, it follows from Lemma~\ref{lem:angle-upper-semicontinuous} that the above function is lower semi-continuous. Combining the two cases gives the result. 
\end{proof}

For the sake of convenience in the proof of the next proposition, we write $\mathbb{L}^2(Y,\mu,M)$ for the space of all $L^2$-functions from $(Y,\mu)$ to $M$, \emph{without} identifying functions that are almost everywhere equal. Thus $L^2(Y,\mu,M)$ is a quotient of $\mathbb{L}^2(Y,\mu,M)$. 

\begin{prop}\label{prop:continuum-product-hhs}
	For any {\hhs} $M$ and finite measure space $(Y, \mu)$, the continuum product $L^2(Y,\mu,M)$ is again a {\hhs}. 
\end{prop}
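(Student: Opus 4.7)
The plan is to verify the three defining properties of Definition~\ref{defn:hhs} for $L^2(Y, \mu, M)$: completeness, being a geodesic CAT(0) space, and having tangent cones that embed isometrically into Hilbert spaces. Being CAT(0) is Proposition~\ref{prop:continuum-product-CAT0}, while Remark~\ref{rmk:continuum-product-geodesics} gives geodesics fiberwise and hence makes $L^2(Y, \mu, M)$ geodesic. For completeness, I will use the standard argument for $L^2$-spaces of functions valued in a complete metric space: given a Cauchy sequence $(\xi_n)$, Chebyshev's inequality extracts an a.e.\ pointwise Cauchy subsequence, completeness of $M$ gives a pointwise limit $\xi$, and Fatou's lemma shows $\xi \in L^2(Y, \mu, M)$ is the $L^2$-limit of $(\xi_n)$.

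The main work lies in the tangent cone embedding. Fix $\xi \in L^2(Y, \mu, M)$. I will construct a Hilbert space $\mathscr{H}_\xi$ of ``$L^2$-sections of tangent Hilbert spaces along $\xi$'' by, for each $\eta \in L^2(Y, \mu, M)$, forming the ``velocity field'' $v_\eta(y) := \log_{\xi(y)} \eta(y) \in \hhil_{\xi(y)} M$ (so $\|v_\eta(y)\| = d_M(\xi(y), \eta(y))$), taking the linear span of all such $v_\eta$ combined pointwise in the fibers $\hhil_{\xi(y)} M$, and equipping it with the form
\[
\langle v_\eta, v_{\eta'} \rangle_{\mathscr{H}_\xi} := \int_Y \big\langle v_\eta(y), v_{\eta'}(y) \big\rangle_{\hhil_{\xi(y)} M} \, d\mu(y) \; .
\]
Lemma~\ref{lem:log-inner-lower-semicontinuous}, together with the measurability of $\xi, \eta, \eta'$, ensures the integrand is measurable, while fiberwise Cauchy--Schwarz together with Cauchy--Schwarz for integrals gives $\int |\langle v_\eta(y), v_{\eta'}(y) \rangle| \, d\mu(y) \leq d(\xi, \eta) d(\xi, \eta')$; hence the form is finite and positive semi-definite. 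After quotienting by null vectors and completing, I obtain the Hilbert space $\mathscr{H}_\xi$, and I define $\Phi \colon T_\xi L^2(Y, \mu, M) \to \mathscr{H}_\xi$ by $\Phi(\text{apex}) = 0$ and $\Phi\big([([\xi, \eta], t)]\big) = (t / d(\xi, \eta)) \, v_\eta$ for $\eta \neq \xi$ and $t \geq 0$.

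Showing $\Phi$ is isometric on the dense subcone $T_\xi' \subset T_\xi$ generated by actual geodesic segments reduces to the angle formula
\[
\cos \angle_{L^2(Y, \mu, M)} \big( [\xi, \eta], [\xi, \eta'] \big) = \frac{\langle v_\eta, v_{\eta'} \rangle_{\mathscr{H}_\xi}}{d(\xi, \eta) \, d(\xi, \eta')} \; ,
\]
which by definition of the comparison angle reduces in turn to computing
\[
\lim_{r \to 0^+} r^{-2} \, d_{L^2}\big([\xi, \eta](r\sigma), [\xi, \eta'](r\tau)\big)^2 = \sigma^2 d(\xi, \eta)^2 + \tau^2 d(\xi, \eta')^2 - 2\sigma\tau \langle v_\eta, v_{\eta'}\rangle_{\mathscr{H}_\xi}
\]
for $\sigma, \tau > 0$. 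The main obstacle is justifying the exchange of limit and integration here. Fiberwise, the Euclidean law of cosines in the comparison triangle $\widetilde{\eta(y)\,\xi(y)\,\eta'(y)}$ expresses $d_M\big([\xi(y), \eta(y)](r\sigma), [\xi(y), \eta'(y)](r\tau)\big)^2$ in terms of a comparison angle $\widetilde\angle^{(y)}(r\sigma, r\tau)$ converging to $\angle^{(y)}\big([\xi(y), \eta(y)], [\xi(y), \eta'(y)]\big)$; the Hilbert--Hadamard property of $M$ then identifies $d(\xi(y),\eta(y)) \, d(\xi(y), \eta'(y)) \cos \angle^{(y)}$ with $\langle v_\eta(y), v_{\eta'}(y)\rangle$, yielding the pointwise limit. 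Since $|\cos \widetilde\angle| \leq 1$, the integrand is dominated uniformly in $r$ by $\big(\sigma \, d(\xi(y), \eta(y)) + \tau \, d(\xi(y), \eta'(y))\big)^2$, which lies in $L^1(Y, \mu)$ by Minkowski's inequality, and dominated convergence delivers the limit. This establishes the angle formula and hence the isometry of $\Phi$ on $T_\xi'$; since $\mathscr{H}_\xi$ is complete, $\Phi$ extends by continuity to an isometric embedding of the whole tangent cone $T_\xi L^2(Y, \mu, M)$ into $\mathscr{H}_\xi$.
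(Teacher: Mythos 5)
Your proof is correct and follows essentially the same route as the paper's: CAT(0) via the CN inequality (Proposition~\ref{prop:continuum-product-CAT0}), the standard $L^2$-completeness argument, and the tangent cone embedding via the assembled logarithm map into an $L^2$-integral of tangent Hilbert spaces, using Lemma~\ref{lem:log-inner-lower-semicontinuous} for measurability and Cauchy--Schwarz for integrability. The one place where you go further is the final step: the paper simply remarks that the fibrewise tangent cone embedding is ``easy to see'' from the fibrewise structure of geodesics, whereas you justify it in detail via the angle formula and a dominated convergence argument to pass the limit of comparison angles under the integral---a useful clarification, and the domination bound and monotone convergence of the comparison angles both check out.
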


\begin{proof}
	\newcommand{\la}{\left\langle}
	\newcommand{\ra}{\right\rangle}
	\newcommand{\LOG}{\operatorname{LOG}}
	\newcommand{\ol}{\overline}
	The fact that $L^2(Y,\mu,M)$ is a CAT(0) metric space follows from Proposition~\ref{prop:continuum-product-CAT0}. 
	
	Completeness of $L^2(Y,\mu,M)$ is proved in a similar way as that of classical $L^2$-spaces. More precisely, if $([\xi_n])_{n \in \nbbd}$ is a sequence in $\mathbb{L}^2(Y,\mu,M)$ that gives rise to a Cauchy sequence in $L^2(Y,\mu,M)$, then by passing to a subsequence, we may assume $d([\xi_n], [\xi_{n+1}]) \leq 2^{-2n}$. Define 
	\[
	Y_n = \left\{y \in Y \colon d_M(\xi_n(y), \xi_{n+1}(y) ) \geq {2^{-n}} \right\} \; .
	\]
	Thus $\mu(Y_n) \leq 2^{-2n}$ by the definition of the metric on $L^2(Y,\mu,M)$. Hence the set $\displaystyle \bigcap_{m=0}^\infty \bigcup_{n = m}^\infty Y_n$ has measure zero and for any $y$ in the complement of this set, the sequence $(\xi_n(y))_{n \in \nbbd}$ is Cauchy. It follows from the completeness of $M$ that $([\xi_n])_{n \in \nbbd}$ converges almost everywhere. A standard argument shows that the limit is in $\mathbb{L}^2(Y,\mu,M)$. 
	
	It remains to show that the tangent cone at any point of $L^2(Y,\mu,M)$ embeds isometrically into a Hilbert space. To this end, we fix any representative $\xi \in \mathbb{L}^2(Y,\mu,M)$ of the said point. 
	For each point $y \in Y$, by the assumption that $M$ is a {\hhs}, the tangent cone $T_{\xi(y)}M$ embeds isometrically into a Hilbert space $\hil_{\xi,y}$. Form the direct product vector space $\prod_{y} \hil_{\xi,y}$, whose generic element is written as $v = (v_{y})_{y \in Y}$, where $v_y \in \hil_{\xi,y}$. Consider the logarithm map $\log_{\xi(y)} \colon M \to T_{\xi(y)}M \subset \hil_{\xi,y}$ defined in Construction~\ref{constr:log-map}, which is a non-expansive map. These logarithm maps can be assembled into a map
	\begin{align*}
	\LOG_\xi \colon \mathbb{L}^2(Y,\mu,M) &\to \prod_{y} \hil_{\xi,y} \; , \\
	\varphi &\mapsto ( \log_{\xi(y)}(\varphi(y)) )_{y \in Y} \; .
	\end{align*}
	The image of this assembled logarithm map spans a vector subspace $V_\xi$.

	We wish to define a positive-semidefinite symmetric bilinear form on $V_\xi$ by integrating the inner products on $\hil_{\xi,y}$ over the measure $\mu$. To show that this is well defined, we need to show that for $v,w \in V_\xi$, the map
	\[
	y \mapsto \la  v_y, w_y \ra
	\]
	is a measurable function with a finite integral. To this end, it suffices to show the case when $v$ and $w$ are in the image of ${\LOG}_\xi$; thus we can consider $\varphi,\psi \in \mathbb{L}^2(Y,\mu,M)$ such that 
	\[
	(v_y) = ( \log_{\xi(y)} (\varphi(y)) ) \quad \text{and} \quad (w_y) = ( \log_{\xi(y)} (\psi(y)) ) \; ,
	\]
	so that the map $y \mapsto \la v_y, w_y \ra$ becomes
	\[
	y \mapsto \la \log_{\xi(y)} (\varphi(y)) ,\log_{\xi(y)} (\psi(y)) \ra \; , 
	\]
	which is a measurable function because $\xi$, $\varphi$ and $\psi$ are measurable and the function $(x_1,x_0,x_2) \mapsto \left\langle \log_{x_0} (x_1) , \log_{x_0} (x_2) \right\rangle$ is lower semi-continuous by Lemma~\ref{lem:log-inner-lower-semicontinuous}. On the other hand, by the Cauchy-Schwarz inequality and the metric properties of the logarithm map, we have 
	\begin{align*}
	&~ \int_{Y} \left| \la \log_{\xi(y)} (\varphi(y)),\log_{\xi(y)} (\psi(y)) \ra \right| d\mu (y) \\
	\leq &~ \int_{Y}  \left\| \log_{\xi(y)} (\varphi(y)) \right\| \left\| \log_{\xi(y)} (\psi(y)) \right\| d\mu (y) \\
	= &~ \int_{Y} d(\xi(y) , \varphi(y)) \cdot d(\xi(y) , \psi(y)) \, d\mu (y) \\
	\leq &~  d(\xi, \varphi) \cdot d(\xi, \psi) < \infty \;,
	\end{align*}
	which shows integrability. 
	
	It is straightforward to check that the formula
	\[
	\la v, w \ra = \int_Y \la v_y , w_y \ra \, d\mu (y)
	\]
	defines a positive-semidefinite symmetric bilinear form on $V_\xi$. Let $V_\xi^0 = \{ v \in V_\xi \colon \la v, w \ra = 0 \text{ for any } w \in V_\xi \}$ and let $\hil_\xi$ be the completion of $V_\xi / V_\xi^0$ by the inner product induced from $\la -, - \ra$.  
	
	It is easy to see that $T_{[\xi]} L^2(Y,\mu,M)$ embeds isometrically into $\hil_\xi$, by taking the fibre-wise embedding, because the tangent cone, which is constructed from geodesic segments, can be constructed fibrewise; after all, geodesic segments in $L^2(Y,\mu,M)$ are fibre-wise geodesic segments over each point $y \in Y$. This yields the desired isometric embedding of the tangent cone $T_{[\xi]} L^2(Y,\mu,M)$ into a Hilbert space.
\end{proof}

Next we discuss the admissibility of $L^2(Y,\mu,M)$ (cf., Definition~\ref{defn:hhs-admissible}). For this purpose, we need a few results which are reminiscent of the classical theory of $L^p$-spaces. 

\begin{lem}\label{lem:continuum-product-simple-functions}
	For any separable metric space $X$ and finite measure space $(Y, \mu)$, the set of \emph{simple functions} from $Y$ to $X$ (i.e., functions with finite ranges) is dense in $L^2(Y,\mu,X)$. 
\end{lem}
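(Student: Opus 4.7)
The plan is to adapt the classical three-step approximation argument from the scalar $L^p$-theory, making the necessary adjustments to handle the fact that $X$ is only a separable metric space (not a normed vector space). First I would fix a $\xi \in L^2(Y,\mu,X)$, a base point $x_0 \in X$, and a tolerance $\varepsilon > 0$. The strategy is to successively approximate $\xi$ by (a) an essentially bounded function, then (b) a function with countable range contained in a fixed countable dense subset of the bounded set, and finally (c) a simple function by truncating the countable range to a finite one. Each step will cost less than $\varepsilon/3$ in the $L^2$-metric.

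For step (a), since $y \mapsto d_X(\xi(y), x_0)$ lies in $L^2(Y,\mu)$, the dominated convergence theorem applied to the sets $Y_R = \{y : d_X(\xi(y), x_0) \leq R\}$ shows that the truncated map
\[
\xi_R(y) = \begin{cases} \xi(y) & y \in Y_R \\ x_0 & y \notin Y_R \end{cases}
\]
satisfies $d(\xi, \xi_R) \to 0$ as $R \to \infty$; choose $R$ large enough. For step (b), pick a countable dense sequence $\{z_n\}_{n\in\nbbd}$ in the separable set $\overline{B(x_0, R)} \cup \{x_0\}$ and, for any $\delta > 0$, form the Borel partition
\[
A_n^{\delta} = B(z_n, \delta) \setminus \bigcup_{m<n} B(z_m, \delta) \quad \text{of } \overline{B(x_0, R)} \; .
\]
Define $\eta^{\delta}(y) = z_n$ if $\xi_R(y) \in A_n^{\delta}$. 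Since every point of $\overline{B(x_0, R)}$ lies in some $A_n^{\delta}$, this forces $d_X(\xi_R(y), \eta^{\delta}(y)) < \delta$ pointwise, so $d(\xi_R, \eta^{\delta}) \leq \delta \sqrt{\mu(Y)}$; choose $\delta$ small. For step (c), note that $\eta^{\delta}$ has countable range contained in $\overline{B(x_0, R)}$, so $d_X(\eta^{\delta}(y), x_0) \leq R$ everywhere. Writing $B_n = (\eta^{\delta})^{-1}(z_n)$ and defining the simple function
\[
\eta_N^{\delta}(y) = \begin{cases} z_n & y \in B_n,~ n \leq N \\ x_0 & \text{otherwise} \end{cases} \; ,
\]
we obtain $d(\eta^{\delta}, \eta_N^{\delta})^2 \leq R^2 \sum_{n>N} \mu(B_n) \to 0$ as $N \to \infty$; pick $N$ large.

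The main technical obstacle is verifying that every function constructed is measurable, so that the $L^2$-norm estimates actually apply. This rests on two observations: (i) $y \mapsto d_X(\xi(y), x_0)$ is measurable because $d_X(\cdot, x_0)$ is continuous and $\xi$ is measurable, so the cutoff sets $Y_R$ are measurable; and (ii) the sets $A_n^{\delta}$ are Borel in $X$ since they are constructed from open balls, so $\xi^{-1}(A_n^{\delta})$ is measurable in $Y$. Assembling the three approximations via the triangle inequality yields a simple function within $\varepsilon$ of $\xi$ in $L^2(Y,\mu,X)$, completing the proof.
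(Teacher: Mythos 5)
Your proof is correct and uses the same underlying idea as the paper's: partition the target space by $\delta$-balls around a countable dense set and then truncate to finitely many pieces. The paper does this a bit more economically in a single pass\,---\,it partitions all of $X$ directly (no preliminary restriction to a bounded ball), writes $Y_i = \xi^{-1}(X_i)$, and uses the convergence of the series $\sum_i \int_{Y_i} d_X(x_0, \xi(y))^2\,d\mu(y) = d(x_0, \xi)^2 < \infty$ to choose the finite tail cutoff $N$ at which to replace $\xi$ by $x_0$; this makes your separate step~(a), the truncation to an essentially bounded function, unnecessary.
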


\begin{proof}
	Given any $\xi \in L^2(Y,\mu,X)$ and $\varepsilon > 0$, we are going to find a simple function $\eta \in L^2(Y,\mu,X)$ with $d(\xi, \eta) \leq \varepsilon \sqrt{\mu(Y) + 1}$. To this end, we pick a countable dense subset $\{ x_i \colon i \in \nbbd \}$ of $X$. Then there is a countable Borel partition $\{X_i \colon i \in \nbbd \}$ of $X$ such that each $X_i$ is contained in the $\varepsilon$-ball $B_{\varepsilon}(x_i)$ around $x_i$. Indeed, we may define
	\[
	X_i = B_{\varepsilon}(x_i) \setminus \left( \bigcup_{j=0}^{i-1} B_{\varepsilon}(x_j) \right) \; .
	\] 
	Write $Y_i = \xi^{-1} (X_i)$ (defined up to measure zero). Then $\{Y_i \colon i \in \nbbd \}$ is a countable measurable partition of $Y$. Hence if we write $x_0$ also for the constant function mapping $Y$ to $\{x_0\} \subset X$, then we have
	\[
	d(x_0, \xi)^2 =  \int_Y d_X( x_0, \xi(y) )^2 \, d\mu(y) = \sum_{i =0}^\infty \int_{Y_i} d_X( x_0, \xi(y) )^2 \, d\mu(y) \; .
	\]
	Since the right-hand side is a positive series that converges, there is $N  \in \nbbd$ such that 
	\[
	\sum_{i = N}^\infty \int_{Y_i} d_X( x_0, \xi(y) )^2 \, d\mu(y)  <  \varepsilon^2 \; .
	\]
	Now define the simple function
	\[
	\eta (y) = 
	\begin{cases}
	x_i \, , & y \in Y_i \text{ and } i < N \\
	x_0 \, , & y \in Y_i \text{ and } i \geq N
	\end{cases}
	\; .
	\]
	Then by our construction, we have 
	\begin{align*}
	d(\xi, \eta)^2 &= \sum_{i =0}^\infty \int_{Y_i} d_X( \eta(y), \xi(y) )^2 \, d\mu(y) \\
	&= \sum_{i =0}^{N-1} \int_{Y_i} d_X( x_i, \xi(y) )^2 \, d\mu(y) + \sum_{i = N}^\infty \int_{Y_i} d_X( x_0, \xi(y) )^2 \, d\mu(y)  \\
	&\leq \sum_{i =0}^{N-1} \int_{Y_i} \varepsilon^2 \, d\mu(y) + \varepsilon^2 \\
	&\leq \varepsilon^2 \mu(Y) + \varepsilon^2 \; . 
	\end{align*}
	Therefore we have $d(\xi, \eta) \leq \varepsilon \sqrt{\mu(Y) + 1}$. 
\end{proof}

For the following lemmas, we are going to view a \emph{finite measurable partition} of a measure space $(Y,\mu)$ as a measurable map from $Y$ to a finite space $I$, regarded as the \emph{index set}. Given two finite measurable partitions $\mathcal{P}_1, \mathcal{P}_2 \colon Y \to I$ with the same index set, we define
\[
\delta(\mathcal{P}_1, \mathcal{P}_2) = \sum_{i\in I} \mu \left( \mathcal{P}_1^{-1}(\{i\}) \bigtriangleup \mathcal{P}_2^{-1}(\{i\}) \right) \; .
\]
Given two finite measurable partitions $\mathcal{P}_j \colon Y \to I_j$ for $j = 1,2$, we say $\mathcal{P}_1$ \emph{refines} $\mathcal{P}_2$ if there is a map $f \colon I_1 \to I_2$ such that $\mathcal{P}_2 = f \circ \mathcal{P}_1$; more generally, for $\varepsilon > 0$, we say $\mathcal{P}_1$ \emph{$\varepsilon$-refines} $\mathcal{P}_2$ if there is a map $f \colon I_1 \to I_2$ such that $\delta(\mathcal{P}_2, f \circ \mathcal{P}_1) \leq \varepsilon$.

The following lemma can probably be found in the literature. We include a proof for completeness. 

\begin{lem}\label{lem:separable-measure-space-approx-partitions}
	Let $(Y,\mu)$ be a separable finite measure space. Then there is a sequence $(\mathcal{P}_n)_{n \in \nbbd}$ of finite measurable partitions of $(Y,\mu)$ with $\mathcal{P}_{n+1}$ refining $\mathcal{P}_{n}$ for all $n \in \nbbd$ and satisfying that for any $\varepsilon > 0$, any finite measurable partition of $(Y,\mu)$ is $\varepsilon$-refined by some $\mathcal{P}_{n}$.
\end{lem}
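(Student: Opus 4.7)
The plan is to construct the nested sequence $(\mathcal{P}_n)$ directly from a countable dense family of measurable sets witnessing separability. Let $\{A_n : n \in \nbbd\}$ be such a family, and let $\mathcal{P}_n$ be the partition of $Y$ generated by $A_1, \ldots, A_n$, indexed by $\{0,1\}^n$: for $\sigma \in \{0,1\}^n$, set $\mathcal{P}_n^{-1}(\sigma) = \bigcap_{i=1}^n A_i^{\sigma(i)}$, where $A_i^1 = A_i$ and $A_i^0 = Y \setminus A_i$. By construction $\mathcal{P}_{n+1}$ refines $\mathcal{P}_n$, since the projection $\{0,1\}^{n+1} \to \{0,1\}^n$ dropping the last coordinate implements the refinement.

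To prove the approximation property, fix a finite measurable partition $\mathcal{Q} \colon Y \to \{1,\ldots,k\}$ with cells $B_i = \mathcal{Q}^{-1}(\{i\})$, and fix $\varepsilon > 0$. Choose $\delta$ (depending on $k$ and $\varepsilon$, to be specified below), and for each $i = 1, \ldots, k$ use separability to select $n_i$ with $\mu(B_i \triangle A_{n_i}) < \delta$. Put $n = \max_i n_i$, and write $A'_i := A_{n_i}$. Then $\mathcal{P}_n$ refines the partition generated by $A'_1, \ldots, A'_k$. I will construct $f$ from the index set of $\mathcal{P}_n$ to $\{1,\ldots,k\}$ in two stages: first define a map $g$ on $\{0,1\}^k$ by setting $g(e_i) = i$ for the standard basis vector $e_i$ and $g(\sigma) = 1$ otherwise; then push $g$ through the canonical projection of the index set of $\mathcal{P}_n$ onto $\{0,1\}^k$ via $\sigma \mapsto (\sigma(n_1), \ldots, \sigma(n_k))$.

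The main work is the error estimate. Let $E_i = (f \circ \mathcal{P}_n)^{-1}(\{i\})$. For $i > 1$, the set $E_i$ equals $A'_i \cap \bigcap_{j \ne i}(Y \setminus A'_j)$. Decomposing $B_i \triangle E_i$ and using the fact that distinct $B_j$ are disjoint, one checks $B_i \cap A'_j \subseteq A'_j \triangle B_j$ for $j \ne i$, yielding
\[
\mu(B_i \triangle E_i) \;\le\; 2\mu(B_i \triangle A'_i) + \sum_{j \ne i} \mu(A'_j \triangle B_j) \;<\; (k+1)\delta.
\]
For $i = 1$, the set $E_1 = Y \setminus \bigcup_{i>1} E_i$ satisfies $\mu(B_1 \setminus E_1) < (k-1)\delta$ and $\mu(E_1 \setminus B_1) \le \sum_{i>1} \mu(B_i \triangle E_i) < (k-1)(k+1)\delta$. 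Summing, $\delta(\mathcal{Q}, f \circ \mathcal{P}_n) \le C k^2 \delta$ for an absolute constant $C$, so choosing $\delta = \varepsilon/(Ck^2)$ makes this less than $\varepsilon$, as required.

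There is no serious obstacle here: the construction is standard measure-theoretic approximation, and the only potentially fussy point is bookkeeping in the bound on $\mu(B_i \triangle E_i)$, where one must remember that the $B_i$ are disjoint (so $B_i \cap A'_j$ contributes only error, not measure) while the $A'_j$ need not be. The separability hypothesis is used exactly once, to produce the countable dense family at the start.
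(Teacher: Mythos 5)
Your proof is correct, and the construction of the nested partitions $(\mathcal{P}_n)$ is the same as the paper's (modulo a typo in the paper, where the casework in the definition of $\mathcal{P}_n(y)(j)$ should read $A_j$ rather than $A_n$): in both cases $\mathcal{P}_n$ is the partition generated by the first $n{+}1$ members of a countable dense family of measurable sets. Where you diverge is in the proof of the approximation property. The paper proceeds by induction on the number $k$ of cells of $\mathcal{Q}$: it merges the last two cells to get a $k$-cell partition $\mathcal{R}$, applies the inductive hypothesis with tolerance $\varepsilon/3$, picks a fresh $A_n$ approximating the split-off cell to within $\varepsilon/3$, and verifies that the resulting refinement map has error at most $\varepsilon$. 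You instead give a direct, non-recursive estimate: choose one approximating set $A_{n_i}$ per cell with tolerance $\delta$, define the map by ``the unique index whose set you lie in, defaulting to cell $1$ when ambiguous,'' and bound $\sum_i \mu(B_i \bigtriangleup E_i)$ by $O(k^2)\delta$ using disjointness of the cells $B_i$, then set $\delta = \varepsilon/(Ck^2)$. Your bound is explicit and the argument is arguably easier to audit; the paper's inductive argument hides the $k$-dependence of the tolerances inside the recursion but avoids the bookkeeping with the default cell. One small point worth noting (which does not affect correctness): your derivation of $\mu(B_i \bigtriangleup E_i) < (k{+}1)\delta$ for $i>1$ silently covers the degenerate case where two of the $A'_j$ coincide, since the inclusions $B_i\setminus E_i \subseteq (B_i\bigtriangleup A'_i) \cup \bigcup_{j\ne i}(A'_j\bigtriangleup B_j)$ and $E_i\setminus B_i\subseteq A'_i\bigtriangleup B_i$ hold verbatim even when $E_i=\varnothing$; it is reassuring to spell this out, since in that case $\mu(B_i\bigtriangleup E_i)=\mu(B_i)$, which one must check is small.
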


\begin{proof}
	Since $(Y,\mu)$ is separable, we can choose a countable family $\{A_n \colon n \in \nbbd \}$ of measurable subsets such that for any $\varepsilon >0$ and any measurable subset $A$ in $Y$, we have $\mu(A \bigtriangleup A_n) < \varepsilon$ for some $n$. For any $n \in \nbbd$, define $\mathcal{P}_n \colon Y \to \{0,1\}^{\{0,\ldots, n \}}$ by
	\[
	\mathcal{P}_n (y) (j) = 
	\begin{cases}
	0 , & y \notin A_j \\
	1 , & y \in A_j 
	\end{cases}
	\]
	for any $y \in Y$ and $j \in \{0,\ldots, n \}$. Then clearly $\mathcal{P}_{n+1}$ refines $\mathcal{P}_{n}$ for all $n \in \nbbd$. 
	
	To see that for any finite measurable partition $\mathcal{Q} \colon Y \to J$ and any $\varepsilon > 0$, we have that $\mathcal{Q}$ is $\varepsilon$-refined by some $\mathcal{P}_{n}$, we apply induction on the cardinality of $J$. The case when $J$ is a singleton is trivial. Now fix $k \in \nbbd$ and suppose for any $\delta > 0$, any finite measurable partition of $(Y,\mu)$ with the index set containing no more than $k$ elements is $\delta$-refined by some $\mathcal{P}_{n}$. Then given $\varepsilon > 0$ and a finite measurable partition $\mathcal{Q} \colon Y \to \{0, \ldots, k \}$, we form another finite measurable partition $\mathcal{R} \colon Y \to \{0, \ldots, k-1 \}$ by merging $\mathcal{Q}^{-1}(k-1)$ and $\mathcal{Q}^{-1}(k)$. By our choice of the family $\{ A_n \colon n \in \nbbd \}$, we can choose $A_n$ such that $\mu( \mathcal{Q}^{-1}(k) \bigtriangleup A_n) < \varepsilon/3$. On the other hand, by our inductive assumption, there is $m \in \nbbd$ such that $\mathcal{R}$ is $\delta$-refined by some $\mathcal{P}_{m}$, i.e., there is a map $f \colon \{0,1\}^{\{0,\ldots, m \}} \to \{0, \ldots, k-1 \}$ such that $\delta(\mathcal{R}, f \circ \mathcal{P}_m) \leq \varepsilon/3$. Without loss of generality, we may assume $m \geq n$. Define $g \colon \{0,1\}^{\{0,\ldots, m \}} \to \{0, \ldots, k \}$ such that 
	\[
	g \colon \{0,1\}^{\{0,\ldots, m \}} \to \{0, \ldots, k \} \, , \quad s = (s_j)_{j \in \{0,\ldots, m \}} \mapsto 
	\begin{cases}
	k \, , & s_n = 1 \\
	f(s) \, , & s_n = 0
	\end{cases}
	\; .
	\]
	Thus we have 
	\[
	\mu \left(\mathcal{Q}^{-1}(k) \bigtriangleup (g \circ \mathcal{P}_m)^{-1} (k)\right) = \mu \left( \mathcal{Q}^{-1}(k) \bigtriangleup A_n \right) < \varepsilon/3
	\]
	and for any $j \in \{0, \ldots, k-1 \}$, since $\mathcal{Q}^{-1}(j) = \mathcal{R}^{-1}(j) \setminus \mathcal{Q}^{-1}(k)$, we have
	\begin{align*}
	&~ \mu \left(\mathcal{Q}^{-1}(j) \bigtriangleup (g \circ \mathcal{P}_m)^{-1} (j)\right) \\
	=&~ \mu \left( \Big( \mathcal{R}^{-1}(j) \setminus \mathcal{Q}^{-1}(k) \Big) \bigtriangleup \Big( (f \circ \mathcal{P}_m)^{-1} (j) \setminus A_n \Big)\right) \\
	\leq &~ \mu \left(\mathcal{R}^{-1}(j) \bigtriangleup  (f \circ \mathcal{P}_m)^{-1} (j) \right) \\
	&+ \mu \left( \Big( \mathcal{R}^{-1}(j) \setminus \mathcal{Q}^{-1}(k) \Big) \cap A_n \right) + \mu \left( \mathcal{Q}^{-1}(k) \cap \Big( (f \circ \mathcal{P}_m)^{-1} (j) \setminus A_n \Big) \right) \; ,
	\end{align*}
	whence 
	\begin{align*}
	&~ \delta(\mathcal{Q}, g \circ \mathcal{P}_m) \\
	=&~ \sum_{j = 0}^k  \mu \left(\mathcal{Q}^{-1}(j) \bigtriangleup (g \circ \mathcal{P}_m)^{-1} (j)\right) \\
	\leq &~ \frac{\varepsilon}{3} +  \sum_{j = 0}^{k-1}   \mu \left(\mathcal{R}^{-1}(j) \bigtriangleup  (f \circ \mathcal{P}_m)^{-1} (j) \right)  \\
	& + \sum_{j = 0}^{k-1}  \left( \mu \left( \Big( \mathcal{R}^{-1}(j) \setminus \mathcal{Q}^{-1}(k) \Big) \cap A_n \right) + \mu \left( \mathcal{Q}^{-1}(k) \cap \Big( (f \circ \mathcal{P}_m)^{-1} (j) \setminus A_n \Big) \right) \right) \\
	=&~ \frac{\varepsilon}{3} +  \delta(\mathcal{R}, f \circ \mathcal{P}_m) + \mu \left( \Big( Y \setminus \mathcal{Q}^{-1}(k) \Big) \cap A_n \right) + \mu \left( \mathcal{Q}^{-1}(k) \cap \Big( Y \setminus A_n \Big) \right) \\
	\leq &~ \frac{\varepsilon}{3} +  \frac{\varepsilon}{3} + \mu \left(\mathcal{Q}^{-1}(k) \bigtriangleup A_n \right) \\
	\leq &~ \frac{\varepsilon}{3} +  \frac{\varepsilon}{3} + \frac{\varepsilon}{3} = \varepsilon \; .
	\end{align*}
	This shows $\mathcal{Q}$ is $\varepsilon$-refined by $\mathcal{P}_{m}$. The result thus follows by induction. 
\end{proof}

We remark that the condition in Lemma~\ref{lem:separable-measure-space-approx-partitions} is in fact equivalent to the separability of $(Y,\mu)$. 

\begin{prop}\label{prop:continuum-product-afdhhm}
	For any admissible {\hhs} $M$ and any separable finite measure space $(Y, \mu)$, the continuum product $L^2(Y,\mu,M)$ is again an admissible {\hhs}.
\end{prop}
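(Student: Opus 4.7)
The plan is to explicitly construct the required sequence of closed convex subsets of $L^2(Y,\mu,M)$, combining the admissible exhaustion $M_1 \subset M_2 \subset \cdots$ of $M$ with the sequence of finite measurable partitions $\mathcal{P}_1, \mathcal{P}_2, \ldots$ of $(Y, \mu)$ produced by Lemma~\ref{lem:separable-measure-space-approx-partitions}. Proposition~\ref{prop:continuum-product-hhs} already gives that $L^2(Y,\mu,M)$ is a {\hhs}, so only admissibility remains to be verified.

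Concretely, for each $n$, write $\mathcal{P}_n \colon Y \to I_n$ and define
\[
L_n = \left\{ f \circ \mathcal{P}_n \colon f \colon I_n \to M_n \right\} \subset L^2(Y, \mu, M).
\]
The sequence $(L_n)$ is increasing because $\mathcal{P}_{n+1}$ refines $\mathcal{P}_n$ and $M_n \subset M_{n+1}$. By Example~\ref{eg:continuum-product-manifolds}, $L_n$ is isometric to the weighted Cartesian product $M_n^{|\operatorname{supp}(\mu \circ \mathcal{P}_n^{-1})|}$ with metric $d^2(f,g) = \sum_{i} \mu(\mathcal{P}_n^{-1}(i)) \, d_{M_n}(f(i), g(i))^2$, which is a finite\-/dimensional Riemannian manifold. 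Convexity follows from Remark~\ref{rmk:continuum-product-geodesics}, since each geodesic in $L^2(Y,\mu,M)$ is fibre-wise and $M_n$ is itself convex in $M$. Closedness is a direct Cauchy argument: an $L^2$-convergent sequence $\xi_k = f_k \circ \mathcal{P}_n$ forces each net $(f_k(i))$ to be Cauchy in the complete Hadamard manifold $M_n$, and its limit $f(i) \in M_n$ defines a representative of the $L^2$-limit.

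The main work is showing $\bigcup_n L_n$ is dense. Given $\xi \in L^2(Y,\mu,M)$ and $\varepsilon > 0$, I would proceed in three successive approximation steps. First, apply Lemma~\ref{lem:continuum-product-simple-functions} to approximate $\xi$ within $\varepsilon/3$ by a simple function $\eta' = \sum_{i=1}^m x_i \chi_{\mathcal{Q}^{-1}(i)}$ for some finite measurable partition $\mathcal{Q}$ and points $x_i \in M$. Second, using admissibility of $M$, choose $N$ and $x_i' \in M_N$ with $d_M(x_i, x_i') < \varepsilon / (3\sqrt{\mu(Y)+1})$, yielding $\eta'' = \sum_i x_i' \chi_{\mathcal{Q}^{-1}(i)}$ with $d(\eta',\eta'') < \varepsilon/3$. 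Third, let $D = \max_{i,j} d_M(x_i', x_j')$; using Lemma~\ref{lem:separable-measure-space-approx-partitions}, find $n \geq N$ (so that $M_N \subset M_n$) and a map $\phi \colon I_n \to \{1,\dots,m\}$ with $\delta(\mathcal{Q}, \phi \circ \mathcal{P}_n) \leq \varepsilon^2 / (9 D^2)$, and set $\eta = (x'_{\phi \circ \mathcal{P}_n}) \in L_n$. Then $\eta$ and $\eta''$ differ only on a set of measure at most $\varepsilon^2/(9D^2)$, where the integrand is bounded by $D^2$, so $d(\eta, \eta'') < \varepsilon/3$. The triangle inequality gives $d(\xi, \eta) < \varepsilon$.

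The main obstacle I anticipate is the careful bookkeeping in the third step, where one must simultaneously choose $n$ large enough so that $\mathcal{P}_n$ approximates the partition $\mathcal{Q}$ well \emph{and} $M_n$ contains the finitely many approximating values $x_i'$. This is resolved by passing to the maximum of the relevant indices, but requires that the bound on $\delta(\mathcal{Q}, \phi \circ \mathcal{P}_n)$ be expressible purely in terms of the diameter of the finite set $\{x_i'\}$ (which is finite, independent of further refinement). Once density is established, separability of $L^2(Y,\mu,M)$ is automatic: each $L_n$ is separable as a finite\-/dimensional Riemannian manifold (using separability of $M_n$), hence $\bigcup_n L_n$ is a countable union of separable metric spaces, which is separable and dense.
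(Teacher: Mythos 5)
Your proof is correct and follows essentially the same route as the paper's: you form the pieces $L^2(I_n,\mu_n,M_n)$ by combining the admissible exhaustion $M_n$ of $M$ with the refining partition sequence from Lemma~\ref{lem:separable-measure-space-approx-partitions}, identify them as finite\-/dimensional Riemannian manifolds via Example~\ref{eg:continuum-product-manifolds}, and establish density by the same three approximation steps (simple functions, push the finitely many values into some $M_N$, then refine the partition), differing only in minor arithmetic (you split the estimate across two triangle-inequality terms, the paper bounds $d(\xi,\eta)$ in one integral with a slightly different choice of constant).
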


\begin{proof}
	By our assumption, the {\hhs} $M$ contains an increasing sequence $M_0 \subset M_1 \subset \ldots$ of convex subsets isometric to finite\-/dimensional complete Riemannian manifolds, such that $M = \overline{\bigcup_{n \in \nbbd} M_n}$. We are going to produce such a sequence for $L^2(Y,\mu,M)$, too. 
	
	Since $(Y, \mu)$ is separable, we can find a sequence $(\mathcal{P}_n \colon Y \to I_n )_{n \in \nbbd}$ of finite measurable partitions of $(Y,\mu)$ satisfying the conditions in Lemma~\ref{lem:separable-measure-space-approx-partitions}. Let $\mu_n$ be the push-forward of the measure $\mu$ under $\mathcal{P}_n$. Then by Remark~\ref{rmk:continuum-product-functoriality}, there is a commutative diagram of isometric embeddings
	\[
	\xymatrix{
		L^2(I_0,\mu_0,M_0) \ar[r] \ar[rrrd]_{\iota_0}  & \ldots \ar[r] & L^2(I_n,\mu_n,M_n) \ar[r] \ar[rd]^{\iota_n} & \ldots \\
		&&& L^2(Y,\mu,M)
	}
	\]
	Under these embeddings, we view the spaces $L^2(I_n,\mu_n,M_n)$ as an increasing sequence of subspaces of $L^2(Y,\mu,M)$.	Observe that as a subspace, $L^2(I_n,\mu_n,M_n)$ consists of functions on $Y$ which are constant on each member set of the partition $I_n$ and take values in $M_n$. Thus it is convex in $L^2(Y,\mu,M)$ due to Remark~\ref{rmk:continuum-product-geodesics}. 
	By Example~\ref{eg:continuum-product-manifolds}, the space $L^2(I_n,\mu_n,M_n)$ is isometric to a Cartesian product of $M_n$ with a weighted $\ell^2$-metric, which is thus again a finite\-/dimensional complete Riemannian manifold. 
	
	It remains to show that 
	\[
	L^2(Y,\mu,M) = \overline{ \bigcup_{n \in \nbbd}  L^2(I_n,\mu_n,M_n) } \; .
	\]
	Thanks to Lemma~\ref{lem:continuum-product-simple-functions}, it suffices to show that for any simple function $\xi$ in $L^2(Y,\mu,M)$ and any $\varepsilon > 0$, we can find $m \in \nbbd$ and $\eta \in L^2(I_m,\mu_m,M_m)$ such that $d(\xi, \eta) < \varepsilon$. Since $\operatorname{im} \xi$ is a finite subset in $M$, by our choice of the sequence $(M_n)_{n \in \nbbd}$, there is $m_0 \in \nbbd$ such that $\operatorname{im} \xi$ is in the $\varepsilon'$-neighborhood of $M_{m_0}$ for $\varepsilon' = {\varepsilon}/\sqrt{2 \mu(Y)}$, that is, there is a map $g \colon \operatorname{im} \xi \to M_{m_0}$ such that $d(x, g(x)) \leq \varepsilon'$ for any $x \in \operatorname{im} \xi$. 
	On the other hand, since $\xi$ provides a finite measurable partition of $(Y,\mu)$, by our choice of the sequence $(\mathcal{P}_n)_{n \in \nbbd}$, there is a natural number $m \geq m_0$ such that the finite measurable partition $\xi$ is $\varepsilon''$-refined by $\mathcal{P}_{m}$ for $\varepsilon'' = {\varepsilon}^2/ \left( 2 {(\operatorname{diam} (\operatorname{im} \xi ) + \varepsilon' )^2}\right)$, that is, there is a map $f \colon I_m \to \operatorname{im} \xi$, such that $\delta(\xi, f \circ \mathcal{P}_{m}) < \varepsilon''$. Here $\operatorname{diam} (\operatorname{im} \xi ) = \sup_{x,x' \in \operatorname{im} \xi } d_M(x,x')$. 
	
	We thus define $\eta = g \circ f \circ \mathcal{P}_{m} \in L^2(I_m,\mu_m,M_m)$ and compute 
	\begin{align*}
	&~ d(\xi, \eta)^2 \\
	=&~ \int_{y \in Y} d_M(\xi(y), g \circ f \circ \mathcal{P}_{m} (y) )^2 \, d \mu (y) \\
	=&~ \sum_{x \in \operatorname{im} \xi } \int_{y \in \xi^{-1}(x) } d_M( x, g \circ f \circ \mathcal{P}_{m} (y) )^2 \, d \mu (y) \\
	=&~ \sum_{x \in \operatorname{im} \xi } \left( \int_{y \in \xi^{-1}(x) \cap \left( f \circ \mathcal{P}_{m} \right)^{-1}(x) } d_M( x, g(x) )^2 \, d \mu (y) \right. \\
	&~~~\qquad + \left. \int_{y \in \xi^{-1}(x) \setminus \left( f \circ \mathcal{P}_{m} \right)^{-1}(x) } d_M( x, g \circ f \circ \mathcal{P}_{m} (y) )^2 \, d \mu (y)  \right) \\
	\leq &~ \sum_{x \in \operatorname{im} \xi } \left( \int_{y \in \xi^{-1}(x) \cap \left( f \circ \mathcal{P}_{m} \right)^{-1}(x) } (\varepsilon')^2 \, d \mu (y) \right. \\
	&~~~\qquad + \left. \int_{y \in \xi^{-1}(x) \setminus \left( f \circ \mathcal{P}_{m} \right)^{-1}(x) } (\operatorname{diam} (\operatorname{im} \xi ) + \varepsilon' )^2 \, d \mu (y)    \right) \\
	\leq &~ \int_{y \in Y} \frac{\varepsilon^2}{2 \mu(Y)} \, d\mu(y) + \sum_{x \in \operatorname{im} \xi } \int_{y \in \xi^{-1}(x) \bigtriangleup \left( f \circ \mathcal{P}_{m} \right)^{-1}(x) } (\operatorname{diam} (\operatorname{im} \xi ) + \varepsilon' )^2 \, d \mu (y) \\
	\leq &~ \mu(Y) \cdot  \frac{\varepsilon^2}{2 \mu(Y)} + \delta(\xi, f \circ \mathcal{P}_{m}) \cdot  (\operatorname{diam} (\operatorname{im} \xi ) + \varepsilon' )^2 \\
	< &~ \varepsilon^2 \; ,
	\end{align*}
	as desired. 
\end{proof}

With similar techniques as in the above proof of Proposition~\ref{prop:continuum-product-afdhhm}, one can prove the following result. We omit the details as we will not make explicit use of this. 

\begin{prop}\label{prop:continuum-product-separable-separable}
	For any separable {\hhs} $M$ and any separable finite measure space $(Y, \mu)$, the continuum product $L^2(Y,\mu,M)$ is again a separable {\hhs}. \qed
\end{prop}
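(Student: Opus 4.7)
My plan is to adapt the argument used for Proposition~\ref{prop:continuum-product-afdhhm} almost verbatim, replacing the sequence of finite\-/dimensional Riemannian submanifolds $M_n$ by a countable dense subset $D$ of $M$. Since Proposition~\ref{prop:continuum-product-hhs} already guarantees that $L^{2}(Y,\mu,M)$ is a {\hhs}, the only point to verify is that it contains a countable dense subset.

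First I would fix a countable dense subset $D \subset M$, and invoke Lemma~\ref{lem:separable-measure-space-approx-partitions} to fix a refining sequence $(\mathcal{P}_n \colon Y \to I_n)_{n\in\nbbd}$ of finite measurable partitions of $(Y,\mu)$ with the approximation property stated there. The candidate countable dense set is
\[
    S \;=\; \bigl\{\, \eta \colon Y \to D \,\bigm|\, \exists\, n\in\nbbd \text{ such that } \eta \text{ is constant on each fibre of } \mathcal{P}_n \,\bigr\}.
\]
Each element of $S$ is specified by a choice of $n$ together with a function $I_n \to D$, so $S$ is indeed countable.

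Next I would show $S$ is dense in $L^{2}(Y,\mu,M)$. Given $\xi \in L^{2}(Y,\mu,M)$ and $\varepsilon > 0$, Lemma~\ref{lem:continuum-product-simple-functions} produces a simple function $\xi_1$ with $d(\xi,\xi_1) < \varepsilon/3$. Using the density of $D$ in $M$ and the finiteness of $\operatorname{im}\xi_1$, one chooses a map $g \colon \operatorname{im}\xi_1 \to D$ with $d_M(x,g(x)) \leq \varepsilon/(3\sqrt{\mu(Y)+1})$ for all $x \in \operatorname{im}\xi_1$; then $\xi_2 = g\circ\xi_1$ is another simple function, now taking values in $D$, and satisfies $d(\xi_1,\xi_2) < \varepsilon/3$. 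Finally, treating $\xi_2$ as a finite measurable partition of $(Y,\mu)$ indexed by $\operatorname{im}\xi_2$, the choice of the sequence $(\mathcal{P}_n)$ supplies some $m \in \nbbd$ and a map $f \colon I_m \to \operatorname{im}\xi_2$ making $\xi_2$ be $\varepsilon''$-refined by $\mathcal{P}_m$, where $\varepsilon''$ is chosen so that $\varepsilon'' \cdot \bigl(\operatorname{diam}(\operatorname{im}\xi_2)\bigr)^2 < (\varepsilon/3)^2$. Setting $\eta = f\circ\mathcal{P}_m \in S$, the same bookkeeping as in the proof of Proposition~\ref{prop:continuum-product-afdhhm} gives $d(\xi_2,\eta) < \varepsilon/3$, whence $d(\xi,\eta) < \varepsilon$ by the triangle inequality.

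The proof is essentially routine and I do not anticipate a genuine obstacle; the only mildly delicate point is getting the two quantitative approximations in the right order, so that the value-approximation constant $\varepsilon/(3\sqrt{\mu(Y)+1})$ is chosen before $\xi_2$ is fixed (otherwise $\operatorname{diam}(\operatorname{im}\xi_2)$ in the partition-approximation step is not yet controlled). Once the constants are set as above, both error terms are controlled independently and the triangle inequality closes the estimate.
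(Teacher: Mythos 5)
Your proposal is correct and follows exactly the route the paper indicates when it omits the proof with the remark that it uses ``similar techniques'' to Proposition~\ref{prop:continuum-product-afdhhm}: replace the exhausting sequence $(M_n)$ by a countable dense subset $D\subset M$, take the countable family of $D$-valued functions factoring through some $\mathcal{P}_n$ from Lemma~\ref{lem:separable-measure-space-approx-partitions}, and approximate a given $\xi$ in three steps via Lemma~\ref{lem:continuum-product-simple-functions}, density of $D$, and $\varepsilon''$-refinement of partitions. The bookkeeping of the constants is done in the right order (fix the value-approximation tolerance before $\xi_2$, then choose $\varepsilon''$ relative to $\operatorname{diam}(\operatorname{im}\xi_2)$), so the estimate closes cleanly.
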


\begin{rmk}\label{rmk:not-Hilbert-manifold}
	The constructions in the proof of Proposition~\ref{prop:continuum-product-afdhhm} may be used to show that even if we start with a finite-dimensional Hadamard manifold $M$, as long as the sectional curvatures of $M$ are not all zero and the measure space $(Y, \mu)$ contains sets with arbitrarily small nonzero measures, the continuum product $L^2(Y,\mu,M)$ is not (isometric to) a Riemannian-Hilbertian manifold, because it is impossible to construct a Riemann curvature tensor. We explain this point in the next few paragraphs. 
	
	To begin with, we fix:
	\begin{enumerate}
		\item a base point $x \in M$ together with orthonormal tangent vectors $v, w$ in $T_x M$ such that the sectional curvature $K(v,w) < 0$, and 
		\item a sequence $\left(Y_n\right)_{n \in \nbbd}$ of disjoint measurable subsets of $Y$ such that $0 < \mu \left( Y_n \right) \leq \left(\frac{1}{3}\right)^n \mu \left( Y \right)$ for any $n \in \nbbd$. 
	\end{enumerate}
	To construct $\left(Y_n\right)_{n \in \nbbd}$, we use our assumption on $(Y,\mu)$ to choose a sequence of measurable subsets $\left(Z_n\right)_{n \in \nbbd}$ such that 
	$0 < \mu \left( Z_{n+1} \right) \leq \frac{1}{3} \mu \left( Z_n \right)$ for any $n \in \nbbd$. Then for any $n \in \nbbd$, we define 
	\[
		Y_n =  Z_n \setminus \left( \bigcup_{k = n+1}^{\infty} Z_k \right) 
	\]
	and observe that they are disjoint subsets of $Y$ and 
	\[
		0 < \frac{1}{2} \mu \left( Z_n \right) \leq \mu \left( Y_n \right) \leq \mu \left( Z_n \right) \leq \left(\frac{1}{3}\right)^n \mu \left( Y \right) \; .
	\]
	by properties of geometric series. Thus the sequence $\left(Y_n\right)_{n \in \nbbd}$ satisfies our requirements. 
	
	Now consider the sequence of finite measurable partitions $\left( \mathcal{P}_n \colon Y \to I_n \right)_{n \in \nbbd}$ where, for any $n \in \nbbd$, $I_n = \left\{ Y_0, \ldots, Y_n, Y \setminus \bigcup_{k=0}^{n} Y_k \right\}$ and $\mathcal{P}_n$ is the obvious quotient map. Clearly $\mathcal{P}_{n+1}$ refines  $\mathcal{P}_{n}$ for any $n \in \nbbd$. Hence following the proof of Proposition~\ref{prop:continuum-product-afdhhm}, we have a sequence of closed convex subsets 
	\[
		N_0 \subseteq N_1 \subseteq \ldots \subseteq L^2(Y,\mu, M)
	\]
	such that $N_n$ is canonically identified with $L^2(I_n, \mu_n, M)$, where $\mu_n \left( \{A\} \right) = \mu(A)$ for any $A \in I_n$. Observe that $N_0$ contains 
	the function $\xi \in L^2(Y,\mu, M)$ taking constant value $x$. Thus we have a sequence of tangent cones 
	\[
		T_\xi N_0 \subseteq T_\xi N_1 \subseteq \ldots \subseteq T_\xi L^2(Y,\mu, M) \; .
	\] 
	
	For any $n \in \nbbd$, we define vectors $\eta_n, \theta_n \in T_\xi N_n$ such that under the canonical identification $T_\xi N_n \simeq L^2(I_n, \mu_n, T_x M)$, we have 
	\[
		\eta_n \left( Y_k \right) = \left( \mu \left( Y_k \right) \right)^{-\frac{1}{4}} v \quad \text{and} \quad \theta_n \left( Y_k \right) = \left( \mu \left( Y_k \right) \right)^{-\frac{1}{4}} w  \quad \text{for } k \in \{0, \ldots, n \} 
	\]
	and $\eta_n \left( Y \setminus \bigcup_{k=0}^{n} Y_k \right) = \theta_n \left( Y \setminus \bigcup_{k=0}^{n} Y_k \right) = 0$. For any $n \in \nbbd$, since $\eta_n$ and $\eta_{n+1}$ only differ on $Y_{n+1}$, it follows that 
	\begin{align*}
		d_{T_\xi L^2(Y,\mu, M)} \left( \eta_n, \eta_{n+1} \right) = \sqrt{ \int_{Y_{n+1}} \left\| \left( \mu \left( Y_{n+1} \right) \right)^{-\frac{1}{4}} v \right\|^2 \, d \mu  } &\\
		= \left( \mu \left( Y_{n+1} \right) \right)^{\frac{1}{4}} &\leq  \left(\frac{1}{3}\right)^{\frac{n}{4}} \left( \mu \left( Y \right) \right)^{\frac{1}{4}}
	\end{align*}
	and thus $\left( \eta_n \right)_{n \in \nbbd}$ is a Cauchy sequence in $T_\xi L^2(Y,\mu, M)$, whose limit we denote by $\eta_\infty$. Similarly, $\left( \theta_n \right)_{n \in \nbbd}$ is a Cauchy sequence in $T_\xi L^2(Y,\mu, M)$, whose limit we denote by $\theta_\infty$.
	
	Let $R_x^M \colon T_x M \times T_x M \to \operatorname{End} \left( T_x M \right)$ be the Riemann curvature tensor of $M$ at $x$. By our choice of $v$ and $w$, we have $\left\langle R_x^M(v,w)w, v \right\rangle = K(v,w) < 0$. For any $n \in \nbbd$, by identifying $N_n$ as the Cartesian product $M^{I_n}$ with a Riemannian metric weighted by $\mu_n$, we see that the Riemann curvature tensor $R_{\xi}^{N_n}$ of $N_n$ at $\xi$ is ``fiberwise'' in the sense that 
	$\left( R_{\xi}^{N_n} (\kappa, \lambda) \nu \right) (A) = R_x^M \left( \kappa(A), \lambda(A) \right) \nu(A)$ for any $A \in I_n$ and $\kappa, \lambda, \nu \in L^2(I_n, \mu_n, T_x M) \simeq T_\xi N_n$, whence
	\begin{align*}
		&\ \left\langle R_{\xi}^{N_n} \left( \eta_n, \theta_n \right) \theta_n, \eta_n \right\rangle \\
		= &\ \sum_{k = 0}^{n} \left\langle R_x^M \left( \left( \mu \left( Y_{k} \right) \right)^{-\frac{1}{4}} v , \left( \mu \left( Y_{k} \right) \right)^{-\frac{1}{4}} w\right) \left( \left( \mu \left( Y_{k} \right) \right)^{-\frac{1}{4}} w \right), \left( \mu \left( Y_{k} \right) \right)^{-\frac{1}{4}} v \right\rangle \cdot  \mu \left( Y_{k} \right) \\
		= &\ (n+1) K(v,w) \; ,
	\end{align*}
	which approaches $-\infty$ as $n \to \infty$. 
	
	Now suppose $L^2(Y,\mu,M)$ were isometric to a Riemannian-Hilbertian manifold. Then each $N_n$ would be a geodesically closed Riemannian submanifold, and thus the Riemann curvature tensor of $L^2(Y,\mu,M)$ at $\xi$, a continuous 3-1 tensor (see \cite[Chapter~IX]{lang}), would coincide with $R_{\xi}^{N_n}$ when restricted to $T_\xi N_n$. However, the above computations show that it is impossible to satisfy continuity around the tuple $\left( \eta_\infty, \theta_\infty, \theta_\infty, \eta_\infty \right)$. 
\end{rmk}


\bibliographystyle{alpha}
\bibliography{Novikov-diffeo-Hilbert}

\end{document}